\documentclass[12pt,a4paper]{amsart}
\usepackage{amsthm}
\usepackage{amssymb}
\usepackage{amsmath}
\usepackage{mathrsfs}
\usepackage{amscd}
\usepackage[all]{xy}

\usepackage{tikz}
\usepackage{pdflscape}
\usepackage{amsmath}
\usepackage{amsfonts}
\usepackage{amssymb}
\usepackage{mathtools}
\usepackage[all]{xy}
\usepackage{color}
\usetikzlibrary{arrows}
\usepackage{float}
\usepackage[shortlabels]{enumitem}
\usepackage{ifpdf}
\ifpdf
  \usepackage[colorlinks,
            linkcolor=magenta,       
            anchorcolor=magenta,     
            citecolor=magenta,       
            final,
            hyperindex]{hyperref}
\else
  \usepackage[colorlinks,
            linkcolor=magenta,       
            anchorcolor=magenta,     
            citecolor=magenta,       
            final,
            hyperindex,driverfallback=hypertex]{hyperref}
\fi
\topmargin -.8cm \textheight 22.8cm \oddsidemargin 0cm \evensidemargin -0cm \textwidth 16.3cm

\newtheorem{theorem}{Theorem}[section]
\newtheorem{lemma}[theorem]{Lemma}
\newtheorem{coro}[theorem]{Corollary}
\newtheorem{problem}[theorem]{Problem}

\newtheorem{prop}[theorem]{Proposition}
\theoremstyle{definition}
\newtheorem{defn}[theorem]{Definition}
\newtheorem{remark}[theorem]{Remark}




\newcommand{\delete}[1]{}

\newcommand {\dd}{{\bf d}}
\newcommand {\rr}{{\bf r}}
\newcommand {\im}{{\rm im }}
\newcommand {\ra}{{\rm ran }}
\newcommand {\dm}{{\rm dom }}


\begin{document}
\title[Chain projection ordered categories and DRC-restriction semigroups]{Chain projection ordered categories and DRC-restriction semigroups}

\author{Yin Die}
\address{School of Mathematics, Yunnan Normal University,  Kunming, Yunnan, 650500,   China.}
\email{204585080@qq.com}

\author{Shoufeng Wang$^{\ast}$}\thanks{*Corresponding author}
\address{School of Mathematics, Yunnan Normal University,  Kunming, Yunnan, 650500,  China.}
\email{wsf1004@163.com}


\begin{abstract}
In this paper we provide a theory of chain projection ordered categories and generalize that of chain projection ordered groupoids developed by  East and  Azeef Muhammed recently.  By using  chain projection ordered categories, we obtain a structure theorem for DRC-restriction semigroups. More specifically, we prove that the category  of DRC-restriction semigroups together with (2,1,1)-homomorphisms is isomorphic to the category of chain projection ordered categories together with chain projection ordered functors.  Moreover, some special cases are also considered. As applications of the main theorem, we demonstrate that the existence of free projection-generated DRC-restriction semigroups associated to any strong two-sided projection algebra and reobtain the structures of projection-fundamental  DRC-restriction semigroups.  Our work may be regarded as an   answer  for the fourth problem proposed by  East and  Azeef Muhammed in [Advances in Mathematics, 437 (2024) 109447].
\end{abstract}

\keywords{Chain projection ordered categories; Strong two-sided  projection algebras; DRC-restriction semigroups; Projection-fundamental DRC-restriction semigroups; Free projection-generated DRC-restriction semigroups.}

\maketitle



\vspace{-0.8cm}

\section{Introduction}
Let $(S, \cdot)$ be a semigroup. As usual, we denote  the set of  all idempotents in $S$ by $E(S)$ and the set of all inverses of
$x\in S$ by $V(x)$. Recall that $V(x)=\{a\in S\mid xax=x, axa=a\}$ for all  $x\in S$.
A semigroup $S$ is called  {\em regular}  if $V(x)\not=\emptyset$ for any $x\in S$, and a regular semigroup $S$
is called   {\em inverse}  if $E(S)$ is a commutative subsemigroup  of $S$,  or equivalently, the  cardinal   of
$V(x)$  is equal to $1$ for  all $x\in S$  (see Howie \cite{Howie1} for example).
 The theory of inverse semigroups is perhaps the best developed within semigroup theory (see the monographs \cite{Lawson1,Petrich1}).

It is well known that a semigroup $S$ is an inverse semigroup if and only if there exists a unary operation $\circ: S\rightarrow S, x\mapsto x^\circ$ satisfying the following axioms:
\begin{equation}\label{inversekehua}
xx^\circ x=x,\, x^{\circ\circ}=x,\, (xy)^\circ =y^\circ x^\circ,\, xx^\circ yy^\circ =yy^\circ xx^\circ.
\end{equation}
Inspired by this fact, in 1978  Nordahl and Scheiblich \cite{Nordahl1} introduced  regular $\circ$-semigroups as a generalization of inverse semigroups in the range of regular semigroups. Recall that a unary semigroup $(S,\cdot,\circ)$ is called a {\em regular $\circ$-semigroup}  if the following axioms hold:
\begin{equation}\label{regularstar}
xx^{\circ}x=x,\,x^{\circ\circ}=x,\,(xy)^{\circ}=y^\circ x^\circ.
\end{equation}
Inverse semigroups are regular $\circ$-semigroups obviously. But the converse is not true.
Regular $\circ$-semigroups have been inverstigated extensively in literatures (for example, see \cite{Adair1,Auinger1,East1,East2,East3,Imaoka1,Imaoka2,Jones1,Nordahl1,Polak1,Szendrei1,Yamada1}, etc.)

On the other hand, in 1991 Lawson introduced Ehresmann semigroups as a generalization of inverse semigroups in the range of non-regular semigroups by using generalized Green relation developed by ``York School".  Let $S$ be a semigroup and let $E\subseteq E(S)$. The relation $\widetilde{\mathcal R}_E$ is defined on $S$ by the rule that for any
$x,y\in S$, we have $x \widetilde{\mathcal R}_E y$ if
$$ex = x \mbox{ if and only if } ey =y \mbox{ for all } e \in E.$$ Dually, we have the relation $\widetilde{\mathcal L}_E$ on $S$.
Observe that both $\widetilde{\mathcal R}_E$ and $\widetilde{\mathcal L}_E$ are equivalences on $S$ but $\widetilde{\mathcal R}_E$ (respectively, $\widetilde{\mathcal L}_E$) may not be a left congruence (respectively, a right congruence).  Recall that a {\em band} is a semigroup in which every element is idempotent and a {\em semilattice} is a commutative band. In view of Lawson \cite{Lawson2}, $(S,E)$ is called {\em $E$-Ehresmann} if
\begin{itemize}
\item$E$ is a subsemilattice of $S$,
\item every $\widetilde{\mathcal R}_E$-class contains an element of $E$  and $\widetilde{\mathcal R}_E$ is a left
congruence,
\item every $\widetilde{\mathcal L}_E$-class contains an element of $E$  and $\widetilde{\mathcal L}_E$ is a right
congruence.
\end{itemize}
If this is the case,  $E$ is called {\em the distinguished  semilattice} of $S$, and the $\widetilde{\mathcal R}_E$-class (respectively, $\widetilde{\mathcal L}_E$-class) containing $a\in S$ has a unique element in $E$ which will be denoted by $a^+$ (respectively, $a^\ast$).  An $E$-Ehresmann semigroup $(S,E)$ is called
{\em $E$-restriction}, if $ae=(ae)^+a$ and $ea=a(ea)^\ast$ for all $e\in E$ and $a\in S$.  We say that a semigroup $S$  is {\em restriction} (respectively, {\em Ehresmann}) in the sequel if $(S,E)$ is an $E$-restriction (respectively, $E$-Ehresmann) semigroup for some $E\subseteq E(S)$. Let $(S, \cdot)$ be an inverse semigroup. Define $x^+=xx^{-1}$ and $x^\ast=x^{-1}x$ for all $x\in S$. Then it is easy to check that $(S, \cdot, ^+, ^\ast)$ forms a restriction semigroup.
From Lemmas 2.2 and 2.4 and their dual in Gould \cite{Gould1}, we have the following characterizations of Ehresmann semigroups and restriction semigroups from a varietal perspective.
\begin{lemma}[\cite{Gould1}]\label{Ehresmann}A semigroup $(S ,\cdot)$ is Ehresmann if and only if there are two unary operations $`` +"$ and $`` \ast"$ on $S$ such that the following axioms hold:
$$x^+ x=x,\, x^+y^+=y^+x^+=(x^+y^+)^+,\, x^+(xy)^+= (xy)^+=(xy^+)^+,\, x^{+\ast}=x^+;$$
$$xx^\ast=x,\, x^\ast y^\ast=y^\ast x^\ast=(x^\ast y^\ast)^\ast,\, (xy)^\ast y^\ast=(xy)^\ast =(x^\ast y)^\ast,\, x^{\ast+}=x^\ast.$$
In this case, we denote this Ehresmann semigroup by $(S, \cdot, ^+, ^\ast)$. An Ehresmann $(S, \cdot, ^+, ^\ast)$ is restriction if and only if it additionally satisfies the following axioms  $$\mbox{\em \bf ample conditions}\,\,\,\,\,\,\,\,\,\,\,\,(xy)^+x=xy^+,\,\, x(yx)^\ast =y^\ast x.$$
\end{lemma}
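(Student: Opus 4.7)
The plan is to prove each biconditional by translating between the relational language (the generalized Green relations $\widetilde{\mathcal R}_E,\widetilde{\mathcal L}_E$ together with the semilattice $E$) and the purely equational language (the unary operations $^+$ and $^\ast$). For the forward direction of the first equivalence, starting from an $E$-Ehresmann semigroup, I would define $x^+$ to be the unique element of $E$ lying in the $\widetilde{\mathcal R}_E$-class of $x$. Uniqueness holds because if $e,f\in E$ satisfy $e\,\widetilde{\mathcal R}_E\,f$ then, using $e=ee$ and $f=ff$, one deduces $fe=e$ and $ef=f$, so commutativity of $E$ forces $e=f$. Then I would verify each axiom in turn: $x^+x=x$ is immediate from $x^+\in E$ and $x^+\,\widetilde{\mathcal R}_E\,x$; the identities $x^+y^+=y^+x^+=(x^+y^+)^+$ come from $E$ being a subsemilattice (so products of $^+$-projections already lie in $E$ and are self-projecting); $x^+(xy)^+=(xy)^+$ follows because $x^+x=x$ gives $x^+(xy)=xy$; the axiom $(xy)^+=(xy^+)^+$ uses that $\widetilde{\mathcal R}_E$ is a left congruence to see $xy\,\widetilde{\mathcal R}_E\,xy^+$; and $x^{+\ast}=x^+$ is just the observation that an element of $E$ is its own $\ast$-projection. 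The axioms for $^\ast$ are obtained dually.

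For the reverse direction of the first equivalence, I would set $E=\{x^+\mid x\in S\}$ and first note, using $x^{+\ast}=x^+$ and $x^{\ast+}=x^\ast$, that this coincides with $\{x^\ast\mid x\in S\}$. The axiom $x^+y^+=y^+x^+=(x^+y^+)^+$ shows $E$ is a commutative subsemigroup, and idempotence of each element of $E$ follows from $x^+x=x$ specialized to $x\in E$ (combined with $x^{++}=x^+$ obtained by applying $^+$ to this identity and using the closure). To show the $\widetilde{\mathcal R}_E$-class of $x$ contains $x^+$, I would verify the equivalence $ex=x\Leftrightarrow ex^+=x^+$ for each $e=y^+\in E$, using the axiom $x^+(xy)^+=(xy)^+$ and its specializations to transfer the property between $x$ and $x^+$. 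Finally, that $\widetilde{\mathcal R}_E$ is a left congruence will follow from the key identity $(xy)^+=(xy^+)^+$, which is precisely what is needed to move between $zy$ and $zy^+$ under the relation.

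The second equivalence, about the ample conditions, is then straightforward. Assuming $(S,\cdot,^+,^\ast)$ is restriction, setting $e=y^+$ in $ae=(ae)^+a$ gives $ay^+=(ay^+)^+a=(ay)^+a$ after invoking the Ehresmann axiom $(ay^+)^+=(ay)^+$; renaming yields $(xy)^+x=xy^+$, and the second ample identity is dual. Conversely, given the ample conditions in an Ehresmann semigroup, every $e\in E$ has the form $e=y^+$, whence $ae=ay^+=(ay)^+a=(ay^+)^+a=(ae)^+a$, and the second restriction identity is handled symmetrically.

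The step I expect to be the main obstacle is proving, in the reverse direction of the first equivalence, that $\widetilde{\mathcal R}_E$ is a left congruence purely from the equational axioms. Unlike the semilattice parts of the verification, which are essentially formal once commutativity is in hand, this step must genuinely exploit the compatibility axiom $(xy)^+=(xy^+)^+$ to propagate the defining condition of $\widetilde{\mathcal R}_E$ through left multiplication, and a careful bookkeeping argument will be required. Everything else reduces to rewriting with the axioms or their duals.
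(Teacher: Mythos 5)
Your proposal is correct; the paper gives no proof of this lemma (it is quoted from Lemmas 2.2 and 2.4 of Gould's survey \cite{Gould1}), and your argument is essentially the standard one used there: translate between the relational data $(E,\widetilde{\mathcal R}_E,\widetilde{\mathcal L}_E)$ and the equational signature, with $(xy)^+=(xy^+)^+$ encoding the left-congruence property and the commutativity of $E$ forcing uniqueness of the projection in each class. The one imprecision worth noting is in the step $ex=x\Rightarrow ex^+=x^+$ of the reverse direction: the identities doing the work there are $(ex)^+=(ex^+)^+$ together with $(x^+y^+)^+=x^+y^+$ (giving $x^+=(ex)^+=(ex^+)^+=ex^+$), rather than $x^+(xy)^+=(xy)^+$ as you indicate, but this does not affect the soundness of the plan.
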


We observe that adequate semigroups and ample semigroups defined firstly in Fountain \cite{Fountain1} are special Ehresmann semigroups and restriction semigroups, respectively. We also observe that restriction semigroups have appeared also as the type
$SL2$ $\gamma$-semigroups \cite{Batbedat1,Batbedat2} in the early 1980s.  Restriction semigroups
also have arisen in the work of Jackson and Stokes \cite{Jackson1} in the guise of
twisted $C$-semigroups and in that of Manes \cite{Manes1} as guarded semigroups, motivated by consideration of closure operators and categories,
respectively. The work of Manes has a forerunner in the restriction categories of Cockett and Lack \cite{Cockett1}, who were influenced by considerations
of theoretical computer science. In fact the term ``restriction semigroup" is exactly taken from the above work of Cockett and Lack.
More detailed information on Ehresmann  and restriction semigroups can be found in the survey articles of Gould \cite{Gould1} and Hollings \cite{Hollings1}.

In 2012, Jones \cite{Jones1} introduced $P$-Ehresmann  and $P$-restriction semigroups  from the view  of variety  and gave a common framework for restriction semigroups and regular $\ast$-semigroups.
A bi-unary semigroup $(S, \cdot, ^+, ^\ast)$  is called a {\em $P$-Ehresmann semigroup} if the following axioms hold:
$$x^+x=x,\, (xy)^+=(xy^+)^+,\, (x^+y^+)^+=x^+y^+ x^+,\, x^+ x^+=x^+,\, x^{+\ast}={x}^+,$$
$$xx^\ast=x,\, (xy)^\ast=(x^\ast y)^\ast,\, (x^\ast y^\ast)^\ast=y^\ast x^\ast y^\ast,\, x^\ast x^\ast=x^\ast,\, x^{\ast+}={x}^\ast.$$
A $P$-Ehresmann semigroup $(S, \cdot, ^+, ^\ast)$ is called a {\em  $P$-restriction semigroup} if it additionally satisfies the following axioms :
$$\mbox{\em\bf $P$-ample conditions}\,\,\,\,\,\,\,\,\,\,\,\,\,\,(xy)^+x=xy^+{x}^\ast,\,\,\, x(yx)^\ast=x^+y^\ast x.$$ Let $(S, \cdot, \circ)$ be a regular $\circ$-semigroup. Define $x^+=xx^{\circ}$ and $x^\ast=x^{\circ}x$ for all $x\in S$. Then it is easy to check that $(S, \cdot, ^+, ^\ast)$ forms a $P$-restriction semigroup.
Some works on $P$-Ehresmann  and $P$-restriction semigroups can be found in \cite{Jones1,Jones4,Stein1,Stein2,Wang1,Wang2,Wang3,Wang5} and the references therein.

In 2021, Jones \cite{Jones3} introduced DRC semigroups  and further generalized $P$-Ehremsmann semigroups. A bi-unary  semigroup $(S, \cdot,\, {}^+, {}^\ast)$ is called a {\em DRC semigroup} if the following axioms hold:
\begin{center}\begin{tabular}{lll|lll}
{\rm (\romannumeral1)} & $x^+x=x$&\mbox{}\hskip 6mm\mbox{}
&\mbox{}\hskip 6mm\mbox{}&{\rm (\romannumeral1)}$'$ & $xx^\ast=x$\\
{\rm (\romannumeral2)} & $(xy)^+=(xy^+)^+$&&&{\rm (\romannumeral2)}$'$ & $(xy)^\ast=(x^\ast y)^\ast$\\
{\rm (\romannumeral3)} & $(xy)^+=x^+ (xy)^+ x^+$&&&{\rm (\romannumeral3)}$'$ & $(xy)^\ast=y^\ast (xy)^\ast y^\ast$\\
{\rm (\romannumeral4)} & $x^{++}=x^+$&&&{\rm (\romannumeral4)}$'$& $x^{\ast\ast}=x^\ast$\\
{\rm (\romannumeral5)} & $x^{+\ast}=x^+$&&&{\rm (\romannumeral5)}$'$& $x^{\ast+}=x^\ast$\\
\end{tabular}\\[3mm]
{\bf DRC Conditions}
\end{center}
In this case, we denote $P(S)=\{x^{+}\mid x\in S\}$ and call it {\em the set of projections} of $S$. By (\romannumeral5) and (\romannumeral5)$'$, we have
\begin{equation}
P(S)=\{x^{+}\mid x\in S\}=\{x^{\ast}\mid x\in S\}.
\end{equation}
Observe that DRC semigroups were termed {\em York semigroups} by Jones \cite{Jones2}, {\em DR semigroups satisfying the congruence conditions}  by Stokes \cite{Stokes1} and {\em reduced $E$-Fountain semigroups} by Stein \cite{Stein1,Stein2}, respectively.
Recently,   Wang \cite{Wang6}  introduced DRC-ample conditions for DRC semigroups by which DRC-restriction  semigroups are discriminated from DRC semigroups.
A DRC semigroup $(S, \cdot,\, {}^+, {}^\ast)$ is called {\em a DRC-restriction semigroup} if it also satisfies the following axioms:
 \begin{center}
 \begin{tabular}{llllll}
{\rm (\romannumeral6)} & $x(yx)^\ast=(yx^+)^\ast x$&&&{\rm (\romannumeral6)}$'$ & $(xy)^+x=x(x^\ast y)^+$\\
\end{tabular}\\[3mm]
{\bf DRC-ample conditions}
\end{center}
 In view of the identities (\romannumeral2)$'$ and (\romannumeral2), (vi) (respectively, (vi$'$)) is exactly the axiom $x(y^\ast x)^\ast=(y^\ast x^+)^\ast x$ (respectively, $(xy^+)^+x=x(x^\ast y^+)^+$). This implies that  (vi) and (vi)$'$  are equivalent to the conditions that
\begin{equation}\label{dengjia}
x(e x)^\ast=(e x^+)^\ast x \mbox{ and } (xe)^+x=x(x^\ast e)^+   \mbox{ for all } x\in S \mbox{ and } e\in P(S),
\end{equation} respectively.

Moreover, a regular version of DRC-restriction semigroups also provided in Wang \cite{Wang6}. A unary semigroup $(S,\cdot,\circ)$ is called a {\em generalized regular $\circ$-semigroup} if the following axioms hold:
\begin{equation}\label{g-regularstar}
xx^{\circ}x=x,\,x^{\circ\circ}=x,\,(xy)^{\circ}=(xy)^{\circ}xx^{\circ},\,(xy)^{\circ}=y^{\circ}y(xy)^{\circ},\,(xy)^{\circ}x=(x^{\circ}xy)^{\circ}.
\end{equation}
Obviously, regular $\circ$-semigroups are generalized regular $\circ$-semigroups. But the converse does not hold (see \cite[Example 3.3]{Wang6}).
Let $(S,\cdot,\circ)$ be a  generalized regular $\circ$-semigroup. Define $x^+=xx^\circ$ and $x^\ast=x^\circ x$ for all $x\in S$. By \cite[Proposition 3.7]{Wang6},
$(S, \cdot, ^+, ^\ast)$ forms a DRC-restriction semigroup. DRC semigroups and their generalizations  have been studied by several authors, see \cite{Jones2,Jones3,Stein1,Stein2,Stein3,Stokes1,Stokes2,Stokes3, Wang4,Wang6,Wang7} for example.

On the relationship among Ehresman (respectively, restriction), $P$-Ehresmann (respectively, $P$-restriction) and DRC (respectively, DRC-restriction) semigroups, we have the following result.
\begin{prop}[\cite{Gould1,Jones1,Wang6}]\label{guanxi} The following statements hold:
\begin{itemize}
\item[(1)]
A $P$-Ehresmann (respectively, $P$-restriction)  semigroup is exactly a DRC (respectively, DRC-restriction) semigroup in which $(ef)^+=efe=(fe)^\ast$ for all projections $e$ and $f$.
\item [(2)]An Ehresman (respectively, restriction) semigroup is exactly a DRC (respectively, DRC-restriction) semigroup in which $(ef)^+=ef=fe=(fe)^\ast$ for all projections $e$ and $f$.
\end{itemize}
\end{prop}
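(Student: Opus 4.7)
The plan is to establish each equivalence by direct comparison of the defining axioms, dualising where appropriate. For the forward direction of (1), assume $(S, \cdot, {}^+, {}^\ast)$ is $P$-Ehresmann. The DRC axioms (i), (ii), (iv), (v), and their duals, already appear among the $P$-Ehresmann axioms, so only (iii) and (iii)$'$ require genuine work. To derive $(xy)^+ = x^+(xy)^+x^+$, set $p := (xy^+)^+$; by $P$-Ehresmann axiom (2), $(xy)^+ = p$. The absorption $xy^+ = (x^+x)y^+ = x^+ \cdot xy^+$, combined with another application of (2), gives
\[
p = (xy^+)^+ = (x^+ \cdot xy^+)^+ = (x^+ (xy^+)^+)^+ = (x^+ p)^+,
\]
and the $P$-Ehresmann sandwich axiom $(x^+y^+)^+ = x^+y^+x^+$ applied to the projections $x^+$ and $p$ yields $(x^+ p)^+ = x^+ p x^+$. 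Combining, $(xy)^+ = x^+(xy)^+x^+$; axiom (iii)$'$ is dual. The projection condition $(ef)^+ = efe$ is the sandwich axiom with $e = x^+$ and $f = y^+$, and $(fe)^\ast = efe$ follows from the dual sandwich axiom once one notes that every projection $e$ satisfies $e^+ = e^\ast = e$ (by (iv), (v), and their duals).

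For the reverse direction of (1), the projection hypothesis is literally the $P$-Ehresmann sandwich axiom, while $x^+x^+ = x^+$ follows from DRC (iv) upon substituting $x^+$ for $x$ in (i); all remaining $P$-Ehresmann axioms are already DRC axioms. For the restriction variant, DRC axiom (ii)$'$ rewrites $(yx^+)^\ast$ as $(y^\ast x^+)^\ast$, and the projection condition then gives $(y^\ast x^+)^\ast = x^+ y^\ast x^+$; substituting into the DRC-ample identity $x(yx)^\ast = (yx^+)^\ast x$ yields the $P$-ample identity $x(yx)^\ast = x^+ y^\ast x$, and reading the chain in reverse proves the converse. Axiom (vi)$'$ is symmetric. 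Part (2) is entirely analogous: the Ehresmann identity $x^+ y^+ = y^+ x^+ = (x^+ y^+)^+$ together with idempotency of projections forces $(ef)^+ = ef = fe = efe = (fe)^\ast$, and conversely this stronger projection condition upgrades the $P$-Ehresmann axioms to the Ehresmann axioms and the $P$-ample identities to the ample identities.

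The principal obstacle is the derivation of DRC axiom (iii) from the $P$-Ehresmann axioms, since the $P$-Ehresmann sandwich identity applies only to arguments that are already projections. The resolution is the reduction described above, which uses $(xy)^+ = (xy^+)^+$ together with the left-absorption $x^+ \cdot xy^+ = xy^+$ to move the calculation into the subalgebra of projections where the sandwich axiom bites. Once this step is secured, the remaining verifications are straightforward bookkeeping, facilitated by the reformulation (\ref{dengjia}) of the DRC-ample conditions recorded in the excerpt.
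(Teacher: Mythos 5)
The paper does not actually prove Proposition \ref{guanxi}: it is stated as a known result with citations to Gould, Jones and Wang, so there is no in-text argument to compare yours against. Your direct axiom-by-axiom verification is correct and supplies the missing details. In particular, you correctly identify the only genuinely non-trivial step, namely deriving DRC axiom (iii) from the $P$-Ehresmann axioms, and your reduction $(xy)^+=(xy^+)^+=(x^+\cdot xy^+)^+=(x^+(xy^+)^+)^+$ followed by the sandwich axiom applied to the projections $x^+$ and $(xy^+)^+$ is exactly the right move (it is also consistent with Corollary \ref{zhangyu}, which records that (iii) may be replaced by its projection form). The translations between the DRC-ample and $P$-ample (respectively, ample) conditions via $(yx^+)^\ast=(y^\ast x^+)^\ast$ and the projection condition, using $x^+x=x$ and $xx^\ast=x$ to absorb the outer projections, are likewise correct in both directions. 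One small imprecision: DRC axiom (iv), $x^{++}=x^+$, does not literally appear among the $P$-Ehresmann axioms, which instead posit $x^+x^+=x^+$; you need the one-line derivation $x^{++}=(x^+x^+)^+=x^+x^+x^+=x^+$ from the sandwich axiom with $y=x$ (the converse direction, deducing $x^+x^+=x^+$ from (i) and (iv), you do state). This is trivial to repair and does not affect the soundness of the argument.
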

In view of the previous statements,  we have the following  hierarchy of the above mentioned classes of semigroups, where ``A $\rightarrow$ B" denote A is a subclass of B.
{\small
$$\xymatrix{
  \mbox{inverse semigroups} \ar[d]_{} \ar[r]^{} & \mbox{restriction semigroups} \ar[d]_{} \ar[r]^{} & \mbox{Ehresmann semigroups} \ar[d]^{} \\
  \mbox{regular}\circ\mbox{-semigroups} \ar[d]_{} \ar[r]^{} & P\mbox{-restriction semigroups}  \ar[d]_{} \ar[r]^{} & P\mbox{-Ehresmann semigroups} \ar[d]^{} \\
  \mbox{generalized regular}\circ\mbox{-semigroups} \ar[r]^{} & \mbox{DRC-restriction semigroups} \ar[r]^{} &   \mbox{DRC semigroups}  }$$
  }
\begin{center}
  {\bf Diagram 1: A hierarchy of several subclasses of DRC-semigroups}
\end{center}
According to Gould \cite{Gould1} and  Hollings \cite{Hollings3}, in the study of structure theory of inverse semigroups  three major approaches have emerged. These are: {\em categorical approach} developed in the texts \cite{Ehresmann1,Ehresmann2,Meakin1,Meakin2,Nambooripad1,Schein1,Schein2}),   {\em fundamental approach} developed in the texts \cite{Munn1,Munn2} and {\em covering approach} developed in the texts \cite{McAlister1,McAlister2}, respectively.  The main result on inverse semigroups obtained by using categorical approach was formulated by Lawson in \cite{Lawson1} as the following Ehresmann--Schein--Nambooripad Theorem, due to its varied authorship.
\begin{theorem}[\cite{Lawson1}]\label{esn}The category of inverse  semigroups together with  homomorphisms is isomorphic to the category  of inductive groupoids together with  inductive functors.
\end{theorem}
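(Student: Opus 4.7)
The plan is to construct mutually inverse functors between the two categories, following the classical Schein-style construction. Define a functor $\Phi$ from inverse semigroups to inductive groupoids as follows: given an inverse semigroup $S$, consider the partial composition on the underlying set where $a \cdot b$ is defined precisely when $a^{-1}a = bb^{-1}$, with value $ab$ (the semigroup product). This yields a groupoid with identity set $E(S)$, domain $d(a) = a^{-1}a$, range $r(a) = aa^{-1}$ and inversion $a \mapsto a^{-1}$. The natural partial order $a \leq b \iff a = eb$ for some $e \in E(S)$ restricts to the usual meet-semilattice order on $E(S)$, and the left and right restrictions of an element to an idempotent are simply given by multiplication by that idempotent. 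A semigroup homomorphism between inverse semigroups evidently preserves all of these data, so $\Phi$ extends to morphisms.

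Conversely, define a functor $\Psi$ from inductive groupoids to inverse semigroups. Given an inductive groupoid $(\mathcal{G}, \cdot)$ with identity meet-semilattice $E$, define on the underlying set of $\mathcal{G}$ a pseudoproduct
\[
a \otimes b := a' \cdot b', \qquad \text{where } e := d(a) \wedge r(b),
\]
with $a' \leq a$ the right-restriction of $a$ having $d(a') = e$ and $b' \leq b$ the left-restriction of $b$ having $r(b') = e$. I would verify that $(\mathcal{G}, \otimes)$ is an inverse semigroup with inversion inherited from the groupoid, idempotents equal to $E$, and natural partial order equal to $\leq$. An inductive functor preserves composition, identities, order and restrictions, hence yields a homomorphism of the corresponding inverse semigroups, defining $\Psi$.

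To show these are mutually inverse, I would check that $\Psi \Phi S = S$ as inverse semigroups: the underlying set and inversion are preserved, and the pseudoproduct coincides with the original product via the identity $ab = (a \cdot a^{-1}a bb^{-1})(a^{-1}a bb^{-1} \cdot b)$, since $a^{-1}a bb^{-1}$ is the meet of $d(a)$ and $r(b)$ in $E(S)$. Dually, $\Phi \Psi \mathcal{G} = \mathcal{G}$, because when $d(a) = r(b)$ the meet already equals this common value, no restriction takes place, and $\otimes$ reduces to the groupoid composition; the order and restrictions are untouched by the round-trip. Naturality of these identifications on morphisms is then formal.

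The principal technical obstacle is the associativity of $\otimes$. The two bracketings $(a \otimes b) \otimes c$ and $a \otimes (b \otimes c)$ involve different meets in $E$ and different restrictions on all three factors, and their equality rests on the fine interaction between restriction, meet, and groupoid composition — precisely the compatibility axioms packaged into \emph{inductive} as opposed to merely \emph{ordered} groupoid. This is the heart of the theorem, and is the step where the inductive structure is genuinely used (via auxiliary lemmas that restrictions commute with composition and with taking meets). Once associativity is in hand, verifying that the groupoid inversion serves as an inverse-semigroup inversion, that idempotents of $(\mathcal{G},\otimes)$ coincide with $E$, and that $\Phi$ and $\Psi$ are functorial, are routine.
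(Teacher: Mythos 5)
Your proposal is the standard Ehresmann--Schein--Nambooripad construction and is correct in outline; the paper does not prove this statement itself (it is quoted from Lawson as background), but it recovers it as the commutative special case of Theorems \ref{hgdc} and \ref{teshuqingkuang}, whose functors $\mathbf{C}$, $\mathbf{S}$ and product $a\bullet b$ collapse to exactly your $\Phi$, $\Psi$ and pseudoproduct once $e\times f=e\star f=ef$ is the semilattice meet and the evaluation factor $\varepsilon(e\times f,e\star f)$ degenerates to an identity morphism. The only step you leave unexecuted, associativity of $\otimes$, is indeed the crux (it is the inverse-semigroup instance of Lemma \ref{ccccc}), and your plan to derive it from the compatibility of restrictions with composition and meets is the correct one.
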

On the other hand, the main result on inverse semigroups obtained by using fundamental approach is the theorem below.
\begin{theorem}[\cite{Howie1}]\label{munn} Let $S$ be an inverse semigroup and $T_{E(S)}$ be the Munn semigroup of $E(S)$. Then there exists a homomorphism $\phi: S\rightarrow T_{E(S)}$ whose kernel is the maximum idempotent-separating congruence on $S$. Moreover, $S$ is fundamental if and   only if $S$ is isomorphic to a full inverse subsemigroup of $T_{E(S)}$.
\end{theorem}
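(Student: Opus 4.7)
The plan is to realise $S$ inside $T_{E(S)}$ via the classical Munn representation and then identify its kernel with the maximum idempotent-separating congruence. First, for each $s\in S$ I would define a map $\alpha_s\colon (s^{-1}s)E(S)\to (ss^{-1})E(S)$ by $\alpha_s(e)=ses^{-1}$, and verify that $\alpha_s$ is an order isomorphism between principal order ideals of the semilattice $E(S)$, with two-sided inverse $\alpha_{s^{-1}}$. Setting $\phi(s)=\alpha_s$, a short computation using $(st)^{-1}=t^{-1}s^{-1}$ together with the commutativity of $E(S)$ gives $\alpha_s\alpha_t=\alpha_{st}$, so $\phi\colon S\to T_{E(S)}$ is a homomorphism.

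Next I would analyse $\mu:=\ker\phi$. Two elements $s,t\in S$ are $\mu$-related exactly when $s^{-1}s=t^{-1}t$, $ss^{-1}=tt^{-1}$, and $ses^{-1}=tet^{-1}$ for every $e\leq s^{-1}s$. Since $\alpha_e=\mathrm{id}_{eE(S)}$ for every $e\in E(S)$, distinct idempotents cannot be $\mu$-related, so $\mu$ is idempotent-separating. For the maximality, I would take any idempotent-separating congruence $\rho$ on $S$; it is a standard fact that such a $\rho$ must be contained in Green's relation $\mathcal{H}$, so $s\,\rho\,t$ forces $s^{-1}s=t^{-1}t$ and $ss^{-1}=tt^{-1}$. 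For each $e\leq s^{-1}s$ the idempotents $ses^{-1}$ and $tet^{-1}$ are then $\rho$-related and hence equal, which shows $\rho\subseteq\mu$.

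Finally I would combine the pieces. The semigroup $S$ is fundamental precisely when $\mu$ is trivial, equivalently when $\phi$ is injective. In that case $\phi$ embeds $S$ as a full inverse subsemigroup of $T_{E(S)}$, since $\phi(e)=\mathrm{id}_{eE(S)}$ ranges over all idempotents of $T_{E(S)}$ as $e$ ranges over $E(S)$. Conversely, $T_{E(S)}$ itself is fundamental, and any full inverse subsemigroup of a fundamental inverse semigroup is again fundamental; hence if $S$ is isomorphic to such a subsemigroup, then $S$ is fundamental.

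The main obstacle is the maximality step: one needs the structural fact that every idempotent-separating congruence on an inverse semigroup refines Green's $\mathcal{H}$-relation, which rests on the delicate interplay between the semilattice $E(S)$ and the inverse operation. Once this is available, the remainder of the argument is routine bookkeeping with the semilattice of idempotents.
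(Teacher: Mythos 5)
This theorem is stated in the paper as classical background, cited from Howie, and the paper gives no proof of it; your argument is precisely the standard Munn representation proof from that source, and it is correct. The only ingredients you defer to "standard facts" --- that every idempotent-separating congruence on an inverse semigroup is contained in $\mathcal{H}$, and that the Munn semigroup $T_{E}$ is itself fundamental (so that fundamentality passes to full inverse subsemigroups via $E(U)=E(T_{E})$) --- are genuine lemmas, but they are exactly the ones proved alongside this theorem in the classical treatment, so there is no gap in the approach.
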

Since our present paper does not involve covering approach, we  will no longer list the results on inverse semigroups obtained by using this approach.
Theorems \ref{esn} and \ref{munn} have been generalized in various directions, see for example \cite{Armstrong1,DeWolf1,East1,East2,FitzGerald1,Fountain1,Fountain2,Gomes1,Gould1,Gould2,Hollings2,Imaoka1,Imaoka2,Jones1,Jones3,Lawson2,Lawson3,Nambooripad1,Stokes4,Stokes2,Wang2,Wang3,Wang4,Wang6,Wang7,WangY}. More specifically, for regular case, Nambooripad \cite{Nambooripad1} generalized this two theorems to general regular semigroups; Imaoka and Fujiwara \cite{Imaoka2} and East and Azeef Muhammed \cite{East2} generalized Theorem \ref{esn} to locally inverse regular $\circ$-semigroups and general regular $\circ$-semigroups respectively; East and Azeef Muhammed \cite{East2}, Imaoka \cite{Imaoka1} and Jones \cite{Jones1} generalized Theorem \ref{munn} to regular $\circ$-semigroups. For non-regular cases, Fountain \cite{Fountain1}, Fountain  Gomes and  Gould \cite{Fountain2} and  Gomes and Gould \cite{Gomes1} generalized Theorem \ref{munn} to Ehresmann semigroups and restriction semigroups; Jones \cite{Jones1,Jones3} and Wang \cite{Wang6} generalized Theorem \ref{munn} to $P$-restriction, $P$-Ehresmann, DRC-restriction and DRC semigroups, respectively. On the other hand, Lawson \cite{Lawson2,Lawson3} generalized Theorem \ref{esn} to Ehresmann semigroups and restriction semigroups;
Wang \cite{Wang2,Wang3}  generalized Theorem \ref{esn} to a special class of $P$-restriction and $P$-Ehresmann semigroups, respectively. Recently, Wang \cite{Wang4}
generalized Theorem \ref{esn} to the whole class of DRC semigroups and so  generalized Theorem \ref{esn} to all subclasses of DRC semigroups mentioned in Diagram 1.

Although Wang \cite{Wang4} investigated  all subclasses of DRC semigroups mentioned in Diagram 1 by so-called categorical approach, we observe that in this process  he actually  used some kinds of generalized small categories or generalized  groupoids introduced there instead of the usual small categories or groupoids. More recently,  among other things East and Azeef Muhammed \cite{East1,East2}  generalized Theorem \ref{esn} to the whole class of regular $\circ$-semigroups by using the theory of chain projection ordered groupoids established there rather than the theory of generalized groupoids of any kind. In particular, they proved the following theorem.
\begin{theorem}[\cite{East1}]
The category of regular $\circ$-semigroups together with $\circ$-homomorphisms is isomorphic to the category of chain projection ordered groupoids together with chain projection ordered functors.
\end{theorem}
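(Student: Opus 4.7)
The plan is to establish the isomorphism by constructing a pair of mutually inverse functors between the two categories. In the forward direction, given a regular $\circ$-semigroup $(S,\cdot,\circ)$, build a groupoid $\mathcal{G}(S)$ whose object set is $P(S)=\{x^\circ x\mid x\in S\}=\{xx^\circ\mid x\in S\}$ and whose morphism set is $S$ itself, with each $x\in S$ regarded as an arrow from $x^\circ x$ to $xx^\circ$. The partial composition is the semigroup product $xy$ whenever $x^\circ x=yy^\circ$, and the inverse of $x$ is $x^\circ$. Endow $P(S)$ with the projection algebra operations derived from $\circ$ (capturing how conjugation by other projections acts on a projection) to obtain the required chain structure; the axioms of a chain projection ordered groupoid then translate directly from the identities in (\ref{regularstar}) together with their easy consequences.

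In the reverse direction, given a chain projection ordered groupoid $G$ with projection algebra $P$, define a binary operation $\bullet$ on the morphism set by $x\bullet y:=\tilde x\cdot\tilde y$, where $\tilde x$ and $\tilde y$ are canonical restrictions of $x$ and $y$ along a chain in $P$ that ``bridges'' the codomain of $x$ and the domain of $y$ so that their groupoid composition becomes defined; set $x^\circ:=x^{-1}$. The substantive work is to show that $(G,\bullet,{}^\circ)$ satisfies (\ref{regularstar}). The identities $x\bullet x^\circ\bullet x=x$ and $x^{\circ\circ}=x$ come quickly from the groupoid structure, but associativity of $\bullet$ and the anti-involution $(x\bullet y)^\circ=y^\circ\bullet x^\circ$ are genuinely nontrivial. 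I expect associativity to be the main obstacle; it should follow from the chain axioms, which were engineered precisely so that different bracketings, after passage through a common chain of projections, yield the same composite arrow.

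It then remains to verify that the two constructions are mutually inverse on objects and to handle morphisms. Starting from $S$, the pseudo-product in the semigroup recovered from $\mathcal{G}(S)$ must equal the original product $xy$; this reduces to showing that the chain in $P(S)$ bridging $x^\circ x$ and $yy^\circ$ is exactly the one for which $\tilde x\cdot\tilde y=xy$, which is a direct computation using the $\circ$-identities. Starting from $G$, the domains, codomains and partial composition of the groupoid recovered from the pseudo-product coincide with those of $G$, essentially by construction. On morphisms, a $\circ$-homomorphism $\phi:S\to T$ induces the chain projection ordered functor $x\mapsto\phi(x)$, and conversely any chain projection ordered functor preserves the pseudo-product and inversion, hence induces a $\circ$-homomorphism. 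Functoriality of both assignments, and naturality of the bijections, are routine once the object-level reconstruction is in place.
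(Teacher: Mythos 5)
Your overall two-functor strategy is exactly the one used in this paper (and in [East1]): Sections 5 and 6 construct $\textbf{C}(S)$ from a semigroup and $\textbf{S}(C)$ from a category/groupoid, prove $\textbf{S}(\textbf{C}(S))=S$ and $\textbf{C}(\textbf{S}(C))=C$, and treat morphisms separately; you also correctly locate associativity of the pseudo-product as the crux, and it is indeed discharged by showing the two bracketings differ by $\lambda(e,b,f)$ versus $\rho(e,b,f)$ for a linked pair and invoking the chain condition (G2) (Lemma \ref{ccccc}). A small notational slip: you declare $x$ to be an arrow from $x^\circ x$ to $xx^\circ$ but then compose $xy$ when $x^\circ x=yy^\circ$; with the usual left-to-right reading of composition this forces $\dd(x)=xx^\circ$ and $\rr(x)=x^\circ x$, which is the convention adopted in (\ref{qqyyaa}).

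The one substantive gap is in the reverse direction. The anti-involution $(x\bullet y)^\circ=y^\circ\bullet x^\circ$ does \emph{not} follow from the chain projection ordered groupoid axioms alone as this paper defines them: plain chain projection ordered groupoids correspond to \emph{generalized} regular $\circ$-semigroups (Theorem \ref{teshuqingkuang}), which satisfy only the weaker identities (\ref{g-regularstar}). To recover $(xy)^\circ=y^\circ x^\circ$ one needs the \emph{symmetric} hypotheses --- $e\times f=f\star e$ on the projection algebra, together with $\varepsilon(p,q,p)=p$ whenever $p\,\mathcal{F}_{P_C}\,q$ --- which in [East1] are invisible because the projection algebra of a regular $\ast$-semigroup carries a single operation and these conditions are built into the definitions there, but which in the present framework are precisely what cuts out the subcategory matching regular $\circ$-semigroups. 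As written, your plan proves the correspondence with generalized regular $\circ$-semigroups; to obtain the stated theorem you must identify where the symmetry of the projection algebra and the path-reversal property of $\varepsilon$ enter the verification of $(x\bullet y)^\circ=y^\circ\bullet x^\circ$ (concretely, one needs $\varepsilon(q,p)=\varepsilon(p,q)^{-1}$, which fails for a general evaluation map).
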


In the end of their paper \cite{East1}, East and Azeef Muhammed proposed the following natural problem.
\begin{problem}[The Fourth Problem in \cite{East1}] \label{disige}
Can the theory developed in \cite{East1} be applied to other categories of ``projection-based" semigroups such as $P$-Ehresmann semigroups and DRC semigroups?
\end{problem}

In this paper, we solve the above Problem \ref{disige} for DRC-restriction semigroups. We generalize the theory of chain projection ordered groupoids established
by East and Azeef Muhammed in \cite{East1,East2} to that of chain projection ordered categories by which DRC-restriction semigroups are characterized. More specifically,
we prove   the category  of DRC-restriction semigroups together with (2,1,1)-homomorphisms is isomorphic to the category of chain projection ordered categories together with chain projection ordered functors. Some special cases such as $P$-restriction semigroups and generalized regular $\circ$-semigroups are also considered.  Finally, some applications of the main theorem are given.  Our results generalize and enrich the corresponding  results obtained by  East and Azeef Muhammed in \cite{East1,East2}.

The paper is organized as follows. Section 2 gives some fundamental notions and results on ordered categories and groupoids, and Section 3 states some items on two-sided projection algebras. In Section 4, based on the results given in Sections 2 and 3, we establish the theory of chain projection ordered categories and construct the category of chain projection ordered categories together with chain projection ordered functors. Section 5 is devoted to proving that a DRC-restriction semigroup give rises to a chain projection ordered category, and Section 6 proves our category isomorphism theorem. In the final two sections, we consider two applications of our main result. Section 7 uses so-called chain semigroups to construct the free DRC-restriction semigroup over any arbitrary strong two-sided projection algebra  and Section 8 uses projection categories (groupoids)  to reobtain the fundamental representation theorem for DRC-restriction semigroups  appeared firstly in Wang \cite{Wang6}.

\section{Ordered categories}
In this setion, we give some basic results on ordered categories and ordered groupoids.  Assume that $C$ is a nonempty set, $``\circ"$ is a partial binary operation on $C$ and $$\dd: C\rightarrow C, x\mapsto \dd(x),\ \  \rr: C\rightarrow C, x\mapsto \rr(x)$$ are maps. According to \cite{Lawson2},   $(C, \circ, \dd, \rr)$ is called a {\em (small) category}, if the following conditions hold:
\begin{itemize}
\item[(C1)]For all  $x,y\in C$, $x\circ y$ is defined if and only if $\rr(x)=\dd(y),$ in this case, $\dd(x\circ y)=\dd(x)$ and $\rr(x\circ y)=\rr(y)$.
\item[(C2)]For all $x,y,z\in C$,  if $x\circ y$ and $y\circ z$ are defined,  then $(x\circ y)\circ z=x\circ(y\circ z)$.
\item[(C3)] For all $x\in C$, $\dd(x)\circ x$ and $x\circ\rr(x)$ are defined, and $\dd(x)\circ x=x=x\circ \rr(x)$.
\end{itemize}
\begin{remark}\label{fanchoudexingzhi}
Let $(C, \cdot, \dd, \rr)$ be a category and $x\in C$.  Then $\dd(x)\circ x$ and $x\circ\rr(x)$ are defined by {\rm (C3)}, and so $\rr(\dd(x))=\dd(x)$ and $\rr(x)=\dd(\rr(x))$ by {\rm (C1)}. This implies that $\{\dd(x)\mid x\in C\}=\{\rr(x)\mid x\in C\}$.  We call $$P_{C}=\{\dd(x)\mid x\in C\}=\{\rr(x)\mid x\in C\}$$ the {\em set of objects} of $C$. Clearly, $\dd(e)=e=\rr(e)$ for all $e\in P_{C}$.
\end{remark}
Let $(C, \circ, \dd, \rr)$ be a category and $``\leq"$ be a partial order on $C$.  Then $(C, \cdot, \dd, \rr, \leq)$ is called an {\em ordered category} if for all $a,b,c,d\in C$ and $p, q\in P_{C}$, the following statements hold:
\begin{itemize}
\item[(O1)]If $a\leq b$, then $\dd(a)\leq \dd(b)$ and $\rr(a)\leq \rr(b)$.
\item[(O2)]If $a\leq b$, $c\leq d$, $\rr(a)=\dd(c)$, $\rr(b)=\dd(d)$, then $a\circ c\leq b\circ d$.
\item[(O3)]If $p\leq \dd(a)$, then exists a unique element $u\in C$ such  that $u\leq a$ and $\dd(u)=p$.
\item[(O4)]If $q\leq \rr(a)$, then exists a unique element $v\in C$ such that $v\leq a$ and $\rr(v)=q$.
\end{itemize}
In this case, the elements $u,v$ in (O3) and (O4) are denoted by $u=_p\hspace{-1.5mm}{\downharpoonleft}a$ and $v=a{\downharpoonright}_{q}$,  and call them the
{\em left restriction} and {\em right restriction} on $p$ and $q$ of $a$, respectively.

\begin{lemma}\label{b} Let $(C, \cdot, \dd, \rr, \leq)$ be an ordered category, $a,b\in C$ and $p,q\in P_{C}$.
\begin{itemize}
\item[(1)] If $p\leq \dd(a)$, then ${_p}{\downharpoonleft}a\leq a$, $\dd({_p}{\downharpoonleft} a)=p$ and $\rr({_p}{\downharpoonleft}a)\leq \rr(a)$.
\item[(2)] If $q\leq \rr(a)$, then $a{\downharpoonright}{_q}\leq a$, $\rr(a{\downharpoonright}{_q})=q$ and $\dd(a{\downharpoonright}{_q})\leq \dd(a)$.
\item[(3)] If $a\leq b$, then $a={_{\dd(a)}}{\downharpoonleft} b$. In particular, ${_{\dd(a)}}{\downharpoonleft} a=a$.
\item[(4)] If $a\leq b$, then $a=b{\downharpoonright}{_{\rr(a)}}$. In particular, $a{\downharpoonright}{_{\rr(a)}}=a$.
\item[(5)]  If $p\leq \dd(a)$, then ${_p}{\downharpoonleft} a=a{\downharpoonright}{_{\rr({_p}{\downharpoonleft} a)}}$.
\item[(6)] If $q\leq \rr(a)$, then $a{\downharpoonright}{_q}={_{\dd(a{\downharpoonright}{_q})}}{\downharpoonleft} a$.
\item[(7)] If $p\leq \dd(a)$, $\rr(a)=\dd(b)$, then ${_p}{\downharpoonleft}(a\circ b)={_p}{\downharpoonleft}a\circ {_{\rr({_p}{\downharpoonleft}a)}}{\downharpoonleft} b$.
\item[(8)] If $q\leq \rr(b)$, $\rr(a)=\dd(b)$, then $(a\circ b){\downharpoonright}{_q}=a{\downharpoonright}{_{\dd(b{\downharpoonright}{_q})}}\circ b{\downharpoonright}{_q}$.
\item[(9)] If $p\leq q\leq \dd(a)$, then ${_p}{\downharpoonleft}({_q}{\downharpoonleft} a)={_p}{\downharpoonleft} a$.
 \item[(10)] If $p\leq q\leq \rr(a)$, then $(a{\downharpoonright}{_q}){\downharpoonright}{_p}=a{\downharpoonright}{_p}$.
\item[(11)]If $p,q\in P_{C}$ and $p\leq q$, then ${_p}{\downharpoonleft} q=p$.
\item[(12)]If $p,q\in P_{C}$ and $p\leq q$, then $q{\downharpoonright}{_p}=p.$
\end{itemize}
\end{lemma}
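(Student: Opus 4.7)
My plan is to derive all twelve items as direct consequences of the axioms (O1)--(O4), with the uniqueness clauses in (O3) and (O4) doing essentially all of the work: in each case I will exhibit a candidate element satisfying the prescribed bound and boundary condition, and then invoke uniqueness to identify it with the relevant restriction.

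For items (1)--(6) this is nearly immediate. Items (1) and (2) simply repackage the existence parts of (O3) and (O4) together with (O1) applied to ${_p}{\downharpoonleft}a\leq a$ (respectively $a{\downharpoonright}{_q}\leq a$). For (3) (and dually (4)), I would observe that (O1) gives $\dd(a)\leq \dd(b)$, so ${_{\dd(a)}}{\downharpoonleft}b$ is defined; since $a$ itself satisfies $a\leq b$ and has domain $\dd(a)$, the uniqueness in (O3) forces $a={_{\dd(a)}}{\downharpoonleft}b$, and the ``in particular'' clause follows by taking $b=a$. For (5) (and dually (6)), set $r=\rr({_p}{\downharpoonleft}a)$, which by (1) lies below $\rr(a)$; then ${_p}{\downharpoonleft}a$ is itself a candidate element below $a$ with range $r$, and the uniqueness in (O4) identifies it with $a{\downharpoonright}{_r}$.

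Items (7) and (8) form the substantive core. For (7), I would set $a'={_p}{\downharpoonleft}a$, $r=\rr(a')$, and $b'={_r}{\downharpoonleft}b$. The first task is to verify that $a'\circ b'$ is even defined: by (1), $r\leq \rr(a)=\dd(b)$, so $b'$ is meaningful, and (1) also gives $\dd(b')=r=\rr(a')$, making the composition legal by (C1). Next, (O2) applied to $a'\leq a$ and $b'\leq b$ (with $\rr(a')=\dd(b')$ and $\rr(a)=\dd(b)$) yields $a'\circ b'\leq a\circ b$, while (C1) gives $\dd(a'\circ b')=\dd(a')=p$. Since $p\leq \dd(a)=\dd(a\circ b)$, uniqueness in (O3) identifies $a'\circ b'$ with ${_p}{\downharpoonleft}(a\circ b)$. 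Item (8) is proved dually.

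The remaining items (9)--(12) are brief applications of transitivity and uniqueness. For (9), ${_p}{\downharpoonleft}({_q}{\downharpoonleft}a)$ lies below ${_q}{\downharpoonleft}a$ by (1), hence below $a$ by transitivity of $\leq$, and has domain $p$; uniqueness in (O3) then identifies it with ${_p}{\downharpoonleft}a$. For (11), Remark~\ref{fanchoudexingzhi} gives $\dd(q)=q$ and $\dd(p)=p$, so $p$ is itself an element below $q$ with domain $p$, forcing ${_p}{\downharpoonleft}q=p$ by uniqueness in (O3). Items (10) and (12) are dual. No single step presents a genuine obstacle; the only real care required lies in the bookkeeping for (7) and (8), where one must check that every composition appearing on either side of the claimed equality is defined before uniqueness can be invoked.
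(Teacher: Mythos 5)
Your proposal is correct and follows essentially the same route as the paper: every item is reduced to the uniqueness clauses of (O3)/(O4) (packaged as items (3) and (4)), with (7)--(8) handled by checking the composite of the two restrictions is defined, lies below $a\circ b$ via (O2), and has the right domain. The bookkeeping you flag for (7) is exactly what the paper's proof carries out.
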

\begin{proof}
We only need to show items (1), (3), (5), (7), (9) and (11) by symmetry.

(1) By the definition of ${_p}{\downharpoonleft} a$ and (O3), we have ${_p}{\downharpoonleft}a\leq a$ and $\dd({_p}{\downharpoonleft} a)=p$, and so $\rr({_p}{\downharpoonleft}a)\leq \rr(a)$ by (O1).

(3) The fact $a\leq b$ and (O1) give $\dd(a)\leq \dd(b)$, and so $b$ has a unique left restriction on $\dd(a)$ by (O3). Since $a\leq b$ and $\dd(a)=\dd(a)$, we have   $a={_{\dd(a)}}{\downharpoonleft} b$.

(5) (O3) gives ${_p}{\downharpoonleft}a\leq a$ and so we have ${_p}{\downharpoonleft} a=a{\downharpoonright}{_{\rr({_p}{\downharpoonleft} a)}}$ by (4) in the present lemma.

(7) By (O3) and (O1), we have ${_p}{\downharpoonleft}a\leq a$ and $\rr({_p}{\downharpoonleft}a)\leq \rr(a)=\dd(b)$.  By the definitions of ${_p}{\downharpoonleft}(a\circ b) $ and ${_{\rr({_p}{\downharpoonleft}a)}}{\downharpoonleft}b$ and (O3), $${_p}{\downharpoonleft}(a\circ b)\leq  a\circ b,\, {_{\rr({_p}{\downharpoonleft}a)}}{\downharpoonleft}b\leq b,\, \rr(_{p}{\downharpoonleft}a)=\dd({_{\rr({_p}{\downharpoonleft}a)}}{\downharpoonleft}b),\dd({_p}{\downharpoonleft}(a\circ b))=p,$$ this together with the fact $\rr(a)=\dd(b)$ and (O2) implies that ${_p}{\downharpoonleft}a  \circ  {_{\rr({_p}{\downharpoonleft}a)}}{\downharpoonleft}b  \leq a\circ b $. On the other hand, we have $\dd({_p}{\downharpoonleft}a  \circ  {_{\rr({_p}{\downharpoonleft}a)}}{\downharpoonleft}b )=\dd({_p}{\downharpoonleft}a)=p$ by (C1) and (O3). By (3) in the present lemma, we obtain  ${_p}{\downharpoonleft}a\circ {_{\rr({_p}{\downharpoonleft}a)}}{\downharpoonleft} b={_p}{\downharpoonleft}(a\circ b)$.

(9) Item (O3) gives that ${_p}{\downharpoonleft} ({_q}{\downharpoonleft} a)\leq {_q}{\downharpoonleft} a \leq a$ and $\dd({_p}{\downharpoonleft} ({_q}{\downharpoonleft} a))=p$, and so  ${_p}{\downharpoonleft}({_q}{\downharpoonleft} a)={_p}{\downharpoonleft} a$ by (3) in the present lemma.

(11) Since $p\leq q$ and $\dd(p)=p$ by Remark \ref{fanchoudexingzhi}, we have $p= {_p}{\downharpoonleft}q$ by (3) in the present lemma.
\end{proof}

Let $(C, \cdot, \dd, \rr, \leq)$ be an ordered category and $p\in P_{C}$, Denote $$p^{\downarrow}=\{q\in P_{C}\mid q\leq p\}.$$ Let $a\in C$. By (1)  and (2) in Lemma \ref{b}, we have $\rr({_p}{\downharpoonleft}a)\leq \rr(a)$ and $\dd(a{\downharpoonright}_q)\leq \dd(a)$. Thus we can define the maps $\nu_{a}$ and $\mu_{a}$ as follows:
\begin{equation}\label{c}
\nu_{a}:\dd(a)^{\downarrow}\rightarrow \rr(a)^{\downarrow},\, p\mapsto p\nu_{a}=\rr({_p}{\downharpoonleft}a);\,\,\,\,\, \mu_{a}: \rr(a)^{\downarrow}\rightarrow \dd(a)^{\downarrow},\, q\mapsto q\mu_{a}=\dd(a{\downharpoonright}_q).
\end{equation}
By using the maps $\nu_a$ and $\mu_a$ ($a\in S$), we can restate (7) and (8) in Lemma \ref{b} as follows:  If $(C, \cdot, \dd, \rr, \leq)$ is an ordered category,  $a,b\in C$ and $ p,q\in P_{C}$ with $\rr(a)=\dd(b)$, $p\leq \dd(a)$ and $q\leq \rr(b)$.  Then
\begin{equation}\label{e}
{_p}{\downharpoonleft}(a\circ b)={_p}{\downharpoonleft}a \circ {_{p\nu_a}}\hspace{-0.5mm}{\downharpoonleft}b,\quad (a\circ b){\downharpoonright}_{q}= a{\downharpoonright}_{q\mu_{b}}\circ b{\downharpoonright}_{q}.
\end{equation}

\begin{lemma}\label{f} Let $(C, \cdot, \dd, \rr, \leq)$ be an ordered category and $a\in C$. Then $\nu_{a}$ and $\mu_{a}$ are mutually inverse bijections. In particular, if $e\in P_C$, then $\nu_e=\mu_e={\rm id}_{e^\downarrow}$.
\end{lemma}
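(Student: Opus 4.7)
The plan is to verify the two compositional identities $\nu_a\mu_a=\mathrm{id}_{\dd(a)^\downarrow}$ and $\mu_a\nu_a=\mathrm{id}_{\rr(a)^\downarrow}$ directly from the definitions in (\ref{c}), using the bridge identities in parts (5) and (6) of Lemma \ref{b} which relate left restrictions to right restrictions.

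First I would take $p\le\dd(a)$ and compute $p\nu_a\mu_a$. By definition $p\nu_a=\rr({_p}{\downharpoonleft}a)$, and then $(p\nu_a)\mu_a=\dd(a{\downharpoonright}_{\rr({_p}{\downharpoonleft}a)})$. The key observation is Lemma \ref{b}(5), which says $a{\downharpoonright}_{\rr({_p}{\downharpoonleft}a)}={_p}{\downharpoonleft}a$; hence $(p\nu_a)\mu_a=\dd({_p}{\downharpoonleft}a)=p$ by Lemma \ref{b}(1). Symmetrically, starting from $q\le\rr(a)$ and applying Lemma \ref{b}(6) together with Lemma \ref{b}(2), one obtains $(q\mu_a)\nu_a=\rr({_{\dd(a{\downharpoonright}_q)}}{\downharpoonleft}a)=\rr(a{\downharpoonright}_q)=q$. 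This shows both maps are mutually inverse, hence bijections.

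For the special case $e\in P_C$, Remark \ref{fanchoudexingzhi} gives $\dd(e)=\rr(e)=e$, so both $\nu_e$ and $\mu_e$ have domain and codomain $e^\downarrow$. For $p\le e$, Lemma \ref{b}(11) yields ${_p}{\downharpoonleft}e=p$, whence $p\nu_e=\rr(p)=p$ (using $\rr(p)=p$ from Remark \ref{fanchoudexingzhi}); dually, Lemma \ref{b}(12) gives $p\mu_e=\dd(p)=p$. Thus $\nu_e=\mu_e=\mathrm{id}_{e^\downarrow}$.

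No real obstacle is anticipated: the proof is essentially a one-line bookkeeping argument, and the only subtle point is recognizing that Lemma \ref{b}(5) and (6) are precisely the identities required to swap the roles of left and right restrictions in the composition. Everything else follows by reading off domains and ranges using (O1), (O3), (O4) and Remark \ref{fanchoudexingzhi}.
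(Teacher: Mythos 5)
Your proof is correct and follows essentially the same route as the paper: both compute $p\nu_a\mu_a=p$ via Lemma \ref{b}(5) and (1) (with the reverse composition handled dually/symmetrically), and both handle the case $e\in P_C$ via Remark \ref{fanchoudexingzhi} and Lemma \ref{b}(11). The only cosmetic difference is that the paper obtains $\mu_e=\nu_e^{-1}$ from the first part rather than computing it directly via Lemma \ref{b}(12).
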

\begin{proof}Let $p\leq \dd(a)$ and denote $q=p\nu_{a}=\rr(_{p}{\downharpoonleft}a)$. Then by (5) and (1) in Lemma \ref{b},
$$p\nu_{a}\mu_{a}=q \mu_{a}=\dd(a{\downharpoonright}{_q})=\dd({_p}{\downharpoonleft} a)=p.$$
Thus $\nu_{a}\mu_{a}={\rm id}_{\dd(a)^{\downarrow}}$. Dually,  we can prove that $\mu_{a}\nu_{a}={\rm id}_{\rr(a)^{\downarrow}}$. If $e\in P_C$, then by Remark \ref{fanchoudexingzhi}, we have $\dd(e)=\rr(e)=e$. Let $p\in \dd(e)^\downarrow=e^\downarrow$. Then $p\in P_C$ and $p\leq e$, and so $p\nu_{e}=\rr(_{p}{\downharpoonleft}e)=\rr(p)=p$ by Lemma \ref{b} (11) and Remark \ref{fanchoudexingzhi} again. This shows that  $\nu_e={\rm id}_{e^\downarrow}$. Moreover, $\mu_e=\nu_e^{-1}={\rm id}_{e^\downarrow}^{-1}={\rm id}_{e^\downarrow}$.
\end{proof}
\begin{lemma}\label{apdxn} Let $(C, \cdot, \dd, \rr, \leq)$ be an ordered category, $a\in C$ and $e,f\in P_C$ with $ e\leq \dd(a), f\leq \rr(a)$. Then $\nu_{_{e\downharpoonleft{a}}}=\nu_a|_{e^\downarrow}$ and $\mu_{a\downharpoonright_f}=\mu_a|_{f^\downarrow}$.
\end{lemma}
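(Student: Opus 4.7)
The plan is to unpack the two claimed identities as equalities of functions, check that the domains coincide, and then reduce both statements to the iterated-restriction identities recorded in Lemma \ref{b}(9) and (10).

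First I would handle the left-restriction identity $\nu_{_e\downharpoonleft a}=\nu_a|_{e^\downarrow}$. By the definition of $_e\downharpoonleft a$ and (O3), $\dd(_e\downharpoonleft a)=e$, so the domain of $\nu_{_e\downharpoonleft a}$ is exactly $e^\downarrow$, which also is the domain of the restricted map $\nu_a|_{e^\downarrow}$ (using $e\leq \dd(a)$, so $e^\downarrow\subseteq \dd(a)^\downarrow$). Now fix $p\in e^\downarrow$; by the definition in \eqref{c},
\[
p\,\nu_{_e\downharpoonleft a}=\rr\bigl({_p}{\downharpoonleft}({_e}{\downharpoonleft}a)\bigr).
\]
Since $p\leq e\leq \dd(a)$, Lemma \ref{b}(9) gives ${_p}{\downharpoonleft}({_e}{\downharpoonleft}a)={_p}{\downharpoonleft}a$, so applying $\rr$ yields $p\,\nu_{_e\downharpoonleft a}=\rr({_p}{\downharpoonleft}a)=p\,\nu_a$, as required. (As a sanity check, the image lies in $\rr({_e}{\downharpoonleft}a)^\downarrow=(e\nu_a)^\downarrow$ because $_p\downharpoonleft a={_p}{\downharpoonleft}({_e}{\downharpoonleft}a)\leq {_e}{\downharpoonleft}a$ forces $p\nu_a\leq e\nu_a$ by (O1).)

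The identity $\mu_{a\downharpoonright_f}=\mu_a|_{f^\downarrow}$ is proved by the symmetric argument. One observes $\rr(a{\downharpoonright}_f)=f$ by (O4), so both maps have domain $f^\downarrow$. For $q\in f^\downarrow$ (hence $q\leq f\leq \rr(a)$), Lemma \ref{b}(10) gives $(a{\downharpoonright}_f){\downharpoonright}_q=a{\downharpoonright}_q$, and taking $\dd$ yields $q\,\mu_{a\downharpoonright_f}=\dd(a{\downharpoonright}_q)=q\,\mu_a$.

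There is no real obstacle here: the statement is essentially a direct translation of Lemma \ref{b}(9) and (10) from iterated restrictions to the auxiliary maps $\nu,\mu$ defined in \eqref{c}. The only minor care needed is to verify that the domains of $\nu_{_e\downharpoonleft a}$ and $\nu_a|_{e^\downarrow}$ (and dually for $\mu$) indeed coincide, which is immediate from (O3) and (O4).
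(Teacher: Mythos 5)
Your proposal is correct and follows essentially the same route as the paper: both check that the domains agree and then reduce the pointwise identity to the iterated-restriction formulas of Lemma \ref{b}(9) (and its dual (10)), handling the $\mu$ case by symmetry. The extra remarks about where the image lands are harmless but not needed.
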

\begin{proof}
It is easy to see that  $\nu_{_{e}{\downharpoonleft}a}$ and $\nu_{a}{\mid}{_{e^{\downarrow}}}$ have the same domain $e^{\downarrow}$ by Lemma \ref{b} (1). Let $t\leq e$. By (\ref{c}) and Lemma \ref{b} (9), we have
$t\nu_{_{e}{\downharpoonleft}a}=\rr({_t}{\downharpoonleft}({_e}{\downharpoonleft} a))=\rr(_{t}{\downharpoonleft}a)=t\nu_{a}.$ Dually, $\mu_{a{\downharpoonleft}{_f}}=\mu_{a}{\mid}{_{f^{\downarrow}}}.$
\end{proof}

\begin{lemma}\label{ee}Let $(C, \cdot, \dd, \rr, \leq)$ be an ordered category, $a,b\in C$ and $\rr(a)=\dd(b)$. Then $$\nu_{a\circ b}=\nu_{a}\nu_{b},~\mu_{a\circ b}=\mu_{b}\mu_{a}.$$
\end{lemma}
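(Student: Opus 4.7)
The plan is to verify the two equalities by evaluating both sides on an arbitrary element of their common domain and exploiting the ``restriction of a composite'' identities already recorded in equation~(\ref{e}).

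First I would check that the domains and codomains match. Since $\rr(a)=\dd(b)$, (C1) gives $\dd(a\circ b)=\dd(a)$ and $\rr(a\circ b)=\rr(b)$, so $\nu_{a\circ b}$ is a map $\dd(a)^{\downarrow}\to\rr(b)^{\downarrow}$; meanwhile $\nu_a\colon \dd(a)^{\downarrow}\to\rr(a)^{\downarrow}=\dd(b)^{\downarrow}$ and $\nu_b\colon\dd(b)^{\downarrow}\to\rr(b)^{\downarrow}$ are composable and their composite has the same domain and codomain. The analogous check works for $\mu_{a\circ b}$ and $\mu_b\mu_a$.

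Next I would fix $p\in\dd(a)^{\downarrow}$ and compute: by the definition~(\ref{c}), $p\nu_{a\circ b}=\rr({_p}{\downharpoonleft}(a\circ b))$. Applying the first identity of~(\ref{e}),
\[
{_p}{\downharpoonleft}(a\circ b)={_p}{\downharpoonleft}a\circ {_{p\nu_a}}\hspace{-0.5mm}{\downharpoonleft}b,
\]
so by (C1) its $\rr$-value equals $\rr({_{p\nu_a}}\hspace{-0.5mm}{\downharpoonleft}b)=(p\nu_a)\nu_b$, which is precisely $p(\nu_a\nu_b)$. This gives $\nu_{a\circ b}=\nu_a\nu_b$. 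The identity $\mu_{a\circ b}=\mu_b\mu_a$ follows by an entirely symmetric argument, using the second identity of~(\ref{e}) together with the fact that $\dd(a\circ b){\downharpoonright}_q=a{\downharpoonright}_{q\mu_b}\circ b{\downharpoonright}_q$ has $\dd$-value $\dd(a{\downharpoonright}_{q\mu_b})=(q\mu_b)\mu_a$ for each $q\leq\rr(b)$.

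There is no real obstacle here: the work was already done in proving Lemma~\ref{b}(7)--(8), and the present lemma simply rephrases those identities in the language of the bijections $\nu_a,\mu_a$. The only care required is to invoke (C1) at the right moment to extract the $\rr$- and $\dd$-values of the composites, and to write composition of the functions $\nu_a,\nu_b$ in the correct order (with $\nu_a$ acting first), matching the order in which the restrictions are taken in~(\ref{e}).
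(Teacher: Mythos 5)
Your proof is correct and follows essentially the same route as the paper: both evaluate $\nu_{a\circ b}$ at an arbitrary $p\leq\dd(a)$, apply the decomposition ${_p}{\downharpoonleft}(a\circ b)={_p}{\downharpoonleft}a\circ{_{p\nu_a}}{\downharpoonleft}b$ from Lemma~\ref{b}(7) (equivalently, equation~(\ref{e})), and read off the $\rr$-value via (C1), with the $\mu$ identity handled dually. No gaps.
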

\begin{proof}Let $p=\dd(a),q=\rr(a)=\dd(b),r=\rr(b)$. Then $\dd(a\circ b)=\dd(a)=p$ and $\rr(a\circ b)=\rr(b)=r,$ and so
$$\nu_{a}: p^{\downarrow}\rightarrow q^{\downarrow},\, \nu_{b}: q^{\downarrow}\rightarrow r^{\downarrow},\,\,\,  \nu_{a\circ b}: p^{\downarrow}\rightarrow r^{\downarrow},\nu_{a}\nu_{b}: p^{\downarrow}\rightarrow r^{\downarrow}$$
Let $e\in p^\downarrow$ (i.e. $s\leq p$) and denote $t=s\nu_{a}=\rr({_s}{\downharpoonleft} a)$. By Lemma \ref{b} (7), we have
$$s\nu_{a\circ b}=\rr({_s}{\downharpoonleft}(a\circ b))=\rr({_s}{\downharpoonleft}a \circ {_t}{\downharpoonleft} b)=\rr({_t}{\downharpoonleft} b)=t\nu_{b}=s\nu_{a}\nu_{b}.$$ Thus $\nu_{a\circ b}=\nu_{a}\nu_{b}$. Dually,  we can prove that $\mu_{a\circ b}=\mu_{b}\mu_{a}$.
\end{proof}
A category $(C, \circ, \dd, \rr)$ is called a {\em groupoid} if for all $a\in C$, there exists $a'\in C$ such that $a\circ a'=\dd(a)$ and $ a'\circ a=\rr(a)$. In this case, for $a\in C$, we have  $\rr(a')=\rr(a\circ a')=\rr(\dd(a))=\dd(a)$. Dual argument gives $\dd(a')=\rr(a)$.  The above $a'$ is unique. In fact, if $a{''}\in C$  and $a\circ a{''}=\dd(a)$ and $ a{''}\circ a=\rr(a)$, then $\dd(a)=\rr(a{''})$, and so $$a'=\dd(a')\circ a'=\rr(a)\circ a'=(a{''}\circ a) \circ a'=a{''}\circ \dd(a)=a{''}\circ \rr(a{''})=a{''}.$$
Denote the unique element $a'$ by $a^{-1}$.  An ordered category $(C, \circ, \dd, \rr, \leq)$ is called an {\em ordered groupoid} if $(C, \circ, \dd, \rr)$ is a groupoid.
\begin{lemma}\label{qunpei}Let $(C, \circ, \dd, \rr, \leq)$ be an ordered groupoid and $a, b\in C$.
\begin{itemize}
\item[(1)] $a\circ a^{-1}=\dd(a), a^{-1}\circ a=\rr(a), \dd(a)=\rr(a^{-1}), \rr(a)=\dd(a^{-1})$.
\item[(2)]If $a\leq b$, then $a^{-1}\leq b^{-1}$.
\item[(3)]If $p, q\in P_C$ and $p\leq \dd(a), q\leq \rr(a)$, then $a{\downharpoonright}_q=(_q{\downharpoonleft}a^{-1})^{-1}$, $_p{\downharpoonleft}a=(a^{-1}{\downharpoonright}_p)^{-1}$.
\item[(4)] $\mu_a=\nu_{a^{-1}}$.
\end{itemize}
\end{lemma}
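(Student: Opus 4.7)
Part (1) is essentially immediate. The identities $a\circ a^{-1}=\dd(a)$ and $a^{-1}\circ a=\rr(a)$ come straight from the definition of the inverse in the groupoid, and the discussion preceding the lemma (where the uniqueness of $a^{-1}$ was established) already shows $\rr(a^{-1})=\dd(a)$ and $\dd(a^{-1})=\rr(a)$, so there is nothing more to do.

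For part (2), the natural candidate for $a^{-1}$ viewed as a restriction of $b^{-1}$ is $u:={_{\rr(a)}}{\downharpoonleft}b^{-1}$, which is defined because $\rr(a)\leq\rr(b)=\dd(b^{-1})$ by (O1) and part (1). I plan to apply (O2) to the pair $a\leq b$ and $u\leq b^{-1}$: the matching conditions $\rr(a)=\dd(u)$ and $\rr(b)=\dd(b^{-1})$ hold, so $a\circ u\leq b\circ b^{-1}=\dd(b)$. Since $\dd(a\circ u)=\dd(a)\leq\dd(b)$ and $\dd(b)\in P_C$, Lemma \ref{b}(3) together with Lemma \ref{b}(11) forces $a\circ u={_{\dd(a)}}{\downharpoonleft}\dd(b)=\dd(a)$. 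In a groupoid this one-sided identity determines $u$: multiplying on the left by $a^{-1}$ (which is composable with $\dd(a)$ since $\rr(a^{-1})=\dd(a)$) gives $u=a^{-1}\circ\dd(a)=a^{-1}$. Hence $a^{-1}=u\leq b^{-1}$.

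Part (3) follows from (2) by a uniqueness argument. For $v:=a{\downharpoonright}_q$ we have $v\leq a$, so by (2) and part (1) we obtain $v^{-1}\leq a^{-1}$ with $\dd(v^{-1})=\rr(v)=q$. By the uniqueness clause of (O3) this forces $v^{-1}={_q}{\downharpoonleft}a^{-1}$, whence $a{\downharpoonright}_q=v=({_q}{\downharpoonleft}a^{-1})^{-1}$. The second identity is obtained symmetrically, using (O4) in place of (O3). Part (4) is then a one-line computation: for $q\leq\rr(a)=\dd(a^{-1})$,
\[
q\mu_a=\dd(a{\downharpoonright}_q)=\dd\bigl(({_q}{\downharpoonleft}a^{-1})^{-1}\bigr)=\rr({_q}{\downharpoonleft}a^{-1})=q\nu_{a^{-1}},
\]
using (3) and part (1) to identify the $\dd$ of an inverse with the $\rr$ of the original. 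The domains and codomains match because $\dd(a^{-1})=\rr(a)$ and $\rr(a^{-1})=\dd(a)$.

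The only genuinely non-routine step is part (2); the main obstacle there is recognising that one does not need to verify both sides of the inverse relation for $u$, because the single identity $a\circ u=\dd(a)$ in a groupoid already pins down $u=a^{-1}$. Everything else is formal and follows from the lemmas already proved.
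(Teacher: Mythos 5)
Your proof is correct and follows essentially the same route as the paper's: both arguments identify $a^{-1}$ with ${_{\rr(a)}}{\downharpoonleft}b^{-1}$ by establishing the identity $a\circ({_{\rr(a)}}{\downharpoonleft}b^{-1})=\dd(a)$ and then cancelling by $a^{-1}$, and parts (3) and (4) are the same uniqueness computation (you merely run (3) in the mirrored direction, passing from $v=a{\downharpoonright}_q$ to $v^{-1}$ rather than from ${_q}{\downharpoonleft}a^{-1}$ to its inverse). The only cosmetic difference is in (2): you derive $a\circ u\leq b\circ b^{-1}=\dd(b)$ from (O2) and then pin down $a\circ u=\dd(a)$ via Lemma \ref{b}(3) and (11), whereas the paper computes ${_{\dd(a)}}{\downharpoonleft}(b\circ b^{-1})$ directly with Lemma \ref{b}(7); both are sound.
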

\begin{proof}
(1) This follows from the statements before Lemma \ref{qunpei}.

(2) Let $a\leq b$. Then $a= {_{\dd(a)}}{\downharpoonleft}b$ by Lemma \ref{b} (3). Item (O1) gives $\dd(a)\leq \dd(b)$,  and so $\dd(a)={_{\dd(a)}}{\downharpoonleft}\dd(b)$ by Lemma \ref{b} (11).  Thus $$\rr(a^{-1})=\dd(a)={_{\dd(a)}}{\downharpoonleft}\dd(b)={_{\dd(a)}}{\downharpoonleft}(b\circ b^{-1})=
{_{\dd(a)}}{\downharpoonleft}b \circ _{\rr({_{\dd(a)}}{\downharpoonleft}b)}{\downharpoonleft} b^{-1}=a\circ{_{\rr(a)}}{\downharpoonleft}b^{-1}$$ by item (1) in the present lemma and Lemma \ref{b} (7). This implies that
$$a^{-1}=a^{-1} \circ \,\rr(a^{-1})=a^{-1}\,\circ\, a \circ\,{_{\rr(a)}}{\downharpoonleft}b^{-1}=\rr(a)\,\circ {_{\rr(a)}}{\downharpoonleft}b^{-1} $$$$=\dd({_{\rr(a)}}{\downharpoonleft}b^{-1})\,\circ {_{\rr(a)}}{\downharpoonleft}b^{-1}={_{\rr(a)}}{\downharpoonleft}b^{-1}\leq b^{-1}$$
by Lemma \ref{b} (1) and (C3).

(3) Since $q\leq \rr(a)=\dd(a^{-1})$, we have ${_q}{\downharpoonleft}a^{-1}\leq a^{-1}$, and so $({_q}{\downharpoonleft}a^{-1})^{-1}\leq a$ by item  (2) in the present lemma. By Lemma \ref{b} (1), we get $\rr(({_q}{\downharpoonleft}a^{-1})^{-1})=\dd({_q}{\downharpoonleft}a^{-1})=q$. By Lemma \ref{b} (4), we have $({_q}{\downharpoonleft}a^{-1})^{-1}=a{\downharpoonright}{_q}.$ The other identity can be proved dually.

(4) By (\ref{c}), we have $$\nu_{a^{-1}}:\dd(a^{-1})^{\downarrow}\rightarrow \rr(a^{-1})^{\downarrow},\, q\mapsto\rr({_q}{\downharpoonleft}a^{-1}).$$ By (1) and (3) in the present lemma,  $$\nu_{a^{-1}}:\rr(a)^{\downarrow}\rightarrow \dd(a)^{\downarrow},\, q\mapsto\dd(({_q}{\downharpoonleft}a^{-1})^{-1})=\dd(a{\downharpoonright}{_q}).$$ This implies that  $\nu_{a^{-1}}=\mu_{a}$ by (\ref{c}) again.
\end{proof}

\section{Two-sided projection algebras}
In this section, we shall give some basic notions and facts on two-sided projection algebras introduced by Jones in \cite{Jones1}.
Let $P$ be a non-empty set and``$\times$"  be a binary operation on $P$. Then $(P,\times)$ is called a {\em left projection algebra} if the following axioms hold:
\begin{itemize}
\item[(L1)]$e\times e=e$.
\item[(L2)]$(e\times f)\times e=e\times f$.
\item[(L3)]$e\times(f\times g)=(e\times f)\times (f\times g)$.
\item[(L4)]$(e\times f)\times g=(e\times f)\times(e\times g)$.
\end{itemize}
Dually, let $P$ be a non-empty set and ``$\star$" be a binary operation on $P$. Then $(P,\star)$ is called a {\em right projection algebra} if the following axioms hold:
\begin{itemize}
\item[(R1)]$e\star e=e.$
\item[(R2)]$(f\star e)=e\star(f\star e).$
\item[(R3)]$(g\star f)\star e=(g\star f)\star (f\star e).$
\item[(R4)]$g\star (f\star e)=(g\star e)\star(f\star e).$
\end{itemize}
Let $P$ be a non-empty set and ``$\times$" and ``$\star$" be binary operations on $P$. Then $(P,\times,\star)$ is called a {\em two-sided projection algebra}, or   {\em  projection algebra} for simplicity, if the following conditions hold:
\begin{itemize}
\item[(P1)]$(P,\times)$ forms a left projection algebra.
\item[(P2)]$(P,\star)$ forms a right projection algebra.
\item[(P3)]For all $e,f,g\in P$, we have $(e\star(f\times g))\star g=(e\star f)\star g$ and $g\times((g\star f)\times e)=g\times(f\times e)$.
\item[(P4)]For all $e,f\in P$, we have $(e\times f)\star e=e\times f$ and $f\times(e\star f)=e\star f$.
\end{itemize}
Let $(P,\times,\star)$ be a projection algebra. Define a relation ``$\leq_{P}$" on $P$ as follows: For all $e,f\in P$,
$$e\leq_{P} f\Longleftrightarrow e=f\times e.$$
Then $``\leq_{P}"$ is a partial order by \cite[Lemma 5.2]{Jones1}. Recall form \cite{Wang6} that a projection algebra $(P,\times,\star)$
is {\em strong} if the following axioms hold:
\begin{itemize}
\item[(SP1)]$e\times (((e\times f)\times g)\star f)=(e\times f)\times g.$
\item[(SP2)]$(f\times (g\star (f\star e)))\star e=g\star (f\star e).$
\end{itemize}
It is easy to see that (SP1) (respectively, (SP2)) is equivalent to the following condition: For all  $e,f, g\in P$,
\begin{equation}\label{wang2}
g\leq_P e\times f\Longrightarrow e\times (g\star f)=g\,\,\,
(\mbox{respectively, }g\leq_P f\star e\Longrightarrow (f\times g) \star e=g).
\end{equation}
A strong projection algebra $(P,\times,\star)$ is called
\begin{itemize}
\item {\em symmetric} if $e\times f=f\star e$ for all $e,f\in P$, and
\item {\em commutative} if $e\times f=f\times  e$ for all $e,f\in P$.
\end{itemize}
\begin{lemma}\label{jiben1}Let $(P,\times, \star)$ be a projection algebra and  $e,f,g\in P$.
\begin{itemize}
\item[(1)]  $e\leq_P f $ if and only if $e=e\times f=f\times e$. Moreover, $e\times f\leq_P e$.
\item[(2)] $e\leq_P f $if and only if $ e=e\star f=f\star e$. Moreover, $e\star f\leq_P f$.
\item[(3)]   $(e\times f)\star f=e\star f$ and $e\times (e\star f)=e\times f$.
\item[(4)] $(e\times f)\times (e\star f)=e\times f$ and $(e\times f)\star (e\star f)=e\star f$.
\item[(5)]If $e\leq_{P} f$, then $ g\times e\leq_{P}g\times f$ and $e\star g\leq_{P}f\star g$.
\end{itemize}
\end{lemma}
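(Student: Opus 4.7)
The lemma is a catalogue of basic identities holding in every projection algebra; the plan is to handle the items in order, using earlier ones freely and pulling the appropriate axiom from (L1)--(L4), (R1)--(R4), (P3), (P4) at each step.

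Items (1) and (2) are the most foundational. For (1), given $e\leq_P f$ (i.e.\ $e = f\times e$), the swapped form of (L2), namely $(f\times e)\times f = f\times e$, immediately yields $e\times f = e$; the converse is just the definition. The clause $e\times f\leq_P e$ is (L4) with its variables $(e,f,g)$ set to $(e,e,f)$, giving $(e\times e)\times f = (e\times e)\times(e\times f)$, i.e.\ $e\times f = e\times(e\times f)$. Item (2) is slightly subtler because I need both $e\star f = e$ and $f\star e = e$. The key observation is that either one alone forces the other and also forces $e\leq_P f$: applying the swap of (P4), $(f\times e)\star f = f\times e$, converts $e\leq_P f$ into $e\star f = e$; conversely, from $e\star f = e$, (R3) with $c=e$ collapses to $e = e\star(f\star e)$, whence (R2) gives $f\star e = e$, and then the second half of (P4), $f\times(e\star f) = e\star f$, immediately produces $f\times e = e$. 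The moreover clause $e\star f\leq_P f$ is a direct reading of (P4).

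For (3), the identity $(e\times f)\star f = e\star f$ is obtained by first using (P4) $(e\times f)\star e = e\times f$ together with (R2) to derive $e\star(e\times f) = e\times f$, and then applying the first half of (P3) with $a=e$, $b=e$, $c=f$, which collapses $(e\star(e\times f))\star f$ to $(e\star e)\star f = e\star f$. The second identity $e\times(e\star f) = e\times f$ is proved symmetrically, using (P4) and (L2) to get $(e\star f)\times f = e\star f$, and then the second half of (P3) with $c=e$, $b=f$, $a=f$. Item (4) reduces to (3): for $(e\times f)\times(e\star f) = e\times f$, use (L3) with $f,g$ replaced by $f, e\star f$, simplifying $e\times(f\times(e\star f))$ via (P4) and (3); for $(e\times f)\star(e\star f) = e\star f$, use (R4) with $g'=e\times f$, $f'=e$, $e'=f$, reducing the expression to $((e\times f)\star f)\star(e\star f) = (e\star f)\star(e\star f) = e\star f$ by (3) and (R1).

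Item (5) follows the same pattern. For $g\times e\leq_P g\times f$, from $e = f\times e$ one computes $g\times e = g\times(f\times e) = (g\times f)\times(f\times e) = (g\times f)\times e = (g\times f)\times(g\times e)$ using (L3) and (L4). For $e\star g\leq_P f\star g$, item (2) gives $e\star f = e$, so (R3) with $a=e$, $b=f$, $c=g$ yields $e\star g = (e\star f)\star(f\star g) = e\star(f\star g)$, and then (R4) with $g'=e$, $f'=f$, $e'=g$ gives $e\star(f\star g) = (e\star g)\star(f\star g)$; combining, $(e\star g)\star(f\star g) = e\star g$, which by (2) is equivalent to $e\star g\leq_P f\star g$. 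The main obstacle is purely organizational: the axiom list contains many visually similar rules, and the proof essentially consists of locating the right substitution at each step. The one genuine technical trick, first used in (3), is that (P4) combined with (R2) (and its $\star\leftrightarrow\times$ counterparts) moves projections between the two operations, allowing $\times$-identities to be converted into $\star$-identities and vice versa.
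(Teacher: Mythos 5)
Your proof is correct: I checked each substitution against (L1)--(L4), (R1)--(R4), (P3), (P4) and every step goes through. In particular the two less obvious moves both work: in (2), from $e\star f=e$ alone, (R3) with $(g,f,e):=(e,f,e)$ gives $e=e\star(f\star e)$, (R2) then forces $f\star e=e$, and the second half of (P4) recovers $e=f\times e$; and in (3), the combination of (P4) with (R2) (respectively (L2)) to obtain $e\star(e\times f)=e\times f$ (respectively $(e\star f)\times f=e\star f$) before invoking (P3) is exactly the bridge needed between the two operations. However, your route differs from the paper's in that the paper gives no argument at all here: items (1)--(4) are dispatched by citing Lemma 3.2 of Wang's earlier work \cite{Wang3} and item (5) by citing Lemma 5.2 of \cite{Wang6}. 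So what you have produced is a self-contained axiomatic verification of facts the paper imports as known. That buys independence from the references (and makes visible which axioms each item actually uses --- e.g.\ (1) needs only (L1), (L2), (L4), while (3) and (4) genuinely need the mixed axioms (P3), (P4)), at the cost of the longer axiom-chasing you describe. One presentational caveat: your variable labels $a,b,c$ and primed letters when citing (P3), (R3), (R4) do not match the letters used in the axioms as stated in the paper, so a reader must reconstruct the intended substitution; writing the instantiations explicitly would remove that friction.
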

\begin{proof} Items (1)--(4) follow from \cite[Lemma 3.2]{Wang3}, and (5) follows from \cite[Lemma 5.2]{Wang6}, respectively.
\end{proof}
Let $(P,\times, \star)$ be a projection algebra. For each $e\in P$, define $$ e^{\downarrow}=\{x\in P\mid x\leq_P e\}.$$ By (1) and (2) in Lemma \ref{jiben1}, we have
\begin{equation}\label{xu3}
 e^{\downarrow}=\{e\times p\mid p\in P\}=\{p\star e\mid p\in P\}.
\end{equation}
Let $p\in P$. Define  $\theta_{p}$ and $\delta_{p}$ as follows:
\begin{equation}\label{j}
\theta_{p}: P\rightarrow P,\, q\mapsto q\theta_{p}=q\star p,\,\, \delta_{p}: P \rightarrow P,\, q\mapsto q\delta_{p}= p\times q.
 \end{equation}
Again by (1) and (2) in Lemma \ref{jiben1}, for all $p,q\in P$ we have
\begin{equation}\label{l}
p\leq_{P}q\Longleftrightarrow p\theta_{q}=q\theta_{p}=p\Longleftrightarrow p\delta_{q}=q\delta_{p}=p.
\end{equation}
Moreover, (\ref{xu3}) gives that
\begin{equation}\label{m}
\begin{array}{cc}
\im\theta_{p}=\{q\star p\mid q\in P\}=p^{\downarrow}=\{p\times q\mid q\in P\}=\im\delta_{p}.
\end{array}
\end{equation}
\begin{lemma}\label{o} Let $(P,\times,\star)$ be a projection algebra and $p\leq_{P}q$. Then $$\theta_{p}=\theta_{p}\theta_{q}=\theta_{q}\theta_{p}=\theta_{p}\delta_{q},\,\delta_{p}=\delta_{p}\delta_{q}=\delta_{q}\delta_{p}=\delta_{p}\theta_{q}.$$
\end{lemma}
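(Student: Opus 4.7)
The statement bundles six equalities involving the maps $\theta_p,\theta_q,\delta_p,\delta_q$, and my plan is to split them into two groups according to difficulty. The first group consists of the four identities whose outer (second-applied) map is subscripted by $q$, namely $\theta_p=\theta_p\theta_q$, $\theta_p=\theta_p\delta_q$, $\delta_p=\delta_p\delta_q$, and $\delta_p=\delta_p\theta_q$. These should follow purely from the order theory of $\leq_P$. The second group, $\theta_p=\theta_q\theta_p$ and $\delta_p=\delta_q\delta_p$, is where the mixed axiom (P3) must enter.

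For the first group the key point is that everything in the image of $\theta_p$ or $\delta_p$ sits below $p$, and hence below $q$. Concretely, by Lemma \ref{jiben1} (1) and (2) one has $x\star p\leq_P p$ and $p\times x\leq_P p$ for every $x\in P$; combined with the hypothesis $p\leq_P q$ and transitivity of $\leq_P$, both $x\star p$ and $p\times x$ lie below $q$. Applying the characterizations $y\leq_P q \Leftrightarrow y=y\star q=q\star y\Leftrightarrow y=y\times q=q\times y$ from Lemma \ref{jiben1} then collapses, in turn, $(x\star p)\star q$, $q\times(x\star p)$, $q\times(p\times x)$, and $(p\times x)\star q$ back to $x\star p$ or $p\times x$ as required. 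So this group is essentially immediate.

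The two remaining identities are subtler because now the outer map is applied first, so the intermediate element $x\star q$ or $q\times x$ need not lie below $p$. For $\theta_q\theta_p=\theta_p$ I would apply the first clause of (P3) with $e=x$, $f=q$, $g=p$, giving $(x\star(q\times p))\star p=(x\star q)\star p$; the hypothesis $p\leq_P q$ yields $q\times p=p$ by Lemma \ref{jiben1} (1), and then $(x\star p)\star p=x\star p$ because $x\star p\leq_P p$. Dually, for $\delta_q\delta_p=\delta_p$ I would use the second clause of (P3) with $g=p$, $f=q$, $e=x$ to get $p\times((p\star q)\times x)=p\times(q\times x)$; since $p\star q=p$, the left-hand side becomes $p\times(p\times x)=p\times x$, again by Lemma \ref{jiben1} (1).

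The main obstacle, to the extent there is one, is recognizing that the one-sided axioms (L1)--(L4) and (R1)--(R4) alone cannot yield the two ``inner'' identities: no such axiom lets one peel off a $q\star$ or a $q\times$ from an element not already known to lie below $q$. This is precisely the role of (P3), which couples the two operations and lets the extra $q$-factor be absorbed into $p$ via the relation $q\times p=p=p\star q$. Once this is noticed, the whole lemma reduces to rewriting the two sides of each equation at an arbitrary $x\in P$ and invoking Lemma \ref{jiben1}.
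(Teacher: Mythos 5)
Your proof is correct, but it routes the two ``inner'' identities $\theta_q\theta_p=\theta_p$ and $\delta_q\delta_p=\delta_p$ through the mixed axiom (P3), whereas the paper stays entirely inside the one-sided axioms: it writes $t\theta_q\theta_p=(t\star q)\star p=(t\star q)\star(p\star q)$ and collapses this to $t\star(p\star q)=t\star p$ by (R4), with (L4) handling the dual. This directly refutes your closing claim that ``the one-sided axioms (L1)--(L4) and (R1)--(R4) alone cannot yield the two inner identities'': (R4), read from right to left, is precisely a rule for peeling a $\star\,q$ off the first argument once the second argument has been rewritten as $p\star q$. Your (P3)-based derivation is nonetheless valid --- substituting $e=x$, $f=q$, $g=p$ into $(e\star(f\times g))\star g=(e\star f)\star g$ and using $q\times p=p$ and $(x\star p)\star p=x\star p$ does give $\theta_q\theta_p=\theta_p$, and dually for $\delta$ --- it just uses a stronger hypothesis than necessary. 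For the remaining four identities your argument (everything in $\operatorname{im}\theta_p\cup\operatorname{im}\delta_p$ lies below $p$, hence below $q$, hence is fixed by $\theta_q$ and $\delta_q$ via Lemma \ref{jiben1}) is if anything cleaner than the paper's, which invokes (R3) for $\theta_p\theta_q$ where the order-theoretic observation already suffices. The paper's approach buys the (minor) extra information that these identities hold in any structure satisfying only the left and right projection algebra axioms together with the compatibility needed for $\leq_P$ to be an order; yours trades that for a shorter case analysis.
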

\begin{proof}
Let $t\in P$. By Lemma \ref{jiben1} (2), we have $t\star p\leq_P p$ and $(t\star p)\star p=t\star p$. Since $p\leq_{P}q$, it follows that $p\star q=p$  and $q\times(t\star p)=t\star p$ by (2) and (1) in Lemma \ref{jiben1}.
Firstly, by (R3) we have
$$t\theta_{p}\theta_{q}=(t\star p)\star q\overset{({\rm R}3)}{=}(t\star p)\star(p\star q)=(t\star p)\star p=t\star p=t\theta_{p}.$$
Secondly, by (R4) it follows that
$$t\theta_{q}\theta_{p}=(t\star q)\star p=(t\star q)\star(p\star q)\overset{({\rm R}4)}{=}t\star (p\star q)=t\star p=t\theta_{p}.$$
Finally, $t\theta_{p}\delta_{q}=q\times(t\star p)=t\star p=t\theta_{p}$.
Thus $\theta_{p}=\theta_{q}\theta_{p}=\theta_{p}\theta_{q}=\theta_{p}\delta_{q}$. Dually, we have $\delta_{p}=\delta_{p}\delta_{q}=\delta_{q}\delta_{p}=\delta_{p}\theta_{q}$.
\end{proof}

Let $(P,\times,\star)$ be a projection algebra. Define a relation ${{\mathcal F}_{P}}$ on $P$ as follows:  For all $p,q\in P$,
\begin{equation}\label{desfxv}
p {{\mathcal F}_{P}}\, q\Longleftrightarrow p=q\delta_{p},q=p\theta_{q}.
\end{equation}
Let $p,q\in P$. Then $q\delta_{p}=p\times q$ and $p\theta_{q}=p\star q$ by the definitions of  $\delta_{p}$ and $\theta_{q}$. Thus
\begin{equation}\label{s}
p {{\mathcal F}_{P}}\, q\,\mbox{if and only if }\, p\times q=p \mbox { and } p\star q=q.
\end{equation}
\begin{lemma}\label{p}Let $(P,\times,\star)$ be a projection algebra and $p,q\in P$. Then $q\delta_{p}\leq_P p, p\theta_{q} \leq_P q$ and $q\delta_{p}$ $\mathcal{F}_{P}\, p\theta_{q}$.
\end{lemma}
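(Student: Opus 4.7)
The plan is to unpack the definitions of $\delta_p$, $\theta_q$, and the relation $\mathcal{F}_P$, after which the statement reduces to direct citations of Lemma \ref{jiben1}. By the definitions in (\ref{j}), the two elements whose properties we want to establish are $q\delta_p = p\times q$ and $p\theta_q = p\star q$, so the whole assertion is really a restatement of earlier facts in the new notation.

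First I would dispatch the two order inequalities. The claim $q\delta_p = p\times q \leq_P p$ is precisely the ``Moreover'' clause of Lemma \ref{jiben1}(1), and dually $p\theta_q = p\star q \leq_P q$ is the ``Moreover'' clause of Lemma \ref{jiben1}(2). Both are therefore immediate and require no further manipulation.

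For the relation $q\delta_p \,\mathcal{F}_P\, p\theta_q$, I would invoke the characterization recorded in (\ref{s}), namely that $a\,\mathcal{F}_P\,b$ holds exactly when $a\times b = a$ and $a\star b = b$. Applied with $a = p\times q$ and $b = p\star q$, the claim reduces to verifying the two identities
\[
(p\times q)\times (p\star q) = p\times q, \qquad (p\times q)\star (p\star q) = p\star q,
\]
which are exactly the content of Lemma \ref{jiben1}(4). The main ``obstacle'' is therefore no more than bookkeeping: identifying which item of Lemma \ref{jiben1} corresponds to each assertion once the operators $\delta_p$, $\theta_q$ and the relation $\mathcal{F}_P$ have been expanded via (\ref{j}) and (\ref{s}).
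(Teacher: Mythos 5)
Your proposal is correct and follows essentially the same route as the paper's own proof: both expand $q\delta_p=p\times q$ and $p\theta_q=p\star q$, obtain the inequalities from Lemma \ref{jiben1}(1)--(2), and verify the $\mathcal{F}_P$ relation via the two identities of Lemma \ref{jiben1}(4).
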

\begin{proof}Firstly, we have $q\delta_{p}\leq_P p$ and $ p\theta_{q} \leq_P q$ by the facts that $q\delta_{p}=p\times q, p\theta_{q}=p\star q$ and (1) and (2) in Lemma \ref{jiben1}. On the other hand,
$$(q\delta_{p}) \times (p\theta_{q})=(p\times q)\times(p\star q)=p\times q=q\delta_{p},$$ $$(q\delta_{p}) \star (p\theta_{q})=(p\times q)\star(p\star q)=p\star q=p\theta_{q}$$ by Lemma \ref{jiben1} (4). Thus $q\delta_{p}\,\mathcal{F}_{P} \,p\theta_{q}$.
\end{proof}

\begin{lemma}\label{y}Let $(P,\times,\star)$ be a  projection algebra and $p, q, r\in P$.
\begin{itemize}
\item[(1)] If $p\leq_{P} q$ and $q=q\times r$, then $p=p\times r.$
\item[(2)]If $p=p\times q$ and $q\leq_{P}r$, then $p=p\times r.$
\end{itemize}
\end{lemma}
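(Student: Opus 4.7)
The plan is to derive both parts directly from the axioms (L1)--(L4) of the left projection algebra, together with the characterization $e\leq_{P} f \Longleftrightarrow e = e\times f = f\times e$ recorded in Lemma \ref{jiben1}(1). For part (1), the hypothesis $p\leq_{P} q$ gives $p = q\times p$. I would then run a short four-step chain: starting from $p\times r = (q\times p)\times r$, apply (L4) with $(e,f,g) = (q,p,r)$ to rewrite this as $(q\times p)\times(q\times r)$; substitute the hypothesis $q = q\times r$ to obtain $(q\times p)\times q$; collapse via (L2) to $q\times p$; and finally identify $q\times p = p$ to finish.

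For part (2), the strategy is more indirect. First, $q\leq_{P}r$ supplies $r\times q = q$. I would then apply (L3) with $(e,f,g) = (p,r,q)$ to obtain $p\times(r\times q) = (p\times r)\times(r\times q)$, which after absorbing $r\times q = q$ becomes $p\times q = (p\times r)\times q$. Combined with the hypothesis $p = p\times q$, this reads $p = (p\times r)\times q$. At this point I would abbreviate $s := p\times r$; Lemma \ref{jiben1}(1) directly supplies $s\leq_{P}p$, and hence $p\times s = s$. On the other hand, the identity $p = s\times q$ together with (L2) applied to $(s,q)$ yields $p\times s = (s\times q)\times s = s\times q = p$. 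Matching the two expressions for $p\times s$ forces $s = p$, that is, $p\times r = p$.

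The only place where real insight is required is the ``renaming trick'' $s := p\times r$ in part (2), which converts the preliminary identity $p = (p\times r)\times q$ into the target $p = p\times r$ by pitting (L2) against the absorption $p\times s = s$ coming from $s\leq_{P} p$. Part (1) is a one-line (L4)--(L2) telescoping, while part (2) needs the single clever step of recognising that the auxiliary element $p\times r$ already lies below $p$ in $\leq_{P}$ so that it is self-absorbing from the left. Everything else is routine substitution among the axioms.
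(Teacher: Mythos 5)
Your proof is correct. Part (1) is essentially the paper's own argument read in the opposite direction: the paper computes $p=p\times q=(q\times p)\times(q\times r)\overset{(\mathrm{L}4)}{=}(q\times p)\times r=p\times r$, while you start from $p\times r$ and telescope down to $p$ via (L4) and (L2); the two chains use the same substitutions. Part (2) genuinely diverges from the paper. The paper runs a single direct chain
$p=p\times q=p\times(r\times q)\overset{(\mathrm{L}3)}{=}(p\times r)\times(r\times q)=(p\times r)\times q\overset{(\mathrm{L}4)}{=}(p\times r)\times(p\times q)=(p\times r)\times p\overset{(\mathrm{L}2)}{=}p\times r$,
staying entirely inside the axioms (L2)--(L4). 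You instead stop after the (L3) step at $p=(p\times r)\times q$, set $s=p\times r$, and close by evaluating $p\times s$ two ways: once as $s$ (from $s=p\times r\leq_P p$, using the ``Moreover'' clause of Lemma \ref{jiben1}(1)) and once as $p$ (from $p\times s=(s\times q)\times s\overset{(\mathrm{L}2)}{=}s\times q=p$). This trades the paper's application of (L4) for an appeal to the order-theoretic fact $e\times f\leq_P e$; both arguments are equally elementary and both are valid, the paper's being marginally more self-contained within (L1)--(L4) and yours making the role of the partial order more visible.
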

\begin{proof}
By the facts that $p\leq_{P} q, q=q\times r$  and Lemma \ref{jiben1} (1), we obtain $p=p\times q=q\times p$ and $q=q\times r$. This together with (L4) gives that
$$p=p\times q=(q\times p)\times (q\times r)\overset{({\rm L}4)}{=}(q\times p)\times r=p\times r.$$ This proves (1). On the other hand, by the facts $p=p\times q, q\leq_{P}r$ and Lemma \ref{jiben1} (1), we obtain $p=p\times q, q=q\times r=r\times q$. This together with (L3), (L4) and (L2) implies that
$$p=p\times q=p\times (r\times q)\overset{({\rm L}3)}{=}(p\times r)\times (r\times q)$$
$$=(p\times r)\times q\overset{({\rm L}4)}{=}(p\times r)\times (p\times q)=(p\times r)\times p\overset{({\rm L}2)}{=}p\times r.$$
This shows (2).
\end{proof}
\begin{lemma}\label{zjiy}
Let $(P,\times,\star)$ be a projection algebra, $p_{1},p_{2},\ldots,p_{k},q\in P$ and $p_{1}\mathcal{F}_{P}\,p_{2}\mathcal{F}_{P}\ldots \mathcal{F}_{P}\,p_{k}$. Then $$p_{k}\delta_{p_{k-1}}\delta_{p_{k-2}}\ldots \delta_{p_{1}}=p_{1},\,\, p_{1}\theta_{p_{2}}\theta_{p_{3}}\ldots \theta_{p_{k}}=p_{k}.$$
\end{lemma}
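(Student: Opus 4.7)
The plan is to proceed by induction on $k$, reducing each application of $\delta_{p_i}$ (respectively $\theta_{p_i}$) to a single step that falls immediately out of the defining relation $\mathcal{F}_P$ together with the reformulation in (\ref{s}).

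For the base case $k=2$, the assumption $p_{1}\mathcal{F}_{P}\,p_{2}$ unpacks via (\ref{s}) as $p_{1}\times p_{2}=p_{1}$ and $p_{1}\star p_{2}=p_{2}$. Recalling from (\ref{j}) that $q\delta_{p}=p\times q$ and $q\theta_{p}=q\star p$, the two required identities read $p_{2}\delta_{p_{1}}=p_{1}\times p_{2}=p_{1}$ and $p_{1}\theta_{p_{2}}=p_{1}\star p_{2}=p_{2}$, which is exactly what we have.

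For the inductive step, assume the result for chains of length $k-1$. Given $p_{1}\mathcal{F}_{P}\,p_{2}\mathcal{F}_{P}\,\ldots\,\mathcal{F}_{P}\,p_{k}$, I would first observe that the last step $p_{k-1}\mathcal{F}_{P}\,p_{k}$ gives, again by (\ref{s}) and (\ref{j}),
\[
p_{k}\delta_{p_{k-1}}\;=\;p_{k-1}\times p_{k}\;=\;p_{k-1},
\]
so the leftmost application collapses the chain, and applying the inductive hypothesis to the shorter chain $p_{1}\mathcal{F}_{P}\,\ldots\,\mathcal{F}_{P}\,p_{k-1}$ yields
\[
p_{k}\delta_{p_{k-1}}\delta_{p_{k-2}}\cdots\delta_{p_{1}}\;=\;p_{k-1}\delta_{p_{k-2}}\cdots\delta_{p_{1}}\;=\;p_{1}.
\]
Dually, the first step $p_{1}\mathcal{F}_{P}\,p_{2}$ gives $p_{1}\theta_{p_{2}}=p_{1}\star p_{2}=p_{2}$, and applying the inductive hypothesis to the shorter chain $p_{2}\mathcal{F}_{P}\,\ldots\,\mathcal{F}_{P}\,p_{k}$ produces
\[
p_{1}\theta_{p_{2}}\theta_{p_{3}}\cdots\theta_{p_{k}}\;=\;p_{2}\theta_{p_{3}}\cdots\theta_{p_{k}}\;=\;p_{k}.
\]

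There is essentially no obstacle here: the whole content of the lemma is that the relation $\mathcal{F}_{P}$ was defined exactly so that one step of $\delta_{p_{i-1}}$ applied to $p_{i}$ (or $\theta_{p_{i+1}}$ applied to $p_{i}$) undoes the chain by one link, and an immediate induction finishes the argument. No appeal to (L1)--(L4), (R1)--(R4), or (P1)--(P4) beyond what is already encoded in (\ref{s}) is needed.
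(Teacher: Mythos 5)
Your proof is correct and follows essentially the same route as the paper: the paper simply writes out the telescoping computation $p_{k}\delta_{p_{k-1}}=(p_{k-1}\times p_{k})=p_{k-1}$, etc., directly with "$\cdots$", which is exactly your induction unrolled. Both arguments rest only on (\ref{j}) and (\ref{s}), so there is nothing to add.
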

\begin{proof}
In view of the fact that $p_{1}\mathcal{F}_{P}p_{2}\mathcal{F}_{P} \ldots \mathcal{F}_{P}p_{k}$, (\ref{desfxv}) and (\ref{s}), we have $$p_{k}\delta_{p_{k-1}}\delta_{p_{k-2}}\ldots \delta_{p_{1}}=(p_{k-1}\times p_{k})\delta_{p_{k-2}}\ldots \delta_{p_{1}}$$$$=p_{k-1}\delta_{p_{k-2}}\ldots \delta_{p_{1}}=(p_{k-2}\times p_{k-1})\delta_{p_{k-3}}\ldots \delta_{p_{1}}$$$$=p_{k-2}\delta_{p_{k-3}}\ldots \delta_{p_{1}}=\cdots\cdots=p_{2}\delta_{p_{1}}=p_{1}\times p_{2}=p_{1}.$$ The other identity can be proved dually.
\end{proof}
Let $(P, \times, \star)$ and $(P', \times, \star)$ be two projection algebras. A map $\psi: P\rightarrow P'$ is called a {\em homomorphism} if
$$(y{\delta_x})\psi=(x\times y)\psi=(x\psi)\times (y\psi)=(y\psi){\delta_{x\psi}},\,\,\, (x\theta_y)\psi=(x\star y)\psi=(x\psi)\star (y\psi)=(x\psi)\theta_{y\psi}$$ for all $x, y\in P$.
\begin{lemma}\label{pin}Let $(P, \times, \star)$ and $(P', \times, \star)$ be two projection algebras, $\psi: P\rightarrow P{'}$ be a homomorphism  and  $p,q\in P$.
\begin{itemize}
\item[(1)] If $p\leq_P q$, then $p\psi\leq_{P'} q\psi$.
\item[(2)]If $p ~{\mathcal F}_P~ q$, then $p\psi~ \mathcal{F}_{P'}~  q\psi$.
\end{itemize}
\end{lemma}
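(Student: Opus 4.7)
The plan is to unfold the definitions of $\leq_P$ and $\mathcal{F}_P$ in terms of the operations $\times$ and $\star$, and then simply transport the defining equalities across $\psi$ using the homomorphism property. Both parts reduce to one-line calculations, so the main subtlety is just being careful with which characterization of each relation is most convenient.

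For part (1), I would start from the definition $p\leq_P q \iff p = q\times p$ given just before Lemma \ref{jiben1}. Applying $\psi$ to both sides and using that $\psi$ preserves $\times$ (which is built into the definition of homomorphism immediately before this lemma), I obtain $p\psi = (q\times p)\psi = (q\psi)\times(p\psi)$, which by the same definition means $p\psi \leq_{P'} q\psi$.

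For part (2), the cleanest route is to use the reformulation $(\ref{s})$: $p\,\mathcal{F}_P\,q$ if and only if $p\times q = p$ and $p\star q = q$. Applying $\psi$ to each of these two equations and using that $\psi$ preserves both $\times$ and $\star$, I obtain $(p\psi)\times(q\psi) = p\psi$ and $(p\psi)\star(q\psi) = q\psi$, which by $(\ref{s})$ again gives $p\psi \,\mathcal{F}_{P'}\, q\psi$.

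There is no real obstacle here, since everything reduces to the definitional identities $\psi(x\times y) = \psi(x)\times\psi(y)$ and $\psi(x\star y) = \psi(x)\star\psi(y)$. The only thing worth mentioning is that one should use the equational characterizations from $(\ref{s})$ (and the analogous one for $\leq_P$) rather than the $\delta$/$\theta$ forms in $(\ref{desfxv})$, since $\psi$ is stated to preserve $\times$ and $\star$ directly; the $\delta$/$\theta$ preservation is then a consequence rather than a hypothesis.
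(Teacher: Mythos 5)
Your proof is correct and follows essentially the same route as the paper: part (1) is the identical one-line computation applying $\psi$ to $q\times p=p$, and part (2) spells out via (\ref{s}) exactly what the paper dismisses with ``can be proved similarly.'' No issues.
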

\begin{proof}
Let $p\leq_P q$. Then $q\times p=p$. Since $\psi$ is homomorphism, we have $(q\psi)\times (p\psi)=p\psi$, and so $p\psi\leq_{P'} q\psi$. This gives (1), and (2) can be proved similarly.
\end{proof}
\begin{lemma}\label{jj}Let $(P,\times,\star)$ be a strong projection algebra and $p,s\in P$. Then $\delta_{p\theta_{s}}=\delta_{s}\delta_{p}\theta_{s}$ and $ \theta_{p\delta_{s}}=\theta_{s}\theta_{p}\delta_{s}$.
\end{lemma}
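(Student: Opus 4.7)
The plan is to verify each identity pointwise. Since the defining axioms of a strong projection algebra are symmetric under the swap $\times\leftrightarrow\star$ (in particular (SP1) and (SP2), and the two halves of (P3) and (P4), form dual pairs), the second identity $\theta_{p\delta_s}=\theta_s\theta_p\delta_s$ will follow by an entirely dual argument from the first. I therefore concentrate on proving $\delta_{p\theta_s}=\delta_s\delta_p\theta_s$, which by (\ref{j}) amounts to the assertion that
\[
(p\star s)\times q=(p\times(s\times q))\star s
\]
for every $q\in P$.

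My strategy is to show both sides equal the common expression $(p\times((p\star s)\times q))\star s$. For the right-hand side this will be the ``inner'' equality $p\times((p\star s)\times q)=p\times(s\times q)$, which I prove below. For the left-hand side, Lemma \ref{jiben1}(1) gives $(p\star s)\times q\leq_P p\star s$, so the reformulation (\ref{wang2}) of axiom (SP2), applied with $f=p$, $e=s$, and $g=(p\star s)\times q$, yields
\[
(p\star s)\times q=\bigl(p\times((p\star s)\times q)\bigr)\star s,
\]
as required.

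To prove the inner equality, the key tool is the second half of axiom (P3), namely $g\times((g\star f)\times e)=g\times(f\times e)$. First, (L3) with $(e,f,g)=(p,p\star s,q)$, combined with $p\times(p\star s)=p\times s$ from Lemma \ref{jiben1}(3), rewrites $p\times((p\star s)\times q)$ as $(p\times s)\times((p\star s)\times q)$. Since $(p\times s)\star s=p\star s$ (again Lemma \ref{jiben1}(3)), the instance $(g,f,e)=(p\times s,s,q)$ of the second half of (P3) collapses this to $(p\times s)\times(s\times q)$, and a final application of (L3) with $(e,f,g)=(p,s,q)$ delivers $p\times(s\times q)$.

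I expect the main obstacle to be spotting the initial use of (SP2): it is this reformulation that transforms the problem into a purely $\times$-side computation for which (P3) is designed, by effectively replacing the middle expression $(p\star s)\times q$ with $(p\times\text{something})\star s$. Everything after that step is a routine chain of rewrites using (L3) and Lemma \ref{jiben1}(3). The second identity of the lemma will then be the verbatim dual, with (SP1) in place of (SP2), the first half of (P3) in place of the second, and the mirror clauses of Lemma \ref{jiben1}(3).
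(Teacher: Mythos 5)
Your proof is correct and follows essentially the same route as the paper: apply the reformulation (\ref{wang2}) of (SP2) to rewrite $(p\star s)\times q$ as $(p\times((p\star s)\times q))\star s$, then simplify the inner term using the second half of (P3), and obtain the second identity by duality. The only difference is that your derivation of the inner equality $p\times((p\star s)\times q)=p\times(s\times q)$ via (L3) and Lemma \ref{jiben1} (3) is an unnecessary detour, since that equality is literally the instance $(g,f,e)=(p,s,q)$ of the second half of (P3), which is how the paper invokes it.
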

\begin{proof}Let $t\in P$. Then $(p\star s)\times t\leq_P p\star s$ by Lemma \ref{jiben1} (1),  and so $(p\star s)\times t=(p\times((p\star s)\times t))\star s$ by (\ref{wang2}).  Now,  (P3) gives that
$$t\delta_{p\theta_{s}}=(p\star s)\times t=(p\times((p\star s)\times t))\star s\overset{({\rm P}3)}{=}(p\times(s\times t))\star s=t\delta_{s}\delta_{p}\theta_{s}.$$
Dually, we have $\theta_{p\delta_{s}}=\theta_{s}\theta_{p}\delta_{s}$.
\end{proof}
Using  Lemma \ref{jj} and mathematical induction, we have the following corollary.
\begin{coro}\label{diav} Let $(P,\times,\star)$ be a strong projection algebra and  $p_{1},p_{2},\ldots,p_{k},q\in P$.
Then
$$\delta_{q\theta_{p_{1}}\theta_{p_{2}}\ldots\theta_{p_{k}}}=\delta_{p_{k}}\ldots\delta_{p_{1}}\delta_{q}\theta_{p_{1}}\ldots
\theta_{p_{k}},$$$$ \theta_{q\delta_{p_{k}}\delta_{p_{k-1}}\ldots\delta_{p_{1}}}=\theta_{p_{1}}\ldots \theta_{p_{k}}\theta_{q}\delta_{p_{k}}\ldots \delta_{p_{1}}.$$
\end{coro}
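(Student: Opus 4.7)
The plan is to prove both identities simultaneously by induction on $k$, with Lemma \ref{jj} serving both as the base case and as the single algebraic step that drives the inductive transition. For the first identity, the base case $k=1$ reduces to $\delta_{q\theta_{p_1}} = \delta_{p_1}\delta_q\theta_{p_1}$, which is exactly the first statement of Lemma \ref{jj} applied with $p=q$ and $s=p_1$.

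For the inductive step on the first identity, I would set $r = q\theta_{p_1}\theta_{p_2}\cdots\theta_{p_{k-1}}$, so that the element whose $\delta$ is being computed becomes $r\theta_{p_k}$. Applying Lemma \ref{jj} with $p=r$ and $s=p_k$ gives $\delta_{r\theta_{p_k}} = \delta_{p_k}\delta_r\theta_{p_k}$, and then substituting the induction hypothesis $\delta_r = \delta_{p_{k-1}}\cdots\delta_{p_1}\delta_q\theta_{p_1}\cdots\theta_{p_{k-1}}$ yields precisely $\delta_{p_k}\delta_{p_{k-1}}\cdots\delta_{p_1}\delta_q\theta_{p_1}\cdots\theta_{p_{k-1}}\theta_{p_k}$, as required. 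The second identity is proved by an entirely dual induction, with $r = q\delta_{p_k}\delta_{p_{k-1}}\cdots\delta_{p_2}$ and using the second statement of Lemma \ref{jj}.

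There is no real obstacle here: Lemma \ref{jj} already performs the one nontrivial rearrangement (pushing a $\delta$ past a $\theta$ at the cost of producing matching $\delta$'s on the left and $\theta$'s on the right), and the induction merely iterates this. The only care needed is bookkeeping the order of composition, so that after each application the freshly produced $\delta_{p_k}$ lands on the far left of the $\delta$-block and $\theta_{p_k}$ on the far right of the $\theta$-block, matching the stated formulas.
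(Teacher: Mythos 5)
Your proof is correct and is exactly the argument the paper intends: the paper states the corollary as an immediate consequence of Lemma \ref{jj} and mathematical induction without writing out the details, and your induction (peeling off $\theta_{p_k}$ for the first identity and $\delta_{p_1}$ for the second, each time applying the corresponding half of Lemma \ref{jj} and substituting the inductive hypothesis) supplies precisely those details with the composition order handled correctly.
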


\section{Chain projection ordered categories}
In this section, we develop a theory of chain projection ordered categories, which generalizes the theory of chain projection ordered groupoids provided by  East and  Azeef Muhammed in \cite{East1} recently.
\begin{defn}\label{dzxc} Let $(P,\times,\star)$ be a strong projection algebra and  $k$ be a positive integer. Assume that $p_1, p_2, \ldots, p_k\in P$ and $p_{1}{\mathcal{F}_{P}}\, p_{2}\mathcal{F}_{P} \ldots \mathcal{F}_{P}\, p_{k}$. Then we call ${\mathfrak p}=(p_{1},p_{2},\ldots,p_{k})$ a {\em $P$-path} in $P$ with length $k$ from $p_1$ to $p_k$. Denote the set of all $P$-paths in $P$ by $\mathscr{P}(P)$. For all $p\in P$,  we identify $P$-path $(p)$ with length $1$ to $p$. Under this assumption, $P$ is a subset of $\mathscr{P}(P)$.
\end{defn}
Let $(P,\times,\star)$ be a strong projection algebra. For $${\mathfrak p}=(p_{1},p_{2},\ldots,p_{k}), \mathfrak{q}=(q_{1},q_{2},\ldots,q_{l})\in \mathscr{P}(P),$$ we define $\dd({\mathfrak p})=p_1, \rr({\mathfrak p})=p_k$,  and if $\rr({\mathfrak p})=p_{k}=q_{1}=\dd({\mathfrak q})$, we let
$${\mathfrak p}\circ {\mathfrak q}=(p_{1},p_{2},\ldots,p_{k}=q_{1},q_{2},\ldots,q_{l}).$$
It is easy to see that $(\mathscr{P}(P), \circ, \dd, \rr)$ is a small category, $P_{\mathscr{P}(P)}=P$ and $\dd({\mathfrak p})=p=\rr({\mathfrak p})$ for all $p\in P$. We call this category the {\em path category} of $(P,\times,\star)$. Let $\mathfrak{p}=(p_{1},p_{2},\ldots,p_{k})\in {\mathscr{P}(P)}$, $q\in P$ and $q\leq_P \dd(\mathfrak{p})=p_{1}$. Denote
\begin{equation}\label{u}
{_q}{\downharpoonleft}\mathfrak{p}=(q_{1},q_{2},\ldots,q_{k}), \mbox{ where } q_{1}=q, q_{i}=q_{i-1}\theta_{p_{i}}=q_{i-1}\star p_{i}, i=2,3,\ldots, k,
\end{equation}
and call it the {\em left restriction} of $\mathfrak{p}$ on $q$. In this case, observe that $q\leq_P p_{1}$, it follows that $q_1=q=q\star p_1=q\theta_{p_{1}}$ by Lemma
\ref{jiben1} (2). Combining (\ref{u}) and Lemma \ref{jiben1} (2), we have
\begin{equation}\label{v}
q_{i}=q\theta_{p_{1}}\theta_{p_{2}}\ldots\theta_{p_{i}},\,\, q_i\leq_P p_i,\, i=1,2,\ldots, k.
\end{equation}
Dually, let $\mathfrak{p}=(p_{1},p_{2},\ldots,p_{k})\in {\mathscr{P}(P)}$, $q\in P$ and $q\leq_P \rr(\mathfrak{p})=p_{k}$. Denote
\begin{equation}\label{mft}
\mathfrak{p}{\downharpoonright}_{q}=(q_{1},q_{2},\ldots,q_{k}), \mbox{ where } q_{k}=q, q_{i}=q_{i+1}\delta_{p_{i}}=p_{i}\times q_{i+1}, i=k-1, k-2, \ldots, 1
\end{equation}
and call it the {\em right restriction} of $\mathfrak{p}$ on $q$. In this case, observe that $q\leq_P p_k$, it follows that $q_k=q=p_k\times q=q\delta_{p_{k}}$ by
Lemma \ref{jiben1} (1).  Combining (\ref{mft}) and Lemma \ref{jiben1} (1),  we have
\begin{equation}\label{win}
q_{i}=q\delta_{p_{k}}\ldots\delta_{p_{i}},\,\, q_i\leq_P p_i, i=k,\, k-1, \ldots, 1.
\end{equation}
\begin{lemma}\label{ccts}Let $(P,\times,\star)$ be a strong projection algebra, $q, r\in P$ and  $\mathfrak{p}=(p_{1},p_{2},\ldots,p_{k})\in \mathscr{P}(P)$.
\begin{itemize}
\item[(1)] If $q\leq_{P} \dd(\mathfrak{p})$, then $_{q}{\downharpoonleft}\mathfrak{p}\in \mathscr{P}(P)$, $\dd(_{q}{\downharpoonleft} \mathfrak{p})=q$ and $\rr(_{q}{\downharpoonleft} \mathfrak{p})\leq_{P} \rr(\mathfrak{p})$.
\item[(2)]If $q\leq_{P} \rr(\mathfrak{p})$, then $\mathfrak{p}{\downharpoonright}{_q}\in \mathscr{P}(P)$, $\rr(\mathfrak{p}{\downharpoonright}{_q})=q$ and $\dd(\mathfrak{p}{\downharpoonright}{_q})\leq_{P} \dd(\mathfrak{p})$.
\item[(3)] If $q=\dd(\mathfrak{p})$, then $_{q}{\downharpoonleft} \mathfrak{p}=\mathfrak{p}$.
\item[(4)] If $q=\rr(\mathfrak{p})$, then $ \mathfrak{p}{\downharpoonright}_{q}=\mathfrak{p}$.
\item[(5)] If $r \leq_{P} q\leq_{P} \dd(\mathfrak{p})$, then $ {_r}{\downharpoonright} ({_q}{\downharpoonleft} \mathfrak{p}) ={_r}{\downharpoonleft} \mathfrak{p}$.
\item[(6)] If $r \leq_{P} q\leq_{P} \rr(\mathfrak{p})$, then $ (\mathfrak{p}{\downharpoonright}{_q}){\downharpoonright}{_r}=\mathfrak{p}{\downharpoonright}{_r}$.
\item[(7)] If $r \leq_{P}  \dd(\mathfrak{p})$, then ${_r}{\downharpoonleft} \mathfrak{p}=\mathfrak{p}{\downharpoonright}_{\rr({_r}{\downharpoonleft}\mathfrak{p})}$.
\item[(8)] If $r \leq_{P}  \rr(\mathfrak{p})$, then $\mathfrak{p}{\downharpoonright}{_r}={_{\dd(\mathfrak{p}{\downharpoonright}{_r})}}{\downharpoonleft}\mathfrak{p}$.
\end{itemize}
\end{lemma}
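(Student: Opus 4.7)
The plan is to prove the left-handed statements (1), (3), (5), (7); by the symmetry between the left and right projection axioms (P1)--(P2) and between the two strong axioms (SP1)--(SP2), the right-handed items (2), (4), (6), (8) then follow by the entirely dual argument. Throughout I will work coordinate-wise, using the explicit formulas of (\ref{v}): writing ${}_q{\downharpoonleft}\mathfrak{p}=(q_1,\ldots,q_k)$, I have $q_1=q$, $q_i=q_{i-1}\star p_i$, and $q_i=q\theta_{p_1}\cdots\theta_{p_i}\leq_P p_i$.

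For (1), the equalities $\dd({}_q{\downharpoonleft}\mathfrak{p})=q$ and $\rr({}_q{\downharpoonleft}\mathfrak{p})=q_k\leq_P p_k=\rr(\mathfrak{p})$ are immediate. The substantive content is that $(q_1,\ldots,q_k)$ is actually a $P$-path, i.e.\ $q_j\,\mathcal{F}_P\,q_{j+1}$ for every $j$. To prove $q_j\star q_{j+1}=q_{j+1}$, I will apply axiom (R3) with $g=f=q_j$, $e=p_{j+1}$ and then invoke (R1) to collapse $q_j\star q_j$ to $q_j$; this yields $q_j\star p_{j+1}=q_j\star q_{j+1}$, whose left side is $q_{j+1}$ by definition. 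For $q_j\times q_{j+1}=q_j$, I will first deduce $q_j\times p_{j+1}=q_j$ from Lemma \ref{y}(1) applied to $q_j\leq_P p_j$ and $p_j=p_j\times p_{j+1}$ (the latter from $p_j\,\mathcal{F}_P\,p_{j+1}$). Then the strong axiom (SP1) in the form (\ref{wang2}), with $e=q_j$, $f=p_{j+1}$, $g=q_j$ (so that $g\leq_P q_j=e\times f$), produces $q_j\times(q_j\star p_{j+1})=q_j$, the desired identity.

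Part (3) is a straightforward forward induction: $q_1=q=p_1$ and, given $q_{i-1}=p_{i-1}$, the recurrence gives $q_i=p_{i-1}\star p_i=p_i$ by $p_{i-1}\,\mathcal{F}_P\,p_i$. For (5), I will compare the coordinates of ${}_r{\downharpoonleft}({}_q{\downharpoonleft}\mathfrak{p})=(r_1,\ldots,r_k)$ with those of ${}_r{\downharpoonleft}\mathfrak{p}=(r'_1,\ldots,r'_k)$ by forward induction on $i$. The base $r_1=r\star q=r=r\star p_1=r'_1$ uses $r\leq_P q\leq_P p_1$. Alongside, I will maintain the invariant $r_j\leq_P q_j$, which holds because $r_j=r_{j-1}\star q_j\leq_P q_j$ by Lemma \ref{p}. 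For the step, axiom (R4) rewrites $r_{i-1}\star q_i=r_{i-1}\star(q_{i-1}\star p_i)=(r_{i-1}\star p_i)\star(q_{i-1}\star p_i)=(r_{i-1}\star p_i)\star q_i$; invoking the invariant $r_{i-1}\leq_P q_{i-1}$ together with Lemma \ref{jiben1}(5) yields $r_{i-1}\star p_i\leq_P q_{i-1}\star p_i=q_i$, and Lemma \ref{jiben1}(2) then collapses $(r_{i-1}\star p_i)\star q_i$ to $r_{i-1}\star p_i$. Hence $r_i=r_{i-1}\star p_i=r'_{i-1}\star p_i=r'_i$ by the induction hypothesis.

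The most delicate part is (7), which genuinely invokes the strong axiom. With $s=r_k=\rr({}_r{\downharpoonleft}\mathfrak{p})$, I will write $\mathfrak{p}{\downharpoonright}_s=(s_1,\ldots,s_k)$ (so $s_k=s$ and $s_i=p_i\times s_{i+1}$ for $i<k$) and prove $s_i=r_i$ by \emph{reverse} induction on $i$. The base case $s_k=r_k=s$ is tautological. For the inductive step, substituting the hypothesis and using $r_i=r_{i-1}\star p_i$ gives $s_{i-1}=p_{i-1}\times(r_{i-1}\star p_i)$, which must be identified with $r_{i-1}$. Axiom (SP1) in the form (\ref{wang2}) does precisely this with the choice $e=p_{i-1}$, $f=p_i$, $g=r_{i-1}$: one has $e\times f=p_{i-1}\times p_i=p_{i-1}$ by $p_{i-1}\,\mathcal{F}_P\,p_i$, and $g=r_{i-1}\leq_P p_{i-1}=e\times f$ by (\ref{v}), so the conclusion $e\times(g\star f)=g$ reads exactly $p_{i-1}\times(r_{i-1}\star p_i)=r_{i-1}$. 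The main obstacle I anticipate is bookkeeping: identifying the correct triple $(e,f,g)$ to feed into (SP1) at each use, and noting that the opposite orientations of the recurrences defining ${}_r{\downharpoonleft}\mathfrak{p}$ and $\mathfrak{p}{\downharpoonright}_s$ are exactly what force the induction in (7) to proceed in reverse.
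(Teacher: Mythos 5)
Your proposal is correct and follows essentially the same route as the paper: coordinate-wise induction using the explicit formulas for the restrictions, with the strong axiom (\ref{wang2}) supplying the key identities in (1) and (7), and the remaining items obtained by duality. The only differences are cosmetic choices of which axiom to invoke at individual steps (e.g.\ in (1) you reach $q_j\times p_{j+1}=q_j$ via Lemma \ref{y}(1) and apply (SP1) with $e=g=q_j$, where the paper uses Lemma \ref{jiben1}(3) and (SP1) with $e=p_j$ followed by (L4) and (L2); in (5) you use (R4) with monotonicity where the paper uses (R3)).
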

\begin{proof} We only need to show items (1), (3), (5) and (7) by symmetry.

(1) Let ${_q}{\downharpoonleft}\mathfrak{p}=(q_{1},q_{2},\ldots,q_{k}),$  where $$q_{1}=q, q_{i}=q_{i-1}\theta_{p_{i}}=q_{i-1}\star p_{i}, i=2,3,\ldots, k.$$
Since $\mathfrak{p}\in \mathscr{P}(P)$, we have $p_{i}\,\mathcal{F}_{P}\,p_{i+1},$ that is, $$p_i\times p_{i+1}=p_i, p_{i+1}=p_i\star p_{i+1},\, i=1, 2, \ldots, k-1.$$ Let $i\in\{1,2, \ldots, k-1\}$. By (R4) and (R1), we have
$$q_{i}\theta_{ q_{i+1}}=q_{i} \star q_{i+1}= q_{i}\star(q_{i}\star p_{i+1})=(q_{i}\star p_{i+1})\star(q_{i}\star p_{i+1})=q_{i}\star p_{i+1}=q_{i+1}.$$
On the other hand, by the fact that $q_{i+1}=q_{i}\star p_{i+1}$ and Lemma \ref{jiben1} (3), we have $$q_{i+1}\delta_{ q_{i}}=q_{i}\times q_{i+1}=q_{i}\times(q_{i}\star p_{i+1})=q_{i}\times p_{i+1}.$$ Observe  (\ref{v}) and the fact that $p_{i}\,\mathcal{F}_{P}\,p_{i+1}$, it follows that $q_{i}\leq_{P} p_{i}=p_{i}\times p_{i+1}$. This together with (\ref{wang2}) gives that $q_{i}=p_{i}\times(q_{i}\star p_{i+1})$. Combing (L4) and (L2), we obtain
$$q_{i+1}\delta _{q_{i}}=q_{i}\times p_{i+1}=(p_{i}\times(q_{i}\star p_{i+1}))\times p_{i+1}$$$$=(p_{i}\times(q_{i}\star p_{i+1}))\times(p_{i}\times p_{i+1})=(p_{i}\times(q_{i}\star p_{i+1}))\times p_{i}=p_{i}\times(q_{i}\star p_{i+1})= q_{i}.$$
This implies that $q_{i}\,\mathcal{F}_{P} \,q_{i+1} $, and so $_{q}{\downharpoonleft} \mathfrak{p}\in \mathscr{P}(P)$. By the definition of ${_q}{\downharpoonleft}\mathfrak{p}$, we have $\dd(_{q}{\downharpoonleft} \mathfrak{p})=q_{1}=q$. Moreover, (\ref{v}) gives that $\rr(_{q}{\downharpoonleft} \mathfrak{p})=q_{k}\leq_{P} p_{k}=\rr( \mathfrak{p})$.

(3) Let $\mathfrak{p}=(p_{1},p_{2},\ldots,p_{k})$. Then $q=\dd(\mathfrak{p})=p_{1}$, and so $_{q}{\downharpoonleft} \mathfrak{p}=(q_{1},q_{2},\ldots,q_{k})$, where $$q_{i}=q\theta_{p_{1}}\ldots\theta_{p_{i}},i=1,2, \ldots, k.$$ Obviously, $q_{1}=q=p_{1}$. Assume that $q_{i-1}=p_{i-1}$, $i\geq 2$. Since $\mathfrak{p}\in \mathscr{P}(P)$, it follows that $p_{i-1}\,\mathcal{F}_{P}\,p_{i}$, and so $p_{i-1}\star p_{i}=p_{i}$. By (\ref{u}), $q_{i}=q_{i-1}\star p_{i}=p_{i-1}\star p_{i}=p_{i}$. By mathematical induction, the result follows.

(5) Let $$\mathfrak{p}=(p_{1},p_{2},\ldots,p_{k}),\, _{q}{\downharpoonleft} \mathfrak{p}=(q_{1},q_{2},\ldots,q_{k}),$$$$  _{r}{\downharpoonleft}\mathfrak{p}=(r_{1},r_{2},\ldots,r_{k}),\,\,  _{r}{\downharpoonleft}(_{q}{\downharpoonleft} \mathfrak{p})=(s_{1},s_{2},\ldots,s_{k}).$$
By (\ref{u}) and (\ref{v}),
\begin{align}\label{min}
&q_{i}=q \theta_{p_{1}}\ldots\theta_{p_{i}}=q_{i-1}\star p_{i},r_{i}=r\theta_{p_{1}}\ldots\theta_{p_{i}}=r_{i-1}\star p_{i} , s_{i}
=r\theta_{q_{1}}\ldots\theta_{q_{i}}=s_{i-1}\star q_{i},
\notag
\\&q_{i}\leq_{P} p_{i},r_{i}\leq_{P} p_{i},s_{i}\leq_{P} q_{i},i=1,2, \ldots, k.
\end{align}
When $i=1$, $r_{1}=r=s_{1}$. Let $r_{i-1}=s_{i-1}$. Then the fact $s_{i-1}\leq_{P} q_{i-1}$ gives $s_{i-1}=s_{i-1}\star q_{i-1}$ by Lemma \ref{jiben1} (2). This together with the fact $q_{i}=q_{i-1}\star p_{i}$ and (R3) implies that
$$s_{i}=s_{i-1}\star q_{i}=(s_{i-1}\star q_{i-1})\star(q_{i-1}\star p_{i})=(s_{i-1}\star q_{i-1})\star p_{i}=s_{i-1}\star p_{i}=r_{i-1}\star p_{i}=r_{i}.$$
By mathematical induction, the result follows.

(7) Let $\mathfrak{p}=(p_{1},p_{2},\ldots,p_{k})\in \mathscr{P}(P)$ and $r\leq_{P}\dd(\mathfrak{p})$. By (\ref{v}), $${_r}{\downharpoonleft}\mathfrak{p}=(r,r\theta_{p_{2}},\ldots,r\theta_{p_{1}}\theta_{p_{2}}\ldots\theta_{p_{k}}).$$ Let $t=\rr({_r}{\downharpoonleft}\mathfrak{p})=r\theta_{p_{1}}\theta_{p_{2}}\ldots\theta_{p_{k}}$. Then $t\leq_{P} \rr(\mathfrak{p})=p_{k}$ by  item (1). In view of (\ref{win}),  $$\mathfrak{p}{\downharpoonright}{_{t}}=(t\delta_{p_{k}}\delta_{p_{k-1}}\ldots\delta_{p_{1}},\ldots,t\delta_{p_{k}}\delta_{p_{k-1}},t\delta_{p_{k}}).$$
In the following statements, we will show  ${_r}{\downharpoonleft}\mathfrak{p}=\mathfrak{p}{\downharpoonright}{_{t}}$. In fact, by the fact that $t\leq_{P} \rr(\mathfrak{p})=p_{k}$ and (\ref{l}), we have $t\delta_{p_{k}}=t=r\theta_{p_{1}}\theta_{p_{2}}\ldots\theta_{p_{k}}.$ On the other hand, by (\ref{v}) and the fact that $p_{k-1}\mathcal{F}_{P} p_{k}$, we have
$$r\theta_{p_{1}}\theta_{p_{2}}\ldots\theta_{p_{k-1}}\leq_{P} p_{k-1}= p_{k-1}\times p_{k}.$$ This together with (\ref{wang2}) gives that  $$r\theta_{p_{1}}\theta_{p_{2}}\ldots\theta_{p_{k}}\delta_{p_{k-1}}=p_{k-1}\times ((r\theta_{p_{1}}\theta_{p_{2}}\ldots\theta_{p_{k-1}})\star p_{k})=r\theta_{p_{1}}\theta_{p_{2}}\ldots\theta_{p_{k-1}}.$$ By mathematical induction, we have
$$r\theta_{p_{1}}\theta_{p_{2}}\ldots\theta_{p_{i}}=r\theta_{p_{1}}\theta_{p_{2}}\ldots\theta_{p_{k}}\delta_{p_{k}}\ldots \delta_{p_{i}},\,\,i=k,k-1,\ldots,1.$$ Thus ${_r}{\downharpoonleft}\mathfrak{p}=\mathfrak{p}{\downharpoonright}{_{t}}=\mathfrak{p}{\downharpoonright}_{\rr({_r}{\downharpoonleft}\mathfrak{p})}$.
\end{proof}
\begin{lemma}\label{kin}Let $(P,\times,\star)$ be a strong projection algebra and $\mathfrak{p},\mathfrak{q}\in \mathscr{P}(P) $, $\rr(\mathfrak{p})=\dd(\mathfrak{q})$.
\begin{itemize}
\item[(1)] If $r\in P$, $r \leq_{P} \dd(\mathfrak{p})$, $s=\rr({_{r}}{\downharpoonleft}\mathfrak{p})$, then ${_{r}}{\downharpoonleft}(\mathfrak{p}\circ \mathfrak{q} )={_{r}}{\downharpoonleft}\mathfrak{p}  \circ  { _{s}}{\downharpoonleft }\mathfrak{q}$.
\item[(2)] If $l\in P,\,\,l \leq_{P} \rr(\mathfrak{q})$, $t=\dd(\mathfrak{q}{\downharpoonright}{_l})$, then $(\mathfrak{p}\circ \mathfrak{q} ){\downharpoonright}{_l}=\mathfrak{p}{\downharpoonright}{_t}\circ   \mathfrak{q}{\downharpoonright}{_l}$.
\end{itemize}
\end{lemma}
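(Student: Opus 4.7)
The plan is to prove (1) directly by writing out both $P$-paths in full using the closed formula \eqref{v}, and then comparing them entry-by-entry. Statement (2) follows by a fully dual argument, using \eqref{win} in place of \eqref{v} and the right-handed duality already exploited in the proof of Lemma \ref{ccts}, so it suffices to establish (1).

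First I would set $\mathfrak{p}=(p_1,\ldots,p_k)$ and $\mathfrak{q}=(q_1,\ldots,q_l)$ with $p_k=q_1$, so that
$$\mathfrak{p}\circ\mathfrak{q}=(p_1,\ldots,p_{k-1},p_k,q_2,\ldots,q_l)$$
is a $P$-path of length $k+l-1$. Applying \eqref{v} to $\mathfrak{p}\circ\mathfrak{q}$ (with starting point $r\leq_P p_1=\dd(\mathfrak{p}\circ\mathfrak{q})$), the $i$-th entry of ${_r}{\downharpoonleft}(\mathfrak{p}\circ\mathfrak{q})$ is $t_i=r\theta_{p_1}\theta_{p_2}\cdots\theta_{p_i}$ for $1\le i\le k$, while the $(k+j-1)$-th entry is $t_{k+j-1}=r\theta_{p_1}\cdots\theta_{p_k}\theta_{q_2}\cdots\theta_{q_j}$ for $2\le j\le l$. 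Analogously, the entries of ${_r}{\downharpoonleft}\mathfrak{p}$ are $r'_i=r\theta_{p_1}\cdots\theta_{p_i}$ for $1\le i\le k$, and in particular $s=r'_k=r\theta_{p_1}\cdots\theta_{p_k}$.

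The key observation is that $s=\rr({_r}{\downharpoonleft}\mathfrak{p})\le_P \rr(\mathfrak{p})=p_k=q_1=\dd(\mathfrak{q})$ by Lemma \ref{ccts}(1). Hence ${_s}{\downharpoonleft}\mathfrak{q}$ is defined, $\dd({_s}{\downharpoonleft}\mathfrak{q})=s=\rr({_r}{\downharpoonleft}\mathfrak{p})$ so the composition ${_r}{\downharpoonleft}\mathfrak{p}\circ{_s}{\downharpoonleft}\mathfrak{q}$ makes sense, and from $s\le_P q_1$ we get $s\theta_{q_1}=s\star q_1=s$ by Lemma \ref{jiben1}(2). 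Consequently the entries of ${_s}{\downharpoonleft}\mathfrak{q}$ are $s'_1=s$ and $s'_j=s\theta_{q_1}\theta_{q_2}\cdots\theta_{q_j}=s\theta_{q_2}\cdots\theta_{q_j}$ for $2\le j\le l$. Comparing now with $t_i$: for $1\le i\le k$, the $i$-th entry of ${_r}{\downharpoonleft}\mathfrak{p}\circ{_s}{\downharpoonleft}\mathfrak{q}$ equals $r'_i=t_i$; at the seam $i=k$, we have $r'_k=s'_1=s=t_k$; and for $2\le j\le l$, $s'_j=s\theta_{q_2}\cdots\theta_{q_j}=t_{k+j-1}$. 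Thus the two $P$-paths coincide, proving (1).

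The only nontrivial point, and hence the main (very mild) obstacle, is the bookkeeping at the seam $i=k$ where the two subpaths are glued: the first entry of ${_s}{\downharpoonleft}\mathfrak{q}$ must coincide with the last entry of ${_r}{\downharpoonleft}\mathfrak{p}$, which is exactly the content of the identity $s\theta_{q_1}=s$. Once this is in place, everything else is a routine indexing verification reading off the formula \eqref{v}, with no further use of the strong axioms (SP1), (SP2).
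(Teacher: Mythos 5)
Your proof is correct and follows essentially the same route as the paper's: both write out ${_r}{\downharpoonleft}(\mathfrak{p}\circ\mathfrak{q})$, ${_r}{\downharpoonleft}\mathfrak{p}$ and ${_s}{\downharpoonleft}\mathfrak{q}$ entry-by-entry from the definition of left restriction and match them up, the only point of substance being that the seam entry $s=\rr({_r}{\downharpoonleft}\mathfrak{p})$ satisfies $s\theta_{q_1}=s$. The paper phrases this via the recursive formula \eqref{u} while you use the closed formula \eqref{v}, a purely cosmetic difference.
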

\begin{proof}
Assume that $\mathfrak{p}=(p_{1},p_{2},\ldots,p_{k})$ and $\mathfrak{q}=(q_{1},q_{2},\ldots,q_{l})$. Since $\rr(\mathfrak{p})=\dd(\mathfrak{q})$, we have $p_{k}=q_{1}$. By (\ref{u}),
$$_{r}{\downharpoonleft}\mathfrak{p}=(r,r\theta_{p_{2}},r\theta_{p_{2}}\theta_{p_{3}},\ldots,r\theta_{p_{2}}\ldots\theta_{p_{k}}),$$ and so
$s=\rr(_{r}{\downharpoonleft}\mathfrak{p})=r\theta_{p_{2}}\ldots\theta_{p_{k}}$. Again by (\ref{u}), we have
$$_{r}{\downharpoonleft} (\mathfrak{p}\circ \mathfrak{q} )={_r}{\downharpoonleft}(p_{1},p_{2},\ldots,p_{k},q_{2},\ldots,q_{l})
$$$$=(r,r\theta_{p_{2}},r\theta_{p_{2}}\theta_{p_{3}},\ldots,r\theta_{p_{2}}\ldots\theta_{p_{k}}
=s,\,\,\,\, s\theta_{q_{2}},s\theta_{q_{2}}\theta_{q_{3}},\ldots,s\theta_{q_{2}}\ldots\theta_{q_{l}})
$$$$ =(r,r\theta_{p_{2}},r\theta_{p_{2}}\theta_{p_{3}},\ldots,r\theta_{p_{2}}\ldots\theta_{p_{k}})\circ(s,s\theta_{q_{2}},s\theta_{q_{2}}\theta_{q_{3}},\ldots,s\theta_{q_{2}}\ldots\theta_{q_{l}})
={_r}{\downharpoonleft}\mathfrak{p}\circ { _s}{\downharpoonleft}\mathfrak{q}.$$ This proves item (1). Dually, we can prove (2).
\end{proof}

\begin{defn}Let $(C,\circ,\dd,\rr, \leq)$ be an ordered category such that $(P_{C},\times,\star)$ is a strong projection algebra. Then $(C,\circ,\dd,\rr, \leq, P_C, \times, \star)$ is called a {\em weak projection ordered category} if $p\leq q$ if and only if $p\leq_{P_{C}} q$ for all $p,q\in P_{C}$.
\end{defn}

\begin{theorem}\label{dddnu}Let $(P,\times,\star)$ be a strong projection algebra. Define a relation  ``$\leq$" on $\mathscr{P}(P)$ as follows: For all $\mathfrak{p}, \mathfrak{q}\in \mathscr{P}(P)$,
$$\mathfrak{p}\leq_{\mathscr{P}} \mathfrak{q}\Longleftrightarrow \mbox{there exists }r\in P \mbox{ such that } r\leq_P \dd(\mathfrak{q}) \mbox{ and } \mathfrak{p}={_r}{\downharpoonleft}\mathfrak{q}.$$
Then ``$\leq_{\mathscr{P}}$" is a partial order on $\mathscr{P}(P)$, and $(\mathscr{P}(P),\circ,\dd,\rr, \leq_{\mathscr{P}}, P, \times, \star)$(Note that $P_{\mathscr{P}(P)}=P$) forms a weak projection ordered category.
\end{theorem}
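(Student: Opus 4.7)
The plan is first to establish that $\leq_{\mathscr{P}}$ is a partial order on $\mathscr{P}(P)$, then to verify the compatibility $p\leq_{\mathscr{P}} q\Leftrightarrow p\leq_P q$ on singleton paths, and finally to check the ordered-category axioms (O1)--(O4) using the restriction lemmas already proved in this section. Essentially everything reduces to calisthenics with Lemmas \ref{ccts} and \ref{kin}, so the heart of the argument is bookkeeping rather than new ideas.

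For reflexivity, $\mathfrak{p}={_{\dd(\mathfrak{p})}}{\downharpoonleft}\mathfrak{p}$ by Lemma \ref{ccts}(3). For transitivity, if $\mathfrak{p}={_r}{\downharpoonleft}\mathfrak{q}$ and $\mathfrak{q}={_s}{\downharpoonleft}\mathfrak{t}$ with $r\leq_P\dd(\mathfrak{q})=s$ and $s\leq_P\dd(\mathfrak{t})$, then $r\leq_P\dd(\mathfrak{t})$, and Lemma \ref{ccts}(5) gives $\mathfrak{p}={_r}{\downharpoonleft}({_s}{\downharpoonleft}\mathfrak{t})={_r}{\downharpoonleft}\mathfrak{t}$. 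For antisymmetry, I would observe directly from (\ref{u}) that left restriction preserves length, so $\mathfrak{p}\leq_{\mathscr{P}}\mathfrak{q}\leq_{\mathscr{P}}\mathfrak{p}$ forces $\dd(\mathfrak{p})\leq_P\dd(\mathfrak{q})\leq_P\dd(\mathfrak{p})$; hence $\dd(\mathfrak{p})=\dd(\mathfrak{q})$ and Lemma \ref{ccts}(3) gives $\mathfrak{p}={_{\dd(\mathfrak{q})}}{\downharpoonleft}\mathfrak{q}=\mathfrak{q}$. The compatibility statement is then immediate: for $p,q\in P$ identified with singletons, the only left restriction $_{r}{\downharpoonleft}(q)$ with $r\leq_P q$ is $(r)$, so taking $r=p$ shows $p\leq_{\mathscr{P}} q\Leftrightarrow p\leq_P q$. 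With this in place, (O1) reduces to Lemma \ref{ccts}(1)--(2). For (O2), assuming $\mathfrak{a}={_r}{\downharpoonleft}\mathfrak{b}$, $\mathfrak{c}={_s}{\downharpoonleft}\mathfrak{d}$ with the composability hypotheses, we have $s=\dd(\mathfrak{c})=\rr(\mathfrak{a})=\rr({_r}{\downharpoonleft}\mathfrak{b})$, so Lemma \ref{kin}(1) yields ${_r}{\downharpoonleft}(\mathfrak{b}\circ\mathfrak{d})={_r}{\downharpoonleft}\mathfrak{b}\circ{_s}{\downharpoonleft}\mathfrak{d}=\mathfrak{a}\circ\mathfrak{c}$, whence $\mathfrak{a}\circ\mathfrak{c}\leq_{\mathscr{P}}\mathfrak{b}\circ\mathfrak{d}$. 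For (O3), given $p\leq_P\dd(\mathfrak{a})$, the candidate $\mathfrak{u}={_p}{\downharpoonleft}\mathfrak{a}$ satisfies $\mathfrak{u}\leq_{\mathscr{P}}\mathfrak{a}$ and $\dd(\mathfrak{u})=p$ by Lemma \ref{ccts}(1); any other candidate $\mathfrak{u}'={_t}{\downharpoonleft}\mathfrak{a}$ with $\dd(\mathfrak{u}')=p$ must have $t=p$, so $\mathfrak{u}'=\mathfrak{u}$.

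The main subtlety lies in (O4), since the relation $\leq_{\mathscr{P}}$ is phrased solely through left restrictions while the natural witness to (O4) is a right restriction. Given $q\leq_P\rr(\mathfrak{a})$, I would set $\mathfrak{v}=\mathfrak{a}{\downharpoonright}_q$ and appeal to Lemma \ref{ccts}(8), which rewrites $\mathfrak{v}$ as ${_{\dd(\mathfrak{v})}}{\downharpoonleft}\mathfrak{a}$; this places $\mathfrak{v}\leq_{\mathscr{P}}\mathfrak{a}$ with $\rr(\mathfrak{v})=q$. Uniqueness is handled symmetrically: any $\mathfrak{v}'={_t}{\downharpoonleft}\mathfrak{a}$ with $\rr(\mathfrak{v}')=q$ satisfies $\mathfrak{v}'=\mathfrak{a}{\downharpoonright}_{\rr(\mathfrak{v}')}=\mathfrak{a}{\downharpoonright}_q$ by Lemma \ref{ccts}(7). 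This passage between left and right restrictions, which in turn rests on the strong-projection axioms (SP1)/(SP2) via the proofs of Lemma \ref{ccts}(7)--(8), is the one genuinely nontrivial step; the rest of the verification is purely formal manipulation of the definitions.
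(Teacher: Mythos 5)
Your proposal is correct and follows essentially the same route as the paper: reflexivity and transitivity via Lemma \ref{ccts}(3),(5), antisymmetry by comparing domains with $\leq_P$, the singleton-path compatibility via (\ref{u}), and (O1)--(O4) via Lemmas \ref{ccts} and \ref{kin}, with (O4) handled exactly as in the paper by converting the right restriction to a left one through Lemma \ref{ccts}(7)--(8). The only cosmetic difference is your remark that left restriction preserves length, which your antisymmetry argument does not actually need since the comparison of $\dd(\mathfrak{p})$ and $\dd(\mathfrak{q})$ already suffices.
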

\begin{proof}
Let $\mathfrak{p},\mathfrak{q},\mathfrak{m}\in \mathscr{P}(P)$. By Lemma \ref{ccts} (3), we have $_{\dd(\mathfrak{p})}{\downharpoonleft}\mathfrak{p}=\mathfrak{p}$, and so $\mathfrak{p}\leq_{\mathscr{P}} \mathfrak{p}$. Let $\mathfrak{p}\leq_{\mathscr{P}} \mathfrak{q}$ and $\mathfrak{q}\leq_{\mathscr{P}} \mathfrak{p}$. Then there exist $r,l\in P$ such that $$r\leq_{P} \dd(\mathfrak{q}) ,l\leq_{P} \dd(\mathfrak{p}), \mathfrak{p}={_r}{\downharpoonleft}\mathfrak{q},\mathfrak{q}={_l}{\downharpoonleft}\mathfrak{p}$$ By Lemma \ref{ccts} (1),  $r\leq_{P} \dd(\mathfrak{q})=\dd(_{l}{\downharpoonleft}\mathfrak{p})=l$. Dually, $l\leq_{P}r=\dd(\mathfrak{p})$. Thus  $r=l=\dd(\mathfrak{p})=\dd(\mathfrak{q})$, and hence $\mathfrak{p}={_r}{\downharpoonleft}\mathfrak{q}={_{\dd(\mathfrak{q})}}{\downharpoonleft} \mathfrak{q}=\mathfrak{q}$ by Lemma \ref{ccts} (3). Assume that $\mathfrak{p}\leq_{\mathscr{P}} \mathfrak{q}$ and $\mathfrak{q}\leq_{\mathscr{P}} \mathfrak{m}$. Then there exist $r,l\in P$ such that $$r\leq_{P} \dd(\mathfrak{q}),l\leq_{P} \dd(\mathfrak{m}),\mathfrak{p}={_r}{\downharpoonleft}\mathfrak{q},\mathfrak{q}={_l}{\downharpoonleft}\mathfrak{m}.$$ This together with Lemma \ref{ccts} (1) gives that $r\leq_{P}\dd(\mathfrak{q})=\dd(_{l}{\downharpoonleft}\mathfrak{m})=l\leq_{P}\dd(m)$.  By Lemma \ref{ccts} (5), we have $ _{r}{\downharpoonleft}\mathfrak{m}= {_r}{\downharpoonleft}({_l}{\downharpoonleft}\mathfrak{m})={_r}{\downharpoonleft} \mathfrak{q}=\mathfrak{p}$, and so $\mathfrak{p}\leq_{\mathscr{P}}  \mathfrak{m}$.  Thus ``$\leq_{\mathscr{P}} $"  is a partial order.

Let $p,q\in P$. If $p\leq_{\mathscr{P}}  q$, then there exists $r\in P$ such that $r\leq_{P}\dd(q)=q, p={_r}{\downharpoonleft}q$. By $(\ref{u})$, we have $p=r$,  and so $p\leq_{P}q$.
Conversely,  if $p \leq_{P} q$, then $p\leq_{P}q=\dd(q)$. By $(\ref{u})$, we obtain $_{p}{\downharpoonleft} q=p$. This gives that $p\leq_{\mathscr{P}}  q$. Thus for all $p,q\in P$, $p\leq_{\mathscr{P}}  q$ if and only if $p\leq_{P}q$.

(O1) Let $\mathfrak{p}\leq_{\mathscr{P}}  \mathfrak{q}$. Then there exists $r\in P$ such that $r\leq_{P} \dd(\mathfrak{q})$ and $ \mathfrak{p}={_r}{\downharpoonleft}\mathfrak{q}$. By Lemma
\ref{ccts} (1),
$$\dd(\mathfrak{p})=\dd({_r}{\downharpoonleft}\mathfrak{q})=r\leq_{P} \dd(\mathfrak{q}) ,\rr(\mathfrak{p})=\rr({_r}{\downharpoonleft}\mathfrak{q})\leq_{P} \rr(\mathfrak{q}).$$ This implies that $\dd(\mathfrak{p})\leq_{\mathscr{P}}  \dd(\mathfrak{q})$ and $\rr(\mathfrak{p})\leq_{\mathscr{P}}  \rr(\mathfrak{q})$.

(O2) Let $\mathfrak{p}\leq_{\mathscr{P}}  \mathfrak{q},\mathfrak{n}\leq_{\mathscr{P}}  \mathfrak{m}$ and $\rr(\mathfrak{p})=\dd(\mathfrak{n}),\rr(\mathfrak{q})=\dd(\mathfrak{m})$. Then there exist $r,l\in P$ such that $$\mathfrak{p}={_r}{\downharpoonleft}\mathfrak{q},\mathfrak{n}={_l}{\downharpoonleft}\mathfrak{m},r\leq _{P} \dd(\mathfrak{q}),l\leq_{P} \dd(\mathfrak{m}).$$ By Lemma \ref{ccts} (1), $l=\dd({_l}{\downharpoonleft}{\mathfrak m})=\dd(\mathfrak{n})=\rr(\mathfrak{p})=\rr({_r}{\downharpoonleft}\mathfrak{q})$, and so $\mathfrak{p}\circ \mathfrak{n}=({_r}{\downharpoonleft }\mathfrak{q}) \circ ({_l}{\downharpoonleft}\mathfrak{m})={_r}{\downharpoonleft}(\mathfrak{q}\circ \mathfrak{m})$  by Lemma \ref{kin} (1). Observe that $r\leq_{P}\dd(\mathfrak{q})=\dd(\mathfrak{q}\circ \mathfrak{m})$, it follows that $\mathfrak{p}\circ \mathfrak{n}\leq_{\mathscr{P}}  \mathfrak{q}\circ \mathfrak{m}$.

(O3) Let  $r\leq_{P}  \dd(\mathfrak{q})$. By Lemma \ref{ccts} (1), we have ${_r}{\downharpoonleft} \mathfrak{q}\in \mathscr{P}(P)$ and $\dd({_r}{\downharpoonleft} \mathfrak{q})=r$. Obviously, ${_r}{\downharpoonleft} \mathfrak{q}\leq_{\mathscr{P}}  \mathfrak{q}$.  Let $\mathfrak{p}\in \mathscr{P}(P)$, $\dd(\mathfrak{p})=r$ and $\mathfrak{p}\leq_{\mathscr{P}}  \mathfrak{q}$. Then there exists $\l\in P$ such that $l\leq_{P}  \dd(\mathfrak{q})$ and $\mathfrak{p}={_l}{\downharpoonleft } \mathfrak{q}$. This implies that $l=\dd({_l}{\downharpoonleft} \mathfrak{q})=\dd(\mathfrak{p})=r$ by Lemma \ref{ccts} (1) again. Thus $\mathfrak{p}={_l}{\downharpoonleft} \mathfrak{q}={_r}{\downharpoonleft} \mathfrak{q}$.

(O4) Let $l\leq_{P}  \rr(\mathfrak{q})$. By Lemma \ref{ccts} (2), we have $\mathfrak{q}{\downharpoonright}{_l}\in \mathscr{P}(P)$ and $\rr(\mathfrak{q}{\downharpoonright}{_l})=l$.  By Lemma \ref{ccts} (8), we have $\mathfrak{q}{\downharpoonright}{_l}={_{\dd(\mathfrak{q}{\downharpoonright}{_l})}}{\downharpoonleft } \mathfrak{q}$, and so $\mathfrak{q}{\downharpoonright}{_l}\leq_{\mathscr{P}}  \mathfrak{q}$.  Now let $\mathfrak{p}\in \mathscr{P}(P), \rr(\mathfrak{p})=l$ and $ \mathfrak{p}\leq_{\mathscr{P}}  \mathfrak{q}$. Then there exists $m\leq_{P}  \dd(\mathfrak{q})$ such that $\mathfrak{p}={_m}{\downharpoonleft}\mathfrak{q}=\mathfrak{q}{\downharpoonright}{_{\rr({_m}{\downharpoonleft}\mathfrak{q})}}$ by Lemma \ref{ccts} (7).
So  $\rr({_m}{\downharpoonleft}\mathfrak{q})=\rr(\mathfrak{q}{\downharpoonright}{_{\rr({_m}{\downharpoonleft}\mathfrak{q})}})=\rr(\mathfrak{p})=l$ by Lemma \ref{ccts} (2). Thus $\mathfrak{p}=\mathfrak{q}{\downharpoonright}{_{\rr({_m}{\downharpoonleft}\mathfrak{q})}}=\mathfrak{q}{\downharpoonright}{_l}$.
\end{proof}

In the following statements, we shall give some properties of general weak projection ordered categories.
Let $(C,\circ,\dd,\rr,\leq, P_C, \times,\star)$ be a weak projection ordered category and $a\in C$. Then $\dd(a)\in P_{C}$. By (\ref{c}) and (\ref{j}), we can define
\begin{equation}\label{aa}
\Theta_{a}=\theta_{\dd(a)}\nu_{a}:P_{C}\longmapsto \rr(a)^{\downarrow},x\longmapsto \rr(_{x\star \dd(a)}{\downharpoonleft}a).
\end{equation}
Dually, we have
\begin{equation}\label{cpia}
\Delta_{a}=\delta_{\rr(a)}\mu_{a}:P_{C}\longmapsto \dd(a)^{\downarrow},y\longmapsto \dd(a{\downharpoonright}_{\rr(a)\times y}).
\end{equation}
It is easy to see that
\begin{equation}\label{ccwio}
\Theta_{p}=\theta_{p},\,\,\, \Delta_{p}=\delta_{p}.
\end{equation}
 for all $p\in P_{C}$ by (1) and (2) in Lemma \ref{jiben1}, Remark \ref{fanchoudexingzhi} and Lemma \ref{b} (11).
 Moreover, by (1) and (2) in Lemma \ref{jiben1}, we can obtain that
\begin{equation}\label{xpha}
\Theta_{a}{\mid}{_{\dd(a)^{\downarrow}}}=\nu_{a},\,\,\,\Delta_{a}{\mid}{_{\rr(a)^{\downarrow}}}=\mu_{a}.
\end{equation}
\begin{lemma}\label{dd}Let $(C,\circ,\dd,\rr,\leq, P_C, \times,\star)$ be a weak projection ordered category and $a, b\in C$.
\begin{itemize}
\item[(1)] $\nu_{a}\theta_{\rr(a)}=\nu_{a}=\nu_{a}\delta_{\rr(a)}$ and $\mu_{a}\delta_{\dd(a)}=\mu_{a}=\mu_{a}\theta_{\dd(a)}$.
\item[(2)]$\Theta_{a}\theta_{\rr(a)}=\Theta_{a}=\theta_{\dd(a)}\Theta_{a}$ and $\Delta_{a}\delta_{\dd(a)}=\Delta_{a}=\delta_{\rr(a)}\Delta_{a}$.
\item[(3)]$\Theta_{a}\mu_{a}=\theta_{\dd(a)}$.
\item[(4)] If $a\leq b$, then $\Theta_{a}=\theta_{\dd(a)}\nu_{b}$.
\item[(5)]For all $p,q\in P_{C}$, if $p\leq \dd(a)$ and $q\leq \rr(a)$, then $$\Theta_{_{p}\downharpoonleft a}=\theta_{p}\Theta_{a},\,\,\Delta_{a\downharpoonright_{q}}=\delta_{q}\Delta_{a},\,\,\Theta_{a\downharpoonright_{q}}=\theta_{q\mu_{a}}\Theta_{a},\,\,
   \Delta_{_{p}\downharpoonleft a}=\delta_{p\nu_{a}}\Delta_{a}.$$
\item[(6)] If $\rr(a)=\dd(b)$, then $\Theta_{a\circ b}=\Theta_{a}\Theta_{b}$ and $\Delta_{a\circ b}=\Delta_{b}\Delta_{a}$.
\item[(7)] $\Theta_{a}\Delta_{a}=\theta_{\dd(a)}$ and $\Delta_{a}\Theta_{a}=\delta_{\rr(a)}$.
\end{itemize}
\end{lemma}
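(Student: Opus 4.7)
The plan is to reduce every assertion to the two factorisations $\Theta_a=\theta_{\dd(a)}\nu_a$ and $\Delta_a=\delta_{\rr(a)}\mu_a$ given in (\ref{aa}) and (\ref{cpia}), then apply the elementary identities already established for $\nu_a,\mu_a$ (Section 2) and for $\theta_p,\delta_p$ (Section 3). The engine of the whole list is item (1): since $\nu_a$ maps $\dd(a)^{\downarrow}$ into $\rr(a)^{\downarrow}$, every element of the form $p\nu_a$ satisfies $p\nu_a\leq_{P_C}\rr(a)$, hence Lemma \ref{jiben1}(1)(2) give $(p\nu_a)\star\rr(a)=p\nu_a=\rr(a)\times(p\nu_a)$. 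This yields $\nu_a\theta_{\rr(a)}=\nu_a=\nu_a\delta_{\rr(a)}$. The identities for $\mu_a$ are obtained dually.

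With (1) in hand, (2) is immediate: $\Theta_a\theta_{\rr(a)}=\theta_{\dd(a)}\nu_a\theta_{\rr(a)}=\theta_{\dd(a)}\nu_a=\Theta_a$, and $\theta_{\dd(a)}\Theta_a=\theta_{\dd(a)}\theta_{\dd(a)}\nu_a=\Theta_a$ using the idempotency $\theta_{\dd(a)}\theta_{\dd(a)}=\theta_{\dd(a)}$ (Lemma \ref{o} with $p=q=\dd(a)$). For (3), expand $\Theta_a\mu_a=\theta_{\dd(a)}\nu_a\mu_a=\theta_{\dd(a)}\,\mathrm{id}_{\dd(a)^{\downarrow}}=\theta_{\dd(a)}$ using Lemma \ref{f} together with the fact that $\im\theta_{\dd(a)}\subseteq\dd(a)^{\downarrow}$ makes the composition legitimate. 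For (4), from $a\leq b$ Lemma \ref{b}(3) gives $a={_{\dd(a)}}{\downharpoonleft}b$, and Lemma \ref{apdxn} then yields $\nu_a=\nu_b|_{\dd(a)^{\downarrow}}$; since $\theta_{\dd(a)}$ lands in $\dd(a)^{\downarrow}$, we conclude $\Theta_a=\theta_{\dd(a)}\nu_a=\theta_{\dd(a)}\nu_b$.

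Item (5) combines (4) with Lemma \ref{o}: for instance, $_p{\downharpoonleft}a\leq a$ has domain $p$, so (4) gives $\Theta_{_p\downharpoonleft a}=\theta_p\nu_a$, while $p\leq\dd(a)$ together with Lemma \ref{o} gives $\theta_p=\theta_p\theta_{\dd(a)}$, and so $\theta_p\nu_a=\theta_p\theta_{\dd(a)}\nu_a=\theta_p\Theta_a$. The identity $\Theta_{a\downharpoonright_q}=\theta_{q\mu_a}\Theta_a$ is the same argument, using $\dd(a\downharpoonright_q)=q\mu_a\leq\dd(a)$ from (\ref{c}) and Lemma \ref{b}(2). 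The two $\Delta$-identities are dual. For (6), use $\dd(a\circ b)=\dd(a)$ and Lemma \ref{ee} to write $\Theta_{a\circ b}=\theta_{\dd(a)}\nu_a\nu_b$; now insert $\nu_a=\nu_a\theta_{\rr(a)}=\nu_a\theta_{\dd(b)}$ (by (1) and $\rr(a)=\dd(b)$) to obtain $\theta_{\dd(a)}\nu_a\theta_{\dd(b)}\nu_b=\Theta_a\Theta_b$. Finally, (7) collapses to one line: $\Theta_a\Delta_a=\theta_{\dd(a)}\nu_a\delta_{\rr(a)}\mu_a=\theta_{\dd(a)}\nu_a\mu_a=\theta_{\dd(a)}$ by (1) and Lemma \ref{f}, and the other identity is dual. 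No step presents a genuine obstacle; the only place where one must watch one's step is in (5) and (6), where the matching of domains and codomains — $\rr(a)=\dd(b)$ in (6), and $p\leq\dd(a)$, $q\leq\rr(a)$ in (5) — is what makes the various compositions defined and ensures Lemma \ref{o} is applicable.
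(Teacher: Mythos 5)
Your proof is correct and follows essentially the same route as the paper's: establish item (1) from Lemma \ref{jiben1}(1)--(2), then derive the rest from the factorisations $\Theta_a=\theta_{\dd(a)}\nu_a$, $\Delta_a=\delta_{\rr(a)}\mu_a$ together with Lemmas \ref{o}, \ref{f}, \ref{b} and \ref{ee}. The only differences are cosmetic streamlinings — in (3) and (7) you invoke $\nu_a\mu_a=\mathrm{id}_{\dd(a)^{\downarrow}}$ from Lemma \ref{f} where the paper unwinds the restrictions element-by-element, and in (4)--(5) you route through Lemma \ref{apdxn} and item (4) instead of Lemma \ref{b}(9) directly — all of which are sound.
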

\begin{proof}
We first observe that $p\leq q$ if and only if $p\leq_{P_{C}} q$ for all $p,q\in P_{C}$ as $C$ is a weak projection ordered category.

(1) It is easy to see that  $\nu_{a}\theta_{\rr(a)}$,\,$\nu_{a}$ and $\nu_{a}\delta_{\rr(a)}$ have the same domain $\dd(a)^{\downarrow}$. Let $x\in \dd(a)^{\downarrow}$. Then $x\nu_{a}\in \rr(a)^{\downarrow}$, i.e. $x\nu_{a}\leq \rr(a)$. By (1) and (2) in Lemma \ref{jiben1}, we have $$x(\nu_{a}\theta_{\rr(a)})=(x\nu_{a})\theta_{\rr(a)}=(x\nu_{a})\star \rr(a)=x\nu_{a}=\rr(a)\times (x\nu_{a})=x\nu_{a}\delta_{\rr(a)}.$$ This implies that $\nu_{a}\theta_{\rr(a)}=\nu_{a}=\nu_{a}\delta_{\rr(a)}$. Dually, $\mu_{a}\delta_{\dd(a)}=\mu_{a}=\mu_{a}\theta_{\dd(a)}$.

(2) The fact $\Theta_{a}=\theta_{\dd(a)}\nu_{a}$ and (1) imply that $\Theta_{a}\theta_{\rr(a)}=\theta_{\dd(a)}\nu_{a}\theta_{\rr(a)}=\Theta_{a}$. On the other hand, let $t\in P_{C}$. Then $t\star \dd(a)\leq \dd(a)$ and so $(t\star \dd(a))\star \dd(a)=t\star \dd(a)$ by Lemma \ref{jiben1} (2). This implies that $$t\theta_{\dd(a)}\Theta_{a}=\rr(_{(t\star \dd(a))\star \dd(a)}{\downharpoonleft} a)=\rr(_{t\star \dd(a)}{\downharpoonleft} a)=t\Theta_{a}.$$  Since $\theta_{\dd(a)}\Theta_{a}$ and $\Theta_{a}$ have the same domain $P_{C}$,  we have $\theta_{\dd(a)}\Theta_{a}=\Theta_{a}$. Dually, $\Delta_{a}\delta_{\dd(a)}=\Delta_{a}=\delta_{\rr(a)}\Delta_{a}$.

(3) Obviously, $\Theta_{a}\mu_{a}$ and $\theta_{\dd(a)}$ have the same domain $P_{C}$. Let $t\in P_{C}$. By Lemma \ref{jiben1} (2), $t\star \dd(a)\leq \dd(a)$. By using  (5)  and (1) in Lemma \ref{b},  we have $$t\Theta_{a}\mu_{a}=\dd(a{\downharpoonright}_{\rr(_{t\star \dd(a)}{\downharpoonleft} a)})=\dd(_{t\star \dd(a)}{\downharpoonleft} a)=t\star \dd(a)=t\theta_{\dd(a)}.$$ Thus $\Theta_{a}\mu_{a}=\theta_{\dd(a)}$.

(4) Let $a\leq b$.  Then $a= {_{\dd(a)}}{\downharpoonleft}b$ by Lemma \ref{b} (3). Let $t\in P_{C}$. By Lemma \ref{jiben1} (2) and (O1),  we have $t\star \dd(a)\leq \dd(a)\leq \dd(b)$. By Lemma \ref{b} (9),
$$t\Theta_{a}=\rr(_{(t\star \dd(a))}{\downharpoonleft} a)=\rr(_{(t\star \dd(a))}{\downharpoonleft}(_{\dd(a)}{\downharpoonleft} b))=\rr(_{(t\star \dd(a))}{\downharpoonleft } b)=t\theta_{\dd(a)}\nu_{b}.$$

(5) Let $p\leq \dd(a)$ and $t\in P_{C}$. By Lemma \ref{jiben1} (2), we have $t\star p\leq p\leq \dd(a)$ and $(t\star p)\star \dd(a)=t\star p$. By the definition of $\Theta_{a}$, (1)  and (9) in Lemma \ref{b},
$$t\Theta_{{_p}{\downharpoonleft} a}=\rr(_{(t\star \dd({_p}{\downharpoonleft} a))}{\downharpoonleft}(_{p}{\downharpoonleft} a))=\rr(_{t\star p}{\downharpoonleft}({_p}{\downharpoonleft } a))=\rr(_{(t\star p)}{\downharpoonleft} a)=\rr(_{(t\star p)\star \dd(a)}{\downharpoonleft} a)=t\theta_{p}\Theta_{a}.$$
This implies that $\Theta_{{_p}{\downharpoonleft} a}=\theta_{p}\Theta_{a}$.
Dually, $\Delta_{a\downharpoonright_{q}}=\delta_{q}\Delta_{a}$ for all $q\in P_{C}$ with $q\leq \rr(a)$ . On the other hand, let $q\in P_{C}$ and $q\leq \rr(a)$. By Lemma \ref{b} (6), we obtain $a{\downharpoonright}{_q}={_{\dd(a{\downharpoonright}_{q})}}{\downharpoonleft}a={_q\mu_{a}}{\downharpoonleft}a$, and so $\Theta_{a{\downharpoonright}{_q}}=\Theta_{_{q\mu_{a}}{\downharpoonleft}a}=\theta_{q\mu_{a}}\Theta_{a}$. Dually, $\Delta_{_{p}\downharpoonleft a}=\delta_{p\nu_{a}}\Delta_{a}$ for all $p\in P_C$ with $p\leq \dd(a)$.

(6) Let $p=\dd(a)$ and $q=\rr(a)=\dd(b)$. Using (1) of the present lemma, Lemma \ref{ee} and the fact that $\dd(a\circ b)=\dd(a)$ in order,
$$\Theta_{a}\Theta_{b}=\theta_{\dd(a)}\nu_{a}\theta_{\dd(b)}\nu_{b}=\theta_{\dd(a)}\nu_{a}\theta_{\rr(a)}\nu_{b}
=\theta_{\dd(a)}\nu_{a}\nu_{b}=\theta_{\dd(a)}\nu_{a\circ b}=\theta_{\dd(a\circ b)}\nu_{a\circ b}=\Theta_{a\circ b}.$$
Dually, $\Delta_{a\circ b}=\Delta_{b}\Delta_{a}$.

(7) Let $t\in P$. Then $t\star \dd(a)\leq \dd(a)$ By Lemma \ref{jiben1} (2) . By Lemma \ref{b} (1), we have $\rr(_{(t\star \dd(a))}{\downharpoonleft} a)\leq \rr(a)$, which  together with Lemma \ref{jiben1} (1) gives that $\rr(a)\times \rr(_{(t\star \dd(a))}{\downharpoonleft} a)=\rr(_{(t\star \dd(a))}{\downharpoonleft} a).$ By (5) and (1) in Lemma \ref{b},
$$t\Theta_{a}\Delta_{a}=\dd(a{\downharpoonright}_{\rr(a)\times \rr(_{(t\star \dd(a))}{\downharpoonleft} a)} )=\dd(a{\downharpoonright}_{\rr(_{(t\star \dd(a))}{\downharpoonleft} a)} )=\dd(_{(t\star \dd(a))}{\downharpoonleft} a)=t\star \dd(a)=t\theta_{\dd(a)}.$$ Since $\Theta_{a}\Delta_{a}$ and $\theta_{\dd(a)}$ have the same domain $P_{C}$, we have $\Theta_{a}\Delta_{a}=\theta_{\dd(a)}$. Dually, we can prove that $\Delta_{a}\Theta_{a}=\delta_{\rr(a)}$.
\end{proof}
Let $(C,\circ,\dd,\rr,\leq, P_C, \times,\star)$ be a weak projection ordered category. Then $p^{\downarrow}=\{x\in P_{C}\mid x\leq p\}$ is a subalgebra of $(P_{C},\times,\star)$ for all $p\in P_{C}$. In fact, if $x,y\in p^{\downarrow}$, then $x\leq p$ and $y\leq p$. By (1) and (2) in Lemma \ref{jiben1}, we have $x\times y\leq x\leq p$ and $x\star y\leq y\leq p$. This yields that $x\times y,x\star y\in p^{\downarrow}$.
\begin{lemma}\label{ii}
Let $(C,\circ,\dd,\rr,\leq, P_C, \times,\star)$ be a weak projection ordered category. Then the following statements are equivalent:
\begin{itemize}
\item[\rm (G1a)] For all $a\in C$ and $p,q\in P_{C}$, if $p\leq \dd(a)$ and $q\leq \rr(a)$, then $\delta_{p\nu_{a}}=\Delta_{a}\delta_{p}\Theta_{a}$ and $ \theta_{q\mu_{a}}=\Theta_{a}\theta_{q}\Delta_{a}$.
\item[\rm (G1b)]For all $a\in C$ and $p\in P_{C}$, we have $\delta_{p\Theta_{a}}=\Delta_{a}\delta_{p}\Theta_{a}$ and $\theta_{p\Delta_{a}}=\Theta_{a}\theta_{p}\Delta_{a}$.
\item[\rm (G1c)]For all $a\in C$ and $p, q\in P_{C}$, if $p\leq \dd(a)$ and $q\leq \rr(a)$, then $\Delta_{_{p}{\downharpoonleft} a}=\Delta_{a}\delta_{p}$ and $\Theta_{a{\downharpoonright}_{q}}=\Theta_{a}\theta_{q}$.
\item[\rm (G1d)]For all $a\in C$,  the map $\nu_{a}: \dd(a)^{\downarrow}\rightarrow\rr(a)^{\downarrow},\, p\mapsto \rr({_p}{\downharpoonleft} a)$ is a homomorphism between projection algebras.
\end{itemize}
\end{lemma}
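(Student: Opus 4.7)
My plan is to prove the equivalences along the chain (G1a) $\Leftrightarrow$ (G1b) $\Leftrightarrow$ (G1c) $\Leftrightarrow$ (G1d). For each link, only the identity involving $\delta$ needs to be treated; the one involving $\theta$ follows by a dual argument. The proof is driven by two ideas: the restriction/extension trick $p \leftrightarrow p\theta_{\dd(a)}$ that moves between an arbitrary $p \in P_C$ and its trace $p\theta_{\dd(a)} \leq \dd(a)$, and the compatibility identities of Lemma \ref{dd} linking $\Theta_a, \Delta_a$ with $\nu_a, \mu_a$ and with $\theta_{\dd(a)}, \delta_{\rr(a)}$.

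For (G1b) $\Rightarrow$ (G1a), simply restricting $p$ to lie below $\dd(a)$ yields $p\theta_{\dd(a)} = p$ by Lemma \ref{jiben1}(2), so $p\Theta_a = p\nu_a$ and the (G1b)-identity becomes the first identity of (G1a). For (G1a) $\Rightarrow$ (G1b), given an arbitrary $p \in P_C$ I set $p' = p\theta_{\dd(a)} \leq \dd(a)$; then $p'\nu_a = p\Theta_a$, while Lemma \ref{jj} gives $\delta_{p'} = \delta_{\dd(a)}\delta_p\theta_{\dd(a)}$, which combined with $\Delta_a\delta_{\dd(a)} = \Delta_a$ and $\theta_{\dd(a)}\Theta_a = \Theta_a$ from Lemma \ref{dd}(2) reduces $\Delta_a\delta_{p'}\Theta_a$ to $\Delta_a\delta_p\Theta_a$, yielding (G1b).

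For (G1b) $\Leftrightarrow$ (G1c), I first use Lemma \ref{dd}(5) to rewrite $\Delta_{_p\downharpoonleft a} = \delta_{p\nu_a}\Delta_a$, so that the first identity of (G1c) becomes $\delta_{p\nu_a}\Delta_a = \Delta_a\delta_p$ for $p \leq \dd(a)$. Starting from the (G1b)-identity $\delta_{p\nu_a} = \Delta_a\delta_p\Theta_a$ (valid for $p \leq \dd(a)$ as just noted), post-composing with $\Delta_a$ and applying $\Theta_a\Delta_a = \theta_{\dd(a)}$ from Lemma \ref{dd}(7) together with $\delta_p\theta_{\dd(a)} = \delta_p$ from Lemma \ref{o} (using $p \leq \dd(a)$) yields the desired identity. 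Conversely, post-composing the (G1c)-identity with $\Theta_a$ and invoking $\Delta_a\Theta_a = \delta_{\rr(a)}$ together with $\delta_{p\nu_a}\delta_{\rr(a)} = \delta_{p\nu_a}$ (since $p\nu_a \leq \rr(a)$) recovers (G1b) for $p \leq \dd(a)$, and the restriction/extension trick promotes it to all $p$.

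For (G1c) $\Leftrightarrow$ (G1d), I evaluate $\delta_{p\nu_a}\Delta_a = \Delta_a\delta_p$ at $y \in \rr(a)^\downarrow$. Using $\Delta_a|_{\rr(a)^\downarrow} = \mu_a$ from (\ref{xpha}) and noting $p\nu_a \times y \leq \rr(a)$, this reduces to $(p\nu_a \times y)\mu_a = p \times y\mu_a$ for $p \leq \dd(a)$, $y \leq \rr(a)$. Setting $p' = p\nu_a$ (so $p = p'\mu_a$ by Lemma \ref{f}) converts this into $(p' \times y)\mu_a = p'\mu_a \times y\mu_a$ on $\rr(a)^\downarrow$, which is precisely $\mu_a$ being a $\times$-homomorphism; since $\nu_a$ and $\mu_a$ are mutual inverses (Lemma \ref{f}), this is equivalent to $\nu_a$ preserving $\times$. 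The dual treatment of the $\Theta$-equation yields $\star$-preservation, and (G1c) $\Leftrightarrow$ (G1d) follows. The main obstacle is the clean invocation of Lemma \ref{jj} in the extension step, which genuinely relies on the strongness axioms (SP1)--(SP2), together with careful bookkeeping of the codomains of $\Theta_a, \Delta_a, \nu_a, \mu_a$; once this is handled, every other step is a direct application of Lemmas \ref{f}, \ref{o}, and \ref{dd}.
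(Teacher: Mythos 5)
Your proof is correct and follows essentially the same route as the paper: the same restriction/extension trick $p\mapsto p\theta_{\dd(a)}$ for (G1a)$\,\Leftrightarrow\,$(G1b), the same use of Lemma \ref{dd}(5),(7) and Lemma \ref{o} for the link with (G1c), and the same reduction of (G1d) to the operator identities via the bijectivity of $\nu_a$ and $\mu_a$ (Lemma \ref{f}). The only differences are organizational — you arrange the implications as a chain where the paper uses a hub centered on (G1a), and you phrase the (G1c)$\,\Leftrightarrow\,$(G1d) link as ``$\mu_a$ preserves $\times$ and $\nu_a$ preserves $\star$'' rather than the paper's direct element computation; just make sure to spell out, in the (G1d)$\,\Rightarrow\,$(G1c) direction, that agreement of $\delta_{p\nu_a}\Delta_a$ and $\Delta_a\delta_p$ on $\rr(a)^{\downarrow}$ extends to all of $P_C$ via $\Delta_a=\delta_{\rr(a)}\Delta_a$ and $\delta_{p\nu_a}=\delta_{\rr(a)}\delta_{p\nu_a}$.
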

\begin{proof}Let $a\in C$ and denote $s=\dd(a)$ and $t=\rr(a)$. Then $s, t\in P_{C}$.

(G1a) $\Longrightarrow$ (G1b). Let $p\in P_{C}$. By using the definition of $\Theta_{a}$, (G1a), the fact that $p\theta_{s}=p\star s \leq s=\dd(a)$, Lemma \ref{jj} and Lemma
\ref{dd} (2) in order, we have
$$\delta_{p\Theta_{a}}=\delta_{(p\theta_{s})\nu_{a}} =\Delta_{a}\delta_{(p\theta_{s})}\Theta_{a} =\Delta_{a}\delta_{s}\delta_{p}\theta_{s}\Theta_{a}
 =\Delta_{a}\delta_{p}\Theta_{a}.$$ Dually, $\theta_{p\Delta_{a}}=\Theta_{a}\theta_{p}\Delta_{a}$.

(G1a) $\Longrightarrow$ (G1c).
Let $p\in P_{C}$ with $ p\leq \dd(a)$. By (5) and (1) in Lemma \ref{b}, we have ${_p}{\downharpoonleft } a=a{\downharpoonright}_{r}$, where $r=\rr({_p}{\downharpoonleft} a)\leq \rr(a)$. By using Lemma \ref{dd} (5), (G1a), Lemma \ref{dd} (7) and Lemma \ref{o}, we have
$$\Delta_{_{p}{\downharpoonleft} a}=\delta_{p\nu_{a}}\Delta_{a} =\Delta_{a}\delta_{p}\Theta_{a}\Delta_{a} =\Delta_{a}\delta_{p}\theta_{\dd(a)}
 =\Delta_{a}\delta_{p}.$$ Dually, $\Theta_{a\downharpoonright_{q}}=\Theta_{a}\theta_{q}$ for all $q\in P_{C}$ with $ q\leq \rr(a)$.

(G1b) $\Longrightarrow$ (G1a).  Let $p\in P_{C}$ with $ p\leq \dd(a)$. By Lemma \ref{jiben1} (2), we have $p=p\star \dd(a)=p\theta_{\dd(a)}$, and so $p\nu_{a}=(p\theta_{\dd(a)})\nu_{a}=p(\theta_{\dd(a)}\nu_{a})=p\Theta_{a}$.  In view of (G1b), we obtain that  $\delta_{p\nu_{a}}=\delta_{p\Theta_{a}}=\Delta_{a}\delta_{p}\Theta_{a}$. Dually, $ \theta_{q\mu_{a}}=\Theta_{a}\theta_{q}\Delta_{a}$ for all $q\in P_{C}$ with $ q\leq \rr(a)$.

(G1c) $\Longrightarrow$ (G1a).
Let $p \in P_{C}$ with $p\leq \dd(a)$. By Lemma \ref{b} (1), we have $p\nu_{a}=\rr({_p}{\downharpoonleft} a)\leq \rr(a)$ and so $\delta_{p\nu_{a}}\delta_{\rr(a)}=\delta_{p\nu_{a}}$ by Lemma \ref{o}. Using (G1c) and (5) and (7) in Lemma \ref{dd}, we have
$$\Delta_{a}\delta_{p}\Theta_{a}=\Delta_{_{p}{\downharpoonleft} a}\Theta_{a} =\delta_{p\nu_{a}}\Delta_{a}\Theta_{a}
 =\delta_{p\nu_{a}}\delta_{\rr(a)} =\delta_{p\nu_{a}}.$$
Dually, $ \theta_{q\mu_{a}}=\Theta_{a}\theta_{q}\Delta_{a}$ for all $q\in P_{C}$ with $ q\leq \rr(a)$.

(G1c) $\Longrightarrow$ (G1d).
Assume that (G1c) holds. Then (G1a) also holds by the previous paragraph. Let $p,q \in P_{C}$ with $p,q \in \dd(a)^{\downarrow}$. Then $p,q\leq \dd(a)$.
On one hand,
\begin{equation*}
\begin{aligned}
(q\nu_{a})\times( p\nu_{a})&=p\nu_{a}\delta_{q\nu_{a}}\,\,\,\,\,\,(\mbox{by (\ref{j})})\\
&=p\nu_{a}\Delta_{a}\delta_{q}\Theta_{a}\,\,\,\,\,\,(\mbox{by (G1a)})\\
&=p\nu_{a}\delta_{\rr(a)}\mu_{a}\delta_{q}\theta_{\dd(a)}\nu_{a}\,\,\,\,\,\,(\mbox{by (\ref{aa}) and (\ref{cpia})})\\
&=p\nu_{a}\mu_{a}\delta_{q}\nu_{a}\,\,\,\,\,\,(\mbox{Lemma \ref{dd} (1) and \,Lemma \ref{o}})\\
&=p\delta_{q}\nu_{a}\,\,\,\,\,\,(\mbox{by Lemma \ref{f}})\\
&=(q\times p)\nu_{a}.\,\,\,\,\,\,(\mbox{by (\ref{j})})
\end{aligned}
\end{equation*}
This shows that $\nu_a$ preserves $\times$. On the other hand,
 \begin{equation*}
\begin{aligned}
p\Theta_{a}\theta_{\rr({_q}{\downharpoonleft}a)}&=p\Theta_{a{\downharpoonright}{_{\rr({_q}{\downharpoonleft}a)}}}\,\,\,\,\,\,( \mbox{by(G1c)})\\
&=p\Theta_{{_q}{\downharpoonleft}a}\,\,\,\,\,\,(\mbox{by Lemma \ref{b} (5)})\\
&=\rr({_{p\star \dd({_q}{\downharpoonleft}a)}}{\downharpoonleft}({_q}{\downharpoonleft}a))\,\,\,\,\,\,(\mbox{by (\ref{aa})})\\
&=\rr({_{p\star q}}{\downharpoonleft}({_q}{\downharpoonleft}a))\,\,\,\,\,\,(by \mbox{Lemma \ref{b} (1)})\\
&=\rr({_{p\star q}}{\downharpoonleft}a)\,\,\,\,\,\,(\mbox{by Lemma \ref{jiben1} (2) and Lemma \ref{b} (9)})\\
&=(p\star q)\nu_{a}.\,\,\,\,\,\,(\mbox{by (\ref{c})})
\end{aligned}
\end{equation*}
Moreover,
\begin{equation*}
\begin{aligned}
p\Theta_{a}\theta_{\rr({_q}{\downharpoonleft}a)}&=\rr({_{p\star \dd(a)}}{\downharpoonleft}a)\theta_{\rr({_q}{\downharpoonleft}a)}\,\,\,\,\,\,(\mbox{by (\ref{aa})})\\
&=\rr({_p}{\downharpoonleft}a)\theta_{\rr({_q}{\downharpoonleft}a)}\,\,\,\,\,\,(\mbox{by Lemma \ref{jiben1} (2)})\\
&=\rr({_p}{\downharpoonleft}a)\star \rr({_q}{\downharpoonleft}a)\,\,\,\,\,\,(\mbox{by (\ref{j})})\\
&=(p\nu_{a})\star(q\nu_{a}).\,\,\,\,\,\,(\mbox{by (\ref{c})})
\end{aligned}
\end{equation*}
 Thus $(p\star q)\nu_{a}=(p\nu_{a})\star(q\nu_{a}).$  Therefore $\nu_{a}$ is a homomorphism.

(G1d) $\Longrightarrow$  (G1a). By Lemma \ref{f} and (G1d),
$\nu_{a}: \dd(a)^{\downarrow}\rightarrow\rr(a)^{\downarrow},\,\,\,p\mapsto\rr({_p}{\downharpoonleft}a)$ is an isomorphism.
Let $p\leq \dd(a)=s$ and $e\in P_{C}$. We shall prove that $e\delta_{p\nu_{a}}=e\Delta_{a}\delta_{p}\Theta_{a}$ in the sequel.
Denote $f=t\times e=e\delta_{t}$, where $t=\rr(a)$. Then by  Lemma \ref{jiben1} (1), we have $f\leq t$, i.e. $f\in \rr(a)^{\downarrow}$, and so we can let $g=f\mu_{a}$. By Lemma \ref{f}, we have $f=f\mu_{a}\nu_{a}=g\nu_{a}$. Obviously, we obtain $g\in \dd(a)^{\downarrow}$, i.e. $g\leq \dd(a)=s$. By the facts that $p\nu_{a}\leq \rr(a)=t$, $p\leq \dd(a)=s$ and Lemma \ref{o}, we have $\delta_{p\nu_{a}}=\delta_{t}\delta_{p\nu_{a}}$ and $\delta_{p}\theta_{s}=\delta_{p}$. Using Lemma \ref{o}, the fact that $\nu_{a}$ is an isomorphism and the definitions of $\Delta_{a}$ and $\Theta_{a}$ in order, we have
$$e\delta_{p\nu_{a}}=e\delta_{t}\delta_{p\nu_{a}}=f\delta_{p\nu_{a}}=(g\nu_{a})\delta_{p\nu_{a}}=(g\delta_{p})\nu_{a}=(f\mu_{a})\delta_{p}\nu_{a}
=e\delta_{t}\mu_{a}\delta_{p}\nu_{a}$$$$=e\delta_{t}\mu_{a}\delta_{p}\theta_{s}\nu_{a}=e\delta_{\rr(a)}\mu_{a}\delta_{p}\theta_{\dd(a)}\nu_{a}=e\Delta_{a}\delta_{p}\Theta_{a}.$$
This shows $\delta_{p\nu_{a}}=\Delta_{a}\delta_{p}\Theta_{a}$. On the other hand,
by Lemma \ref{f} and (G1d),    $$\mu_{a}: \rr(a)^{\downarrow}\rightarrow \dd(a)^{\downarrow},\, q\mapsto q\mu_{a}=\dd(a{\downharpoonright}_q)$$  is also an isomorphism.  Thus $ \theta_{q\mu_{a}}=\Theta_{a}\theta_{q}\Delta_{a}$ by dual arguments.
\end{proof}

\begin{defn}\label{aaa}A weak projection ordered category $(C,\circ,\dd,\rr,\leq,P_{C},\times,\star)$ is called  a {\em projection ordered category} if  the conditions (G1a)--(G1d) in Lemma \ref{ii} hold.
\end{defn}

\begin{lemma}\label{nn}
Let $(C,\circ,\dd,\rr,\leq,P_{C},\times,\star)$ be a projection ordered category and $a\in C, p,q\in P_{C}$. Then
$$p\Delta_{a}\delta_{q}\Theta_{a}\theta_{p}=q\Theta_{a}\theta_{p},\,\,  p\Theta_{a}\theta_{q}\Delta_{a}\delta_{p}=q\Delta_{a}\delta_{p}.$$
\end{lemma}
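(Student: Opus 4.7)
The plan is to reduce each of the two identities to a one-line simplification after applying axiom (G1b) of Lemma~\ref{ii}, together with one of the mixed identities from Lemma~\ref{jiben1}(3).

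For the first identity, I would begin by noting that (G1b), applied with the projection $q$ in place of $p$, yields
$\Delta_{a}\delta_{q}\Theta_{a}=\delta_{q\Theta_{a}}$.
Substituting this inside the left-hand side converts the expression into $p\,\delta_{q\Theta_{a}}\,\theta_{p}$. Now I would unfold the two remaining maps using their definitions in (\ref{j}): applying $\delta_{q\Theta_{a}}$ to $p$ gives $(q\Theta_{a})\times p$, and then applying $\theta_{p}$ gives $((q\Theta_{a})\times p)\star p$. At this point Lemma~\ref{jiben1}(3), which says $(e\times f)\star f=e\star f$, collapses the expression to $(q\Theta_{a})\star p=q\Theta_{a}\theta_{p}$, which is exactly the right-hand side.

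For the second identity I would argue dually. From the second half of (G1b), with $p$ replaced by $q$, I obtain $\Theta_{a}\theta_{q}\Delta_{a}=\theta_{q\Delta_{a}}$, so the left-hand side becomes $p\,\theta_{q\Delta_{a}}\,\delta_{p}$. Unfolding via (\ref{j}) gives $p\times(p\star(q\Delta_{a}))$, and now the other identity in Lemma~\ref{jiben1}(3), namely $e\times(e\star f)=e\times f$, reduces this to $p\times(q\Delta_{a})=q\Delta_{a}\delta_{p}$, the right-hand side.

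I do not anticipate a genuine obstacle: both identities are essentially immediate consequences of the reformulation of (G1a) given by (G1b), once one recognises that the prefix $p$ and suffix $\theta_{p}$ (respectively $\delta_{p}$) are there precisely to trigger one of the two absorption laws $(e\times f)\star f=e\star f$ and $e\times(e\star f)=e\times f$ from Lemma~\ref{jiben1}(3). The only minor care needed is to apply (G1b) with the correct projection (namely $q$, not $p$) so that the $\delta_{q}$ (resp.\ $\theta_{q}$) standing between $\Delta_{a}$ and $\Theta_{a}$ is absorbed into a single $\delta_{q\Theta_{a}}$ (resp.\ $\theta_{q\Delta_{a}}$); everything else is a mechanical evaluation of the unary operators defined in (\ref{j}).
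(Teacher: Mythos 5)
Your proposal is correct and follows essentially the same route as the paper: apply (G1b) with the projection $q$ to collapse $\Delta_{a}\delta_{q}\Theta_{a}$ into $\delta_{q\Theta_{a}}$, then evaluate via (\ref{j}) and finish with the absorption law $(e\times f)\star f=e\star f$ from Lemma \ref{jiben1}(3), with the second identity handled dually. No gaps.
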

\begin{proof}
Denote $t=q\Theta_{a}$. By (G1b) and Lemma \ref{jiben1} (3),
$$p\Delta_{a}\delta_{q}\Theta_{a}\theta_{p}=p\delta_{q\Theta_{a}}\theta_{p}=p\delta_{t}\theta_{p}=(t\times p)\star p=t\star p=t\theta_{p}=q\Theta_{a}\theta_{p}.$$
The other identity can be proved dually.
\end{proof}

\begin{defn}\label{ll}
Let $(C,\circ,\dd,\rr,\leq, P_C, \times,\star)$ be a projection ordered category, $\mathscr{P}(P_{C})$ be the path category of $P_{C}$ and $\varepsilon: \mathscr{P}(P_{C})\rightarrow C$ be a map. Then $\varepsilon$ is called an {\em evaluation map} if the following conditions hold: For all $p\in P_{C}$ and $ \mathfrak{p}, \mathfrak{q}\in \mathscr{P}(P_{C})$, 
\begin{itemize}
\item[\rm (E1)] $\varepsilon(p)=\varepsilon(p, p)=p.$
\item[\rm (E2)] $\dd(\varepsilon(\mathfrak{p}))=\dd(\mathfrak{p})$ and $\rr(\varepsilon(\mathfrak{p}))=\rr(\mathfrak{p})$.
\item[\rm (E3)]If $\mathfrak{p}\circ \mathfrak{q}$ is defined, then $\varepsilon(\mathfrak{p}\circ \mathfrak{q})=\varepsilon(\mathfrak{p})\circ \varepsilon(\mathfrak{q})$.
\item[\rm (E4)] If $\mathfrak{p}\leq_{\mathscr{P}} \mathfrak{q}$, then $\varepsilon(\mathfrak{p})\leq \varepsilon(\mathfrak{q}) $.
\end{itemize}
\end{defn}
\begin{prop}\label{fder}
Let $(C,\circ,\dd,\rr,\leq, P_C, \times,\star)$ be a projection ordered category, $\mathscr{P}(P_{C})$ be the path category of $P_{C}$ and $\varepsilon: \mathscr{P}(P_{C})\rightarrow C$ be a map satisfying (E1)--(E3). Then the following statements are equivalent:
\item[\rm (E4)] If $\mathfrak{p}\leq_{\mathscr{P}} \mathfrak{q}$, then $\varepsilon(\mathfrak{p})\leq \varepsilon(\mathfrak{q}) $.
\item[\rm (E5)] For all $\mathfrak{p}\in \mathscr{P}(P_{C})$ and $t\in P_{C},$ the fact that $t\leq \dd(\mathfrak{p})$ implies that $\varepsilon(_{t}{\downharpoonleft} \mathfrak{p} )={_t}{\downharpoonleft} \varepsilon(\mathfrak{p})$.
 \item[\rm (E6)] For all $\mathfrak{p}\in \mathscr{P}(P_{C})$ and $t\in P_{C},$ the fact that $t\leq \rr(\mathfrak{p})$ implies that $\varepsilon(\mathfrak{p}{\downharpoonleft}{_t} )=\varepsilon(\mathfrak{p}){\downharpoonleft}{_t}$.
\end{prop}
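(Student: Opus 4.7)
The plan is to prove the cyclic chain of implications
$\mathrm{(E5)} \Longrightarrow \mathrm{(E4)} \Longrightarrow \mathrm{(E6)} \Longrightarrow \mathrm{(E4)} \Longrightarrow \mathrm{(E5)}$, which will establish all three equivalences. The key observation is that $\leq_{\mathscr{P}}$ was defined in Theorem \ref{dddnu} precisely in terms of the operation $_{\bullet}\!\!\downharpoonleft$, and via Lemma \ref{ccts} (7)--(8) it can equally be described in terms of $\downharpoonright_{\bullet}$. Combined with the uniqueness statements in axioms (O3) and (O4) of the ordered category $C$, each of (E5) and (E6) will be seen to be exactly the assertion that $\varepsilon$ respects the order.

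For $\mathrm{(E5)} \Longrightarrow \mathrm{(E4)}$: if $\mathfrak{p} \leq_{\mathscr{P}} \mathfrak{q}$ then by definition $\mathfrak{p} = {_r}{\downharpoonleft}\mathfrak{q}$ for some $r \leq_P \dd(\mathfrak{q})$. Using (E2) we have $\dd(\varepsilon(\mathfrak{q})) = \dd(\mathfrak{q})$, so $r \leq \dd(\varepsilon(\mathfrak{q}))$ in $C$ (since the two orders agree on $P_C$). Applying (E5) gives $\varepsilon(\mathfrak{p}) = {_r}{\downharpoonleft}\varepsilon(\mathfrak{q})$, which by Lemma \ref{b} (1) lies below $\varepsilon(\mathfrak{q})$. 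For $\mathrm{(E6)} \Longrightarrow \mathrm{(E4)}$ the argument is dual: if $\mathfrak{p} = {_r}{\downharpoonleft}\mathfrak{q}$ with $r \leq_P \dd(\mathfrak{q})$, then by Lemma \ref{ccts} (7) we may rewrite $\mathfrak{p} = \mathfrak{q}{\downharpoonright}_{s}$ where $s = \rr(\mathfrak{p}) \leq_P \rr(\mathfrak{q})$, and (E6) together with (O4) yields $\varepsilon(\mathfrak{p}) \leq \varepsilon(\mathfrak{q})$.

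For $\mathrm{(E4)} \Longrightarrow \mathrm{(E5)}$: suppose $t \leq_P \dd(\mathfrak{p})$. By Theorem \ref{dddnu} we have ${_t}{\downharpoonleft}\mathfrak{p} \leq_{\mathscr{P}} \mathfrak{p}$, so (E4) gives $\varepsilon({_t}{\downharpoonleft}\mathfrak{p}) \leq \varepsilon(\mathfrak{p})$. Using (E2) and Lemma \ref{ccts} (1), we have $\dd(\varepsilon({_t}{\downharpoonleft}\mathfrak{p})) = \dd({_t}{\downharpoonleft}\mathfrak{p}) = t$. On the other hand, since $t \leq \dd(\varepsilon(\mathfrak{p}))$, the element ${_t}{\downharpoonleft}\varepsilon(\mathfrak{p})$ is by (O3) the unique element of $C$ lying below $\varepsilon(\mathfrak{p})$ with domain $t$. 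Hence $\varepsilon({_t}{\downharpoonleft}\mathfrak{p}) = {_t}{\downharpoonleft}\varepsilon(\mathfrak{p})$, which is (E5). The implication $\mathrm{(E4)} \Longrightarrow \mathrm{(E6)}$ is entirely dual, based on Lemma \ref{ccts} (8) (giving $\mathfrak{p}{\downharpoonright}_{t} \leq_{\mathscr{P}} \mathfrak{p}$), Lemma \ref{ccts} (2), and the uniqueness statement in (O4).

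The arguments are all of a similar uniqueness-transport flavour and require no serious calculation; the only subtlety is being careful that the two partial orders $\leq$ and $\leq_{\mathscr{P}}$ agree on projections (built into the definition of weak projection ordered category) so that the hypothesis $t \leq_P \dd(\mathfrak{p})$ can be read as an inequality in $C$ before invoking (O3) or (O4). The main potential pitfall is ensuring, when extracting the equality $\varepsilon({_t}{\downharpoonleft}\mathfrak{p}) = {_t}{\downharpoonleft}\varepsilon(\mathfrak{p})$ from the uniqueness clause of (O3), that we have correctly identified both sides as elements below $\varepsilon(\mathfrak{p})$ with the same domain $t$; this is where (E2) and Lemma \ref{ccts} (1)--(2) do the essential bookkeeping.
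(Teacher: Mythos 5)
Your proposal is correct and follows essentially the same route as the paper: (E5)$\Rightarrow$(E4) by writing $\mathfrak{p}={_r}{\downharpoonleft}\mathfrak{q}$ and applying (E5), and (E4)$\Rightarrow$(E5) by using (E2) and Lemma \ref{ccts}(1) to identify $\varepsilon({_t}{\downharpoonleft}\mathfrak{p})$ as the unique element below $\varepsilon(\mathfrak{p})$ with domain $t$, with the (E6) equivalence handled dually. The only cosmetic difference is that you route (E6)$\Rightarrow$(E4) through the left-to-right restriction conversion of Lemma \ref{ccts}(7) rather than invoking duality wholesale, which is fine.
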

\begin{proof}
Assume that (E5) holds and $\mathfrak{p}\leq_{\mathscr{P}} \mathfrak{q}$. Then $\mathfrak{p}={_r}{\downharpoonleft}\mathfrak{q}$ for some $r\in P_{C}$ with $r\leq \dd(\mathfrak{q})$.  This implies that $\varepsilon(\mathfrak{p})=\varepsilon({_r}{\downharpoonleft} \mathfrak{q})={_r}{\downharpoonleft} \varepsilon(\mathfrak{q})\leq \varepsilon(\mathfrak{q})$. Conversely, assume that (E4) holds and $q\in P_{C}$ with $q\leq \dd(\mathfrak{p})$. By Theorem \ref{dddnu}, we have ${_q}{\downharpoonleft}\mathfrak{p}\leq_{\mathscr{P}} \mathfrak{p}$ and $q \leq_{\mathscr{P}} \dd(\mathfrak{p})$. Using (E1), (E4) and (E2), we have $q=\varepsilon(q)\leq \varepsilon(\dd(\mathfrak{p}))=\dd(\varepsilon(\mathfrak{p}))$. On the other hand,  (E4) gives $\varepsilon({_q}{\downharpoonleft}\mathfrak{p})\leq \varepsilon(\mathfrak{p})$, and (E2), (E1) and Lemma \ref{ccts} (1) together imply that $\dd(\varepsilon({_q}{\downharpoonleft}\mathfrak{p}))=\varepsilon(\dd({_q}{\downharpoonleft}\mathfrak{p}))=\dd({_q}{\downharpoonleft}\mathfrak{p})
=q.$  By Lemma \ref{b} (3), $\varepsilon(_{q}{\downharpoonleft} \mathfrak{p} )=_{q}{\downharpoonleft}\varepsilon(\mathfrak{p}).$ Thus  (E4)   is equivalent to (E5). Dually,  we can prove that (E4) is equivalent to (E6).
\end{proof}

Let $(C,\circ,\dd,\rr,\leq, P_C, \times,\star)$ be a projection ordered category, $\mathscr{P}(P_{C})$ be the path category of $P_{C}$ and $\varepsilon: \mathscr{P}(P_{C})\rightarrow C$ be an  evaluation map, $\mathfrak{p}=(p_{1},p_{2},\ldots,p_{k})\in \mathscr{P}(P_{C})$. By (E3) and the fact that $\mathfrak{p}=(p_1, p_2)\circ (p_2, p_3)\circ \cdots \circ (p_{k-1}, p_k)$ in $\mathscr{P}(P)$,  we have
\begin{equation}\label{pp}
\varepsilon(\mathfrak{p})=\varepsilon(p_{1},p_{2})\circ \varepsilon(p_{2},p_{3})\circ \cdots \circ \varepsilon(p_{k-1},p_{k}).
\end{equation}
\begin{lemma}\label{oo}
Let $(C,\circ,\dd,\rr,\leq, P_C, \times,\star)$ be a projection ordered category with an  evaluation map $\varepsilon:\mathscr{P}(P_{C})\rightarrow C $ and $$a\in C, p, q, r, s\in P_{C},~ p \mathcal{F}_{P_{C}} q,~ r\leq p, s\leq q, a\leq \varepsilon(p,q).$$
\begin{itemize}
\item[(1)]
$_{r}{\downharpoonleft} \varepsilon(p,q)=\varepsilon(r,r\theta_{q}) $ and $  \varepsilon(p,q){\downharpoonright}{_s}=\varepsilon(s\delta_{p},s).$
\item[(2)]$$\dd(\varepsilon(p,q))=p,\quad   \nu_{\varepsilon(p,q)}=\theta_{q}{\mid}_{p^{\downarrow}}, \quad\Theta_{\varepsilon(p,q)}=\theta_{p}\theta_{q},$$
    $$\rr(\varepsilon(p,q))=q,\quad \mu_{\varepsilon(p,q)}=\delta_{p}{\mid}_{q^{\downarrow}},\quad \Delta_{\varepsilon(p,q)}=\delta_{q}\delta_{p},$$
\item[(3)]$a=\varepsilon(\dd(a),\rr(a))$.
\end{itemize}
\end{lemma}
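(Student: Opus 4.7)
The plan is to prove the three parts in sequence, since (2) and (3) will both reduce to straightforward applications of (1) combined with (E2) and Lemma \ref{b}. The main conceptual point is that the evaluation map $\varepsilon$ transports the concrete restriction operations defined on $P$-paths in (\ref{u}) and (\ref{mft}) over to the abstract restriction operations in $C$, via the equivalent form (E5)/(E6) of (E4) established in Proposition \ref{fder}.

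For part (1), I would first compute the left restriction in the path category: since $r\leq_{P_C} p$ and $p\,\mathcal{F}_{P_C}\,q$, the definition (\ref{u}) gives directly $_{r}{\downharpoonleft}(p,q)=(r,\,r\star q)=(r,\,r\theta_q)$, which is a legitimate $P$-path by Lemma \ref{ccts}(1). Since $C$ is a \emph{weak} projection ordered category, $\leq$ and $\leq_{P_C}$ agree on $P_C$, and by the proof of Theorem \ref{dddnu} these also agree with $\leq_{\mathscr{P}}$ on $P$; in particular, $r\leq \dd(\varepsilon(p,q))=p$ by (E2). Applying (E5) then yields
\[
\varepsilon(r,\,r\theta_q)\;=\;\varepsilon(\,_r{\downharpoonleft}(p,q))\;=\;{_r}{\downharpoonleft}\varepsilon(p,q).
\]
The second identity $\varepsilon(p,q){\downharpoonright}_s=\varepsilon(s\delta_p,s)$ follows dually using (\ref{mft}) and (E6).

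For part (2), the equalities $\dd(\varepsilon(p,q))=p$ and $\rr(\varepsilon(p,q))=q$ are immediate from (E2). For the map $\nu_{\varepsilon(p,q)}$, I would take $r\in p^{\downarrow}$ and compute, using (\ref{c}), part (1), and (E2), that $r\nu_{\varepsilon(p,q)}=\rr(_r{\downharpoonleft}\varepsilon(p,q))=\rr(\varepsilon(r,r\theta_q))=r\theta_q$. For the extension $\Theta_{\varepsilon(p,q)}$, the definition (\ref{aa}) and the same chain of computations applied to $x\star p\in p^{\downarrow}$ give $x\Theta_{\varepsilon(p,q)}=(x\star p)\theta_q=x\theta_p\theta_q$ for every $x\in P_C$. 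The formulas for $\mu_{\varepsilon(p,q)}$ and $\Delta_{\varepsilon(p,q)}$ follow by symmetric arguments using the right-restriction half of part (1).

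Finally, for part (3), from $a\leq\varepsilon(p,q)$ and (O1) together with (E2) I would deduce $\dd(a)\leq p$ and $\rr(a)\leq q$. Lemma \ref{b}(3) then rewrites $a$ as $_{\dd(a)}{\downharpoonleft}\varepsilon(p,q)$, and part (1) converts this into $a=\varepsilon(\dd(a),\,\dd(a)\theta_q)$. Taking $\rr$ of both sides and applying (E2) identifies $\dd(a)\theta_q$ with $\rr(a)$, giving $a=\varepsilon(\dd(a),\rr(a))$. No step here is technically difficult; the only real obstacle is bookkeeping: keeping track of the three partial orders (on $P_C$, on $C$, and on $\mathscr{P}(P_C)$) and ensuring each invocation of (E5), (E6), Lemma \ref{ccts}, and Lemma \ref{b} has its hypotheses legitimately satisfied.
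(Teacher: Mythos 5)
Your proposal is correct and follows essentially the same route as the paper: part (1) via the explicit path-restriction formula (\ref{u}) plus (E5)/(E6), part (2) by evaluating $\nu_{\varepsilon(p,q)}$ pointwise on $p^{\downarrow}$ using part (1) and (E2), and part (3) by combining $a={_{\dd(a)}}{\downharpoonleft}\varepsilon(p,q)$ from Lemma \ref{b}(3) with part (1) and then reading off $\rr(a)=\dd(a)\theta_q$ from (E2). The only cosmetic difference is that the paper obtains $\Theta_{\varepsilon(p,q)}=\theta_p\theta_q$ by composing $\theta_p$ with the already-computed $\nu_{\varepsilon(p,q)}=\theta_q|_{p^{\downarrow}}$, whereas you verify the same identity pointwise.
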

\begin{proof}
(1) By the fact that $r\leq p=\dd((p,q))$ and (\ref{u}), we have ${_r}{\downharpoonleft}(p,q)=(r,r\theta_{q})$ and so  $\varepsilon(r,r\theta_{q})=\varepsilon({_r}{\downharpoonleft}(p,q))={_r}{\downharpoonleft} \varepsilon(p,q)$ by (E5). Dually, $\varepsilon(p,q){\downharpoonright}{_s}=\varepsilon(s\delta_{p},s)$.

(2) By (E2), we obtain $\dd(\varepsilon(p,q))=\dd((p,q))=p$. This implies that $\nu_{\varepsilon(p,q)}$ and $\theta_{q}{\mid}_{p^{\downarrow}}$ have the same domain $p^{\downarrow}$. Let $r\in p^{\downarrow}$. Then $r\leq p$ and so
$$r\nu_{\varepsilon(p,q)}=\rr(_{r}{\downharpoonleft} \varepsilon(p,q))\overset{({\rm E}5)}{=}\rr(\varepsilon(_{r}{\downharpoonleft} (p,q)))\overset{({\rm 4}.1)}{=}\rr(\varepsilon(r,r\theta_{q}))\overset{({\rm E}2)}{=}\rr(r,r\theta_{q})=r\theta_{q}.$$ Observe that the range of $\theta_{p}$ is $p^{\downarrow}$, it follows that $$\Theta_{\varepsilon(p,q)}=\theta_{\dd(\varepsilon(p,q))}\nu_{\varepsilon(p,q)}=\theta_{p}\nu_{\varepsilon(p,q)}=\theta_{p} (\theta_{q}{\mid}_{p^{\downarrow}})=\theta_{p}\theta_{q}.$$ The other three  equalities can be proved dually.

(3) By the fact $a\leq \varepsilon(p,q)$, (O1) and (E2), we have $\dd(a)\leq \dd(\varepsilon(p,q))=\dd(p,q)=p$. Item (1) in the present lemma gives that $_{\dd(a)}{\downharpoonleft}\varepsilon(p,q)=\varepsilon(\dd(a),\dd(a)\theta_{q})$. On the other hand, by the fact that $a\leq \varepsilon(p,q)$ and Lemma \ref{b} (3), we obtain $a={_{\dd(a)}}{\downharpoonleft}\varepsilon(p,q)$. So $a=\varepsilon(\dd(a),\dd(a)\theta_{q})$. This implies that $$\rr(a)=\rr(\varepsilon(\dd(a),\dd(a)\theta_{q}))=\rr(\dd(a),\dd(a)\theta_{q})=\dd(a)\theta_{q}$$  by(E2).  Thus $a=\varepsilon(\dd(a),\rr(a))$.
\end{proof}
\begin{defn}\label{lihfg}Let $(C,\circ,\dd,\rr,\leq, P_C, \times,\star)$ be a projection ordered category, $b\in C$ and $(e,f)\in P_{C}\times P_{C}$. Then $(e,f)$ is called a {\em $b$-linked pair} if
\begin{equation}\label{xx}f=e\Theta_{b}\theta_{f},\,\,\,\,e=f\Delta_{b}\delta_{e}.
\end{equation}

\end{defn}
\begin{lemma}\label{ccc}Let $(C,\circ,\dd,\rr,\leq, P_C, \times,\star)$ be a projection ordered category, $b\in C$ and  $(e,f)$ be  a  $b$-linked pair. Denote
\begin{equation}\label{fff}
e_{1}=e\theta_{\dd(b)},e_{2}=f\Delta_{b},f_{1}=e\Theta_{b},f_{2}=f\delta_{\rr(b)}
\end{equation}
\begin{itemize}
\item[(1)]$e_{i}\leq \dd(b),f_{i}\leq \rr(b),\,i=1,2$.
\item[(2)]$e=e\times \dd(b),\, \rr(b)\star f=f,\, e\mathcal{F}_{P_{C}} e_{i},\, f_{i} \mathcal{F}_{P_{C}} f,\,i=1,2$.
\item[(3)]$_{e_{i}}{\downharpoonleft} b=b{\downharpoonright}_{f_{i}},\, i=1,2.$
\end{itemize}
\end{lemma}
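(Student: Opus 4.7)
The plan is to verify parts (1)--(3) in order, with the bulk of the work in part (2). Part (1) is essentially a range check: $e_{1}=e\star\dd(b)\leq_{P}\dd(b)$ and $f_{2}=\rr(b)\times f\leq_{P}\rr(b)$ by Lemma~\ref{jiben1}, while $e_{2}=f\Delta_{b}$ lies in $\dd(b)^{\downarrow}$ and $f_{1}=e\Theta_{b}$ lies in $\rr(b)^{\downarrow}$ by the prescribed codomains in (\ref{aa}) and (\ref{cpia}).

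For part (2), I would first extract the relations $e\,\mathcal{F}_{P_{C}}\,e_{2}$ and $f_{1}\,\mathcal{F}_{P_{C}}\,f$ from the $b$-linked identities (\ref{xx}) using axiom (G1b) of a projection ordered category. The second clause of (G1b) gives $\theta_{f\Delta_{b}}=\Theta_{b}\theta_{f}\Delta_{b}$; evaluating at $e$ and substituting $e\Theta_{b}\theta_{f}=f$ from (\ref{xx}) yields $e\star e_{2}=e_{2}$, which combined with the direct rewrite $e=e\times e_{2}$ of $e=f\Delta_{b}\delta_{e}$ produces $e\,\mathcal{F}_{P_{C}}\,e_{2}$. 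A symmetric computation using $\delta_{e\Theta_{b}}=\Delta_{b}\delta_{e}\Theta_{b}$ and $f\Delta_{b}\delta_{e}=e$ delivers $f_{1}\,\mathcal{F}_{P_{C}}\,f$. Next, $e=e\times e_{2}$ and $e_{2}\leq_{P}\dd(b)$ trigger Lemma~\ref{y}(2) to give $e=e\times\dd(b)$; dually, $f=f_{1}\star f$ and $f_{1}\leq_{P}\rr(b)$ give $\rr(b)\star f=f$ via the $\star$-analog of Lemma~\ref{y}(2), which is not stated explicitly in the paper but is proved by the same template, using (R3) in place of (L3) and closing with antisymmetry of $\leq_{P}$. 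The remaining relations are then short calculations: $e\times e_{1}=e\times(e\star\dd(b))=e\times\dd(b)=e$ by Lemma~\ref{jiben1}(3) and the identity just established, and $e\star e_{1}=e\star(e\star\dd(b))=e\star\dd(b)=e_{1}$ by (R3) and (R1), which gives $e\,\mathcal{F}_{P_{C}}\,e_{1}$; the dual $f_{2}\,\mathcal{F}_{P_{C}}\,f$ follows from (L4), (L1), Lemma~\ref{jiben1}(3), and $\rr(b)\star f=f$.

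For part (3), by Lemma~\ref{b}(5) and part (1) one has $_{e_{i}}{\downharpoonleft}b=b{\downharpoonright}_{\rr({_{e_{i}}}{\downharpoonleft}b)}=b{\downharpoonright}_{e_{i}\nu_{b}}$, so the claim reduces to $e_{i}\nu_{b}=f_{i}$ for $i=1,2$. Since $e_{i}\leq_{P}\dd(b)$, identity (\ref{xpha}) replaces $\nu_{b}$ by $\Theta_{b}$ on $\dd(b)^{\downarrow}$; Lemma~\ref{dd}(2) then simplifies $e_{1}\Theta_{b}=e\theta_{\dd(b)}\Theta_{b}=e\Theta_{b}=f_{1}$, while Lemma~\ref{dd}(7) simplifies $e_{2}\Theta_{b}=f\Delta_{b}\Theta_{b}=f\delta_{\rr(b)}=f_{2}$. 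The main obstacle is part (2): neither $\mathcal{F}_{P_{C}}$ relation is visible from (\ref{xx}) alone but requires invoking (G1b) in the right direction, and the subsequent passage from $e=e\times e_{2}$ and $f=f_{1}\star f$ to $e=e\times\dd(b)$ and $\rr(b)\star f=f$ depends on Lemma~\ref{y}(2) together with its (unstated) $\star$-dual.
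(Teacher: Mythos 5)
Your proposal is correct and follows essentially the same route as the paper: part (1) via the codomains of $\Theta_b,\Delta_b,\theta_{\dd(b)},\delta_{\rr(b)}$, part (2) via (G1b) applied to the linked-pair identities followed by Lemma~\ref{y}(2) and the computations with (L1)--(L4)/(R1)--(R4) and Lemma~\ref{jiben1}, and part (3) via Lemma~\ref{b}(5) after showing $e_i\nu_b=f_i$. The only cosmetic differences are that for $\rr(b)\star f=f$ the paper avoids your unstated $\star$-dual of Lemma~\ref{y}(2) by instead squeezing $f=f_1\star f\leq_P \rr(b)\star f\leq_P f$ with Lemma~\ref{jiben1}(5),(2), and in part (3) it computes $e_1\nu_b$ and $e_2\nu_b$ directly from the definition of $\Theta_b$ and Lemma~\ref{f} rather than through Lemma~\ref{dd}(2),(7); both variants are valid.
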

\begin{proof}
(1) Since ${\rm im}\Delta_{b}={\rm im}\theta_{\dd(b)}\subseteq \dd(b)^{\downarrow}$ and  ${\rm im}\Theta_{b}={\rm im}\delta_{\rr(b)}=\rr(b)^{\downarrow}$, we have $e_{i}\leq \dd(b)$ and $f_{i}\leq \rr(b)$, $i=1, 2$.

(2) By (\ref{xx}) and (\ref{fff}), we have $e=f\Delta_{b}\delta_{e}=e_{2}\delta_{e}=e\times e_2.$
Using (\ref{xx}), (\ref{fff}) and (G1b), we can obtain $$e\theta_{e_{2}}=e\theta_{f\Delta_{b}}=e\Theta_{b}\theta_{f}\Delta_{b}=f\Delta_{b}=e_{2}.$$
This implies that $e\mathcal{F}_{P_{C}}e_{2}$. By the facts $e=e\times e_2, e_2\leq \dd(b)$ and Lemma \ref{y} (2), we get $e=e\times \dd(b)$. This together with (3) and (4) in Lemma \ref{jiben1} gives that $$e\times e_{1}=e\times e\theta_{\dd(a)}=e\times (e\star \dd(b))=e\times \dd(b)=e ,$$  $$e\star e_{1}=e\star e\theta_{\dd(b)}=e\star(e\star \dd(b))=(e\times \dd(b))\star(e\star \dd(b))=e\star \dd(b)=e\theta_{\dd(b)}=e_{1}.$$This shows that $e\mathcal{F}_{P_{C}}e_{1}$.
By (\ref{xx}) and (\ref{fff}), $f_{1}\star f=f_{1}\theta_{f}=e\Theta_{b}\theta_{f}=f$. Again by (\ref{xx}), (\ref{fff}) and (G1b), $$f_{1}\times f=f\delta_{f_{1}}=f\delta_{e\Theta_{b}}=f\Delta_{b}\delta_{e}\Theta_{b}=e\Theta_{b}=f_{1}.$$ Thus $f_{1}\mathcal{F}_{P_{C}}f$.
On the other hand, (\ref{fff}), (L4) and (L1) imply $$f_{2}\times f=f\delta_{\rr(b)}\times f=(\rr(b)\times f)\times f=(\rr(b)\times f)\times(\rr(b)\times f)=\rr(b)\times f=f\delta_{\rr(b)}=f_{2}.$$ By Lemma \ref{jiben1} (3), $f_{2}\star f=(\rr(b)\times f)\star f=\rr(b)\star f.$ Since $f_{1}\leq \rr(b)$, we have  $f=f_{1}\star f\leq \rr(b)\star f\leq f$ by (5) and (2) in Lemma \ref{jiben1}, and so $\rr(b)\star f=f=f_{2}\star f$. Thus $f_{2}\mathcal{F}_{P_{C}}f$.

(3) By  (\ref{xx}),\,(\ref{fff}) and Lemma \ref{f}, we have $e_{1}\nu_{b}=e\theta_{\dd(b)}\nu_{b}=e\Theta_{b}=f_{1}$ and $$e_{2}\nu_{b}=f\Delta_{b}\nu_{b}=f\delta_{\rr(b)}\mu_{b}\nu_{b}=f\delta_{\rr(b)}=f_{2}.$$ By Lemma \ref{b} (5), $_{e_{i}}{\downharpoonleft}b=b{\downharpoonright}_{\rr(_{e_{i}}{\downharpoonleft}b)}=b{\downharpoonright}_{e_{i}\nu_{b}}=b{\downharpoonright}_{f_{i}},\,i=1,2$.
\end{proof}
Let $(C,\circ,\dd,\rr,\leq, P_C, \times,\star)$ be a projection ordered category with an  evaluation map $\varepsilon:\mathscr{P}(P_{C})\rightarrow C $, $b\in C$, $(e,f)$ be a $b$-linked pair and $e_{1},e_{2},f_{1},f_{2}\in P_{C}$ be defined as in (\ref{fff}). By (1) and (2) in  Lemma \ref{b}, (E2), Lemmas \ref{oo} and \ref{ccc}, the following two elements are well defined:
\begin{equation}\label{nnn}
\lambda(e,b,f)=\varepsilon(e,e_{1})\circ{_{e_{1}}}{\downharpoonleft}b \circ \varepsilon(f_{1},f)=\varepsilon(e,e_{1})\circ b{\downharpoonright}_{f_{1}}\circ \varepsilon(f_{1},f),
\end{equation}
\begin{equation}\label{mmm}
\rho(e,b,f)=\varepsilon(e,e_{2})\circ {_{e_{2}}}{\downharpoonleft}b\circ \varepsilon(f_{2},f)=\varepsilon(e,e_{2})\circ b{\downharpoonright}_{f_{2}}\circ \varepsilon(f_{2},f).
\end{equation}

\begin{defn}\label{lll}
Let $(C,\circ,\dd,\rr,\leq, P_C, \times,\star)$ be a projection ordered category with an  evaluation map $\varepsilon:\mathscr{P}(P_{C})\rightarrow C $. Then $(C,\circ,\dd,\rr,\leq, P_C, \times,\star,\varepsilon)$ is called a {\em chain projection ordered category} if the following condition holds:
\begin{itemize}
\item[\rm (G2)] For all $b\in C$ and $b$-linked pair $(e,f)$, we have $\lambda(e,b,f)=\rho(e,b,f)$, where $\lambda(e,b,f)$ and $\rho(e,b,f)$ are defined by  (\ref{nnn}) and (\ref{mmm}) respectively.
\end{itemize}
Moreover, a chain projection ordered category $(C,\circ,\dd,\rr,\leq,P_C,\times,\star,\varepsilon)$ is called a
\begin{itemize}
\item  {\em chain projection ordered groupoid} if $(C,\circ,\dd,\rr)$ is a groupoid,
\item  {\em symmetric chain projection ordered category} if $(P_C, \times,\star)$ is a symmetric strong projection algebra, and for all $p,q\in P_C$, $p\, {\mathcal F}_{P_{C}}\, q$ implies that $\varepsilon(p,q,p)=p$,
\item  {\em symmetric chain projection ordered groupoid} if it is symmetric and $(C,\circ,\dd,\rr)$ is also a groupoid,
\item  {\em commutative chain projection ordered category} if  $(P_C, \times,\star)$ is a commutative strong projection algebra,
\item  {\em commutative chain projection ordered groupoid} if is commutative and $(C,\circ,\dd,\rr)$ is also a groupoid.
\end{itemize}
\end{defn}
\begin{defn}\label{pwdj}
Assume that $(C_{1},\circ,\dd,\rr,\leq, P_C\times,\star,\varepsilon_{1})$ and $(C_{2},\circ,\dd,\rr,\leq,P_C,\times,\star,\varepsilon_{2})$ are chain projection ordered categories and $\phi:C_1\rightarrow C_2,\,\,a\rightarrow a\phi$ be a map. Then $\phi$ is called {\em a chain projection ordered functor} if for all $a,b\in C_{1}$ and $e,f\in P_{C_{1}}$,  the following conditions hold:
\begin{itemize}
\item[\rm (F1)]$\dd(a\phi)=(\dd(a))\phi,\,\,\rr(a\phi)=(\rr(a))\phi$.
\item[\rm(F2)]If $a\circ b$ is defined, then $(a\circ b)\phi=(a\phi) \circ (b\phi)$.
\item[\rm(F3)]If $a\leq b$, then $a\phi \leq b\phi$.
\item[\rm(F4)]$(e\times f)\phi=e\phi \times f\phi,\,\,(e\star f)\phi=e\phi \star f\phi$.
\item[\rm(F5)]For all $(p_{1},p_{2},\ldots,,p_{k})\in \mathscr{P}(P_{C_{1}})$,  $$(\varepsilon_{1}((p_{1},p_{2},\ldots,,p_{k})))\phi=\varepsilon_{2}((p_{1}\phi,p_{2}\phi,\ldots,,p_{k}\phi)).$$
\end{itemize}
\end{defn}
\begin{lemma}\label{wangw}Let $\phi$ be a chain projection ordered functor between chain projection ordered groupoids $(C_{1},\circ,\dd,\rr,\leq,P_{C_1}, \times,\star,\varepsilon_{1})$ and $(C_{2},\circ,\dd,\rr,\leq,P_{C_2}\times,\star,\varepsilon_{2})$. Then $(x^{-1})\psi=(x\psi)^{-1}$ for all $x\in C_1$.
\end{lemma}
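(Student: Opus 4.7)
The plan is to invoke the uniqueness of inverses in a groupoid, which was established just before Lemma \ref{qunpei} in the excerpt. Recall that for $a \in C$ in a groupoid, $a^{-1}$ is characterized as the unique element $a'$ satisfying $a \circ a' = \dd(a)$ and $a' \circ a = \rr(a)$. So to prove $(x^{-1})\phi = (x\phi)^{-1}$, it suffices to verify that the element $(x^{-1})\phi \in C_2$ satisfies these two defining equations for $(x\phi)^{-1}$.

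First I would compute $(x\phi) \circ ((x^{-1})\phi)$. By Lemma \ref{qunpei}(1) applied in $C_1$, the composite $x \circ x^{-1}$ is defined and equals $\dd(x)$. Applying (F2) gives $(x\phi) \circ ((x^{-1})\phi) = (x \circ x^{-1})\phi = \dd(x)\phi$, and then (F1) yields $\dd(x)\phi = \dd(x\phi)$. Dually, starting from $x^{-1} \circ x = \rr(x)$, the same two axioms give $((x^{-1})\phi) \circ (x\phi) = \rr(x\phi)$.

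At this point the two defining identities for $(x\phi)^{-1}$ in the groupoid $C_2$ are satisfied by $(x^{-1})\phi$, so the uniqueness argument recalled above forces $(x^{-1})\phi = (x\phi)^{-1}$. There is essentially no obstacle here: once one notices that (F1) and (F2) are exactly what is needed to transport the two groupoid axioms across $\phi$, the result is immediate, and none of the order structure, the projection algebra operations, or the evaluation map intervene. The only minor point to check is that the composites on the $C_2$ side are actually defined, which follows from (F1): $\rr(x\phi) = \rr(x)\phi = \dd(x^{-1})\phi = \dd((x^{-1})\phi)$, and similarly for the reverse composite.
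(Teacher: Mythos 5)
Your proof is correct and follows exactly the same route as the paper: apply (F2) and (F1) to the identities $x\circ x^{-1}=\dd(x)$ and $x^{-1}\circ x=\rr(x)$, then invoke the uniqueness of inverses established just before Lemma \ref{qunpei}. Your extra remark verifying that the composites in $C_2$ are defined is a sensible (if routine) addition that the paper leaves implicit.
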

\begin{proof}
By the facts $x\circ x^{-1}=\dd(x)$ and $ x^{-1}\circ x=\rr(x)$ and (F1), (F2),
$$(x\psi)\circ (x^{-1}\psi)=(x\circ x^{-1})\psi=(\dd(x))\psi=\dd(x\psi),\,\,(x^{-1}\psi)\circ(x\psi)=\rr(x\psi).$$
This implies that $(x\psi)^{-1}=x^{-1}\psi$ by the statements before Lemma \ref{qunpei}.
\end{proof}
The following proposition is obvious.
\begin{prop}
The class of (respectively, symmetric, commutative) chain projection ordered categories (respectively, groupoids) together with chain projection ordered functors forms a category.
\end{prop}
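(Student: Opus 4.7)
The plan is to verify the three standard categorical axioms: (i) composition of chain projection ordered functors is again a chain projection ordered functor; (ii) the identity map on any chain projection ordered category is a chain projection ordered functor; (iii) composition is associative with the identities acting as units. Items (ii) and (iii) are immediate from the set-theoretic definition of functional composition, so the essential content lies in (i).

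For (i), fix chain projection ordered functors $\phi:C_1\to C_2$ and $\psi:C_2\to C_3$ and consider $\phi\psi:C_1\to C_3$. I would verify (F1)--(F5) in order. Conditions (F1), (F2) and (F4) pass through composition by a direct two-step calculation: for instance, for (F4), given $e,f\in P_{C_1}$, applying (F4) for $\phi$ first and then for $\psi$ yields $(e\times f)\phi\psi = ((e\phi)\times (f\phi))\psi = (e\phi\psi)\times (f\phi\psi)$, and similarly for $\star$. Condition (F3) follows because each functor preserves the partial order. For (F5), given a $P$-path $(p_1,\ldots,p_k)\in\mathscr{P}(P_{C_1})$, I first need to know that $(p_1\phi,\ldots,p_k\phi)\in\mathscr{P}(P_{C_2})$; this uses (F4) applied to $\phi$ together with the characterization \eqref{s} of $\mathcal{F}_{P_C}$ in terms of the operations $\times$ and $\star$, which is exactly Lemma \ref{pin}(2). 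Iterating the (F5) of $\phi$ and then of $\psi$ then gives $(\varepsilon_1(p_1,\ldots,p_k))\phi\psi=\varepsilon_3(p_1\phi\psi,\ldots,p_k\phi\psi)$.

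For the identity functor ${\rm id}_C:C\to C$ on a chain projection ordered category $C$, all five conditions (F1)--(F5) are trivial, using that $\varepsilon(p_1,\ldots,p_k)$ is sent to itself. Associativity of composition and the unit laws are inherited from composition of maps. This completes the verification that chain projection ordered categories with chain projection ordered functors form a category.

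Finally, for the symmetric, commutative, and groupoid variants, no extra work is required beyond observing that these conditions are preserved under the subcategory inclusion: the additional axioms ($P_C$ being symmetric, commutative, or $C$ being a groupoid) are properties of the objects, not of the morphisms, and Lemma \ref{wangw} confirms that functors between groupoid objects automatically preserve inverses. The mildest potential obstacle is checking that the $P$-path property is preserved by $\phi$ in (F5); once this is handled via (F4) and \eqref{s} as above, everything else is bookkeeping.
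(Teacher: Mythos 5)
Your proof is correct and follows the only natural route; the paper itself omits the argument entirely, declaring the proposition obvious. You rightly isolate the one point that actually needs checking, namely that a chain projection ordered functor sends $P$-paths to $P$-paths so that (F5) composes, and your appeal to (F1), (F4) and Lemma \ref{pin}(2) handles it.
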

\begin{lemma}\label{g}
Let $\phi$ be a chain projection ordered functor between chain projection ordered categories $(C_{1},\circ,\dd,\rr,\leq,P_{C_1}, \times,\star,\varepsilon_{1})$ and $(C_{2},\circ,\dd,\rr,\leq,P_{C_2}\times,\star,\varepsilon_{2})$, and $x\in C_{1}$, $e,f\in P_{C_{1}}$,$e\leq \dd(x),f\leq\rr(x)$. Then $({_e}{\downharpoonleft} x)\varphi={_{e\varphi}}{\downharpoonleft}(x\varphi),\, (x{\downharpoonright}{_f})\varphi=(x\varphi){\downharpoonright}{_{f\varphi}}.$
\end{lemma}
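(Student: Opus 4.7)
The plan is to prove both identities via the uniqueness characterization of restrictions given by axioms (O3) and (O4). Since ${_{e\phi}}{\downharpoonleft}(x\phi)$ is, by definition, the unique element $u$ of $C_2$ satisfying $u\leq x\phi$ and $\dd(u)=e\phi$, it suffices to verify that $({_e}{\downharpoonleft}x)\phi$ has these two properties, together with the fact that $e\phi\leq \dd(x\phi)$ so that the right-hand restriction is actually defined.

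First I would check definability of the right-hand side: since $e\leq \dd(x)$ and $\phi$ preserves the order by (F3), I get $e\phi\leq (\dd(x))\phi=\dd(x\phi)$, where the last equality uses (F1). Hence ${_{e\phi}}{\downharpoonleft}(x\phi)$ exists in $C_2$ by (O3). Next, from Lemma~\ref{b}(1) we know ${_e}{\downharpoonleft}x\leq x$ and $\dd({_e}{\downharpoonleft}x)=e$. Applying (F3) yields $({_e}{\downharpoonleft}x)\phi\leq x\phi$, and applying (F1) yields $\dd(({_e}{\downharpoonleft}x)\phi)=(\dd({_e}{\downharpoonleft}x))\phi=e\phi$. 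The uniqueness clause in (O3) then forces $({_e}{\downharpoonleft}x)\phi={_{e\phi}}{\downharpoonleft}(x\phi)$.

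The second identity follows by a completely dual argument, using (O4) in place of (O3), together with Lemma~\ref{b}(2) to produce the inequality $x{\downharpoonright}{_f}\leq x$ and the equality $\rr(x{\downharpoonright}{_f})=f$, and then invoking (F1), (F3) exactly as above.

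I do not expect any genuine obstacle here: the entire argument rests on the fact that axioms (F1)--(F3) state precisely that chain projection ordered functors respect the underlying ordered category structure (domain/codomain, composition, and order), which is exactly the data used to define restrictions. The richer axioms (F4) and (F5) governing the projection algebra and evaluation map play no role in this lemma.
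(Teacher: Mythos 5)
Your proposal is correct and follows essentially the same route as the paper: establish that $e\phi\leq\dd(x\phi)$ via (F1) and (F3), show that $({_e}{\downharpoonleft}x)\phi$ lies below $x\phi$ with domain $e\phi$, and conclude by uniqueness of restrictions (the paper cites Lemma~\ref{b}(3), which is just the packaged form of the uniqueness clause in (O3) that you invoke directly). The dual argument for the right restriction is handled identically in both.
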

\begin{proof}(F1) gives $e\varphi=(\dd(e))\varphi=\dd(e\varphi)\in P_{C_{2}}$, and  (F3) gives $e\varphi\leq(\dd(x))\varphi=\dd(x\varphi)$. This implies that  ${_{e\varphi}}{\downharpoonleft} (x\varphi)$ is defined. By the fact that ${_e}{\downharpoonleft} x\leq x$ and (F3), we have $({_e}{\downharpoonleft} x)\varphi\leq x\varphi$, and by  the fact $\dd({_e}{\downharpoonleft} x)=e$ and (F1), we obtain   $\dd(({_e}{\downharpoonleft} x)\varphi)=(\dd({_e}{\downharpoonleft} x))\varphi=e\varphi.$
By Lemma \ref{b} (3) , we get $({_e}{\downharpoonleft} x)\varphi={_{e\varphi}}{\downharpoonleft}(x\varphi)$. Dually, $(x{\downharpoonright}{_f})\varphi=(x\varphi){\downharpoonright}{_{f\varphi}}$.
\end{proof}

\section{From DRC-restriction semigroups to chain projection ordered categories}
In this section, we present some  results on DRC-restriction semigroups and prove that each DRC-restriction semigroup give rises to a chain projection ordered category. We begin by giving the following lemma.

\begin{lemma}\label{aisdm} Let $(S,\cdot,\circ)$ be a generalized regular $\circ$-semigroup (respectively, a regular $\circ$-semigroup, an inverse semigroup) and denote $x^{+}=xx^{\circ}$ and $x^{\ast}=x^{\circ}x$ for all $x\in S$. Then $(S,\cdot, ^{+}, ^{\ast})$ is a DRC-restriction semigroup (respectively, $P$-restriction semigroup, restriction semigroup) and for all $x\in S$,
\begin{equation}\label{dlrio}
x^{+}=xx^{\circ},\,x^{\ast}=x^{\circ}x,\,x^{\ast}=x^{\circ +},\,x^{+}=x^{\circ \ast}.
\end{equation}
Conversely, let $(S,\cdot, ^{+}, ^{\ast})$ be a  DRC-restriction semigroup (respectively, $P$-restriction semigroup, restriction semigroup) and for each  $x\in S$,  there exists $x^{\circ}\in S$ such that
\begin{equation}\label{gtio}
x^{+}=xx^{\circ},\,x^{\ast}=x^{\circ}x,\,x^{\ast}=x^{\circ +},\,x^{+}=x^{\circ \ast}.
\end{equation}
Then the above $x^{\circ}$ is unique and $(S, \cdot, \circ)$ forms a generalized regular $\circ$-semigroup (respectively, a regular $\circ$-semigroup, an inverse semigroup).
\end{lemma}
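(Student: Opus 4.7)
For the forward direction, I would first note that $(S,\cdot,{}^+,{}^\ast)$ being DRC-restriction (or $P$-restriction, or restriction, according to the case) is already on record in the paragraph immediately preceding the lemma, via \cite[Proposition 3.7]{Wang6} and the classical precursors cited in Section~1. What then remains are the two ``mixed'' identities in (\ref{dlrio}), and both reduce to a single application of $x^{\circ\circ}=x$:
\[x^{\circ+}=x^\circ(x^\circ)^\circ=x^\circ x=x^\ast,\qquad x^{\circ\ast}=(x^\circ)^\circ x^\circ=xx^\circ=x^+.\]

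For the converse, the first step is to establish uniqueness of $x^\circ$. If $x^\circ$ and $x^\diamond$ both satisfy (\ref{gtio}), a direct computation using the DRC axioms $y^+y=y$, $yy^\ast=y$ together with the mixed identities of (\ref{gtio}) yields
\[x^\circ=x^{\circ+}x^\circ=x^\ast x^\circ=(x^\diamond x)x^\circ=x^\diamond(xx^\circ)=x^\diamond x^+=x^\diamond x^{\diamond\ast}=x^\diamond.\]
Next, the axioms $xx^\circ x=x$ and $x^{\circ\circ}=x$ fall out at once: the first from $xx^\circ=x^+$ and DRC (i), the second from the uniqueness just proved, since $x$ itself trivially satisfies (\ref{gtio}) in the role of $(x^\circ)^\circ$. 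The two absorption identities $(xy)^\circ=(xy)^\circ xx^\circ$ and $(xy)^\circ=y^\circ y(xy)^\circ$ would then be extracted from DRC axioms (iii) and (iii)$'$ respectively, using that projections are idempotent (hence $(xy)^+x^+=(xy)^+$) and that $(xy)^\circ=(xy)^\circ(xy)^{\circ\ast}=(xy)^\circ(xy)^+$.

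The step I expect to be the main obstacle is the final axiom $(xy)^\circ x=(x^\circ xy)^\circ=(x^\ast y)^\circ$, where the DRC-ample conditions are genuinely needed. The plan is to appeal once more to uniqueness: set $u:=(xy)^\circ x$ and $v:=x^\ast y$, and verify the four identities $vu=v^+$, $uv=v^\ast$, $u^+=v^\ast$ and $u^\ast=v^+$ of (\ref{gtio}) for the pair $(u,v)$. The identity $uv=(xy)^\circ xy=(xy)^\ast=(x^\ast y)^\ast$ follows from DRC (ii)$'$; the projection identity $u^+=((xy)^\circ x^+)^+=((xy)^\circ)^+=(xy)^{\circ+}=(xy)^\ast$ reduces to DRC (ii) combined with the absorption identity $(xy)^\circ x^+=(xy)^\circ$ already in hand, and $u^\ast$ is handled dually by DRC (ii)$'$ together with DRC-ample. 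The remaining identity $vu=v^+$ is where the DRC-ample condition $(xy)^+x=x(x^\ast y)^+$ enters decisively: expanding
\[x^\ast y(xy)^\circ x=x^\circ(xy)(xy)^\circ x=x^\circ(xy)^+x=x^\circ x(x^\ast y)^+=x^\ast(x^\ast y)^+=(x^\ast y)^+,\]
where the final step uses DRC (iii) applied to $x^\ast y$ together with projection idempotence. The sub-cases $P$-restriction $\mapsto$ regular $\circ$ and restriction $\mapsto$ inverse then follow from Proposition \ref{guanxi} by a parallel uniqueness argument: the additional projection-compatibility conditions $(ef)^+=efe=(fe)^\ast$ (respectively $(ef)^+=ef=fe=(fe)^\ast$) force $(xy)^\circ=y^\circ x^\circ$ (respectively, and also commutativity of idempotents), upgrading the generalized regular $\circ$-structure to the stronger one.
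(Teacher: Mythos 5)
Your proposal is correct, and for the forward direction and the uniqueness computation it coincides with the paper's argument almost verbatim: the paper likewise invokes \cite[Proposition 3.7]{Wang6} for the DRC-restriction structure, reads (\ref{dlrio}) off the axioms (\ref{g-regularstar}), and proves uniqueness by essentially the same chain $y=y^+y=x^\ast y=x^\circ xy=x^\circ x^+=x^\circ x^{\circ\ast}=x^\circ$. Where you genuinely diverge is in how the converse is finished. The paper does \emph{not} verify the axioms of (\ref{g-regularstar}) directly: having established uniqueness, it checks only that $xx^\circ x=x$ and $x^\circ xx^\circ=x^\ast x^\circ=x^{\circ+}x^\circ=x^\circ$, concludes that $x^\circ$ is the unique inverse of $x$ in $V(x)$ satisfying $xx^\circ=x^+$ and $x^\circ x=x^\ast$, and then cites \cite[Proposition 3.6]{Wang6} to upgrade this to a generalized regular $\circ$-semigroup. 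You instead verify all five identities of (\ref{g-regularstar}) from the DRC axioms, using uniqueness as the engine (for $x^{\circ\circ}=x$ by letting $x$ play the role of $(x^\circ)^\circ$, and for $(xy)^\circ x=(x^\ast y)^\circ$ by showing the pair $u=(xy)^\circ x$, $v=x^\ast y$ satisfies (\ref{gtio}) and invoking the DRC-ample condition (vi)$'$ together with (iii)). I checked your computations for $uv$, $u^+$, $u^\ast$ and $vu$ and they are sound. Your route is more self-contained — it replaces an external citation by an explicit derivation — at the cost of the extra verification; the paper's route is shorter but opaque at exactly the point where the ample conditions do their work. For the two special cases your treatment is only a sketch (as is the paper's, which defers to \cite[Proposition 6.1]{Wang2}, (\ref{inversekehua}) and Proposition \ref{guanxi}); if you wanted those cases in full you would still need to carry out the verification that $y^\circ x^\circ$ satisfies (\ref{gtio}) for $xy$ under the $P$-restriction hypothesis.
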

\begin{proof} By \cite[Proposition 3.7]{Wang6}, under the given assumption, $(S,\cdot,^{+},^{\ast})$ is a DRC-restriction semigroup, and (\ref{dlrio}) follows from (\ref{g-regularstar}).
This gives the direct part. Conversely, let $(S,\cdot,^{+},^{\ast})$ be DRC-restriction semigroup satisfying  (\ref{gtio}). We first assert that the element $x^{\circ}$ in (\ref{gtio}) is unique. In fact, let $y\in S$ and  $x^{+}=xy,\,x^{\ast}=yx,\,x^{\ast}=y^{+},\,x^{+}=y^{\ast}.$ Then by (i) and (i)$^{'}$, $$y=y^{+}y=x^{\ast}y=x^{\circ}xy=x^{\circ}x^{+}=x^{\circ}x^{\circ\ast}=x^{\circ}.$$
Moreover, $xx^{\circ}x=x,\,x^{\circ}xx^{\circ}=x^{\ast}x^{\circ}=x^{\circ +}x^{\circ}=x^{\circ}.$
This implies that $x^{\ast}=x^{\circ}x \,\,{\mathcal L}\,\, x\,\, {\mathcal R}\,\, xx^{\circ}=x^{+},$ and so $x^{\circ}$ is the unique inverse element of $x$ satisfying  $xx^{\circ}=x^{+}$ and $x^{\ast}=x^{\circ}x$.  By \cite[Proposition 3.6]{Wang6}, $(S,\cdot,\circ)$ is a generalized regular $\circ$-semigroup.

 The other two special cases follow from \cite[Proposition 6.1]{Wang2}, (\ref{inversekehua}) and Proposition \ref{guanxi}, we omit the details.
\end{proof}
\begin{remark}\label{tongyi}By Lemma \ref{aisdm}, a generalized regular $\circ$-semigroup (respectively, a regular $\circ$-semigroup, an inverse semigroup) is exactly an algebraic system  $(S,\cdot, ^{+}, ^{\ast}, {\circ})$ where $(S,\cdot,^{+},^{\ast})$ is a DRC-restriction semigroup (respectively, $P$-restriction semigroup, restriction semigroup) and ``${\circ}$" is a unary operation on $S$ satisfying the axioms:
$$x^{+}=xx^{\circ},\,x^{\circ}x=x^{\ast},\,x^{\ast}=x^{\circ +},\,x^{\circ\ast}=x^{+}.$$
\end{remark}
The following lemmas provide some properties of projections in DRC semigroups.
\begin{lemma}[\cite{Wang4}]\label{kangwei}Let $(S, \cdot,\, {}^+, {}^\ast)$ be a DRC semigroup and $e,f\in P(S)$.
\begin{itemize}
\item[(1)] $e^+=e=e^\ast \in E(S)$.
\item[(2)] $(ef)^+=e(ef)^+e$, $(ef)^\ast=f(ef)^\ast f$.
\item[(3)]   $(ef)^+f=ef=e(ef)^\ast $.
\item[(4)]  $(ef)^+(ef)^\ast=ef$.
\end{itemize}
\end{lemma}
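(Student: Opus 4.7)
The plan is to prove the four items in sequence, since each one will feed into the next.

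For item (1), I would first unpack the definition of $P(S)$: since $e\in P(S)$, there is some $x\in S$ with $e=x^+$, and applying axiom (iv) immediately yields $e^+=x^{++}=x^+=e$; for the starred side, axiom (v) gives $e^\ast = x^{+\ast}=x^+=e$. With $e^+=e$ in hand, axiom (i) applied to $x\mapsto e$ gives $e\cdot e = e^+ e = e$, so $e\in E(S)$. Strictly speaking I should note that $e\in P(S)$ also means $e=y^\ast$ for some $y$ (by the displayed equality $P(S)=\{x^+\}=\{x^\ast\}$ coming from (v) and (v$'$)), and argue symmetrically; but the two descriptions coincide, so one line suffices.

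For item (2), the identities are an instant consequence of axioms (iii), (iii$'$) and item (1). Namely, apply (iii) to the pair $(e,f)$ to get $(ef)^+ = e^+(ef)^+ e^+$, then replace $e^+$ by $e$ using (1). The starred identity is dual, using (iii$'$) and $f^\ast=f$.

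For item (3), I would combine item (2) with axiom (i): starting from $(ef)^+ f = e(ef)^+ e\cdot f = e\,[(ef)^+(ef)] = e(ef)$ by (i), and then collapsing $e\cdot ef$ to $ef$ via the idempotence $e^2=e$ from (1). The identity $e(ef)^\ast = ef$ is dual, using (2) for $(ef)^\ast$, axiom (i$'$), and $f^2=f$.

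For item (4), the natural move is to cascade items (2) and (3): write $(ef)^+(ef)^\ast = (ef)^+\cdot f(ef)^\ast f$ via the dual half of (2), regroup as $[(ef)^+ f](ef)^\ast f$, apply item (3) to get $ef\cdot(ef)^\ast f$, apply axiom (i$'$) to reduce to $ef\cdot f$, and finally use $f^2=f$. There is no real obstacle here; the only point requiring any care is keeping track of which axiom ((i), (i$'$), (iii), (iii$'$), (iv), (v)) is being invoked at each step, since the DRC conditions are quite symmetric and easy to mix up. I would present the proof in the order (1)$\Rightarrow$(2)$\Rightarrow$(3)$\Rightarrow$(4) so that each item is available to the next.
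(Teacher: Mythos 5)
Your proof is correct: each of the four items follows exactly as you describe from the DRC axioms, with (1) coming from (iv), (v) and (i), (2) from (iii), (iii$'$) and (1), (3) from (2) together with (i), (i$'$) and the idempotence of $e$ and $f$, and (4) by cascading (2) and (3). The paper itself does not prove this lemma but cites it from Wang's earlier work, and your direct verification is the standard argument one would give there.
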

\begin{coro}[\cite{Wang4}]\label{zhangyu}In DRC Conditions, the axiom (\romannumeral3) (respectively, (\romannumeral3)$'$) can be replaced by the axiom $(x^+y^+)^+=x^+(x^+y^+)^+x^+\,\, ({respectively},\, (x^\ast y^\ast)^\ast=y^\ast(x^\ast y^\ast)^\ast y^\ast)$.
\end{coro}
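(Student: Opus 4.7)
The equivalence splits into an easy direction and a substantive one. The easy direction is that the full axiom (iii) implies the restricted version (iii$_0$): $(x^+y^+)^+=x^+(x^+y^+)^+x^+$. Just substitute $x\mapsto x^+$ and $y\mapsto y^+$ in (iii) and invoke (iv) to collapse $x^{++}$ to $x^+$. The real content is the converse: assuming (i), (ii), (iv), (v) (and the starred duals) together with (iii$_0$), one must recover the general (iii). By symmetry, the dual claim follows from the symmetric argument applied to (iii$'_0$) and the dual axioms.

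The guiding idea is to rewrite the projection $(xy)^+$ as $(eq)^+$ with both factors $e,q$ projections, so that the restricted axiom (iii$_0$) can be invoked. Axiom (ii) is the engine that moves the $+$ inwards, and axiom (i) supplies the $x^+$ we need to place in front.

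Concretely, my plan is the following chain of rewrites. First, apply (ii) once to get $(xy)^+=(xy^+)^+$. Next, use (i) in the form $x^+\cdot x = x$ to observe that $x^+\cdot(xy^+) = (x^+x)y^+ = xy^+$, hence
\[
(xy^+)^+ \;=\; \bigl(x^+\cdot(xy^+)\bigr)^+.
\]
Now apply (ii) a second time, this time with $x^+$ in the role of the outer $x$ and $xy^+$ in the role of the inner $y$, to obtain $\bigl(x^+\cdot(xy^+)\bigr)^+ = \bigl(x^+\cdot(xy^+)^+\bigr)^+$. Writing $p=(xy^+)^+$, axiom (iv) gives $p=p^+$, so $p$ is a projection, and together with $x^+$ both factors inside $(x^+p)^+$ are projections.

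At this stage the hypothesis (iii$_0$) applies with $x^+$ and $p$ in the roles of the two projections, yielding $(x^+p)^+=x^+(x^+p)^+x^+$. Combining the identifications $(xy)^+=(xy^+)^+=(x^+p)^+$ gives $(xy)^+=x^+(xy)^+x^+$, which is (iii). The only point requiring care is the bookkeeping in the two applications of (ii): making sure the roles of the formal variables are assigned correctly, and that (iv) is invoked at the right place to see $(xy^+)^+$ as its own $^+$-image. Once this is set up the computation is a single line, so I do not anticipate any genuine obstacle beyond transparent verification.
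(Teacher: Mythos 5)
Your proof is correct. The paper itself states this corollary without proof (deferring to the cited reference), but your derivation is exactly the expected one: the chain $(xy)^+=(xy^+)^+=\bigl(x^+(xy^+)\bigr)^+=\bigl(x^+(xy^+)^+\bigr)^+$ obtained from (ii), (i), and (ii) again, followed by a single application of the restricted axiom to the product of the two projections $x^+$ and $(xy^+)^+$ (legitimate since $(xy^+)^+$ is the $^+$-image of $xy^+$, with (iv) confirming it is a projection), yields $(xy)^+=x^+(xy)^+x^+$; the easy converse by specializing $x\mapsto x^+$, $y\mapsto y^+$ and invoking (iv) is likewise fine.
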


\begin{lemma}[\cite{Jones1,Wang6}]\label{touyingdaishu}
Let $(S, \cdot,\, {}^+, {}^\ast)$ be a DRC semigroup. Define two binary operations  $``\times_{S}"$ and $``\star_{S}"$ on $P(S)$ as follows: For all $e,f\in P(S)$,
\begin{equation}\label{touyingdaishuyunsuan} e\times_S f=(ef)^+, e\star_S f=(ef)^\ast.
\end{equation}
Then $(P(S), \times_S, \star_S )$ forms a projection algebra and is called the  projection algebra of $S$. In particular,  if $S$ is DRC-restriction, then $P(S)$ is strong. Moreover, if $S$ is P-restriction, then $P(S)$ is symmetric; If $S$ is restriction, then $P(S)$ is commutative.
\end{lemma}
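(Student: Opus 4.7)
The plan is to verify each required axiom by unpacking the definitions and translating them into identities in $S$, then invoking the DRC axioms together with Lemma \ref{kangwei}. Before starting, I would collect several preliminary identities that do the heavy lifting throughout: (a) for $e,f\in P(S)$, $e=e^+=e^\ast$ and $e^2=e$; (b) $(ef)^+e=(ef)^+$ and dually $f(ef)^\ast=(ef)^\ast$ (these follow from Lemma \ref{kangwei}(2): $(ef)^+=e(ef)^+e$, then multiply on the right by $e$ and use $e^2=e$); (c) $(ef)^+f=ef$ and $e(ef)^\ast=ef$ from Lemma \ref{kangwei}(3). These are the ``transport'' moves that let one slide an $e$ or $f$ through a sandwiched projection, and they are the core of the bookkeeping.

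For the left projection algebra, (L1) $(ee)^+=e$ is immediate from (iv); (L2) $((ef)^+e)^+=((ef)^+)^+=(ef)^+$ by (b) and (iv). For (L3), first rewrite $(ef)^+(fg)^+=(ef)^+\cdot f(fg)^+f=ef(fg)^+f$ using Lemma \ref{kangwei}(2) and (c); then two applications of (ii) give $((ef)^+(fg)^+)^+=(ef(fg)^+)^+=(e(fg)^+)^+$. For (L4), apply (ii) to write $((ef)^+(eg)^+)^+=((ef)^+eg)^+$ and then use $(ef)^+e=(ef)^+$ to collapse this to $((ef)^+g)^+$. The axioms (R1)-(R4) follow by the same reasoning applied to the dual axioms (i)$'$-(v)$'$. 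For the two-sided axioms, (P3) reduces to $(efg)^\ast=((ef)^\ast g)^\ast$ via (c) (to eliminate $(fg)^+$) and (ii)$'$; (P4) reduces to $((ef)^+e)^\ast=(ef)^+$ via (b) and (v), with the second identity handled dually.

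For the strong case, I would work with the equivalent form \eqref{wang2}: (SP1) amounts to showing that whenever $g\in P(S)$ satisfies $(ef)^+g=g(ef)^+=g$, one has $(e(gf)^\ast)^+=g$. The hypothesis, combined with $(ef)^+e=(ef)^+$, forces $eg=ge=g$ and hence $gf=gef$. The key ingredient is then the DRC-ample condition \eqref{dengjia}, specifically $f(gf)^\ast=(gf)^\ast f$ (from $x(ex)^\ast=(ex^+)^\ast x$ with $x=f$, $e=g$), which lets one shuttle $f$ past the $(gf)^\ast$-factor. Using this together with (ii), (iii) and the preliminary identities, one reduces $(e(gf)^\ast)^+$ to an expression in which both $\leq_P$-directions between $g$ and $(e(gf)^\ast)^+$ are visible, forcing equality. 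The axiom (SP2) is proved dually from the companion DRC-ample condition $(xe)^+x=x(x^\ast e)^+$. This bookkeeping between the $^+$-side and the $^\ast$-side, mediated entirely by the DRC-ample conditions, is the main obstacle of the whole proof; without (vi) and (vi)$'$ the strong axioms do not hold in a general DRC semigroup.

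Finally, the symmetric and commutative special cases are immediate consequences of Proposition \ref{guanxi}. If $S$ is $P$-restriction, then $(ef)^+=efe=(fe)^\ast$ for all $e,f\in P(S)$, so $e\times_S f=(ef)^+=(fe)^\ast=f\star_S e$, which is symmetry. If $S$ is restriction, then $(ef)^+=ef=fe=(fe)^+$, so $e\times_S f=ef=fe=f\times_S e$, giving commutativity (and symmetry as a by-product).
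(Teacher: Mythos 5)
The paper gives no proof of this lemma; it is imported verbatim from \cite{Jones1,Wang6}, so your argument has to stand on its own. The part establishing that $(P(S),\times_S,\star_S)$ is a projection algebra is correct: your identities (b) and (c) extracted from Lemma \ref{kangwei} do reduce (L1)--(L4), (R1)--(R4), (P3) and (P4) to direct applications of (ii), (ii)$'$, (iv), (v) and their duals (in fact $(ef)^+(fg)^+=e(fg)^+$ already as elements of $S$, so (L3) needs even less than you invoke), and the symmetric and commutative cases are indeed immediate from Proposition \ref{guanxi}.

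The strong case, however, has a genuine gap. The identity you single out as the key ingredient, $f(gf)^\ast=(gf)^\ast f$ obtained from (\ref{dengjia}) with $x=f$ and $e=g$, is vacuous: axiom (iii)$'$ gives $(gf)^\ast=f(gf)^\ast f$, from which $f(gf)^\ast=(gf)^\ast=(gf)^\ast f$ in \emph{every} DRC semigroup. So this instance of the ample condition carries no information and cannot be what distinguishes the DRC-restriction case; the subsequent sentence about making ``both $\leq_P$-directions visible'' is not a completable recipe starting from it. The step that genuinely uses (vi) is the application of (\ref{dengjia}) to the product $ef$ rather than to $f$: with $x=ef$ and the projection $g$ (where $g\leq_P(ef)^+$ forces $g(ef)^+=(ef)^+g=g$ and $g\cdot ef=gf$), one gets $ef(gf)^\ast=ef(g\cdot ef)^\ast=(g(ef)^+)^\ast\,ef=g\cdot ef=gf$, and therefore
$$(e(gf)^\ast)^+=(ef(gf)^\ast)^+=(gf)^+=(g\cdot ef)^+=(g(ef)^+)^+=g^+=g,$$
where the first equality uses the (trivial) $f(gf)^\ast=(gf)^\ast$ and the later ones use (ii) and $g(ef)^+=g$. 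This is exactly the statement that $\rho_{ef}$ and $\sigma_{ef}$ are mutually inverse, i.e.\ Lemma \ref{huni}, which is where the ample condition really enters. As written, your argument for (SP1), and dually for (SP2), is therefore incomplete.
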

Let $(S, \cdot,\, {}^+, {}^\ast)$ be a DRC semigroup. Define a partial order $``\omega_S"$ on $P(S)$ as follows: For all $e,f\in P(S)$,  $$e \omega_S f \Longleftrightarrow fef=e.$$  By DRC Conditions and Lemma \ref{kangwei}, for all $x,y\in S$ and $e,f\in P (S)$,  we have
\begin{equation}\label{xiaoyu}
(xy)^+\,\omega_S x^+,\,\, (xy)^\ast\,\omega_S y^\ast,\,\, (ef)^+\,\omega_S e, \,\,  (ef)^\ast\omega_S f.
\end{equation}
Define binary relations $``\leq^r_S"$ and $``\leq^l_S"$ on $S$ as follows: For all $a,b\in S$,
\begin{equation}\label{xiaoyus} a\leq^r_S b \Longleftrightarrow a=a^+b,\,   a^+\omega_S b^+,
\end{equation}
\begin{equation}\label{xiaoyusl} a\leq^l_S b \Longleftrightarrow a=ba^\ast,\, a^\ast\omega_S b^\ast.
\end{equation}
By (\romannumeral1), (\romannumeral1)$'$ and Lemma \ref{kangwei}, we can obtain the following result easily.
\begin{lemma}\label{xiaoyuslw}Let $(S, \cdot,\, {}^+, {}^\ast)$ be a DRC semigroup. Then $``\leq^r_S"$ and $``\leq^l_S"$ are partial orders on $S$,  and for all $e,f\in P(S)$,
$$e\leq^r_S f \Longleftrightarrow  e \omega_S f\Longleftrightarrow e\leq^l_S f\Longleftrightarrow e\leq_{P(S)} f,
$$
where $\leq_{P(S)}$  is the natural partial order of the projection algebra $(P(S), \times_S, \star_S)$.
\end{lemma}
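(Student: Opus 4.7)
The plan is to prove the partial-order properties of $\leq^r_S$ directly from the DRC axioms (the case of $\leq^l_S$ being entirely symmetric), and then to collapse the four relations on $P(S)$ to a single concrete condition. The whole argument rests on the DRC axioms together with the idempotency $P(S)\subseteq E(S)$ from Lemma \ref{kangwei} (1) and the identity $(fe)^+=f(fe)^+f$ from Lemma \ref{kangwei} (2).

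For $\leq^r_S$: reflexivity is immediate from axiom (\romannumeral1) and the fact $a^+a^+a^+=a^+$. For antisymmetry and transitivity, the key observation is that $\omega_S$ restricted to $P(S)$ is already a partial order, and in fact $e\,\omega_S\,f$ forces the much stronger conclusion $e=ef=fe$: from $fef=e$, right-multiplying by $f$ and using $f^2=f$ gives $fef=ef$, hence $ef=e$, and dually $fe=e$. This makes $\omega_S$ on $P(S)$ antisymmetric (if $e\,\omega_S\,f\,\omega_S\,e$ then $e=ef=f$) and transitive (if $e\,\omega_S\,f\,\omega_S\,g$ then $eg=e(fg)=ef=e$ and $ge=(gf)e=fe=e$, so $geg=e$). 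Consequently, $a\leq^r_S b\leq^r_S a$ forces $a^+=b^+$ and thus $a=a^+b=b^+b=b$; and $a\leq^r_S b\leq^r_S c$ yields both $a^+\,\omega_S\,c^+$ and $a=a^+b=a^+b^+c=a^+c$ (using $a^+b^+=a^+$, another instant consequence of the strong form of $\omega_S$).

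For the equivalences on $P(S)$, since $e=e^+=e^\ast$ for $e\in P(S)$, the definition of $\leq^r_S$ reduces to ``$e=ef$ and $e\,\omega_S\,f$'', and that of $\leq^l_S$ to ``$e=fe$ and $e\,\omega_S\,f$''. By the observation above, each of these two conditions is in fact equivalent to $e\,\omega_S\,f$ alone. Finally, $e\leq_{P(S)} f$ means $e=f\times_S e=(fe)^+$; Lemma \ref{kangwei} (2) gives $(fe)^+=f(fe)^+f$, so the equation $e=(fe)^+$ is equivalent to $e=fef$, that is, to $e\,\omega_S\,f$. No substantial obstacle arises; the whole proof pivots on the single observation that $fef=e$ already implies $e=ef=fe$ whenever $e,f$ are projections, after which every required verification is a one-line manipulation.
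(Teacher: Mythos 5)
Your proof is correct, and it fills in exactly the computation the paper leaves implicit (the paper only remarks that the lemma follows ``easily'' from (i), (i)$'$ and Lemma \ref{kangwei}). The pivotal observation that $fef=e$ for idempotents already forces $e=ef=fe$ --- obtained by multiplying $fef=e$ on the right and on the left by $f$ --- is the intended route, and everything else (reflexivity from (i), antisymmetry and transitivity of $\leq^r_S$ via $a^+b^+=a^+$, and the identification of all four relations on $P(S)$ with $e=ef=fe$ using $(fe)^+=f(fe)^+f$ and $e^+=e$) follows as you describe.
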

\begin{lemma}[\cite{Wang4}]\label{zhuw2}Let $(S, \cdot,\, {}^+, {}^\ast)$ be a DRC semigroup and $a,b\in S$.
\begin{itemize}
\item[(1)] If $a\leq^r_S b$ or $a\leq^l_S b$, then $a^+\omega b^+$, $a^\ast\omega b^\ast$.
\item[(2)] $a\leq^r_S b$ if and only if there exists $e\in P(S)$ such that $a=eb$ and $e\omega_S b^+$.
\item[(3)]  $a\leq^l_S b$ if and only if there exists $e\in P(S)$ such that $a=be$ and $e\omega_S b^\ast$.
\item[(4)] For all $x\in S$ and $e\in P(S)$, if $e\omega_S x^+$, then there exists a unique element $z\in S$ such that $z^+=e$ and $z\leq^l_S x$, this element is exactly $ex$.
\item[(5)]  For all $x\in S$ and $e\in P(S)$, if $e\omega_S x^\ast$, then there exists a unique element $z\in S$ such that $z^\ast=e, z\leq^r_S x$, this element is exactly $xe$.
\end{itemize}
\end{lemma}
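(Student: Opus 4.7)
The proof is a direct item-by-item verification using the DRC axioms (i)--(v), (i)$'$--(v)$'$, the projection-algebra facts of Lemmas \ref{jiben1} and \ref{xiaoyuslw}, the auxiliary properties of projections collected in Lemma \ref{kangwei}, and — for items (4)--(5) — the DRC-ample conditions (vi), (vi)$'$ available in the DRC-restriction setting of the paper. I would proceed in the order (1), then (2)/(3), then (4)/(5).

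\smallskip
\emph{Item (1).} If $a \leq^r_S b$, then $a = a^+b$, and applying axiom (iii)$'$ to $a^\ast = (a^+b)^\ast$ gives $a^\ast = b^\ast (a^+b)^\ast b^\ast = b^\ast a^\ast b^\ast$, i.e.\ $a^\ast \omega_S b^\ast$. The case $a \leq^l_S b$ is dual via (iii).

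\smallskip
\emph{Items (2)/(3).} One direction is immediate by taking $e = a^+$ (resp.\ $e = a^\ast$). For the converse of (2), assume $a = eb$ with $e \omega_S b^+$. Lemma \ref{xiaoyuslw} gives $e \leq_{P(S)} b^+$, so by Lemma \ref{jiben1}(1) we have $e = e \times_S b^+ = (eb^+)^+$. Axiom (ii) then yields $a^+ = (eb)^+ = (eb^+)^+ = e$, whence $a^+b = eb = a$ and $a^+ \omega_S b^+$. Item (3) is dual.

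\smallskip
\emph{Items (4)/(5).} I would show that $z := ex$ (resp.\ $z := xe$) has all advertised properties. The computation $z^+ = (ex)^+ = (ex^+)^+ = e$ is exactly as in (2), and $z^\ast \omega_S x^\ast$ falls out of (iii)$'$: $(ex)^\ast = x^\ast (ex)^\ast x^\ast$. The delicate equation $z = xz^\ast$, i.e.\ $ex = x(ex)^\ast$, I would establish via the DRC-ample identity (vi) with $y = e$: $x(ex)^\ast = (ex^+)^\ast x = ex$, using $(ex^+)^\ast = e \star_S x^+ = e$ from Lemma \ref{jiben1}(2) (since $e \leq_{P(S)} x^+$). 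For uniqueness, suppose $z' \in S$ satisfies $z'^+ = e$ and $z' = xz'^\ast$ with $z'^\ast \omega_S x^\ast$. I would apply (vi)$'$ to $xz'^\ast$: $(xz'^\ast)^+ x = x(x^\ast z'^\ast)^+$. The left-hand side collapses to $z'^+ \cdot x = ex$, while the right-hand side collapses to $x z'^\ast = z'$ since $(x^\ast z'^\ast)^+ = x^\ast \times_S z'^\ast = z'^\ast$ by Lemma \ref{jiben1}(1) (as $z'^\ast \leq_{P(S)} x^\ast$). Hence $z = ex = xz'^\ast = z'$. Item (5) is proved dually, with (vi)$'$ used for existence and (vi) for uniqueness.

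\smallskip
\textbf{Main obstacle.} Items (1)--(3) amount to short formal manipulations. The real content sits in (4)--(5): one must verify both $z = xz^\ast$ and the uniqueness of $z$. Both steps hinge essentially on the DRC-ample conditions (vi) and (vi)$'$, which is precisely where the ``restriction'' flavour of the semigroup enters the argument; the DRC axioms (i)--(v), (i)$'$--(v)$'$ together with the projection-algebra facts alone are not enough to force the two projections $z^\ast$ and $z'^\ast$ (both lying below $x^\ast$) to coincide. Once (vi) and (vi)$'$ are available, however, the identities $(ex^+)^\ast = e$ and $(x^\ast z'^\ast)^+ = z'^\ast$ — which are simply $e \leq_{P(S)} x^+$ and $z'^\ast \leq_{P(S)} x^\ast$ repackaged through the projection algebra — do the rest of the work.
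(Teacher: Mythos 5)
The paper offers no proof of this lemma; it is imported verbatim from \cite{Wang4}, so there is nothing internal to compare your argument against and I assess it on its own terms. Your items (1)--(3) are correct: (iii)$'$ applied to $a=a^+b$ gives $a^\ast\,\omega_S\,b^\ast$, and the converse halves of (2)--(3) reduce, exactly as you say, to $(eb)^+=(eb^+)^+=e$ and its dual, which follow from $e\leq_{P(S)}b^+$ via Lemma \ref{jiben1} and axiom (ii).

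The gap is in (4)--(5). The lemma is stated for a DRC semigroup, yet both your existence step (the identity $ex=x(ex)^\ast$) and your uniqueness step invoke the DRC-ample conditions (vi) and (vi)$'$ --- the very axioms that separate DRC-restriction semigroups from DRC semigroups --- so your argument only establishes (4)--(5) under the stronger hypothesis that $S$ is DRC-restriction. You half-acknowledge this in your closing paragraph, but you do not resolve the resulting tension with the hypotheses of the lemma. The tension is real: with the pairing as printed ($z^+=e$ together with $z\leq^l_S x$), item (4) is false for general DRC semigroups. For instance, in the DRC semigroup of partial maps on $\{1,2\}$ (composed left to right, with $f^+$ and $f^\ast$ the identity maps on the domain and the image of $f$ respectively), take $x$ to be the total map sending both points to $1$ and $e$ the identity on $\{1\}$; then $e\,\omega_S\,x^+$, but the only elements below $x$ under $\leq^l_S$ are $x$ itself and the empty map, neither of which has $+$-value $e$. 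The statement that is provable at the stated level of generality --- and, in view of the trivial proof it admits, surely the one intended in \cite{Wang4} --- pairs $z^+=e$ with $z\leq^r_S x$ in (4) (dually, $z^\ast=e$ with $z\leq^l_S x$ in (5)). With that pairing no ample condition is needed: $z=ex$ satisfies $z^+=(ex^+)^+=e$ and $z=z^+x$ by your own computation, and any $z$ with $z^+=e$ and $z\leq^r_S x$ equals $z^+x=ex$ by the definition of $\leq^r_S$. So either the printed lemma carries a typo in the pairing of the two orders, in which case your (4)--(5) prove a different (and, as stated, false-in-general) assertion by importing extra axioms, or the crossed pairing is intended with a tacit DRC-restriction hypothesis, in which case your computations are correct but that hypothesis must be declared. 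Since the paper only ever applies (4)--(5) to DRC-restriction semigroups (Corollary \ref{zhu2}), nothing downstream is endangered, but as a proof of the lemma as stated your (4)--(5) do not go through.
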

\begin{lemma}\label{yizhi}Let $(S, \cdot,\, {}^+, {}^\ast)$ be a DRC semigroup. Then $\leq^r_S=\leq^l_S$ if and only if $S$ is DRC-restriction.
\end{lemma}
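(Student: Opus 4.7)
The plan is to prove each implication separately, making essential use of Lemma~\ref{zhuw2}, which in particular says that an element $z\leq^r_S x$ is uniquely determined by its ${}^\ast$-component and an element $z\leq^l_S x$ is uniquely determined by its ${}^+$-component.

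For the direct implication, assume $S$ is DRC-restriction and pick $a\leq^r_S b$, so $a=a^+b$ and $a^+\omega_S b^+$. The main computation is
\[b\,a^\ast \;=\; b(a^+b)^\ast \;=\; (a^+b^+)^\ast\,b,\]
where the second equality invokes the DRC-ample axiom $(\romannumeral6)$ applied with $x=b$, $y=a^+$. Since $a^+\omega_S b^+$ is equivalent (by Lemma~\ref{xiaoyuslw}) to $a^+\leq_{P(S)}b^+$, Lemma~\ref{jiben1}(2) yields $(a^+b^+)^\ast=a^+\star_S b^+=a^+$, so $b\,a^\ast=a^+b=a$. Applying axiom $(\romannumeral3)'$ to $a^\ast=(a^+b)^\ast$ produces $a^\ast=b^\ast a^\ast b^\ast$, i.e.\ $a^\ast\omega_S b^\ast$. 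Thus $a\leq^l_S b$, and the reverse inclusion $\leq^l_S\,\subseteq\,\leq^r_S$ follows dually by invoking $(\romannumeral6)'$ in place of $(\romannumeral6)$.

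For the converse, suppose $\leq^r_S=\leq^l_S$ and fix $x\in S$, $e\in P(S)$. The hinge of the argument is an elementary identity: by axiom~$(\romannumeral1)$ we have $ex = ex^+\cdot x$, so axiom~$(\romannumeral2)'$ applied to the product $ex^+\cdot x$ yields
\[(ex)^\ast \;=\; (ex^+\cdot x)^\ast \;=\; ((ex^+)^\ast\,x)^\ast.\]
Combined with $(ex^+)^\ast\omega_S x^+$ from Lemma~\ref{kangwei}(2), this shows that in \emph{any} DRC semigroup the element $(ex^+)^\ast x$ has ${}^+$-component $(ex^+)^\ast$ and ${}^\ast$-component $(ex)^\ast$. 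Now Lemma~\ref{zhuw2}(4) gives $(ex^+)^\ast x\leq^l_S x$, and Lemma~\ref{zhuw2}(5) gives $x(ex)^\ast\leq^r_S x$ with ${}^\ast$-component $(ex)^\ast$. Under the hypothesis $\leq^l_S=\leq^r_S$, the element $(ex^+)^\ast x$ must also be $\leq^r_S x$, so both $(ex^+)^\ast x$ and $x(ex)^\ast$ are elements $\leq^r_S x$ sharing the same ${}^\ast$-component $(ex)^\ast$; the uniqueness in Lemma~\ref{zhuw2}(5) forces $x(ex)^\ast=(ex^+)^\ast x$, which is axiom~$(\romannumeral6)$. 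The dual axiom~$(\romannumeral6)'$ will be obtained by the parallel computation $(xe)^+=(x(x^\ast e)^+)^+$, deduced from $xe=x\cdot x^\ast e$ and axiom~$(\romannumeral2)$, combined with the uniqueness in Lemma~\ref{zhuw2}(4).

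The main (indeed only) obstacle is spotting the identity $(ex)^\ast=((ex^+)^\ast x)^\ast$: it is short and looks harmless, but it is exactly what bridges the hypothesis $\leq^l_S=\leq^r_S$ and the DRC-ample conditions. Once this identification is in hand, the uniqueness machinery of Lemma~\ref{zhuw2} finishes the argument, and everything else reduces to routine bookkeeping with the projection-algebra identities of Lemma~\ref{jiben1} and the basic DRC properties in Lemma~\ref{kangwei}.
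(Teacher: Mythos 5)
Your proof is correct and follows essentially the same route as the paper's: one inclusion is verified by a direct computation with the relevant ample axiom together with Lemmas~\ref{kangwei} and \ref{jiben1}, and the converse is obtained by exhibiting an element below $x$ in one of the orders, transferring it to the other order via the hypothesis, and reading off the ample identity using Lemma~\ref{zhuw2}. The only cosmetic difference is that where the paper extracts the identity directly from the defining equation $a=a^+b$ of $\leq^r_S$, you route through the uniqueness clause of Lemma~\ref{zhuw2}(5); both work.
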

\begin{proof}
Let $\leq^{l}_{S}\,\subseteq\, \leq^{r}_S$ and $x,y\in S$. By (\ref{xiaoyu}), $(x^{\ast}y)^{+} \omega_{S}\, x^{\ast +}=x^{\ast}$, and so $x(x^{\ast}y)^{+}\leq^{l}_{S} x$ by Lemma \ref{zhuw2} (3). By hypothesis,
$x(x^{\ast}y)^{+}\leq^{r}_{S} x$, which implies that  $$x(x^{\ast}y)^{+}=(x(x^{\ast}y)^{+})^{+}x=(xx^{\ast }y)^{+}x=(xy)^{+}x $$ by (ii) and (i)$'$. This gives (vi)$'$.
Conversely, assume that (vi)$'$ is satisfied, $a,b\in S$ and $a\leq^l_S b$. Then $a=ba^{\ast}$ and $a^{\ast}\omega_{S} b^{\ast}$. Thus we have $b^{\ast}a^{\ast}=a^{\ast}$ and
$$a^{+}b=(ba^{\ast})^{+}b=b(b^{\ast}a^{\ast})^{+}=b(a^{\ast})^{+}=ba^{\ast}=a,\,\,\,  b^{+}a^{+}b^{+}=b^{+}(ba^{\ast})^{+}b^{+}=(ba^{\ast})^{+}=a^{+}$$ by DRC-conditions.
This implies that $a\leq^r_S b$. Therefore $\leq^l_S\, \subseteq \,\leq^r_S.$
The above statement together with its dual now gives the desired result.
\end{proof}
Let $(S, \cdot,\, {}^+, {}^\ast)$ be a DRC-restriction semigroup. By lemma \ref{yizhi}, $\leq^r_S=\leq^l_S$. In this case, denote $\leq_S=\leq^r_S=\leq^l_S$ and call it the {\em natural partial order} of $S$.
\begin{coro}\label{zhu2}Let $(S, \cdot,\, {}^+, {}^\ast)$ be a DRC-restriction semigroup and $a, b, c, d\in S$.
\begin{itemize}
\item[(1)] If $a\leq_S b$, then $a^+\leq_S b^+$ and $a^\ast\leq_S b^\ast$.
\item[(2)] $a\leq_S b$ if and only if there exists $e\in P(S)$ such that $a=eb$ and $e\omega_S b^+$.
\item[(3)]$a\leq_S b$ if and only if there exists $e\in P(S)$ such that $a=be$ and $e\omega_S b^\ast$.
\item[(4)]$a\leq_S b$ if and only if $a=a^{+}b=ba^{\ast}$.
\item[(5)]If $a\leq_S b, c\leq_S d$ and $ a^{\ast}=c^{+}, b^{\ast}=d^{+}$, then $ac\leq_S bd$.
\item[(6)] For all $x\in S$ and $e\in P(S)$, if $e\leq_S x^+$, then  there exists a unique element $z\in S$ such that $z^+=e$ and $z\leq_S x,$ this element is exactly $ex$. In this case, we denote this unique element by ${_e}{\downharpoonleft}{x}$, that is, ${_e}{\downharpoonleft}{x}=ex$.
\item[(7)]  For all $x\in S$ and $e\in P(S)$, if $e\leq_S x^\ast$, then  there exists a unique element $z\in S$ such that $z^\ast=e$ and $z\leq_S x$, this element is exactly $x e$. In this case, we denote this unique element by ${x}{\downharpoonright}_e$, that is, ${x}{\downharpoonright}_e=xe$.
\end{itemize}
\end{coro}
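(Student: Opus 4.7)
The plan is that every item in Corollary \ref{zhu2} reduces to the machinery already established. The two key observations are Lemma \ref{yizhi}, which guarantees $\leq^r_S = \leq^l_S = \leq_S$ in a DRC-restriction semigroup, and Lemma \ref{xiaoyuslw}, which identifies $\omega_S$ with the restriction of $\leq_S$ to $P(S)$. Under these two identifications, most items simply translate statements about $\leq^r_S$ or $\leq^l_S$ into the language of $\leq_S$.

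Items (1), (2), (3), (6) and (7) are essentially restatements. For (1), apply Lemma \ref{zhuw2}(1) to obtain $a^+ \omega_S b^+$ and $a^\ast \omega_S b^\ast$, then invoke Lemma \ref{xiaoyuslw} to convert $\omega_S$ to $\leq_S$. Items (2) and (3) follow immediately from Lemma \ref{zhuw2}(2), (3) once we replace $\leq^r_S$, $\leq^l_S$ by $\leq_S$ via Lemma \ref{yizhi}. Items (6) and (7) are Lemma \ref{zhuw2}(4), (5) translated the same way (noting $e \omega_S x^+ \Longleftrightarrow e \leq_S x^+$ and $z \leq^l_S x \Longleftrightarrow z \leq_S x$).

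For (4), the forward direction is immediate since $a \leq_S b$ simultaneously satisfies the defining conditions of $\leq^r_S$ and $\leq^l_S$, giving $a = a^+ b$ and $a = ba^\ast$. For the converse, assume $a = a^+ b = b a^\ast$. I would apply axiom (iii) to $(b a^\ast)^+$, obtaining
\[
a^+ = (b a^\ast)^+ = b^+ (b a^\ast)^+ b^+ = b^+ a^+ b^+,
\]
so $a^+ \omega_S b^+$; dually via (iii)$'$, $a^\ast \omega_S b^\ast$. Combined with $a = a^+ b$, this is the definition of $a \leq^r_S b$, hence $a \leq_S b$.

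For (5), first compute $(ac)^+$ and $(ac)^\ast$. Using $a = a a^\ast$, $c = c^+ c$, the hypothesis $a^\ast = c^+$, and axiom (ii), we have $ac = a c^+ c = a a^\ast c = a c$ and $(ac)^+ = (a c^+)^+ = (a a^\ast)^+ = a^+$; dually $(ac)^\ast = c^\ast$. Next verify the ``factored'' forms required by (4): using $a = a^+ b$ from $a \leq_S b$ and $c = c^+ d = a^\ast d$ from $c \leq_S d$, compute $a^+(bd) = (a^+ b)d = a d = a(a^\ast d) = a c$, and dually $(bd)c^\ast = b(dc^\ast) = bc = (ba^\ast)c = ac$. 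Finally, by (1), $a^+ \leq_S b^+$ and $c^\ast \leq_S d^\ast$, i.e.\ $(ac)^+ \omega_S (bd)^+$ and $(ac)^\ast \omega_S (bd)^\ast$. Conclude $ac \leq_S bd$ by (4). The only subtle point, and the closest thing to an obstacle, is keeping the rewrites $a^\ast c^+ = a^\ast$ and $c^+ a^\ast = c^+$ (both consequences of $a^\ast = c^+$ being an idempotent) straight so that the left- and right-factored formulas come out correctly; beyond that, everything is a direct invocation of Lemmas \ref{yizhi}, \ref{xiaoyuslw}, and \ref{zhuw2}.
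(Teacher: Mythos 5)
Your proposal is correct and follows essentially the same route as the paper: items (1), (2), (3), (6), (7) are dispatched by combining Lemma \ref{zhuw2} with Lemmas \ref{yizhi} and \ref{xiaoyuslw}, the converse in (4) is obtained by applying axiom (iii) to $(ba^\ast)^+$, and (5) is a direct computation of $(ac)^+$ and the factorization $a^+(bd)=ac$. The only cosmetic difference is that in (5) you verify both the left- and right-factored forms and then invoke (4), whereas the paper checks only the $\leq^r_S$ conditions $(ac)^+\,\omega_S\,(bd)^+$ and $(ac)^+bd=ac$, which already suffice since $\leq_S=\leq^r_S$.
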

\begin{proof} By Lemma \ref{zhuw2}, we only need to prove (4) and (5).

(4) If $a\leq_S b$, then we have $a=a^{+}b=ba^{\ast}$ by (\ref{xiaoyus}), (\ref{xiaoyusl}) and Lemma \ref{yizhi}. Conversely, assume that $a=a^{+}b=ba^{\ast}$. Then $a^{+}=(ba^{\ast})^{+}=b^{+}(ba^{\ast})^{+}b^{+}=b^{+}a^{+}b^{+}$ by (iii), and so $a^{+}\omega_S b^{+}$. By  (\ref{xiaoyus}) and Lemma \ref{yizhi}, we have $a\leq_S b$ .

(5) Let $a\leq_{S} b$ and $c\leq_{S} d$. Then $a=a^{+}b,a^{+}\,\omega_{S}\,b^{+},c=c^{+}d,c^{+}\,\omega_{S}\,d^{+}$. By DRC Conditions and the hypothesis, $$(ac)^{+}=(ac^{+})^{+}=(aa^{\ast})^{+}=a^{+},~(bd)^{+}=(bd^{+})^{+}=(bb^{\ast})^{+}=b^{+},$$ and so $(ac)^{+}\,\omega_{S}\,(bd)^{+}.$ Moreover, $(ac)^{+}bd=a^{+}bd=ad=aa^{\ast}d=ac^{+}d=ac.$ Thus $ac\leq_{S} bd$.
\end{proof}

Let $(S, \cdot,\, {}^+, {}^\ast)$ be a DRC  semigroup. In $(P(S), \times_S, \star_S)$, denote
$$\langle e\rangle=\{x\in P(S)\mid x\leq_{P(S)} e\}=\{x\in P(S)\mid x\, \omega_{S}\, e\}$$ for any $e\in P(S)$ (see Lemma \ref{xiaoyuslw}). By (\ref{xiaoyu}), we can define
\begin{equation}\label{rho}\rho_a: \langle a^+\rangle\rightarrow \langle a^\ast\rangle,\,\, x\mapsto (xa)^\ast;\,\,\,\,
\sigma_a: \langle a^\ast\rangle\rightarrow \langle a^+\rangle,\,\, y\mapsto (ay)^+
\end{equation} for all $a\in S$.
In particular, by Lemma \ref{kangwei} we have
\begin{equation}\label{hengdeng}
\rho_e =\sigma_e=\mbox{the identity map on }\langle e\rangle
\end{equation}
for all $e\in P(S)$.
\begin{lemma}[\cite{Wang6}]\label{huni}Let $(S, \cdot,\, {}^+, {}^\ast)$ be a DRC  semigroup. Then $S$ is DRC-restriction if and only if for all $a\in S$,  $\rho_a$ and  $\sigma_a$ are mutually inverse bijections. In this case, for all $a\in S$,  $\rho_a$ and  $\sigma_a$ are mutually inverse isomorphisms between projection algebras.
\end{lemma}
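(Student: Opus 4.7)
My plan is to prove the two implications separately, and then handle the final ``In this case'' statement as a direct calculation.

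For the forward direction, I would assume the DRC-ample conditions (vi) and (vi)$'$. Take $x\in\langle a^+\rangle$, so $x\leq_{P(S)}a^+$. By Lemma \ref{jiben1}(1)--(2) (noting that by Lemma \ref{touyingdaishu} the natural order on $P(S)$ coincides with $\leq_{P(S)}$), this is equivalent to $(xa^+)^+=(a^+x)^+=(xa^+)^\ast=(a^+x)^\ast=x$. Applying (vi) with the substitution $y\to x$ in $x(yx)^\ast=(yx^+)^\ast x$ reversed (so with $x\to a$, $y\to x$), we get $a(xa)^\ast=(xa^+)^\ast a=xa$, and then by axiom (ii),
\[
\sigma_a(\rho_a(x))=(a(xa)^\ast)^+=(xa)^+=(xa^+)^+=x.
\]
A symmetric computation using (vi)$'$ gives $\rho_a(\sigma_a(y))=y$ for every $y\in\langle a^\ast\rangle$. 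Thus $\rho_a$ and $\sigma_a$ are mutually inverse bijections.

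For the reverse direction, I would assume $\rho_a$ and $\sigma_a$ are mutually inverse bijections for every $a\in S$, and aim to derive the DRC-ample conditions. By (\ref{dengjia}), it is enough to establish $x(ex)^\ast=(ex^+)^\ast x$ and $(xe)^+x=x(x^\ast e)^+$ for all $x\in S$ and $e\in P(S)$. The strategy is: first verify that both sides of, say, the first identity have the same $^+$ and the same $^\ast$ (using axioms (ii), (ii)$'$, (iii), (iii)$'$ together with the hypothesis $\sigma_x\rho_x=\mathrm{id}_{\langle x^+\rangle}$ applied to the projection $u=(ex^+)^\ast\in\langle x^+\rangle$, which yields $(x((ex^+)^\ast x)^\ast)^+=(ex^+)^\ast$), and then observe that both sides lie $\leq_S^l\!x$ in the sense of (\ref{xiaoyusl}). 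Then the uniqueness assertion in Lemma \ref{zhuw2}(4)--(5) forces the two elements to be equal. A dual application establishes (vi)$'$.

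For the ``In this case'' clause, I would show that $\rho_a$ is a homomorphism of projection algebras: take $e,f\in\langle a^+\rangle$ and compute both $\rho_a(e\times_S f)=\rho_a((ef)^+)=((ef)^+a)^\ast$ and $\rho_a(e)\times_S\rho_a(f)=((ea)^\ast (fa)^\ast)^+$, reducing each via (vi), axioms (ii), (iii) and Lemma \ref{kangwei} to a common expression, and similarly for $\star_S$. Since $\rho_a$ and $\sigma_a$ are mutually inverse, both are isomorphisms.

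The main obstacle is the reverse direction: the hypothesis is purely a statement about projections, yet the ample conditions are equalities between arbitrary elements of $S$. Bridging this gap requires the dual uniqueness result (Lemma \ref{zhuw2}(4)--(5)), plus careful bookkeeping of which element of $\langle x^+\rangle$ or $\langle x^\ast\rangle$ one must feed into $\sigma_x\rho_x=\mathrm{id}$ to make the idempotent parts of the two candidate elements match. The other two parts are largely direct verification once the right substitutions have been identified.
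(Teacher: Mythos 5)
The paper does not actually prove this lemma; it is imported verbatim from \cite{Wang6}, so there is no internal proof to compare against. Judged on its own terms, your plan is correct, and the delicate points all check out. In the forward direction, the computation $a(xa)^\ast=(xa^+)^\ast a=xa$ (using $(xa^+)^\ast=x$ for $x\leq_{P(S)}a^+$, Lemma \ref{jiben1}(2)) and then $(xa)^+=(xa^+)^+=x$ via (ii) and Lemma \ref{jiben1}(1) is exactly right. In the reverse direction, the crucial chain is: $u=(ex^+)^\ast\in\langle x^+\rangle$ by (\ref{xiaoyu}); $\rho_x(u)=((ex^+)^\ast x)^\ast=(ex^+x)^\ast=(ex)^\ast$ by (ii)$'$ and (i); hence $\sigma_x\rho_x=\mathrm{id}$ gives $(x(ex)^\ast)^+=(ex^+)^\ast$; and since $x(ex)^\ast\leq^l_S x$ by Lemma \ref{zhuw2}(3) while $(ex^+)^\ast x$ is, by Lemma \ref{zhuw2}(4), the \emph{unique} element $z$ with $z^+=(ex^+)^\ast$ and $z\leq^l_S x$, the two sides coincide. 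Your choice to work entirely on the $\leq^l_S$ side here is essential, since $\leq^l_S\neq\leq^r_S$ cannot be assumed before the ample conditions are established (Lemma \ref{yizhi}); a version of the argument that mixed the two orders would be circular, and you avoid that. For the final clause, the reductions do close up: for instance $\rho_a(e\star_S f)=((ef)^\ast a)^\ast=(efa)^\ast$ by (ii)$'$, while $efa=a\rho_a(e)\rho_a(f)$ (using $a(pa)^\ast=pa$ for $p\leq_{P(S)}a^+$, which is the forward-direction identity), so $(efa)^\ast=(\rho_a(e)\rho_a(f))^\ast=\rho_a(e)\star_S\rho_a(f)$; the $\times_S$ case follows by applying $\sigma_a$ and bijectivity. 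So the only part left as a sketch is routine, and the overall argument is sound.
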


Let $(S, \cdot,\, {}^+, {}^\ast)$ be a DRC-restriction  semigroup. Define the {\em restricted product $``\circ_S"$} of $S$ as follows:
$$a\circ_S b=\left\{\begin{array}{cc}
ab
& \mbox{ if}\ a^\ast=b^+ ,\\
\mbox{undefined} & \mbox{otherwise},
\end{array} \right.
$$
where $ab$ is the multiplication of $a$ and $b$ in $S$. Moreover, define the following maps:
\begin{equation}\label{qqyyaa}
\dd_S: S \rightarrow S,\, x\rightarrow x^{+},\,\,  \rr_S: S\rightarrow S,\, x\rightarrow x^{\ast}.
\end{equation}
\begin{equation}\label{hui}
\varepsilon_S: \mathscr{P}({P(S)})\rightarrow S, \,\, (p_{1},p_{2},\ldots,p_{k})\mapsto p_{1}p_2\cdots p_{k}.
\end{equation}
where $\mathscr{P}({P(S)})$ is the path category of $(P(S), \times_S, \star_S)$.
Now we can state our main result in this section.
\begin{theorem}\label{fii}
Let $(S, \cdot,\, {}^+, {}^\ast)$ be a DRC-restriction  semigroup. Then $$\textbf{\rm \bf C}(S)=(S, \circ_S, \dd_S, \rr_S, \leq_S,P_{S}, \times_S, \star_S, \varepsilon_S)$$ is a chain projection ordered category, where $P_{S}=P(S)$. Moreover, we have the following results:
\begin{itemize}
\item  If $S$ is a generalized regular $\circ$-semigroup, then $\textbf{\rm \bf C}(S)$ is a chain projection ordered  groupoid.
\item  If $S$ is P-restriction, then $\textbf{\rm \bf C}(S)$ is symmetric  chain projection ordered category.
\item   If $S$ is a  regular $\circ$-semigroup,  then $\textbf{\rm \bf C}(S)$ is a symmetric chain projection ordered  groupoid.
\item   If $S$ is restriction,  Then $\textbf{\rm \bf C}(S)$ is a commutative chain projection ordered category.
\item   If $S$ is an inverse  semigroup,  then $\textbf{\rm \bf C}(S)$ is a commutative chain projection ordered groupoid.
\end{itemize}
\end{theorem}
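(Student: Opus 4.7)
The plan is to verify the axioms of a chain projection ordered category one by one, reducing each to results already established in Section~5 (and earlier), with the DRC-ample conditions (vi), (vi)$'$ doing the real work at the two genuinely non-trivial steps.

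First, that $(S,\circ_S,\dd_S,\rr_S)$ is a small category with object set $P(S)$ follows routinely from the DRC axioms: (C1) reduces to $(ab)^+=a^+$ and $(ab)^*=b^*$ when $a^*=b^+$ via (ii), (ii)$'$ and (i), (i)$'$; (C2) is associativity in $S$; (C3) is (i) together with (i)$'$. The order axioms (O1)--(O4) for $\leq_S$ are then exactly the content of Corollary~\ref{zhu2}: parts (1) and (5) give (O1) and (O2), while parts (6) and (7) give existence and uniqueness of the left and right restrictions via the explicit formulas ${_e}{\downharpoonleft} x=ex$ and $x{\downharpoonright}_f=xf$. Since $(P(S),\times_S,\star_S)$ is a strong projection algebra by Lemma~\ref{touyingdaishu} and Lemma~\ref{xiaoyuslw} identifies $\leq_S$ with $\leq_{P(S)}$ on $P(S)$, one has a weak projection ordered category. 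To promote to a projection ordered category I would invoke criterion (G1d) of Lemma~\ref{ii}: under the identification $\dd(a)^{\downarrow}=\langle a^+\rangle$ and the formula ${_p}{\downharpoonleft} a=pa$, the map $\nu_a$ becomes $\nu_a(p)=(pa)^*=\rho_a(p)$, which is a projection algebra isomorphism by Lemma~\ref{huni}.

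For the evaluation map $\varepsilon_S$, axioms (E1) and (E3) are immediate (from $p^2=p$ and associativity), and (E2) reduces by induction on $k$ to the claim $(p_1p_2\cdots p_k)^+=p_1$ and $(p_1\cdots p_k)^*=p_k$ whenever $p_1\,\mathcal F_{P(S)}\cdots\mathcal F_{P(S)}\,p_k$, using axiom (ii). For (E4) I would use Proposition~\ref{fder} and instead prove (E5): $\varepsilon_S({_t}{\downharpoonleft}\mathfrak p)=t\,\varepsilon_S(\mathfrak p)$ for $t\leq_{P(S)}\dd(\mathfrak p)$, by induction on the length of $\mathfrak p$. The one non-trivial ingredient is the identity
\[
t(tp)^*=tp\qquad(t\in S,\ p\in P(S)),
\]
which follows from axiom (iii)$'$ in the form $(tp)^*=p(tp)^*p$ combined with $tp(tp)^*=tp$ from (i)$'$. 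The inductive step then gives $\varepsilon_S({_t}{\downharpoonleft}(p_1,\ldots,p_k))=t\cdot\varepsilon_S({_{t\star p_2}}{\downharpoonleft}(p_2,\ldots,p_k))=t(tp_2)^*p_2\cdots p_k=tp_1p_2\cdots p_k$, after absorbing $tp_1=t$ via $t\,\omega_S\,p_1$.

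The main obstacle is axiom (G2). Given $b\in S$ and a $b$-linked pair $(e,f)$, I would first unpack the definitions: computing $\Theta_b(e)=(eb)^*$ and $\Delta_b(f)=(bf)^+$ via (ii), (ii)$'$ and Lemma~\ref{touyingdaishu} yields $e_1=(eb^+)^*$, $f_1=(eb)^*$, $e_2=(bf)^+$, $f_2=(b^*f)^+$, and the $b$-linked condition reduces to the transparent pair of equalities $e=(ebf)^+$, $f=(ebf)^*$. The claim is then that both $\lambda(e,b,f)$ and $\rho(e,b,f)$ collapse to $ebf$. For $\lambda$: $ee_1=eb^+$ by the identity $t(tp)^*=tp$, then $(ee_1)(e_1b)=eb^+b=eb$ using $b^+e_1=e_1$ (since $e_1\,\omega_S\,b^+$), and finally $eb\cdot f_1f=eb\cdot(eb)^*f=ebf$ by (i)$'$. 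The computation $\rho(e,b,f)=ebf$ is dual, with the DRC-ample condition (vi)$'$ entering through $e_2b=(bf)^+b=b(b^*f)^+=bf_2$, which is exactly Lemma~\ref{ccc}(3). Finally, the five ``moreover'' assertions drop out: Lemma~\ref{aisdm} provides a unary $^\circ$ making $(S,\circ_S,\dd_S,\rr_S)$ a groupoid with $a^{-1}=a^\circ$; Lemma~\ref{touyingdaishu} gives symmetry (respectively commutativity) of $P(S)$ in the $P$-restriction (respectively restriction) case; and when $P(S)$ is symmetric with $p\,\mathcal F_{P(S)}\,q$, Proposition~\ref{guanxi}(1) yields $\varepsilon_S(p,q,p)=pqp=(pq)^+=p$, verifying the final symmetry clause of Definition~\ref{lll}.
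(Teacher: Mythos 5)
Your proposal is correct and follows essentially the same route as the paper's proof: category and order axioms from the DRC conditions and Corollary \ref{zhu2}, the projection ordered structure via $\nu_a=\rho_a$ and Lemma \ref{huni}, the evaluation map via (E5) by induction and Proposition \ref{fder}, and (G2) by collapsing both $\lambda(e,b,f)$ and $\rho(e,b,f)$ to $ebf$. Your extra observations (the reformulation of the $b$-linked condition as $e=(ebf)^+$, $f=(ebf)^*$, the explicit role of (vi)$'$ through $e_2b=bf_2$, and the check $\varepsilon_S(p,q,p)=pqp=(pq)^+=p$ in the symmetric case) are correct refinements of details the paper leaves implicit or "omits".
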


\begin{proof}
Let $x,y,z\in S$. By the definitions of $\circ_{S},\dd_{S}$ and $\rr_{S}$,  $x\circ_{S} y$ is defined if and only if  $x^{\ast}=y^{+}$, if and only if $\rr_{S}(x)=\dd_{S}(y).$ In this case, $$\dd_{S}(x\circ_{S} y)=(xy)^{+}=(xy^{+})^{+}=(xx^{\ast})^{+}=x^{+}=\dd_{S}(x).$$ Dually, $\rr_{S}(x\circ_{S}\, y)=\rr_{S}(y)$. If $x\circ_{S} y$ and $y\circ_{S}\, z$ are defined, by the above discussions $(x\circ_{S} y)\circ_{S} z$ and $x\circ_{S} (y\circ_{S} z)$ are defined and $$(x\circ_{S} y)\circ_{S} z=(xy)z=x(yz)=x\circ_{S} (y\circ_{S} z).$$ Since $\rr_{S}(\dd_{S}(x))=\rr_{S}(x^{+})=(x^{+})^{\ast}=x^{+}=\dd_{S}(x)$, it follows that $\dd(x)\circ_{S} x$ is defined and $\dd_{S}(x)\circ_{S} x=\dd_{S}(x)x=x^{+}x=x$. Dually, $x\circ_{S} \rr_{S}(x)$ is defined and $x\circ_{S} \rr_{S}(x)=x$.
Thus $C=(S,\circ_{S},\dd_{S},\rr_{S})$ is a category whose set of objects is $$P_{S}=\{\dd_{S}(x)\mid x\in S\}=\{x^{+}\mid x\in S\}=P(S).$$
By Lemma \ref{zhu2}, $(S,\circ_{S},\dd_{S},\rr_{S},\leq_{S})$ is an ordered category, and for all $x\in S$ and $e,f\in P_{S}=P(S)$, if $e\leq_{S} \dd_{S}(x)=x^{+}$ and $f\leq_{S} \rr_{S}(x)=x^{\ast}$, then ${_e}{\downharpoonleft}x=ex$ and $x{\downharpoonright}{_f}=xf$. By Lemma \ref{touyingdaishu}, $(P_{S},\times_{S},\star_{S})$ is a strong projection algebra. Moreover, $e \leq_{S} f$ if and only if $e \leq_{P_{S}} f$ for all $e, f\in P_{S}$ by Lemma \ref{xiaoyuslw}.  This shows that $(S,\circ_{S},\dd_{S},\rr_{S},\leq_{S}, P_S, \times_S, \star_S)$ forms a weak projection ordered category.
Let $x\in S$.  Then in $(S,\circ_{S},\dd_{S},\rr_{S},\leq_{S}, P_S, \times_S, \star_S)$,
$$\dd_{S}(x)^{\downarrow}=\{e\in P_{S}\mid e\leq_{S} \dd_{S}(x)\},\,\,\rr_{S}(x)^{\downarrow}=\{f\in P_{S}\mid f\leq_{S} \rr_{S}(x)\},$$
\begin{equation}\label{qqyyee}
\nu_{x}:(x^{+})^{\downarrow}=\dd_{S}(x)^{\downarrow}\rightarrow \rr_{S}(x)^{\downarrow}=(x^{\ast})^{\downarrow},\,\,\,p\rightarrow \rr_{S}({_p}{\downharpoonleft} x)=(px)^{\ast},
\end{equation}
\begin{equation}\label{qqyyvv}
\mu_{x}:(x^{\ast})^{\downarrow}=\rr_{S}(x)^{\downarrow}\rightarrow \dd_{S}(x)^{\downarrow}=(x^{+})^{\downarrow},\,\,\,q\rightarrow \dd_{S}(x{\downharpoonright} {_q})=(xq)^{+}.
\end{equation}
By (\ref{rho}), we have
\begin{equation}\label{bope}
\nu_{x}=\rho_{x}\,\, \mbox{and }\,\, \mu_{x}=\sigma_{x}
\end{equation}
Lemma \ref{huni} gives that $\mu_{x}$ and $\nu_{x}$ are mutually inverse isomorphisms between projection algebras.  Thus, $(S,\circ_{S},\dd_{S},\rr_{S},\leq_{S}, P_S, \times_{S},\star_{S})$ is a   projection ordered category. Define
$$\varepsilon_S:\mathscr{P}(P_{S})\rightarrow S,\,(p_{1},p_{2},\ldots,p_{k})\rightarrow p_{1}p_{2}\ldots p_{k}.$$
In the followings, we shall prove that $\varepsilon_{S}$ is an evaluation map. Let $\mathfrak{p}=(p_{1},p_{2},\ldots,p_{k}),\mathfrak{q}=(q_{1},q_{2},\ldots,q_{l})\in \mathscr{P}(P_{S})$. Then $$p_{1},p_{2},\ldots,p_{k},q_{1}\\[2mm],q_{2},\ldots,q_{l}\in P_{S},\, p_{1}\mathcal{F}_{P_{S}}\,p_{2}\,\mathcal{F}_{P_{S}}\cdots \mathcal{F}_{P_{S}}\,p_{k},\,\,q_{1}\,\mathcal{F}_{P_{S}}\,q_{2}\mathcal{F}_{P_{S}}\,\cdots \mathcal{F}_{P_{S}}\,q_{l}.$$ Firstly, let $p\in P_{S}$. Then $\varepsilon_{S}(p)=p$ and $\varepsilon_{S}(p,p)=pp=p$, and so (E1) holds. Secondly, $$\dd_{S}(\varepsilon_{S}(\mathfrak{p}))=\dd_{S}(p_{1}p_{2}\ldots p_{k})=(p_{1}p_{2}\ldots p_{k})^{+}=(p_{1}(p_{2}\ldots p_{k})^{+})^{+}$$$$=p_{1}\times_{S} (p_{2}\ldots p_{k})^{+}= (p_{2}\ldots p_{k})^{+}\delta_{p_{1}}=\ldots =p_{k}\delta_{p_{k-1}}\ldots\delta_{p_{1}}=p_{1}=\dd((p_{1},p_{2},\ldots,p_{k}))$$ by (ii) and Lemma \ref{zjiy}. Dually,  $\rr_{S}(\varepsilon_{S}(\mathfrak{p}))=p_{k}=\rr((p_{1},p_{2},\ldots,p_{k}))$. This gives (E2).
If $\rr(\mathfrak{p})=\dd(\mathfrak{q})$, i.e. $p_{k}=q_{1}$, then $\mathfrak{p}\circ \mathfrak{q}=(p_{1},p_{2},\ldots p_{k-1},p_{k}=q_{1},q_{2},\ldots ,q_{l})$. Since $p_{k}q_{1}=p_{k}p_{k}=p_{k}$, we have
$$\varepsilon_{S}(\mathfrak{p}\circ \mathfrak{q})=p_{1}p_{2}\ldots p_{k-1}p_{k}q_{2}\ldots q_{l}=(p_{1}p_{2}\ldots p_{k-1}p_{k})(q_{1}q_{2}\ldots q_{l})$$$$=\varepsilon_{S}((p_{1},p_{2},\ldots p_{k-1},p_{k}))\circ_{S} \varepsilon_{S}((q_{1},q_{2},\ldots,q_{l})).$$
This implies  (E3) is true. Let $q\in P_{S}$, $q\leq_{S}\dd(\mathfrak{p})=p_{1},$ and ${_q}{\downharpoonleft} \mathfrak{p}=(s_{1},s_{2},\ldots ,s_{k})$, where $$s_{i}=q\theta_{p_{1}}\theta_{p_{2}}\ldots\theta_{p_{i}}=(qp_{1})^{\ast}\theta_{p_{2}}\ldots\theta_{p_{i}} =((qp_{1})^{\ast}p_{2})^{\ast}\theta_{p_{3}}\ldots\theta_{p_{i}}=(qp_{1}p_{2})^{\ast}\theta_{p_{3}}\ldots\theta_{p_{i}} $$$$=\ldots=(qp_{1}p_{2}\ldots p_{i})^{\ast}=p_{i}^{\ast}(qp_{1}p_{2}\ldots p_{i})^{\ast}=p_{i}(qp_{1}p_{2}\ldots p_{i})^{\ast},i=1,2,\ldots,k.$$
If $k=1$, then ${_q}{\downharpoonleft} \mathfrak{p}=q$. Since $q\leq_{S}\dd(\mathfrak{p})=p_{1}$, we have $q=qp_{1}$, and so $$\varepsilon_{S}({_q}{\downharpoonleft} \mathfrak{p})=\varepsilon_{S}(q)=q=qp_{1}={_q}{\downharpoonleft} p_{1}= {_q}{\downharpoonleft}\varepsilon_{S}(p_{1})={_q}{\downharpoonleft}\varepsilon_{S}(\mathfrak{p}).$$
Denote $\mathfrak{t}=(p_{1},p_{2},\ldots ,p_{k-1})$.  Then ${_q}{\downharpoonleft} \mathfrak{t}=(s_{1},s_{2},\ldots ,s_{k-1})$. Assume that $\varepsilon_{S}({_q}{\downharpoonleft} \mathfrak{t})={_q}{\downharpoonleft}\varepsilon_{S}(\mathfrak{t})$. Then $s_{1}s_{2}\ldots s_{k-1}=qp_{1}p_{2}\ldots p_{k-1}$. Thus $$\varepsilon_{S}({_q}{\downharpoonleft} \mathfrak{p})=s_{1}s_{2}\ldots s_{k-1} s_{k}$$$$=qp_{1}p_{2}\ldots p_{k-1}p_{k}(qp_{1}p_{2}\ldots p_{k-1}p_{k})^{\ast}=qp_{1}p_{2}\ldots p_{k-1}p_{k}={_q}{\downharpoonleft}\varepsilon_{S}(\mathfrak{p}).$$
By mathematical induction, for all $q\in P_{S}$ and $\mathfrak{p}\in \mathscr{P}(P_{S})$, $q\leq_{S} \dd_{S}(\mathfrak{p})$ implies that $\varepsilon_{S}({_q}{\downharpoonleft} \mathfrak{p})={_q}{\downharpoonleft}\varepsilon_{S}(\mathfrak{p})$. This shows that (E5) is true, and so (E4) is also true by Proposition \ref{fder}. Thus $\varepsilon_{S}$ is an evaluation map.

Now let $a\in S$. Then by (\ref{aa}), (\ref{bope}) and (\ref{rho}),
\begin{equation}\label{yyqqaa}
p\Theta_{a}=p\theta_{\dd_{S}(a)}\nu_a=p(\theta_{a^{+}}\nu_a)=(p\star_{S} a^{+})\nu_a
=((pa^+)^\ast a)^{\ast}=(pa^+a)^{\ast}=(pa)^{\ast}
\end{equation}
for all $p\in P_{S}$. Dually,
\begin{equation}\label{yyqqee}
p\Delta_{a}=(ap)^{+} \mbox{ for all } p\in P_{S}.
\end{equation}
Let $b\in S,e,f\in P_{S}$ and $(e,f)$ be a $b$-linked pair. Then
$\dd_{S}(b)=b^+$ and $\rr_{S}(b)=b^\ast $. By (\ref{fff}), $$e_{1}=e\theta_{\dd_{S}(b)}=e\star_{S}b^+ =(eb^+)^\ast,\,\,e_{2}=f\Delta_{b}=(bf)^+,$$$$f_{1}=e\Theta_{b}=(eb)^\ast,\,\ f_{2}=f\delta_{\rr_{S}(b)}=b^\ast \times_{S} f=(b^\ast f)^{+}.$$ According to (\ref{nnn}), Lemma \ref{kangwei} and DRC Conditions,
$$\lambda(e,b,f)=\varepsilon_{S}(e,e_{1})\circ_{S} {_{e_{1}}}{\downharpoonleft} b \circ_{S} \varepsilon_{S}(f_{1},f)=ee_{1}e_{1}bf_{1}f$$$$=ee_{1}bf_{1}f=e(eb^+)^{\ast} b(eb)^{\ast}f=eb^+(eb^+)^{\ast}b^+b(eb)^{\ast}f=eb^+b(eb)^{\ast}f=(eb)(eb)^{\ast}f=ebf.$$
Dually, we can prove that $\rho(e,b,f)=ebf$. Thus $\lambda(e,b,f)=\rho(e,b,f)$. We have shown that $\textbf{\rm \bf C}(S)$ is a chain projection ordered category.

Finally, if $(S, \cdot,\, {}^+, {}^\ast, \circ)$ is a generalized regular $\circ$-semigroup, then $\dd_S(x)=x^{+}=x\circ_S x^\circ$ and $\rr(x)=x^{\ast}=x^\circ \circ_S x$ by (\ref{dlrio}).
This shows that $(S, \circ_S, \dd_S, \rr_S, ^\circ)$ is a groupoid, and so $\textbf{\rm \bf C}(S)$ is a chain projection ordered groupoid.
The other cases can be proved by using Lemmas \ref{aisdm} and \ref{touyingdaishu}, we omit the details.
\end{proof}
\begin{lemma}\label{zhang4}Let $(S_1, \cdot,\, ^+,  ^\ast)$ and $(S_2, \cdot,\, ^+,  ^\ast)$ be two DRC-restriction semigroups  and $\theta$ be a (2,1,1)-homomorphism from $S_1$ to $S_2$. Then the rule $\textbf{\rm \bf C}(\theta): S_1 \rightarrow  S_2,\,\,x\rightarrow x\theta$ provides a chain projection ordered functor from $\textbf{\rm \bf C}( S_1)$ to $\textbf{\rm \bf C}(S_2)$.
\end{lemma}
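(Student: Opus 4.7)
The plan is to verify, in the order (F1)--(F5), the five axioms of Definition \ref{pwdj} for the map $\textbf{\rm C}(\theta)$, using only that $\theta$ preserves the binary operation and both unary operations $^+,\,^\ast$. Most of the work amounts to unfolding the definitions given in Theorem \ref{fii}.

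First I would dispatch (F1) and (F2) together: since $\dd_{S_i}(x)=x^+$ and $\rr_{S_i}(x)=x^\ast$, the equalities $\dd_{S_2}(x\theta)=(x\theta)^+=x^+\theta=(\dd_{S_1}(x))\theta$ and the dual are immediate from $\theta$ being a (2,1,1)-homomorphism. For (F2), if $a\circ_{S_1}b$ is defined then $a^\ast=b^+$, so $(a\theta)^\ast=a^\ast\theta=b^+\theta=(b\theta)^+$, whence $(a\theta)\circ_{S_2}(b\theta)$ is defined and equals $(a\theta)(b\theta)=(ab)\theta=(a\circ_{S_1}b)\theta$. For (F3), I would invoke Corollary \ref{zhu2}(4): $a\leq_{S_1}b$ iff $a=a^+b=ba^\ast$. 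Applying $\theta$ gives $a\theta=(a\theta)^+(b\theta)=(b\theta)(a\theta)^\ast$, and another application of Corollary \ref{zhu2}(4) yields $a\theta\leq_{S_2}b\theta$.

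For (F4), the definition of the projection algebra operations in \eqref{touyingdaishuyunsuan} gives, for any $e,f\in P_{S_1}$,
\[
(e\times_{S_1}f)\theta=(ef)^+\theta=((ef)\theta)^+=((e\theta)(f\theta))^+=e\theta\times_{S_2}f\theta,
\]
and the $\star$-case is dual. In particular $\theta$ sends $P_{S_1}$ into $P_{S_2}$ (since $e=e^+$ for $e\in P_{S_1}$ by Lemma \ref{kangwei}(1)) and restricts to a projection algebra homomorphism there. Finally, for (F5), given a $P_{S_1}$-path $(p_1,\ldots,p_k)$ we first use Lemma \ref{pin}(2) (applied to the restriction of $\theta$ to $P_{S_1}$, which is a projection algebra homomorphism by the previous sentence) to conclude that $(p_1\theta,\ldots,p_k\theta)$ is a $P_{S_2}$-path, so $\varepsilon_2(p_1\theta,\ldots,p_k\theta)$ is defined. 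Then by the explicit formula \eqref{hui} for $\varepsilon_{S_i}$,
\[
(\varepsilon_1(p_1,\ldots,p_k))\theta=(p_1p_2\cdots p_k)\theta=(p_1\theta)(p_2\theta)\cdots(p_k\theta)=\varepsilon_2(p_1\theta,\ldots,p_k\theta).
\]

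There is no real obstacle here; the whole argument is a routine translation of the (2,1,1)-homomorphism property through the concrete descriptions of $\dd_S,\rr_S,\leq_S,\times_S,\star_S,\varepsilon_S$ given by Theorem \ref{fii}. The only mildly non-trivial point is the use of Corollary \ref{zhu2}(4) in step (F3), which replaces the defining condition of $\leq_S$ (involving the auxiliary relation $\omega_S$) by an equational one that $\theta$ transparently respects.
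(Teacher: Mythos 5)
Your proposal is correct and follows essentially the same route as the paper's own proof: (F1)--(F2) from the identities $\dd_S(x)=x^+$, $\rr_S(x)=x^\ast$, (F3) via the equational characterization of $\leq_S$ in Corollary \ref{zhu2}(4), and (F4)--(F5) by unfolding \eqref{touyingdaishuyunsuan} and \eqref{hui}. Your extra remark that Lemma \ref{pin}(2) guarantees $(p_1\theta,\ldots,p_k\theta)$ is a $P_{S_2}$-path is a small point of care the paper leaves implicit, but it does not change the argument.
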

\begin{proof}
Firstly, let $x,y\in S_1$. Then $$\dd(x\theta)=(x\theta)^+=x^+\theta=(\dd(x))\theta,\,\, \rr(x\theta)=(x\theta)^\ast=x^\ast \theta=(\rr(x))\theta.$$
This gives (F1).  If $x\circ y$ is defined, then $x^{\ast}=\rr(x)=\dd(y)=y^+$, and so $(x\theta)^{\ast}=x^{\ast}\theta=y^+\theta=(y\theta)^+$. This implies  $(x\theta)\circ(y\theta)$ is defined and $$(x\circ y)\theta=(xy)\theta=(x\theta)(y\theta)=(x\theta)\circ(y\theta).$$ Thus (F2) holds. Let $x\leq_{S_1} y$. Then $x=x^+y=yx^{\ast}$ by Corollary \ref{zhu2} (4), and so $x\theta=(x\theta)^+(y\theta)=(y\theta)(x\theta)^{\ast}$. This shows that $x\theta\leq_{S_2} y\theta$. So (F3) holds. Let $e,f\in P_{S_1}$. Then $$(e\times_{S_1} f)\theta=(ef)^+\theta=((e\theta)(f\theta))^+=(e\theta)\times_{ S_2}(f\theta).$$ Dually, $(e\star_{S_1} f)\theta=(e\theta)\star_{ S_2}(f\theta)$. Thus (F4) is true. Finally, let $\mathfrak{p}=(p_{1},p_{2},\ldots,p_{k})\in \mathscr{P}(P_{S_{1}})$. Then $$(\varepsilon_1(\mathfrak{p}))\theta=(p_{1}p_{2}\ldots p_{k})\theta=(p_{1}\theta)(p_{2}\theta)\ldots(p_{k}\theta)=\varepsilon_2((p_{1}\theta,p_{2}\theta,\ldots,p_{k}\theta)).$$
Thus (F5) holds. The above discussion tells us that $\textbf{\rm\bf C}(\theta)$ is a chain projection functor.
\end{proof}


\section{The category isomorphism}
In this section, we first show how to construct  a  DRC-restriction semigroup from a chain projection ordered category, and  then  prove that the category  of DRC-restriction semigroups together with (2,1,1)-homomorphisms is isomorphic to the category of chain projection ordered categories together with chain projection ordered functors.

Let $(C,\circ,\dd,\rr,\leq,P_{C},\times,\star,\varepsilon)$ be a chain projection ordered category and $a,b\in C$.  By Lemma \ref{p}, $$(\rr(a)\times \dd(b),\rr(a)\star \dd(b))\in \mathcal{F}_{P_{C}},\,\, \rr(a)\times \dd(b)\leq \rr(a),\rr(a)\star \dd(b)\leq \dd(b).$$ Thus $a{\downharpoonright}_{\rr(a)\times \dd(b)},$ $\varepsilon(\rr(a)\times \dd(b),\rr(a)\star \dd(b) )$ and $_{\rr(a)\star \dd(b)}{\downharpoonleft} b$ are all defined. By (1)and (2) in  Lemma \ref{b} and Lemma \ref{oo}, we can define
\begin{equation}\label{kkk}
a\bullet b=a{\downharpoonright}_{\rr(a)\times \dd(b)} \circ~ \varepsilon(\rr(a)\times \dd(b),\rr(a)\star \dd(b) )  \circ~ _{\rr(a)\star \dd(b)}{\downharpoonleft} b.
\end{equation}

\begin{lemma}\label{vvvv}Let $(C,\circ,\dd,\rr,\leq, P_C, \times,\star,\varepsilon)$ be a chain projection ordered category and $a,b,c\in C$.
\begin{itemize}
\item[(1)]If $\rr(a)=\dd(b)$, then $a\bullet b=a\circ b$.
\item[(2)]$\rr(a\bullet b)=\rr(a)\Theta_{b}$ and $\dd(a\bullet b)=\dd(b)\Delta_{a}$.
\item[(3)]$\Theta_{a\bullet b}=\Theta_{a}\Theta_{b}$ and $\Delta_{a\bullet b}=\Delta_{b}\Delta_{a}$.
\item[(4)]$\rr((a\bullet b)\bullet c)=\rr(a)\Theta_{b}\Theta_{c}=\rr(a\bullet(b\bullet c))$ and $ \dd(a\bullet(b\bullet c))=\dd(c)\Delta_{b}\Delta_{a}=\dd((a\bullet b)\bullet c)$.
\end{itemize}
\end{lemma}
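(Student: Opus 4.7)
The plan is to prove the four parts in order, with (1) and (2) being essentially direct from the defining formula (\ref{kkk}), (3) as the main algebraic step, and (4) an immediate consequence of (2) and (3). Throughout I will write $p=\rr(a)\times\dd(b)$ and $q=\rr(a)\star\dd(b)$, noting that $p\,\mathcal{F}_{P_C}\,q$ by Lemma \ref{p} and that the three factors appearing in (\ref{kkk}) form a genuine $\circ$-composition, since Lemma \ref{b}(1),(2) and Lemma \ref{oo}(2) match their targets to sources at $p$ and $q$.

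For (1), if $\rr(a)=\dd(b)$ then axioms (L1) and (R1) collapse both $p$ and $q$ to $\rr(a)$. So (E1) gives $\varepsilon(\rr(a),\rr(a))=\rr(a)$, while items (3),(4) of Lemma \ref{b} give $a{\downharpoonright}_{\rr(a)}=a$ and ${_{\rr(a)}}{\downharpoonleft} b=b$; axiom (C3) then collapses the composition to $a\circ b$. For (2), axiom (C1) applied to (\ref{kkk}) identifies $\dd(a\bullet b)$ with $\dd(a\downharpoonright_p)=p\mu_a$, which equals $\dd(b)\delta_{\rr(a)}\mu_a=\dd(b)\Delta_a$ by (\ref{cpia}); the identity $\rr(a\bullet b)=\rr(a)\Theta_b$ is obtained dually via (\ref{aa}), starting from $\rr(a\bullet b)=\rr({_q}\downharpoonleft b)=q\nu_b$.

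The substantive step is (3). I would first apply Lemma \ref{dd}(6) twice to the three-factor decomposition to obtain
$$\Theta_{a\bullet b}=\Theta_{a\downharpoonright_p}\,\Theta_{\varepsilon(p,q)}\,\Theta_{{_q}\downharpoonleft b},$$
then use Lemma \ref{dd}(5), Lemma \ref{oo}(2) and the idempotence of $\theta_q$ (from (R1)) to rewrite this as $\theta_{p\mu_a}\Theta_a\theta_p\theta_q\Theta_b$. To match this with $\Theta_a\Theta_b$, I plan to evaluate at a generic $t\in P_C$ and exploit (G1d): setting $u=t\Theta_a$, the homomorphism property of $\nu_a$ together with the inequality $p\mu_a\leq\dd(a)$ reduces the left-hand value to $((u\star p)\star q)\nu_b$, while the right-hand value is $(u\star\dd(b))\nu_b$. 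The closing identity $(u\star p)\star q=u\star\dd(b)$ for $u\leq\rr(a)$ I intend to establish via the chain
$$(u\star p)\star q=(u\star p)\star\dd(b)=u\star\dd(b),$$
where the first equality comes from (R3) applied to the element $u\star p\leq\rr(a)$ (which forces $v\star\dd(b)=v\star(\rr(a)\star\dd(b))=v\star q$ for any such $v$), and the second is axiom (P3) applied to $u\leq\rr(a)$. The dual identity $\Delta_{a\bullet b}=\Delta_b\Delta_a$ will follow from the same scheme after swapping $\times$ and $\star$ and invoking the dual axioms. I expect the main obstacle to be the bookkeeping with the F-pair $(p,q)$: one must repeatedly recognize when a $\star$-product through $p$ can be replaced by one through $q$ (and dually for $\times$), which is precisely where the strong-algebra axioms (together with (P3), (R3) and Lemma \ref{jiben1}) intervene.

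Finally, (4) is a formal consequence of (2) and (3). Two applications of (2) give $\rr((a\bullet b)\bullet c)=\rr(a\bullet b)\Theta_c=\rr(a)\Theta_b\Theta_c$, while (2) together with (3) yield $\rr(a\bullet(b\bullet c))=\rr(a)\Theta_{b\bullet c}=\rr(a)\Theta_b\Theta_c$; the two $\dd$-identities are obtained symmetrically via $\Delta$.
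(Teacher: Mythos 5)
Your proposal is correct, and parts (1), (2) and (4) coincide with the paper's argument essentially verbatim (collapse via (L1), (R1), (E1), (C3); read off $\dd$ and $\rr$ from the three-factor decomposition and (\ref{aa}), (\ref{cpia}); combine (2) and (3)). The one genuine divergence is in step (3). After the common decomposition $\Theta_{a\bullet b}=\Theta_{a\downharpoonright_{p}}\Theta_{\varepsilon(p,q)}\Theta_{{}_{q}\downharpoonleft b}$ via Lemma \ref{dd}(6), the paper invokes (G1c) to write $\Theta_{a\downharpoonright_{p}}=\Theta_a\theta_{p}$, so that everything telescopes to the right of $\Theta_a$ and the whole step reduces to the operator identity $\theta_{p}\theta_{q}=\theta_{\rr(a)}\theta_{\dd(b)}$, proved once and for all on $P_C$ by (R4), (P3), (R4), (R3), followed by Lemma \ref{dd}(2). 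You instead take the form $\Theta_{a\downharpoonright_{p}}=\theta_{p\mu_a}\Theta_a$ from Lemma \ref{dd}(5), which leaves a factor on the wrong side of $\Theta_a$; you then have to evaluate pointwise and push it through using the homomorphism property (G1d) of $\nu_a$ together with $\mu_a\nu_a=\mathrm{id}$, before closing with the pointwise identity $(u\star p)\star q=u\star\dd(b)$ for $u\leq\rr(a)$ — which is exactly the paper's operator identity restricted to $\rr(a)^{\downarrow}$, and your justification of it via (R3) and (P3) is sound. Both routes work; the paper's choice of (G1c) buys a cleaner, coordinate-free cancellation that never needs the isomorphism property of $\nu_a$, while yours trades that for an extra (correct but heavier) appeal to (G1d) and more bookkeeping with $\mathcal{F}_{P_C}$-pairs.
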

\begin{proof}Denote $p=\rr(a), q=\dd(b), p{'}=q\delta_{p}=p\times q,  q{'}=p\theta_{q}=p\star q.$

(1) If $p=q$, then by (L1) and (R1), we have $p{'}=q{'}=p$,  and so $$a\bullet b=a{\downharpoonright}{_p} \circ \varepsilon(p,p) \circ  {_p}{\downharpoonleft} b=a\circ p \circ b=(a\circ \rr(a))\circ b=a\circ b.$$
by (3) and (4) in  Lemma \ref{b}, (E1) and (C3).

(2) By (\ref{kkk}), we have $$\rr(a\bullet b)=\rr(_{q'}{\downharpoonleft} b)=q'\nu_{b}=p\theta_{q}\nu_{b}=\rr(a)\theta_{\dd(b)}\nu_{b}=\rr(a)\Theta_{b}.$$
Dually, $\dd(a\bullet b)=\dd(b)\Delta_{a}$.

(3) Let $t\in P_{C}$. By (R4), (P3), (R4) and (R3),
$$t\theta_{p'}\theta_{q'}=(t\star p')\star q'=t\theta_{q\delta_{p}}\theta_{p\theta_{q}}=(t\star(p\times q))\star(p\star q)=((t\star(p\times q ))\star q)\star(p\star q)$$$$=((t\star p)\star q))\star(p\star q)=(t\star p)\star(p\star q)=(t\star p)\star q=t\theta_p\theta_q.$$ This implies that $\theta_{p'}\theta_{q'}=\theta_p\theta_q$.
Using (\ref{kkk}), Lemma \ref{dd} (6), (G1c), Lemma \ref{dd} (5), Lemma \ref{oo} , Lemma \ref{o}, Lemma \ref{dd} (5) and the fact  $\theta_{p'}\theta_{q'}=\theta_p\theta_q$ in order, we have
$$\Theta_{a\bullet b}=\Theta_{a{\downharpoonright}_{p'}} \Theta_{\varepsilon(p',q')} \Theta_{_{q'}{\downharpoonleft}b} =\Theta_{a}\theta_{p'} \theta_{p'}\theta_{q'} \theta_{q'}\Theta_{b}$$$$ =\Theta_{a}\theta_{p'}\theta_{q'} \Theta_{b} =\Theta_{a}\theta_{p}\theta_{q}\Theta_{b} =\Theta_{a}\theta_{\rr(a)} \theta_{\dd(b)}\Theta_{b} =\Theta_{a}\Theta_{b}.$$
Dually, $\Delta_{a\bullet b}=\Delta_{b}\Delta_{a}$.

(4) By (2) and (3) of the present lemma, we have $$\rr((a\bullet b)\bullet c)=\rr(a\bullet b)\Theta_{c}=\rr(a)\Theta_{b}\Theta_{c}=\rr(a)\Theta_{b\bullet c}=\rr(a\bullet(b\bullet c)).$$
Dually, $\dd(a\bullet(b\bullet c))=\dd(c)\Delta_{b}\Delta_{a}=\dd((a\bullet b)\bullet c)$.
\end{proof}
\begin{lemma}\label{ccccc}Let $(C,\circ,\dd,\rr,\leq, P_C, \times,\star,\varepsilon)$ be a chain projection ordered category and $a,b,c\in C$, and denote$$p=\rr(a),q=\dd(b),r=\rr(b),s=\dd(c).$$
\begin{itemize}
\item[(1)] $(a\bullet b)\bullet c=a{\downharpoonright}{_e}\circ \varepsilon(e,e_{1})\circ { _{e_1}}{\downharpoonleft}b\circ \varepsilon(f_{1},f)\circ {_f}{\downharpoonleft} c$, where
$$e=s\Delta_{b}\delta_{p},\, e_{1}=e\theta_{q},\, f_{1}=e\Theta_{b},\, f=p\Theta_{b}\theta_{s}.$$
\item[(2)] $a\bullet(b\bullet c)=a{\downharpoonright}{_e} \circ \varepsilon(e,e_{2}) \circ b{\downharpoonright}_{f_{2}}\circ \varepsilon(f_{2},f)\circ {_f}{\downharpoonleft} c$, where $$e=s\Delta_{b}\delta_{p},\,  e_{2}=f\Delta_{b},f_{2}=f\delta_{r},\,f=p\Theta_{b}\theta_{s}.$$
\item[(3)] $(a\bullet b)\bullet c=a\bullet(b\bullet c)$. This implies that $(C, \bullet)$ forms a semigroup.
\end{itemize}
\end{lemma}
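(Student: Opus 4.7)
The proof proceeds in three stages corresponding to the three parts of the lemma.

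For part (1), the plan is to expand $(a\bullet b)\bullet c$ layer by layer using the definition~(\ref{kkk}) of $\bullet$ twice. By Lemma~\ref{vvvv}(2), $\rr(a\bullet b)=p\Theta_{b}$, so
\[
(a\bullet b)\bullet c \;=\; (a\bullet b){\downharpoonright}_{p\Theta_{b}\times s}\,\circ\,\varepsilon(p\Theta_{b}\times s,\,p\Theta_{b}\star s)\,\circ\, {_{p\Theta_{b}\star s}}{\downharpoonleft} c.
\]
I would then substitute $a\bullet b = a{\downharpoonright}_{p\times q}\circ\varepsilon(p\times q,\,p\star q)\circ {_{p\star q}}{\downharpoonleft} b$ and use equation~(\ref{e}) to distribute the right restriction ${\downharpoonright}_{p\Theta_{b}\times s}$ over the three factors, simplifying one at a time: the leftmost via Lemma~\ref{b}(10), the middle via Lemma~\ref{oo}(1), and the rightmost via Lemma~\ref{b}(6) and~(10). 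The key projection-algebra identities needed to reach the stated form are $p\Theta_{b}\times s = s\Delta_{b}\delta_{p}\Theta_{b} = e\Theta_{b} = f_{1}$ (via axiom~(G1b)) and $(p\Theta_{b}\times s)\mu_{b} = e\theta_{q} = e_{1}$ (via $\Theta_{b}\mu_{b}=\theta_{\dd(b)}$ from Lemma~\ref{dd}(3)).

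Part~(2) is proved symmetrically. Since $\dd(b\bullet c)=s\Delta_{b}$ by Lemma~\ref{vvvv}(2), we have
\[
a\bullet(b\bullet c) \;=\; a{\downharpoonright}_{e}\,\circ\,\varepsilon(e,\,p\star s\Delta_{b})\,\circ\, {_{p\star s\Delta_{b}}}{\downharpoonleft}(b\bullet c),
\]
and I would distribute the left restriction ${_{p\star s\Delta_{b}}}{\downharpoonleft}$ across the three-fold composition defining $b\bullet c$ by~(\ref{e}), then simplify using the dual family of lemmas. The analogous identities here are $p\star s\Delta_{b}=f\Delta_{b}=e_{2}$ and $(p\star s\Delta_{b})\nu_{b}=f\delta_{r}=f_{2}$.

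For part~(3), the two expansions from (1) and (2) share their outermost factors $a{\downharpoonright}_{e}$ and ${_f}{\downharpoonleft} c$, so associativity reduces to
\[
\varepsilon(e,e_{1})\circ {_{e_{1}}}{\downharpoonleft} b\circ\varepsilon(f_{1},f) \;=\; \varepsilon(e,e_{2})\circ b{\downharpoonright}_{f_{2}}\circ\varepsilon(f_{2},f),
\]
which is exactly $\lambda(e,b,f)=\rho(e,b,f)$ in the notation of~(\ref{nnn}) and~(\ref{mmm}). This is furnished by axiom~(G2), provided $(e,f)$ is a $b$-linked pair. I would verify the latter by checking $f=e\Theta_{b}\theta_{f}$ and $e=f\Delta_{b}\delta_{e}$. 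Lemma~\ref{nn} applied with $(p,q):=(s,p)$ yields $s\Delta_{b}\delta_{p}\Theta_{b}\theta_{s}=p\Theta_{b}\theta_{s}$, i.e.\ $e\Theta_{b}\theta_{s}=f$; from this (R4) and (R1) give $e\Theta_{b}\theta_{f}=f$. The companion identity $f\Delta_{b}\delta_{p}=e$ comes from the second identity of Lemma~\ref{nn}, and (L1) together with (L4) then promote $\delta_{p}$ to $\delta_{e}$, yielding $f\Delta_{b}\delta_{e}=e$.

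The main obstacle is the careful bookkeeping of projection-algebra identities ensuring that the distributed pieces match the stated form, particularly the coincidence $p\Theta_{b}\times s = e\Theta_{b}$ that allows the inner $\varepsilon$-factor to bridge correctly. The conceptual heart of the argument is the verification that $(e,f)$ is a $b$-linked pair, which collapses associativity of $\bullet$ to the single chain-axiom~(G2).
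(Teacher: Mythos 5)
Your proposal is correct and follows essentially the same route as the paper: expand both bracketings via the definition of $\bullet$, distribute the restrictions over the three-fold compositions using (\ref{e}) and Lemmas \ref{b} and \ref{oo}, identify the stated projections via (G1b) and Lemma \ref{dd}(3), and reduce associativity to (G2) after checking that $(e,f)$ is a $b$-linked pair by means of Lemma \ref{nn}. The only differences are cosmetic (the paper dispatches part (2) by duality and cites Lemma \ref{oo}(3) rather than \ref{oo}(1) for the middle factor).
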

\begin{proof} (1) Denote $p'=q\delta_{p}=p\times q, q'=p\theta_{q}=p\star q$. Then
$$a\bullet b=a'\circ \varepsilon'\circ b', a'=a{\downharpoonright}_{p'},\varepsilon'=\varepsilon(p',q'),b'={_{q'}}{\downharpoonleft} b.$$
By Lemma \ref{vvvv} (2), we can let $t=\rr(a\bullet b)=\rr(b')=\rr({_{q'}}{\downharpoonleft} b)=p\Theta_{b}$. Then
$$(a\bullet b)\bullet c= (a\bullet b){\downharpoonright}{_{t'}} \circ \varepsilon(t',s')\circ {_{s'}}{\downharpoonleft} c,\, t'=s\delta_{t}=t\times s,s'=t\theta_{s}=t\star s.$$
Denote $\nu=\dd(b'{\downharpoonright}{_{t'}})$ and $\mu=\dd(\varepsilon'{\downharpoonright}{_\nu})$. By Lemma \ref{b} (8),
$$(a\bullet b){\downharpoonright}{_{t'}}=(a'\circ \varepsilon'\circ b'){\downharpoonright}{_{t'}}=a{'}{\downharpoonright}{_\mu}
 ~\circ~ ~\varepsilon'{\downharpoonright}{_\nu}~ \circ~ b'{\downharpoonright}{_{t'}}.$$
 By Lemma \ref{b} (2), (E2) and Lemma \ref{jiben1} (1),$$\mu=\dd(\varepsilon'{\downharpoonright}{_\nu})\leq \dd(\varepsilon')=\dd(\varepsilon(p',q'))=\dd(p',q')=p'=p\times q\leq p,$$ and so we have $a'{\downharpoonright}{_\mu}=(a{\downharpoonright}{_{p'}}){\downharpoonright}{_\mu}=a{\downharpoonright}{_\mu}$ by Lemma \ref{b} (10). In view of  Lemma \ref{b} (2), we obtain $$\varepsilon'{\downharpoonright}{_\nu}\leq \varepsilon'=\varepsilon(p',q'),\,\,\dd(\varepsilon'{\downharpoonright}{_\nu})=\rr(a'{\downharpoonright}{_\mu})=\mu,\,\, \rr(\varepsilon'{\downharpoonright}{_\nu})=\nu.$$  By Lemma \ref{oo} (3), $\varepsilon'{\downharpoonright}{_\nu}=\varepsilon(\mu,\nu)$. Moreover, since
$b'{\downharpoonright}{_{t'}}\leq b'={_{q'}}{\downharpoonleft} b\leq b$ and $\rr(b'{\downharpoonright}{_{t'}})=t'$, we have $b'{\downharpoonright}{_{t'}}=b{\downharpoonright}{_{t'}}$ by Lemma \ref{b} (3). This gives that $$b'{\downharpoonright}{_{t'}}=b{\downharpoonright}{_{t'}}={_{\dd(b{\downharpoonright}{_{t'}})}}{\downharpoonleft} b={_{\dd(b'{\downharpoonright}{_{t'}})}}{\downharpoonleft} b={_\nu}{\downharpoonleft} b$$ by Lemma \ref{b} (6).
Thus $$(a\bullet b)\bullet c= (a\bullet b){\downharpoonright}{_{t'}}\circ \varepsilon(t',s')\circ {_{s'}}{\downharpoonleft} c$$$$=a'{\downharpoonright}{_\mu}\circ \varepsilon'{\downharpoonright}{_\nu}\circ b'{\downharpoonright}{_{t'}}\circ \varepsilon(t',s')\circ{ _{s'}}{\downharpoonleft} c =a{\downharpoonright}{_\mu}\circ \varepsilon(\mu,\nu)\circ {_\nu}{\downharpoonleft} b\circ \varepsilon(t',s')\circ {_{s'}}{\downharpoonleft} c.$$
In the sequel, we shall prove the followings:
\begin{center} (a) $\mu=e$, \hspace{1cm} (b) $\nu=e_{1}$, \hspace{1cm}  (c) $t'=f_{1}$,  \hspace{1cm}     (d) $s'=f$.
\end{center}

\noindent (a) By Lemma \ref{vvvv} (4), we have $\dd(a{\downharpoonright}{_\mu})=\dd((a\bullet b)\bullet c)=s\Delta_{b}\Delta_{a}$. Since $a{\downharpoonright}{_\mu}\leq a$, it follows that $a{\downharpoonright}{_\mu}={_{s\Delta_{b}\Delta_{a}}}{\downharpoonleft} a$ by Lemma \ref{b} (3), and so $$\mu=\rr(a{\downharpoonright}{_\mu})=\rr({_{s\Delta_{b}\Delta_{a}}}{\downharpoonleft} a)=s\Delta_{b}\Delta_{a}\nu_{a}=s\Delta_{b}\delta_{p}\mu_{a}\nu_{a}=s\Delta_{b}\delta_{p}=e$$ by Lemma \ref{b} (2) and Lemma \ref{f}.

\noindent(c)  By (G1b), $t'=s\delta_{t}=s\delta_{p\Theta_{b}}=s\Delta_{b}\delta_{p}\Theta_{b}=e\Theta_{b}=f_{1}$.

\noindent(b) Since ${_\nu}{\downharpoonleft} b=b{\downharpoonright}{_{t'}}$, we have
$$\nu=\dd({_\nu}{\downharpoonleft} b)=\dd(b{\downharpoonright}{_{t'}})=t'\mu_{b}=f_{1}\mu_{b}=(e\Theta_{b})\mu_{b}=e\Theta_{\dd(b)}=e\theta_{q}=e_{1} $$ by Lemma \ref{b} (1), (c) and
Lemma \ref{dd} (3).

\noindent(d) $s'=t\theta_{s}=p\Theta_{b}\theta_{s}=f.$

(2) This is the dual of (1).

(3) By (1) and (2), we only need to prove
\begin{equation}\label{xxx}
\varepsilon(e,e_{1})\circ {_{e_1}}{\downharpoonleft} b\circ \varepsilon(f_{1},f)=\varepsilon(e,e_{2}) \circ b{\downharpoonright}{_{f_{2}}}\circ \varepsilon(f_{2},f).
\end{equation}
By the fact $f\leq s$ and Lemma \ref{o}, we have $\theta_{f}=\theta_{s}\theta_{f}$. By Lemma \ref{nn},
$$e\Theta_{b}\theta_{f}=s\Delta_{b}\delta_{p}\Theta_{b}\theta_{f} =s\Delta_{b}\delta_{p}\Theta_{b}\theta_{s}\theta_{f}
 =p\Theta_{b}\theta_{s}\theta_{f} =p\Theta_{b}\theta_{f} =f.$$
Dually, $e=f\Delta_{b}\delta_{e}$. This implies that $(e,f)$ is a $b$-lined pair. By (G2),  (\ref{xxx}) follows.
\end{proof}
\begin{lemma}\label{edd}
Let $(C,\circ,\dd,\rr,\leq, P_C, \times,\star,\varepsilon)$ be a chain projection ordered category, $a\in C$ and $s,t\in P_{C}$.
\begin{itemize}
\item[(1)] $a\,\bullet\,t=a{\downharpoonright}{_{\rr(a)\times t}}\,\circ\,\varepsilon(\rr(a)\times t, \rr(a)\star t)$. In particular, if $\rr(a)\, \mathcal{F}_{P_C}\, t$, then $a\, \bullet\, t=a~\circ~\varepsilon(\rr(a), t)$,\,\, if $t\leq \rr(a)$, then $a\bullet t=a{\downharpoonright}{_t}$.
\item[(2)]$t\, \bullet\, a= \varepsilon(t\times \dd(a), t\star \dd(a))\,\circ\, { _{t\star \dd(a)}}{\downharpoonleft} a$.
In particular, if $t \,\mathcal{F}_{P_C}\, \dd(a)$, then $t\,\bullet\, a= \varepsilon(t,\dd(a))\,\circ\, a$,\,\, if $t\,\leq\, \dd(a)$, then $t\,\bullet\, a={_t}{\downharpoonleft} a$
\item[(3)]$s \bullet t=\varepsilon(s\times t, s\star t), \dd(s\bullet t)=s\times t, \rr(s\bullet t)=s\star t$. In particular, if $s \,\mathcal{F}_{P_C}\, t$, then $s\bullet t=\varepsilon(s,t), \dd(s\bullet t)=s, \rr(s\bullet t)=t$.
\item[(4)]$s \bullet (s\times t)\bullet s=s\times t$,\,\, $t\bullet (s\star t) \bullet t=s\star t$.
\item[(5)] $s\leq \,t$ (i.e. $s\leq_{P_C} \,t$ ) if and only if  $s=t\bullet s\bullet t$.
\end{itemize}
\end{lemma}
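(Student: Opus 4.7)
The plan is to derive each statement by direct unfolding of the formula (\ref{kkk}) together with basic identities from Lemmas \ref{jiben1} and \ref{b}, the evaluation axioms (E1)--(E3), and Remark \ref{fanchoudexingzhi}. For item (1), I would take $b=t\in P_C$ in (\ref{kkk}) and note $\dd(t)=\rr(t)=t$. Since $\rr(a)\star t\leq t$ by Lemma \ref{jiben1}(2), Lemma \ref{b}(11) gives ${_{\rr(a)\star t}}{\downharpoonleft}t=\rr(a)\star t$; composing a morphism with this projection on the right is absorbed by (C3), yielding the claimed formula. The two special cases reduce further: if $\rr(a)\,\mathcal{F}_{P_C}\,t$ then $\rr(a)\times t=\rr(a)$ (so $a{\downharpoonright}_{\rr(a)}=a$ by Lemma \ref{b}(4)) and $\rr(a)\star t=t$; if $t\leq\rr(a)$, then both $\rr(a)\times t$ and $\rr(a)\star t$ equal $t$ by Lemma \ref{jiben1}(1),(2), and $\varepsilon(t,t)=t$ by (E1), so a final application of (C3) gives $a\bullet t=a{\downharpoonright}_t$. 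Item (2) is handled by the dual argument, using Lemma \ref{b}(12) to collapse the left factor $t{\downharpoonright}_{t\times\dd(a)}$ to $t\times\dd(a)$.

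For item (3) I would specialise (1) to $a=s\in P_C$ (or dually (2) to $t\in P_C$): Lemma \ref{b}(12) gives $s{\downharpoonright}_{s\times t}=s\times t$, Lemma \ref{b}(11) gives ${_{s\star t}}{\downharpoonleft}t=s\star t$, and (C3) absorbs both projections, so $s\bullet t=\varepsilon(s\times t,s\star t)$; the identities $\dd(s\bullet t)=s\times t$ and $\rr(s\bullet t)=s\star t$ follow from (E2). The special case under $s\,\mathcal{F}_{P_C}\,t$ is then immediate from the definition in (\ref{s}). Item (4) follows from the two special cases of (1) and (2) together with associativity of $\bullet$ (Lemma \ref{ccccc}(3)). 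Indeed, since $s\times t\leq s=\rr(s)$, the second special case of (1) gives $s\bullet(s\times t)=s{\downharpoonright}_{s\times t}=s\times t$ by Lemma \ref{b}(12); and since $s\times t\leq s=\dd(s)$, the second special case of (2) gives $(s\times t)\bullet s={_{s\times t}}{\downharpoonleft}s=s\times t$ by Lemma \ref{b}(11). Associating either way then yields $s\bullet(s\times t)\bullet s=s\times t$. The identity $t\bullet(s\star t)\bullet t=s\star t$ is symmetric, using that $s\star t\leq t$ by Lemma \ref{jiben1}(2).

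The step needing a little more bookkeeping is item (5). The forward direction is immediate: if $s\leq t$ then $t\times s=s=t\star s$ by Lemma \ref{jiben1}(1),(2), so by (3) and (E1), $t\bullet s=s=s\bullet t$ and hence $t\bullet s\bullet t=s$. For the converse I would compute $t\bullet s\bullet t$ via the bracketing $(t\bullet s)\bullet t$: by (3), $\alpha:=t\bullet s=\varepsilon(t\times s,t\star s)$ with $\rr(\alpha)=t\star s\leq t$, and applying item (1) to $\alpha\bullet t$, the factors $(t\star s)\times t$ and $(t\star s)\star t$ both collapse to $t\star s$ via Lemma \ref{jiben1}(1),(2). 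Using $\alpha{\downharpoonright}_{t\star s}=\alpha$ (Lemma \ref{b}(4)), $\varepsilon(t\star s,t\star s)=t\star s$ (E1), and (C3), I obtain $t\bullet s\bullet t=\alpha=\varepsilon(t\times s,t\star s)$. If this equals the projection $s$, then (E2) forces $s=\dd(s)=t\times s$, i.e.\ $s\leq t$. No deep step is required; the only potential pitfall is keeping careful track of the $\dd$-$\rr$ bookkeeping so that every composition in (\ref{kkk}) is legitimate before absorbing the collapsed projections through (C3).
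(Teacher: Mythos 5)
Items (1)--(4) and the forward half of (5) are correct and essentially coincide with the paper's proof (your route through the special cases of (1) and (2) in item (4) is a harmless variant of the paper's direct use of item (3)).

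The converse direction of (5), however, contains a genuine error. You set $\alpha=t\bullet s=\varepsilon(t\times s,t\star s)$ and assert $\rr(\alpha)=t\star s\leq t$, from which you collapse $(t\star s)\times t$ and $(t\star s)\star t$ to $t\star s$ and conclude $t\bullet s\bullet t=\varepsilon(t\times s,t\star s)$. But Lemma \ref{jiben1}(2) only gives $t\star s\leq_{P} s$, not $t\star s\leq_{P} t$, and the identity $(t\star s)\star t=t\star s$ (which by Lemma \ref{jiben1}(2) is half of the assertion $t\star s\leq_P t$) fails in general. Concretely, the two-element strong projection algebra with $e\times f=e$ and $e\star f=f$ for all $e,f$ (realized, e.g., by the projections of a $2\times 2$ rectangular band with involution $(i,\lambda)^{*}=(\lambda,i)$) has $(t\star s)\star t=s\star t=t\neq s=t\star s$ for $s\neq t$; there one computes $t\bullet s\bullet t=\varepsilon(t,s,t)$ while $\varepsilon(t\times s,t\star s)=\varepsilon(t,s)$, and these are distinct elements (in the projection-generated model they are $tst=t$ versus $ts$). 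So your claimed identity for $t\bullet s\bullet t$ is false.

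The conclusion is nevertheless easy to rescue, and this is what the paper does: one never needs the exact value of $t\bullet s\bullet t$, only its domain. Since $a\bullet b$ begins with the factor $a{\downharpoonright}_{\rr(a)\times\dd(b)}$, Lemma \ref{b}(2) gives $\dd(a\bullet b)\leq\dd(a)$; hence $\dd(t\bullet s\bullet t)\leq\dd(t\bullet s)=t\times s\leq t$ by item (3) and Lemma \ref{jiben1}(1). If $t\bullet s\bullet t=s$, then $s=\dd(s)=\dd(t\bullet s\bullet t)\leq t$, as required. (The paper phrases this with the bracketing $t\bullet(s\bullet t)$ and reads off $\dd$ of the leading factor $t{\downharpoonright}_{t\times(s\times t)}=t\times(s\times t)$, but the point is the same.) Please replace the "collapse" argument by this domain estimate.
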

\begin{proof}
(1) Since $t\in P_{C}$, we have $\rr(t)=t=\dd(t)$ and $\rr(s)=s=\dd(s)$ by Remark \ref{fanchoudexingzhi}. Denote $\dd(a)=p$ and $\rr(a)=q.$ Then $q\star t\leq t$ by Lemma \ref{jiben1} (2), and so ${_{q\star t}}{\downharpoonleft}t=q\star t$ by Lemma \ref{b} (12). By (E2), $\rr(\varepsilon(q\times t,q\star t))=\rr(q\times t,q\star t)=q\star t$. This together with (C3) gives that $\varepsilon(q\times t,q\star t)\circ q\star t=\varepsilon(q\times t,q\star t)$. Thus $$a\bullet t=a{\downharpoonright}{_{q\times t}}\circ \varepsilon(q\times t,q\star t)\circ {_{q\star t}}{\downharpoonleft}t =a{\downharpoonright}{_{q\times t}}\circ \varepsilon(q\times t,q\star t)\circ (q\star t)$$$$=a{\downharpoonright}{_{q\times t}}\circ \varepsilon(q\times t,q\star t) =a{\downharpoonright}{_{\rr(a)\times t}}\circ \varepsilon(\rr(a)\times t,\rr(a)\star t).$$
If $\rr(a)=q \,\mathcal{F}_{P_{C}}\, t$, then $q\times t=q$ and $ q\star t=t$, and so $$a\bullet t=a{\downharpoonright}{_q}\circ \varepsilon(q,t)=a{\downharpoonright}{_{\rr(a)}}\circ \varepsilon(\rr(a),t)=a\circ \varepsilon(\rr(a),t)$$ by Lemma \ref{b} (3). If $t\leq \rr(a)=q$, then  $q\times t=q\star t=t$ by (1) and (2) in Lemma \ref{jiben1}.
In view of (E1), Lemma \ref{b} (2) and (C3), we obtain that
$$a\bullet t=a{\downharpoonright}{_{q\times t}}\circ \varepsilon(q\times t,q\star t)=a{\downharpoonright}{_ t}\circ\varepsilon(t,t)=a{\downharpoonright}{_ t}\circ t=a{\downharpoonright}{_ t}\circ \rr(a{\downharpoonright}{_ t})=a{\downharpoonright}{_ t}.$$

(2) This is the dual of item (1).

(3) By Lemma \ref{jiben1} (1), we get $s\times t\leq s$, and so $s{\downharpoonright}{_{s\times t}}=s\times t$  by Lemma \ref{b} (11).  Now (E2) gives that  $\dd(\varepsilon(s\times t,s\star t))=\dd(s\times t,s\star t)=s\times t$. Dually, we have $\rr(\varepsilon(s\times t,s\star t))=s\star t$. This together with (C3) implies that $$(s\times t)\circ \varepsilon(s\times t,s\star t)=\varepsilon(s\times t,s\star t).$$ In view of item (1) in this lemma,  we obtain $$s\bullet t=s{\downharpoonright}{_{s\times t}}\circ \varepsilon(s\times t,s\star t)=(s\times t)\circ \varepsilon(s\times t,s\star t)=\varepsilon(s\times t,s\star t).$$ If $s \mathcal{F}_{P_{C}} t$, then $s\times t=s,s\star t=t$, and so $s\bullet t=\varepsilon(s,t)$.

(4) By item (3) of the present lemma, (1) and (2) in Lemma \ref{jiben1} and (E1),
$$s\bullet (s\times t)=  \varepsilon(s\times (s\times t), s\star (s\times t))= \varepsilon(s\times t, s\times t)=s\times t,$$
$$(s\times t)\bullet s  = \varepsilon(((s\times t)\times s, (s\times t)\star s) = \varepsilon(s\times t, s\times t)=s\times t.$$
This gives that $s\bullet (s\times t)\bullet s=s\times t$. Dually, $t\bullet (s\star t) \bullet t=s\star t$.

(5) If $s\leq t $, then  $s\times t=s\star t=s=t\times s=t\star s$ by (1) and (2) in Lemma \ref{jiben1}. By item (3) in the present lemma, we have $s\bullet t=\varepsilon(s\times t, s\star t)=\varepsilon(s, s)=s$ by (E1), and so  $$t\bullet s\bullet t=t\bullet s=\varepsilon(t\times s, t\star s)=\varepsilon(s,s)=s.$$
Conversely, assume that $s=t\bullet s\bullet t$.
Since $\dd(\varepsilon(s\times t, s\star t))=s\times t$ by Lemma \ref{oo} (2) and $\rr(t)=t$, it follows that $$s=t\bullet s\bullet t=t\bullet \varepsilon(s\times t,s\star t)=t{\downharpoonright}{_{t\times (s\times t)}}\circ U \circ V=(t\times (s\times t))\circ U \circ V$$ for some $U,V\in C$ by item (3) in the present lemma, (\ref{kkk}), Lemma \ref{jiben1} (1) and
Lemma \ref{b} (12).
This implies that $s=\dd(s)=\dd({t\times (s\times t)})=t\times (s\times t)\leq t$ by the fact that $s, t\times(s\times t)\in P_{C}$, Remark \ref{fanchoudexingzhi} and Lemma \ref{jiben1} (1).
\end{proof}
\begin{theorem}\label{xii}Let $(C,\circ,\dd,\rr,\leq, P_C, \times,\star,\varepsilon)$ be a chain projection ordered category. Define
$$
a\bullet b=a{\downharpoonright}_{\rr(a)\times \dd(b)} \circ~ \varepsilon(\rr(a)\times \dd(b),\rr(a)\star \dd(b) )  \circ~ _{\rr(a)\star \dd(b)}{\downharpoonleft} b,\,\,\label{zi} a^\clubsuit=\dd(a),\,\, a^\spadesuit=\rr(a)
$$
for all $a, b\in C$. Then  $\textbf{S}(C)=(C, \bullet,\,  {}^\clubsuit, {}^\spadesuit)$ forms a DRC-restriction semigroup with set of projections $P(\textbf{S}(C))=P_C$. Moreover, we have the following results:
\begin{itemize}
\item  If $C$ is a chain projection ordered groupoid, then $\textbf{S}(C)$ is a generalized regular $\circ$-semigroup.
\item  If $C$ is symmetric chain projection ordered category, then $\textbf{S}(C)$ is $P$-restriction.
\item  If $C$ is a symmetric chain projection ordered groupoid, then $\textbf{S}(C)$ is a regular $\circ$-semigroup.
\item  If $C$ is commutative chain projection ordered category, then $\textbf{S}(C)$ is restriction.
\item  If $C$ is commutative chain projection ordered groupoid, then $\textbf{S}(C)$ is an inverse semigroup.

\end{itemize}
\end{theorem}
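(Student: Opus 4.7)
The plan is to verify the twelve DRC-restriction axioms together with the five specializations, taking advantage of the fact that associativity of $\bullet$ is already supplied by Lemma \ref{ccccc} (3). The principal tools are Lemma \ref{vvvv} (2), which records $\dd(a\bullet b)=\dd(b)\Delta_a$ and $\rr(a\bullet b)=\rr(a)\Theta_b$, and Lemma \ref{edd}, which spells out $\bullet$ whenever a projection appears as a factor. The identification $P(\textbf{S}(C))=P_C$ is immediate, because every $e\in P_C$ satisfies $e=\dd(e)=e^\clubsuit$ by Remark \ref{fanchoudexingzhi}, and the reverse inclusion is automatic from the definition of $^\clubsuit$.

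For the unary axioms (i)--(v) and their duals we proceed as follows. Axiom (i) reduces via Lemma \ref{edd} (2) and Lemma \ref{b} (3) to the trivial identity ${_{\dd(x)}}{\downharpoonleft}x=x$. For (ii), both $(x\bullet y)^\clubsuit$ and $(x\bullet y^\clubsuit)^\clubsuit$ evaluate to $\dd(y)\Delta_x$ by Lemma \ref{vvvv} (2) together with $\dd(\dd(y))=\dd(y)$. Axiom (iii) is most efficiently obtained from Lemma \ref{edd} (5): since $\dd(y)\Delta_x$ lies in $\dd(x)^\downarrow$ by definition of $\Delta_x$, the implication $s\leq t\Rightarrow s=t\bullet s\bullet t$ yields $x^\clubsuit\bullet(x\bullet y)^\clubsuit\bullet x^\clubsuit=(x\bullet y)^\clubsuit$ at once. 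Axioms (iv) and (v) are trivial consequences of Remark \ref{fanchoudexingzhi}. The dual axioms follow by symmetric arguments.

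For the DRC-ample conditions we use the reformulation (\ref{dengjia}): it suffices to prove, for each $x\in C$ and $e\in P_C$, that $x\bullet(e\bullet x)^\spadesuit=(e\bullet x^\clubsuit)^\spadesuit\bullet x$, along with its dual. Setting $p=\dd(x)$, Lemma \ref{vvvv} (2) gives $(e\bullet x)^\spadesuit=e\Theta_x\in\rr(x)^\downarrow$, while Lemma \ref{edd} (3) gives $(e\bullet x^\clubsuit)^\spadesuit=e\star p=e\theta_p\in p^\downarrow$. Lemma \ref{edd} (1) and (2) then present the two sides as $x{\downharpoonright}_{e\Theta_x}$ and ${_{e\theta_p}}{\downharpoonleft}x$ respectively; these coincide by Lemma \ref{b} (5), since $\rr({_{e\theta_p}}{\downharpoonleft}x)=(e\star p)\nu_x=e\Theta_x$ by (\ref{aa}). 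The dual argument settles (vi)$'$.

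The five specializations follow cleanly from the same machinery. If $C$ is a groupoid, the assignment $a^\circ=a^{-1}$ combined with Lemma \ref{vvvv} (1) and Lemma \ref{qunpei} (1) produces the four identities in (\ref{gtio}), so Lemma \ref{aisdm} promotes $\textbf{S}(C)$ to a generalized regular $\circ$-semigroup. In the symmetric case, Lemma \ref{edd} (3) together with the axiom $\varepsilon(p,q,p)=p$ for $p\,\mathcal{F}_{P_C}\,q$ and the identity $e\times f=f\star e$ allow us to reduce $e\bullet f\bullet e$ to $e\times f=(e\bullet f)^\clubsuit$, and dually $f\bullet e\bullet f$ to $(f\bullet e)^\spadesuit$, thereby verifying the $P$-restriction criterion of Proposition \ref{guanxi} (1); combining with the groupoid case yields the regular $\circ$-case. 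Commutativity of $\times$ forces $e\times f=e\bullet f=f\bullet e$, which delivers the restriction criterion of Proposition \ref{guanxi} (2) and its inverse-semigroup groupoid consequence. The main obstacle we foresee is the symmetric step: unwinding $e\bullet f\bullet e$ through the $\downharpoonleft$ and $\downharpoonright$ restrictions to reach the length-three $P$-path $(e\times f,e\star f,e\times f)$ on which the axiom $\varepsilon(p,q,p)=p$ directly applies requires a careful intermediate computation involving several applications of Lemma \ref{jiben1} and (E1)--(E3).
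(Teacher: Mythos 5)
Your proposal is correct and follows essentially the same route as the paper: associativity from Lemma \ref{ccccc}(3), the unary axioms from Lemmas \ref{vvvv} and \ref{edd}, the DRC-ample conditions via the reformulation (\ref{dengjia}) together with Lemma \ref{b}(5), and the groupoid case via Lemma \ref{aisdm}. The only divergences are minor: you verify axiom (iii) directly from Lemma \ref{edd}(5) where the paper routes through Corollary \ref{zhangyu}, and you sketch the symmetric and commutative computations that the paper omits --- these do go through, e.g.\ in the symmetric case $(e\star f)\times e=e\star f$ and $(e\star f)\star e=e\times f$, so $e\bullet f\bullet e=\varepsilon(e\times f,e\star f,e\times f)=e\times f$ by the axiom $\varepsilon(p,q,p)=p$, and in the commutative case $e\times f=e\star f$ forces $e\bullet f=\varepsilon(e\times f,e\times f)=e\times f$.
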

\begin{proof} By Lemma \ref{ccccc} (3), $(C, \bullet)$ forms a semigroup. To show $\textbf{S}(C)$ is a DRC-restriction semigroup, we need to prove (i)--(vi) by symmetry. Let $x,y\in C$.

(i)  By (C3), $\dd(x)\circ x$ is defined and $\dd(x)\circ x=x$. In view of Lemma \ref{vvvv}, we have $x^\clubsuit\bullet x =\dd(x)\bullet x=\dd(x)\circ x=x$. Thus (i) holds.

(ii)  By the definition of the operation $\bullet$, we have $\dd(x\bullet y)=\dd(x{\downharpoonright}{_{\rr(x)\times \dd(y)}})$  and $\dd(x\bullet y^\clubsuit)=\dd(x{\downharpoonright}{_{ \rr(x)\times \dd(y^\clubsuit)}})$.
Since $\dd(y^\clubsuit)=\dd({\dd(y)})=\dd(y)$ by Remark \ref{fanchoudexingzhi}, it follows that $$(x\bullet y)^\clubsuit={\dd(x\bullet y)}={\dd(x\bullet y^\clubsuit)}=(x\bullet y^\clubsuit)^\clubsuit.$$

(iii) By Lemma \ref{edd} (3),
\begin{eqnarray}\label{kang1}
&&(x^\clubsuit \bullet y^\clubsuit)^\clubsuit
=({\dd(x)\bullet \dd(y)})^\clubsuit={\dd({\dd(x)\bullet \dd(y)})} =\dd(x)\times \dd(y).
\end{eqnarray}
This together with Lemma \ref{edd} (4) gives that
$$x^\clubsuit \bullet(x^\clubsuit \bullet y^\clubsuit)^\clubsuit \bullet x^\clubsuit=\dd(x)\bullet (\dd(x)\times \dd(y)) \bullet \dd(x)=\dd(x)\times \dd(y)=(x^\clubsuit \bullet y^\clubsuit)^\clubsuit.$$ By Corollary \ref{zhangyu}, (iii) is true.

(iv)--(v) In view of Remark \ref{fanchoudexingzhi}, we have $(x^\clubsuit)^\clubsuit=\dd(\dd(x))=\dd(x)=x^\clubsuit$ and  $$(x^\clubsuit)^\spadesuit=(\dd(x))^\spadesuit=\rr(\dd(x))=\dd(x)=x^\clubsuit.$$
By the above discussion and its dual,   $\textbf{S}(C)=(C, \bullet,\,  {}^\clubsuit,  {}^\spadesuit)$ forms a DRC semigroup with set of projections
\begin{eqnarray}\label{yd}
P({\textbf{S}(C)})=\{x^{\clubsuit}\mid x\in C\}=\{\dd(x)\mid x\in C\}=\{\rr(x)\mid x\in C\}=\{x^{\spadesuit}\mid x\in C\}=P_{C}.
\end{eqnarray}
Let $e\in P({\textbf{S}({C})})=P_{C}$ and $ x\in C$. Then $\rr(e)=e$ by Remark \ref{fanchoudexingzhi}. Denote $t=e\star \dd(x)$. By (1) and (2) in Lemma \ref{jiben1}, we have $t\leq \dd(x)$ and $t\times \dd(x)=t\star \dd(x)=t$. Moreover, Lemma \ref{b} (11) gives that ${_t}{\downharpoonleft}\dd(x)=t$. Since $x^\clubsuit=\dd(x)$, we have $(e\bullet x^{\clubsuit})^{\spadesuit}=\rr(e\bullet \dd(x))=e\star \dd(x)=t$ by Lemma \ref{edd} (3).  In view of Lemma \ref{edd} (2),  (E1), Lemma \ref{b} (1) and (C3), we have
$$(e\bullet x^{\clubsuit})^{\spadesuit}\bullet x=t\bullet x= \varepsilon(t\times \dd(x),t\star \dd(x)) \,\circ \, {_{t\star \dd(x)}}{\downharpoonleft}x=$$$$\varepsilon(t,t)\,\circ \,{_t}{\downharpoonleft}x= t\circ \,{_t}{\downharpoonleft}x=\dd({_t}{\downharpoonleft}x)\,\circ \,{_t}{\downharpoonleft}x={_t}{\downharpoonleft}x.$$
Denote $u=\rr({_t}{\downharpoonleft}x)$. Then $u\in P_{C}$, and so $\rr(u)=u$ by Remark \ref{fanchoudexingzhi}.  By Lemma \ref{b} (1), we have $u\leq \rr(x)$, and so $$\rr(x)\star u=\rr(x)\star \rr(u)=\rr(x)\times u=\rr(x)\times \rr(u)=u $$ by (1) and (2) in Lemma \ref{jiben1}. In riew of Lemma \ref{edd} (2),
$$(e\bullet x)^{\spadesuit}=\rr(e\bullet x)= \rr({_{e\star \dd(x)}}{\downharpoonleft} x)=\rr({_t}{\downharpoonleft}x)=u.$$
This together with Lemma \ref{edd} (1),  (E1),  Lemma \ref{b} (2) and (C3) yields that
 $$x \bullet(e\bullet x)^{\spadesuit}=x\bullet u =a{\downharpoonright}{_{\rr(x)\times \rr(u)}}\circ
\varepsilon(\rr(x)\times \rr(u),\rr(x)\star \rr(u)) $$$$=x{\downharpoonright}{_u}\circ \varepsilon(u,u)=x{\downharpoonright}{_u}\circ u=x{\downharpoonright}{_u}\circ \rr(x{\downharpoonright}{_u})=x{\downharpoonright}{_u}.$$ By the fact that $u=\rr({_t}{\downharpoonleft}x)$ and Lemma \ref{b} (5), we have ${_t}{\downharpoonleft}x=x{\downharpoonright}{_u}$. Thus $x\bullet (e\bullet x)^{\spadesuit}=(e\bullet x^{\clubsuit})^{\spadesuit}\bullet x$. This proves (vi). Thus $\textbf{S}(C)$ is a DRC-restriction semigroup by (\ref{dengjia}).

Finally, let $(C,\circ,\dd,\rr, ^{-1})$ be a groupoid and $x\in C$. By Lemmas \ref{vvvv} and \ref{qunpei}, $$x\bullet x^{-1}=x\circ x^{-1}=\dd(x)=x^\clubsuit=\dd(x)=\rr(x^{-1})= (x^{-1})^\spadesuit.$$ Dually, $x^{-1}\bullet x =x^\spadesuit=(x^{-1})^\clubsuit$. This implies that $\textbf{S}(C)=(C, \bullet,\,  {}^\clubsuit, {}^\spadesuit,  {-1})$ is a generalized regular $\circ$-semigroup by Remark \ref{tongyi}. The other cases can be proved by Proposition \ref{guanxi}, Lemma \ref{aisdm} and Remark \ref{tongyi}, we omit the details.
\end{proof}

Let $(S,\cdot, ^+, ^\ast)$ be a DRC-restriction semigroup. We use $\textbf{E}(S)$ to denote the subsemigroup of $S$ generated by $P({S})$. We say that $S$
is  {\em projection-generated} if $\textbf{\rm \bf E}(S)=S$.
\begin{prop}\label{fgpu}
Let $(C,\circ,\dd,\rr,\leq, P_C, \times, \star,\varepsilon)$ be a chain projection ordered category and $S={\rm \bf{S}}$$(C)$. Then $$P(S)=P_{C},\,\, \textbf{\rm \bf E}(S)={\rm im}(\varepsilon)=\{\varepsilon(\mathfrak{c})\mid \mathfrak{c}\in \mathscr{P}(P)\}.$$ Thus $S$ is projection-generated if and only if $\varepsilon$ is surjective.
\end{prop}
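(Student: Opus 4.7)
The plan is to prove the three assertions in turn, leaning on the infrastructure developed in Sections~4--5. The first, $P(S)=P_C$, is already contained in the proof of Theorem~\ref{xii}: equation~(\ref{yd}) there identifies $P(\textbf{S}(C))$ with $\{\dd(x)\mid x\in C\}=P_C$, so no new argument is needed. The final equivalence is also immediate once the second equality is established, since by definition $S$ is projection-generated means $\textbf{E}(S)=S$.

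The bulk of the work is therefore the equality $\textbf{E}(S)=\mathrm{im}(\varepsilon)$. For the inclusion $\mathrm{im}(\varepsilon)\subseteq\textbf{E}(S)$, I plan to prove by induction on $k$ that for any $P$-path $\mathfrak{c}=(p_{1},\ldots,p_{k})$ one has the identity $\varepsilon(\mathfrak{c})=p_{1}\bullet p_{2}\bullet\cdots\bullet p_{k}$ holding in $S$. The base case $k=1$ is (E1). For the step from $k-1$ to $k$, I would invoke Lemma~\ref{edd}(1): since $\rr(\varepsilon(p_{1},\ldots,p_{k-1}))=p_{k-1}$ by (E2) and $p_{k-1}\,\mathcal{F}_{P_{C}}\,p_{k}$, the formula for $\varepsilon(p_{1},\ldots,p_{k-1})\bullet p_{k}$ collapses to $\varepsilon(p_{1},\ldots,p_{k-1})\circ\varepsilon(p_{k-1},p_{k})$, which (E3) turns into $\varepsilon(p_{1},\ldots,p_{k})$.

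The main obstacle is the reverse inclusion $\textbf{E}(S)\subseteq\mathrm{im}(\varepsilon)$, because a general word $q_{1}\bullet q_{2}\bullet\cdots\bullet q_{n}$ with $q_{i}\in P_{C}$ need not have consecutive entries $\mathcal{F}_{P_{C}}$-related, so one cannot simply read off a $P$-path. I will induct on $n$, the inductive hypothesis being $q_{1}\bullet\cdots\bullet q_{n-1}=\varepsilon(\mathfrak{c}')$ for some path $\mathfrak{c}'=(p_{1},\ldots,p_{k})$. Writing $r=p_{k}\times q_{n}$, Lemma~\ref{edd}(1) yields
\[
\varepsilon(\mathfrak{c}')\bullet q_{n}=\varepsilon(\mathfrak{c}'){\downharpoonright}_{r}\,\circ\,\varepsilon(r,\,p_{k}\star q_{n}).
\]
By Proposition~\ref{fder} in its form (E6), $\varepsilon(\mathfrak{c}'){\downharpoonright}_{r}=\varepsilon(\mathfrak{c}'{\downharpoonright}_{r})$, and by Lemma~\ref{ccts}(2) the restricted path $\mathfrak{c}'{\downharpoonright}_{r}$ ends in $r$, so it concatenates cleanly with the $P$-path $(r,\,p_{k}\star q_{n})$ inside $\mathscr{P}(P_{C})$. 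Applying (E3) one more time exhibits $q_{1}\bullet\cdots\bullet q_{n}$ as $\varepsilon$ of this concatenated path, completing the induction and hence the proposition.
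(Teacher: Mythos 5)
Your proposal is correct and follows essentially the same route as the paper: the identity $P(S)=P_C$ is quoted from the construction of $\textbf{S}(C)$, the inclusion $\mathrm{im}(\varepsilon)\subseteq\textbf{E}(S)$ rests on the identity $\varepsilon(p_1,\ldots,p_k)=p_1\bullet\cdots\bullet p_k$ (the paper telescopes it directly via (\ref{pp}) and Lemma \ref{edd}(3) where you induct, an immaterial difference), and your reverse inclusion — induction on word length, Lemma \ref{edd}(1), (E6) to pull the restriction inside $\varepsilon$, then (E3) — is exactly the paper's argument. No gaps.
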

\begin{proof}
By (\ref{yd}), $P(S)=P_{C}$. Let $\mathfrak{p}=(p_{1},p_{2},\ldots,p_{k})\in \mathscr{P}(P_C)$. Then by (\ref{pp}), Lemma \ref{vvvv} (1) and Lemma \ref{edd} (3),
$$\varepsilon(\mathfrak{p})=\varepsilon(p_{1},p_{2})\circ \varepsilon(p_{2},p_{3})\circ \ldots \circ \varepsilon(p_{k-1},p_{k}) =\varepsilon(p_{1},p_{2})\bullet \varepsilon(p_{2},p_{3})\bullet \ldots \bullet \varepsilon(p_{k-1},p_{k})$$$$=(p_{1}\bullet p_{2})\bullet(p_{2}\bullet p_{3})\bullet \ldots \bullet(p_{k-1}\bullet p_{k}) =p_{1}\bullet p_{2}\bullet \ldots \bullet p_{k}\in \textbf{E}(S).$$
Conversely, let $a\in \textbf{E}(S)$. Then there exists an positive integer $k$ and $p_{1},p_{2},\ldots ,p_{k}\in P$ such that $a=p_{1}\bullet p_{2}\bullet \cdots \bullet p_{k}$. In the sequel, we shall prove that $a\in {\rm im}{\varepsilon}$ by mathematical induction. When $k=1$, we have $\varepsilon(p_{1})=p_{1}$ by (E1), and so $p_{1}\in {\rm im}\varepsilon$. Let $p_{1}\bullet p_{2}\bullet \ldots \bullet p_{k-1}=\varepsilon(\mathfrak{p})$, where $\mathfrak{p}\in \mathscr{P}(P)$. Denote $s=\rr(\varepsilon(\mathfrak{c}))\times p_{k}$ and $t=\rr(\varepsilon(\mathfrak{c}))\star p_{k}$. By Lemma  \ref{edd} (1), (E6) and (E3),
$$a=p_{1}\bullet p_{2}\bullet \ldots \bullet p_{k-1}\bullet p_{k}=\varepsilon(\mathfrak{c})\bullet p_{k}=\varepsilon(\mathfrak{c}){\downharpoonright}{_s}\circ \varepsilon(s,t)=\varepsilon(\mathfrak{c}{\downharpoonright}{_s})\circ \varepsilon(s,t)=\varepsilon(\mathfrak{c}{\downharpoonright}{_s} \circ (s,t))\in {\rm im}\varepsilon.$$
By mathematical induction, we have $a\in {\rm im}\varepsilon$ for all $a\in \textbf{E}(S)$. Thus $\textbf{E}(S)\subseteq {\rm im}\varepsilon$. This implies that $\textbf{E}(S)= {\rm im}\varepsilon$,  and so $S$ is projection-generated if and only if $\varepsilon$ is surjective.
\end{proof}

\begin{lemma}\label{yang2} Let $(C,\circ,\dd,\rr,\leq,P_C, \times, \star,\varepsilon)$ be a chain projection ordered category and $S={\bf S}(C)$. Then $\leq_{S}=\leq$.
\end{lemma}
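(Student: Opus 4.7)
The plan is to prove the equality of the two partial orders by showing inclusions in both directions, exploiting the characterization of $\leq_S$ from Corollary \ref{zhu2} (4) together with the translation between the semigroup product $\bullet$ and the categorical restrictions $_{\,\cdot}\downharpoonleft$ and $\downharpoonright_{\,\cdot}$ provided by Lemma \ref{edd}.

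For the inclusion $\leq\,\subseteq\,\leq_S$, I will start with $a\leq b$ in $C$. By (O1) we get $\dd(a)\leq \dd(b)$ and $\rr(a)\leq \rr(b)$, and Lemma \ref{b} (3) and (4) give $a={_{\dd(a)}}{\downharpoonleft} b = b{\downharpoonright}_{\rr(a)}$. Now Lemma \ref{edd} (2) (the ``if $t\leq \dd(a)$'' clause applied to $t=\dd(a)\leq \dd(b)$ and element $b$) yields $\dd(a)\bullet b = {_{\dd(a)}}{\downharpoonleft}b = a$, so $a^{\clubsuit}\bullet b = a$. Dually, Lemma \ref{edd} (1) gives $b\bullet \rr(a)= b{\downharpoonright}_{\rr(a)}=a$, so $b\bullet a^{\spadesuit}=a$. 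Since in $\textbf{S}(C)$ the unary operations are $x^{+}=x^{\clubsuit}=\dd(x)$ and $x^{\ast}=x^{\spadesuit}=\rr(x)$, Corollary \ref{zhu2} (4) delivers $a\leq_{S} b$.

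For the reverse inclusion, I will suppose $a\leq_{S} b$. From Corollary \ref{zhu2} (4) we have $a = a^{\clubsuit}\bullet b = \dd(a)\bullet b$, while Corollary \ref{zhu2} (1) gives $\dd(a)=a^{\clubsuit}\leq_{P(S)} b^{\clubsuit}=\dd(b)$; because $(P_C,\times,\star)$ is the projection algebra of $\textbf{S}(C)$ and $C$ is a weak projection ordered category, this is exactly $\dd(a)\leq \dd(b)$ in $C$. Applying Lemma \ref{edd} (2) again, $\dd(a)\bullet b = {_{\dd(a)}}{\downharpoonleft} b$, so $a = {_{\dd(a)}}{\downharpoonleft} b\leq b$ by Lemma \ref{b} (1).

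There is essentially no obstacle here: Lemma \ref{edd} is precisely the bridge between the two orders, and the only subtlety to flag is that one must invoke the \emph{weak projection ordered} property of $C$ to identify the algebraic $\leq_{P_C}$ on projections with the categorical $\leq$ restricted to $P_C$, so that the condition ``$a^{\clubsuit}\omega_S b^{\clubsuit}$'' coming from the natural partial order on the DRC-restriction semigroup translates transparently into ``$\dd(a)\leq \dd(b)$'' in the ambient ordered category.
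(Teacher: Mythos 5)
Your proof is correct and follows essentially the same route as the paper: both arguments reduce $\leq_S$ to semigroup identities and then use Lemma \ref{edd} to convert the $\bullet$-products with projections into the categorical restrictions ${_{\dd(a)}}{\downharpoonleft}b$ and $b{\downharpoonright}_{\rr(a)}$. The only (immaterial) difference is that the paper handles the condition on projections via Lemma \ref{edd}~(5) (i.e.\ $s\leq t$ iff $s=t\bullet s\bullet t$) directly from (\ref{xiaoyus}), whereas you route it through Corollary \ref{zhu2}~(4) and (1) together with the weak-projection-ordered identification of $\leq_{P_C}$ with $\leq$ on $P_C$.
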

\begin{proof} Let $x,y\in C$. Then
\begin{eqnarray*}
x \leq_{S} y
&\Longleftrightarrow & x^\clubsuit=y^\clubsuit\bullet x^\clubsuit\bullet y^\clubsuit,x=x^\clubsuit\bullet y\, \, \, \,\,\,\, (\mbox{by (\ref{xiaoyus})})\\
&\Longleftrightarrow& \dd(x)=\dd(y)\bullet \dd(x)\bullet \dd(y) ,x=\dd(x)\bullet y\ \ \ \ (\mbox{by (\ref{zi})})\\
&\Longleftrightarrow&\dd(x)\leq \dd(y),x={_{\dd(x)}}{\downharpoonleft} y. \, \, \, \,\,\,\,(\mbox{by (2) and (5) in Lemma \ref{edd} })\\
&\Longleftrightarrow& x\leq y.  \ \ \ \ \ \ \ \,(\mbox{by (O1), (O3) and Lemma \ref{b} (3)})
\end{eqnarray*}
Thus  $\leq_{S}=\leq$.
\end{proof}

\begin{lemma}\label{ydd}
Let $(C,\circ,\dd,\rr,\leq,P_C, \times, \star,\varepsilon)$ be a chain projection ordered category. Then the restricted product of $S=$${\rm \bf{S}}$$({C})$ is exactly $``\circ"$  and $(P(S),  \times_{S}, \star_{S})=(P_{C},\times, \star)$.
\end{lemma}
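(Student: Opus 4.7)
The plan is to verify the two assertions of the lemma independently; both reduce to invoking facts already established. The statement decomposes as (a) the restricted product on $S = \textbf{S}(C)$ coincides with the categorical composition $\circ$, and (b) $(P(S), \times_S, \star_S)$ equals $(P_C, \times, \star)$ as projection algebras.

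For (a), I would first unfold the definition of the restricted product on a DRC-restriction semigroup: $a \cdot b$ is defined exactly when $a^\ast = b^+$. In $\textbf{S}(C)$ the unary operations are $a^\clubsuit = \dd(a)$ and $a^\spadesuit = \rr(a)$, so the restricted product of $a, b \in C$ is defined precisely when $\rr(a) = \dd(b)$, in which case its value is $a \bullet b$. Invoking Lemma \ref{vvvv}(1), which asserts $a \bullet b = a \circ b$ whenever $\rr(a) = \dd(b)$, yields the first claim.

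For (b), the set equality $P(S) = P_C$ is already recorded as equation (\ref{yd}) inside the proof of Theorem \ref{xii}. To match the operations, I would apply the definition of the derived projection algebra, namely (\ref{touyingdaishuyunsuan}): for $e, f \in P_C$,
$$ e \times_S f = (e \bullet f)^\clubsuit = \dd(e \bullet f), \qquad e \star_S f = (e \bullet f)^\spadesuit = \rr(e \bullet f). $$
Lemma \ref{edd}(3) then evaluates $\dd(e \bullet f) = e \times f$ and $\rr(e \bullet f) = e \star f$, identifying the operations and hence the algebras.

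I do not anticipate any substantive obstacle here, as the lemma is essentially a repackaging of Theorem \ref{xii} together with the computational Lemmas \ref{vvvv} and \ref{edd}; the only point requiring care is to keep the two layers of notation straight, namely the DRC-restriction symbols $^\clubsuit$, $^\spadesuit$, $\bullet$ against the categorical symbols $\dd$, $\rr$, $\circ$.
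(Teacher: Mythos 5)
Your proposal is correct and follows essentially the same route as the paper's own proof: both parts reduce to observing that the restricted product is defined precisely when $\rr(a)=\dd(b)$ and then invoking Lemma \ref{vvvv}(1), and identifying the operations on $P(S)=P_C$ via Lemma \ref{edd}(3). No gaps.
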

\begin{proof}Consider the restricted product of $\textbf{S}({C})$: For all $x,y\in C$,
$$x\odot y=\left\{\begin{array}{cc}
x\bullet y
&\mbox{ if }\ x^\spadesuit=y^\clubsuit,\\
\mbox{undefined} & \mbox{otherwise},
\end{array} \right.
$$
Let $x,y\in C$. Then $x^\spadesuit=\rr(x), y^\clubsuit=\dd(y)$, and so
\begin{center}
$x\odot y$ is defined $ \Longleftrightarrow  \rr(x)=\dd(y) \Longleftrightarrow x\circ y$ is defined,
\end{center}
In this case, we have $x\odot y=x\bullet y=x\circ  y$ by Lemma  \ref{vvvv} (1).
This shows that $\odot$ is equal to $\circ$. On the other hand, we obtain $P(S)=P_{C}$ by (\ref{yd}). Let $e,f\in P_{C}=P(S)$. By Lemma \ref{edd} (3),  $$e\times_{S} f=(e\bullet f)^{\clubsuit}=\dd(e\bullet f)=e\times f,\,\,\, e\star_{S} f=(e\bullet f)^{\spadesuit}=\rr(e\bullet f)=e\star f.$$
Thus $(P(S),  \times_{S}, \star_{S})=(P_{C},\times, \star)$.
\end{proof}

\begin{lemma}\label{wang} Let $\varphi$ be a chain projection ordered functor from $(C_{1},\circ,\dd,\rr,\leq, P_{C_1}\times,\star,\varepsilon_{1})$ to $(C_2, \circ,  \dd,\rr,\leq, P_{C_2}, \times,\star,\varepsilon_2)$. Then  ${\rm\bf{S}}\varphi:C_1 \rightarrow C_2,\ \ x\mapsto x\varphi$  provides a (2,1,1)-homomorphism from ${\rm \bf{S}}(C_1)$ to ${\rm \bf{S}}(C_2)$.
\end{lemma}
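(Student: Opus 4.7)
The plan is to check directly that the map $\mathbf{S}\varphi$ preserves each of the three operations $\bullet$, ${}^\clubsuit$, ${}^\spadesuit$ of $\mathbf{S}(C_1)$. The unary operations are the easy part. Since $x^\clubsuit = \dd(x)$ and $x^\spadesuit = \rr(x)$ by the definition in Theorem \ref{xii}, condition (F1) in Definition \ref{pwdj} gives $(x^\clubsuit)\varphi = (\dd(x))\varphi = \dd(x\varphi) = (x\varphi)^\clubsuit$ and dually for $^\spadesuit$. So the real work is showing that $\varphi$ is a semigroup homomorphism for $\bullet$.

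Next I would unpack the definition of $x \bullet y$ from (\ref{kkk}) and apply $\varphi$ term by term. Writing $p = \rr(x)$, $q = \dd(y)$, $p' = p \times q$, $q' = p \star q$, we have
\[
x \bullet y = x{\downharpoonright}_{p'} \circ \varepsilon_1(p', q') \circ {_{q'}}{\downharpoonleft} y.
\]
Observe first that this composition is defined, and by (F1) the images $(x{\downharpoonright}_{p'})\varphi$, $\varepsilon_1(p',q')\varphi$, ${_{q'}}{\downharpoonleft} y)\varphi$ have matching $\dd$ and $\rr$ in $C_2$, so (F2) lets us distribute $\varphi$ over $\circ$. Then Lemma \ref{g} gives $(x{\downharpoonright}_{p'})\varphi = (x\varphi){\downharpoonright}_{p'\varphi}$ and $({_{q'}}{\downharpoonleft} y)\varphi = {_{q'\varphi}}{\downharpoonleft}(y\varphi)$, and (F5) gives $\varepsilon_1(p',q')\varphi = \varepsilon_2(p'\varphi, q'\varphi)$.

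It remains to identify $p'\varphi$ and $q'\varphi$ with the corresponding projection-algebra products in $P_{C_2}$ built from $\rr(x\varphi)$ and $\dd(y\varphi)$. By (F4) we have $p'\varphi = (p \times q)\varphi = p\varphi \times q\varphi$, and by (F1) $p\varphi = \rr(x)\varphi = \rr(x\varphi)$ and $q\varphi = \dd(y)\varphi = \dd(y\varphi)$. So $p'\varphi = \rr(x\varphi) \times \dd(y\varphi)$, and similarly $q'\varphi = \rr(x\varphi) \star \dd(y\varphi)$. Substituting back,
\[
(x \bullet y)\varphi = (x\varphi){\downharpoonright}_{\rr(x\varphi)\times \dd(y\varphi)} \circ \varepsilon_2(\rr(x\varphi)\times \dd(y\varphi), \rr(x\varphi)\star \dd(y\varphi)) \circ {_{\rr(x\varphi)\star \dd(y\varphi)}}{\downharpoonleft}(y\varphi),
\]
which is exactly $(x\varphi) \bullet (y\varphi)$ by (\ref{kkk}) applied in $C_2$.

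No step really poses a genuine obstacle; this proof is essentially a bookkeeping exercise, where the axioms (F1)--(F5) have been tailored precisely so that each ingredient of $\bullet$ (the restrictions $_\bullet{\downharpoonleft}$ and ${\downharpoonright}_\bullet$, the evaluation $\varepsilon$, and the projection-algebra operations $\times, \star$) transports through $\varphi$. The only point worth mentioning is that one must invoke Lemma \ref{g} (which itself is proved from (F1) and (F3) via Lemma \ref{b} (3)) rather than assume directly that $\varphi$ commutes with the restriction operations.
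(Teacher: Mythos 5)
Your proposal is correct and follows essentially the same route as the paper's own proof: handle $^\clubsuit$ and $^\spadesuit$ via (F1), then expand $x\bullet y$ by (\ref{kkk}) and push $\varphi$ through using (F2), Lemma \ref{g}, (F5), (F4) and (F1) in exactly that order. The paper's proof is the same bookkeeping computation, so there is nothing to add.
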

\begin{proof}By (F1), $x^{\clubsuit}\varphi=(\dd(x))\varphi=\dd(x\varphi)=(x\varphi)^{\clubsuit}.$
This together with its dual shows that $\textbf{S}\varphi$ preserves $``^\clubsuit"$ and $``^{\spadesuit}$".
Using (F2), Lemma \ref{g}, (F5), (F4) and (F1) in order,
$$(x\bullet y)\varphi=(x{\downharpoonright}{_{\rr(x)\times \dd(y)}} ~\circ~ \varepsilon_{1}( \rr(x)\times \dd(y),\rr(x)\star \dd(y))~\circ~ {_{\rr(x)\star \dd(y)}}{\downharpoonleft} y)\varphi
$$$$=(x{\downharpoonright}{_{\rr(x)\times \dd(y)}})\varphi ~\circ~ (\varepsilon_{1}(\rr(x)\times \dd(y),\rr(x)\star \dd(y)))\varphi ~\circ~ ({_{\rr(x)\star \dd(y)}}{\downharpoonleft} y)\varphi
$$$$=x\varphi{\downharpoonright}{_{(\rr(x)\times \dd(y))\varphi}} ~\circ~ \varepsilon_{2}((\rr(x)\times \dd(y))\varphi,(\rr(x)\star \dd(y))\varphi)~\circ~ {_{(\rr(x)\star \dd(y))\varphi}}{\downharpoonleft} y\varphi
$$$$=x\varphi{\downharpoonright}{_{\rr(x)\varphi\times \dd(y)\varphi}} ~\circ~ \varepsilon_{2}(\rr(x)\varphi\times \dd(y)\varphi,\rr(x)\varphi \star \dd(y)\varphi)~\circ~ {_{\rr(x)\varphi\star \dd(y)\varphi}}{\downharpoonleft} y\varphi
$$$$=x\varphi{\downharpoonright}{ _{\rr(x\varphi)\times \dd(y\varphi)}} ~\circ~ \varepsilon_{2}(\rr(x\varphi)\times \dd(y\varphi),\rr(x\varphi) \star \dd(y\varphi)) ~\circ~ {_{\rr(x\varphi)\star \dd(y\varphi)}}{\downharpoonleft} y\varphi=x\varphi \bullet y\varphi.$$
Thus $\textbf{S}\varphi$ is a (2,1,1)-homomorphism.
\end{proof}

\begin{lemma}\label{zhang3} Let $(S,\cdot,^+,^\ast)$ be a DRC-restriction semigroup. Then we have $\textbf{\rm \bf S}(\textbf{\rm \bf C}(S))=S$.
\end{lemma}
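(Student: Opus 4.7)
The plan is to verify that the two DRC-restriction semigroups $S$ and $\textbf{S}(\textbf{C}(S))$ coincide as bi-unary semigroups on the same underlying set $S$. The unary operations are essentially free: by the construction in Theorem \ref{xii}, for $x\in S$ one has $x^{\clubsuit}=\dd_S(x)=x^+$ and $x^{\spadesuit}=\rr_S(x)=x^{\ast}$ by the definition of $\dd_S,\rr_S$ in (\ref{qqyyaa}). So the only real task is to prove that the binary operation $\bullet$ of $\textbf{S}(\textbf{C}(S))$ agrees with the original multiplication $\cdot$ of $S$.

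To this end, fix $a,b\in S$ and compute $a\bullet b$ directly from (\ref{kkk}) inside $\textbf{C}(S)$. Using Lemma \ref{touyingdaishu} we have $\rr_S(a)\times_S\dd_S(b)=a^{\ast}\times_S b^+=(a^\ast b^+)^+$ and $\rr_S(a)\star_S\dd_S(b)=(a^\ast b^+)^\ast$. Since these projections satisfy $(a^\ast b^+)^+\leq_S a^\ast$ and $(a^\ast b^+)^\ast\leq_S b^+$ (by (\ref{xiaoyu}) and Lemma \ref{xiaoyuslw}), Corollary \ref{zhu2}(6)(7) give the explicit restrictions
\[
a{\downharpoonright}_{(a^\ast b^+)^+}=a(a^\ast b^+)^+,\qquad {_{(a^\ast b^+)^\ast}}{\downharpoonleft}b=(a^\ast b^+)^\ast b.
\]
Moreover $\varepsilon_S((a^\ast b^+)^+,(a^\ast b^+)^\ast)=(a^\ast b^+)^+(a^\ast b^+)^\ast$ by (\ref{hui}). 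Since the restricted product $\circ_S$ is just $\cdot$ (when defined), substituting into (\ref{kkk}) yields
\[
a\bullet b \;=\; a(a^\ast b^+)^+\cdot(a^\ast b^+)^+(a^\ast b^+)^\ast\cdot(a^\ast b^+)^\ast b.
\]

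The remaining step is a short simplification in $S$. Using idempotency of the projections $(a^\ast b^+)^+$ and $(a^\ast b^+)^\ast$ (Lemma \ref{kangwei}(1)) together with Lemma \ref{kangwei}(4) (namely $(a^\ast b^+)^+(a^\ast b^+)^\ast=a^\ast b^+$), the middle factor collapses and the displayed product becomes $a\cdot a^\ast b^+\cdot b$. One final application of the absorption axioms (i)$'$ ($a\cdot a^\ast=a$) and (i) ($b^+\cdot b=b$) yields $a\bullet b=ab$, as required. There is no essential obstacle here; the only point to be careful about is keeping track of which projections belong to which one-sided cone so that Corollary \ref{zhu2}(6)(7) is legitimately applied, after which the computation is routine.
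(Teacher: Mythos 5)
Your proposal is correct and follows essentially the same route as the paper: both reduce the claim to checking that the unary operations are $\dd_S={}^+$, $\rr_S={}^\ast$ and that $a\bullet b=a(a^\ast b^+)^+\,(a^\ast b^+)^+(a^\ast b^+)^\ast\,(a^\ast b^+)^\ast b$ collapses to $ab$ via Lemma \ref{kangwei}(4) and axioms (i), (i)$'$. No gaps.
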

\begin{proof}By Theorem \ref{fii}, $\textbf{C}(S)$ is a chain projection ordered category  and
\begin{equation}\label{5.1}
\dd_{S}(x)=x^+,\, \rr_{S}(x)=x^\ast,\,   e\times_S f=(ef)^+,\,  e\star_S f=(ef)^\ast
\end{equation}
for all $x\in S$ and $e,f\in P_{S}=P(S)$.  Let $x,y\in S$. Then
 $x\circ_{S} y=xy$ if and only if
\begin{equation}\label{5.2}x^\ast=\rr_{S}(x)=\dd_{S}(y)=y^{+}
\end{equation} where $xy$ is the product of $x$ and $y$ in  $(S,\cdot)$. Moreover,  for $x\in S, e,f\in P_S$ with $e\leq_{S} \dd_{S}(x)$ and $f\leq_{S} \rr_{S}(x)$,   have  ${_e}{\downharpoonleft} x=ex$  and $ x{\downharpoonright}{_f}=xf$.
Define three operations on $S$ as follows:
$$x\bullet y=x{\downharpoonright}{_{\rr_{S}(x)\times_S \dd_{S}(y)}}\ \circ  \varepsilon_{S}(\rr_{S}(x)\times_S \dd_{S}(y),\rr_{S}(x)\star_S \dd_{S}(y))  \,   {_{\rr_{S}(x)\star_S \dd_{S}(y)}}{\downharpoonleft}y,$$ and $x^{\clubsuit}=\dd_{S}(x),x^{\spadesuit}=\rr_{S}(x),$ where $\varepsilon_{S}$ is defined as in (\ref{hui}).
By Theorem \ref{xii}, $\textbf{S}(\textbf{C}(S))=(S,\bullet,^{\clubsuit},^{\spadesuit})$ is a DRC-restriction semigroup, and  $$x^{\clubsuit}=\dd_{S}(x)=x^{+},x^{\spadesuit}=\rr_{S}(x)=x^{\ast}$$ by (\ref{5.1}).  Furthermore,
\begin{eqnarray}\label{5.3}
x\bullet y&=&x{\downharpoonright}{_{\rr_{S}(x)\times_S \dd_{S}(y)}}\ \circ \varepsilon_{S}(\rr_{S}(x)\times_S \dd_{S}(y),\rr_{S}(x)\star_S \dd_{S}(y))   \ \circ  {_{\rr_{S}(x)\star_{S} \dd_{S}(y)}}{\downharpoonleft}y\nonumber\\
&=&x{\downharpoonright}{_{(x^\ast y^+)^+}}\ \circ_S    \varepsilon_{S}((x^\ast y^+)^+,(x^\ast y^+)^\ast)\ \circ_S {_{(x^\ast y^+)^\ast}}{\downharpoonleft} y\nonumber\\
&=&x(x^\ast y^+)^+\circ_{S} (x^\ast y^+)^+ (x^\ast y^+)^\ast\circ_{S} (x^\ast y^+)^\ast y\nonumber\\
&=&x(x^\ast y^+)^+ (x^\ast y^+)^\ast y
=xx^\ast y^+  y
=xy.\ \ \ (\mbox{by Lemma \ref{kangwei} (4), (i), (i)$'$})
\end{eqnarray}
Thus $\textbf{\rm \bf S}(\textbf{\rm \bf C}(S))=S$.
\end{proof}
\begin{lemma}\label{zhang31} Let $(C,\circ,\dd,\rr,\leq, P_C, \times,\star,\varepsilon)$ be a chain projection ordered category. Then we have  ${\rm\bf{C}}({\rm \bf{S}}({ C})={ C}$.
\end{lemma}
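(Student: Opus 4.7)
The plan is to verify that every constituent of the chain projection ordered category
$$\mathbf{C}(\mathbf{S}(C))=(C,\circ_{\mathbf{S}(C)},\dd_{\mathbf{S}(C)},\rr_{\mathbf{S}(C)},\leq_{\mathbf{S}(C)},P_{\mathbf{S}(C)},\times_{\mathbf{S}(C)},\star_{\mathbf{S}(C)},\varepsilon_{\mathbf{S}(C)})$$
coincides, one by one, with the corresponding datum of $(C,\circ,\dd,\rr,\leq,P_C,\times,\star,\varepsilon)$. Since $\mathbf{S}$ and $\mathbf{C}$ both leave the underlying set untouched, this reduces to a bookkeeping exercise that pulls together the lemmas developed in Section~6.

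First I would dispose of the easy identifications. By the definition of $\mathbf{S}(C)$ in Theorem~\ref{xii} we have $x^{\clubsuit}=\dd(x)$ and $x^{\spadesuit}=\rr(x)$ for every $x\in C$, hence
$$\dd_{\mathbf{S}(C)}(x)=x^{\clubsuit}=\dd(x),\qquad \rr_{\mathbf{S}(C)}(x)=x^{\spadesuit}=\rr(x).$$
Consequently $P_{\mathbf{S}(C)}=P(\mathbf{S}(C))=P_C$ by (\ref{yd}), and Lemma~\ref{ydd} then gives $\times_{\mathbf{S}(C)}=\times$ and $\star_{\mathbf{S}(C)}=\star$, as well as $\circ_{\mathbf{S}(C)}=\circ$ (the restricted product of $\mathbf{S}(C)$ is exactly the partial composition of $C$). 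For the order, Lemma~\ref{yang2} yields $\leq_{\mathbf{S}(C)}=\leq$ directly.

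It remains to verify that the evaluation maps agree, i.e.\ $\varepsilon_{\mathbf{S}(C)}=\varepsilon$. By definition (see (\ref{hui}) applied to the DRC-restriction semigroup $\mathbf{S}(C)$), for every $\mathfrak{p}=(p_{1},p_{2},\ldots,p_{k})\in\mathscr{P}(P_C)$,
$$\varepsilon_{\mathbf{S}(C)}(\mathfrak{p})=p_{1}\bullet p_{2}\bullet\cdots\bullet p_{k}.$$
On the other hand, the computation carried out inside the proof of Proposition~\ref{fgpu} established exactly that
$$\varepsilon(\mathfrak{p})=p_{1}\bullet p_{2}\bullet\cdots\bullet p_{k},$$
using (\ref{pp}), Lemma~\ref{vvvv}(1) and Lemma~\ref{edd}(3). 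Therefore $\varepsilon_{\mathbf{S}(C)}(\mathfrak{p})=\varepsilon(\mathfrak{p})$ for all $\mathfrak{p}\in\mathscr{P}(P_C)$, and all nine components of $\mathbf{C}(\mathbf{S}(C))$ coincide with those of $C$.

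The only step that is more than a direct citation is the agreement of evaluation maps; all other points have been isolated as separate lemmas in Section~6 precisely so that this final verification becomes immediate. No genuine obstacle is expected, and the proof can be stated in a few lines by referring to Lemmas~\ref{ydd} and~\ref{yang2} together with the computation of $\varepsilon$ on paths recorded in the proof of Proposition~\ref{fgpu}.
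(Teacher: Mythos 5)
Your proposal is correct and follows essentially the same route as the paper: identify $\dd,\rr,\circ,P,\times,\star,\leq$ componentwise via Lemmas~\ref{vvvv}, \ref{ydd} and \ref{yang2}, then check the evaluation maps agree. The only difference is that the paper re-runs an induction on path length to prove $\varepsilon_{\mathbf{S}(C)}=\varepsilon$, whereas you reuse the identity $\varepsilon(\mathfrak{p})=p_1\bullet\cdots\bullet p_k$ already established in the proof of Proposition~\ref{fgpu} — a harmless and slightly cleaner shortcut (just note the length-one case separately, since that identity is derived from (\ref{pp}), which concerns paths of length at least two).
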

\begin{proof}
Define three operations on $C$ as follows:
$$x \bullet y=x{\downharpoonright}{_{\rr(x)\times \dd(y)}} \circ\ \varepsilon(\rr(x)\times \dd(y),\,\,\rr(x)\star \dd(y)) \circ\ {_{\rr(x)\star \dd(y)}}{\downharpoonleft}y.$$
\begin{equation}\label{5.4}
x^{\clubsuit}=\dd(x),\,\,  x^{\spadesuit}=\rr(x).
\end{equation}
Then by Theorem \ref{xii}, $S=\textbf{S}({C})=(C,\bullet, ^\clubsuit, ^\spadesuit)$ is a DRC-restriction semigroup and $P_{C}=P(S)$. By Theorem \ref{fii}, we have $\textbf{C}(S)=(C,\circ_{S},\dd_{S},\rr_{S}, \leq_{S},  P_{\textbf{C}(S)}, \times_{S},\star_{S},\varepsilon_{S})$ and $P_{\textbf{C}(S)}=P(S)$. Let $x,y\in C$. Then $\dd_{S}(x)=x^\clubsuit=\dd(x)$ and $\rr_{S}(x)=x^\spadesuit=\rr(x).$ Thus $\dd_{S}=\dd,\rr_{S}=\rr$. Moreover,  $x\circ_{S}y$ is defined if and only if $\rr_{S}(x)=\dd_{S}(y)$, if and only if $\rr(x)=\dd(y)$. In this case, we obtain $x\circ_{S} y=x\bullet y=x\circ y$  by Lemma \ref{vvvv}. This shows that $\circ_{S}=\circ$. By Lemma \ref{ydd}, we get $(P_{C},  \times, \star)=(P_{\textbf{C}(S)},\times_{S}, \star_{S})$. Lemma \ref{yang2} gives that $\leq_{S}=\leq$, and the fact $P_{C}=P_{\textbf{C}(S)}$ gives that $ \mathscr{P}(P_{C})=\mathscr{P}(P_{\textbf{C}(S)})$.

Let $\mathfrak{p}=(p_{1},p_{2},\ldots,p_{k})\in \mathscr{P}(P_{C})$. Then $p_{1}\,\mathcal{F}_{P_C}\,p_{2}\,\mathcal{F}_{P_C}\,\ldots \mathcal{F}_{P_C}\,p_{k}$. If  $k=1$, we have $\varepsilon(\mathfrak{p})=\varepsilon(p_{1})=p_{1}=\varepsilon_{S}(p_{1})=\varepsilon_{S}(\mathfrak{p})$ by (E1).
Denote $\mathfrak{q}=(p_{1},p_{2},\ldots,p_{k-1})$. Then $\mathfrak{p}=\mathfrak{q}\circ (p_{k-1},p_{k})$. Assume that $\varepsilon(\mathfrak{q})=\varepsilon_{S}(\mathfrak{q})=p_{1}\bullet p_{2}\bullet \ldots \bullet p_{k-1}$. Then $$\varepsilon_{S}(\mathfrak{p})=p_{1}\bullet p_{2}\bullet \ldots \bullet p_{k-1}\bullet p_{k}=\varepsilon(\mathfrak{q})\bullet p_{k}.$$ By (E2), we have $\rr(\varepsilon(\mathfrak{q}))=\rr(\mathfrak{q})=p_{k-1}$. Since $p_{k-1} \mathcal{F}_{P_C} p_{k}$, by Lemma \ref{edd} (1) and (E3) we obtain $$\varepsilon_{S}(\mathfrak{p})=\varepsilon(\mathfrak{q})\bullet p_{k}=\varepsilon(\mathfrak{q}) \circ \varepsilon(p_{k-1},p_{k})=\varepsilon(\mathfrak{q} \circ (p_{k-1},p_{k}))=\varepsilon(\mathfrak{p}).$$ Thus $\varepsilon_{S}=\varepsilon$.  By the above discussions, we have ${ C}=\textbf{C}(S)=\textbf{C}(\textbf{S}({  C}))$.
\end{proof}
Now we can give our main theorem in this paper.
\begin{theorem}\label{hgdc}
The category $\mathbb{DRS}$ of DRC-restriction semigroups together with (2,1,1)-homomorphisms is isomorphic to the category $\mathbb{CPOC}$ of chain projection ordered categories together with chain projection ordered functors.
\end{theorem}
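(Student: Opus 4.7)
The plan is to combine all the constructions and equalities that have been established earlier in the excerpt. By Theorem \ref{fii}, Lemma \ref{zhang4}, Theorem \ref{xii} and Lemma \ref{wang}, the assignments
\[
\mathbf{C}\colon (S,\cdot,{}^{+},{}^{\ast})\longmapsto \mathbf{C}(S),\qquad \theta\longmapsto \mathbf{C}(\theta),
\]
\[
\mathbf{S}\colon (C,\circ,\dd,\rr,\le,P_C,\times,\star,\varepsilon)\longmapsto \mathbf{S}(C),\qquad \varphi\longmapsto \mathbf{S}\varphi
\]
are well defined on objects and morphisms of $\mathbb{DRS}$ and $\mathbb{CPOC}$, respectively. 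So I would first check that each of $\mathbf{C}$ and $\mathbf{S}$ is a functor. Since both $\mathbf{C}(\theta)$ and $\mathbf{S}\varphi$ are defined by the rule $x\mapsto x\theta$ and $x\mapsto x\varphi$ on the same underlying sets, the identity morphism is preserved in both directions, and functoriality reduces to the obvious identity $(\theta_1\theta_2)(x)=\theta_2(\theta_1(x))$ and its analogue for $\varphi$; composition is preserved simply because the underlying maps compose the usual way.

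Next I would verify the object-level inverse relations. For every DRC-restriction semigroup $S$, Lemma \ref{zhang3} gives $\mathbf{S}(\mathbf{C}(S))=S$; for every chain projection ordered category $C$, Lemma \ref{zhang31} gives $\mathbf{C}(\mathbf{S}(C))=C$. So on objects the two functors are mutually inverse.

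For the morphism-level inverse relations, let $\theta\colon S_1\to S_2$ be a $(2,1,1)$-homomorphism. Then $\mathbf{C}(\theta)$ is the map $x\mapsto x\theta$ from $\mathbf{C}(S_1)$ to $\mathbf{C}(S_2)$, and applying $\mathbf{S}$ to this functor yields the map $\mathbf{S}(\mathbf{C}(\theta))$ from $\mathbf{S}(\mathbf{C}(S_1))=S_1$ to $\mathbf{S}(\mathbf{C}(S_2))=S_2$ given by exactly the same rule $x\mapsto x\theta$. Hence $\mathbf{S}(\mathbf{C}(\theta))=\theta$. A symmetric argument gives $\mathbf{C}(\mathbf{S}(\varphi))=\varphi$ for every chain projection ordered functor $\varphi$. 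Combining this with the object identities, $\mathbf{C}$ and $\mathbf{S}$ are mutually inverse isomorphisms of categories.

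There is no real obstacle left at this stage: all the substantial work---showing that $\mathbf{C}(S)$ really is a chain projection ordered category, that $\mathbf{S}(C)$ really is a DRC-restriction semigroup, that the assignments on morphisms respect the appropriate structure, and that the two constructions are mutually inverse on objects---has already been carried out in Theorems \ref{fii} and \ref{xii} and Lemmas \ref{zhang4}, \ref{wang}, \ref{zhang3} and \ref{zhang31}. The only thing to write carefully is the assembly argument above, spelling out functoriality and the fact that the morphism assignments are literally identity-on-underlying-maps, which makes the morphism-level inverse relations immediate.
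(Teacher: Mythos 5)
Your proposal is correct and follows essentially the same route as the paper, which likewise assembles Theorems \ref{fii} and \ref{xii} together with Lemmas \ref{zhang4}, \ref{wang}, \ref{zhang3} and \ref{zhang31} to conclude that $\textbf{C}$ and $\textbf{S}$ are mutually inverse functors. The only difference is that you spell out the routine functoriality and morphism-level checks that the paper leaves implicit.
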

\begin{proof}
By Theorems \ref{xii}, \ref{fii} and Lemmas \ref{zhang4},  \ref{wang},  \ref{zhang3} and   \ref{zhang31},
$$\textbf{S}: \mathbb{CPOC} \rightarrow \mathbb{DRS}: (C,\circ,\dd,\rr,\leq, P_C, \times,\star,\varepsilon)\rightarrow \textbf{S}(C),$$$$ \textbf{C}: \mathbb{DRS} \rightarrow \mathbb{CPOC}: (S, \cdot, ^+, ^\ast)\rightarrow \textbf{C}(S)$$
are mutually inverse functors.  Thus the result holds.
\end{proof}
Finally, we also have the following special cases.

\begin{theorem}\label{teshuqingkuang}
The category of generalized regular $\circ$-semigroups (respectively, $P$-restriction semigroups, regular $\circ$-semigroups, restriction semigroups, inverse semigroups) together with (2,1,1)-homomorphisms is isomorphic to the category of chain projection ordered groupoids (respectively, symmetric chain projection ordered categories, symmetric chain projection ordered groupoids, commutative chain projection ordered categories, commutative chain projection ordered groupoids) together with chain projection ordered functors.
\end{theorem}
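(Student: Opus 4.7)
The plan is to deduce Theorem~\ref{teshuqingkuang} as a restriction of the category isomorphism in Theorem~\ref{hgdc}. Concretely, I would show that for each of the five pairs of subcategories, the functors $\textbf{S}$ and $\textbf{C}$ from the proof of Theorem~\ref{hgdc} restrict to mutually inverse functors between the corresponding special subcategories of $\mathbb{DRS}$ and $\mathbb{CPOC}$. Since the equalities $\textbf{S}(\textbf{C}(S))=S$ and $\textbf{C}(\textbf{S}(C))=C$ have already been established in Lemmas~\ref{zhang3} and~\ref{zhang31}, once the restrictions are shown to be well defined on objects and morphisms the subcategory isomorphisms follow automatically.

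The object parts are essentially already recorded. The ``$\mathbb{DRS}\to\mathbb{CPOC}$'' directions of all five statements were listed as special cases in Theorem~\ref{fii}, and the ``$\mathbb{CPOC}\to\mathbb{DRS}$'' directions are the special cases listed in Theorem~\ref{xii}. So I would simply invoke these two theorems: if $S$ is e.g.\ a generalized regular $\circ$-semigroup (respectively $P$-restriction, regular $\circ$, restriction, inverse), then $\textbf{C}(S)$ lies in the claimed subcategory; conversely, if $C$ is a chain projection ordered groupoid (respectively symmetric chain projection ordered category, symmetric chain projection ordered groupoid, commutative chain projection ordered category, commutative chain projection ordered groupoid), then $\textbf{S}(C)$ lies in the claimed subcategory. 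Combined with Lemmas~\ref{zhang3} and~\ref{zhang31}, this gives a bijection on objects in each case.

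For morphisms, the $P$-restriction, restriction cases and their axioms on projection algebras pass directly through the functors, because a chain projection ordered functor satisfies (F4) and so preserves the identities $e\times f=f\star e$ (symmetric) and $e\times f=f\times e$ (commutative); conversely a $(2,1,1)$-homomorphism between DRC-restriction semigroups sends projections to projections and is compatible with $\times_S,\star_S$ by the definitions. The main subtlety I anticipate lies in the groupoid cases: the morphisms declared in Theorem~\ref{teshuqingkuang} are still $(2,1,1)$-homomorphisms (not $(2,1,1,1)$-homomorphisms preserving $^\circ$), so I have to verify that $\textbf{S}\varphi$ applied to a chain projection ordered functor between groupoids automatically preserves $^\circ$, and that $\textbf{C}\theta$ applied to a $(2,1,1)$-homomorphism between generalized regular $\circ$-semigroups yields a functor of groupoids (i.e.\ a functor preserving inverses in the sense of Lemma~\ref{wangw}). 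For the first, Lemma~\ref{wangw} gives exactly what is needed. For the second, I would use Remark~\ref{tongyi} and the uniqueness part of Lemma~\ref{aisdm}: if $\theta$ preserves $^+$ and $^\ast$, then the image $x^\circ\theta$ satisfies $x\theta\cdot(x^\circ\theta)=(x\theta)^+$ and $(x^\circ\theta)\cdot x\theta=(x\theta)^\ast$ together with the two projection identities, and hence must equal $(x\theta)^\circ$ by uniqueness; this forces $\textbf{C}\theta$ to commute with taking inverses in the groupoid structure of $\textbf{C}(S)$.

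Once these verifications are in place, the restricted pairs $(\textbf{S},\textbf{C})$ are mutually inverse on both objects and morphisms in each of the five settings, giving the five claimed category isomorphisms. The hardest step, as indicated, is the groupoid morphism compatibility; all other verifications are short consequences of (F1)--(F5), Lemma~\ref{touyingdaishu}, Proposition~\ref{guanxi}, and the uniqueness statement of Lemma~\ref{aisdm}.
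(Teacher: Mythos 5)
Your proposal is correct and follows essentially the route the paper intends: the theorem is presented as an immediate corollary of Theorem \ref{hgdc}, with the object-level special cases already recorded in Theorems \ref{fii} and \ref{xii} and the morphism-level claims handled by Lemmas \ref{zhang4}, \ref{wang}, \ref{zhang3} and \ref{zhang31}. Your extra care about the groupoid morphisms (via Lemma \ref{wangw} and the uniqueness clause of Lemma \ref{aisdm}) is sound, though strictly speaking the morphism classes in the five subcategories are the same (2,1,1)-homomorphisms and chain projection ordered functors as in the parent categories, so no additional compatibility needs to be verified beyond what Theorem \ref{hgdc} already provides.
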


\section{Chain semigroups}
In this section, we consider the chain semigroups determined by  strong two-sided projection algebras and give a presentation of these semigroups by using Theorem \ref{hgdc}. We first recall some basic results on categories given in \cite{East1}.
Let ${\mathcal C}=(C, \circ, \dd, \rr)$ be a category and $\approx$ be an equivalence on $C$. Then $\approx$ is called a {\em  congruence} on $\mathcal C$  if for all $a,b,u,v\in C$, the following conditions hold:
\begin{itemize}
\item[(V1)] If $a\approx b$, then $\dd(a)=\dd(b)$ and $ \rr(a)=\rr(b)$.
\item[(V2)] If $\rr(u)=\dd(a)=\dd(b)$ and $a\approx b$, then $u\circ a=u\circ b$.
\item[(V3)] If $\dd(v)=\rr(a)=\rr(b)$ and $ a\approx b$, then $a\circ v=b\circ v$.
\end{itemize}
Let ${\mathcal C}=(C, \circ, \dd, \rr)$ be a category and $\approx$ be a  congruence on $\mathcal C$. Let $a\in C$.  Denote the $\approx$-class of $C$ containing $a$ by $[a]$. Moreover, denote $C/\hspace{-1.5mm} \approx=\{[a]\mid a\in C\}$. Define a partial binary operation $``\circ"$ on $C/\hspace{-1.5mm} \approx$ and two maps $\dd,\rr$ from $C/\hspace{-1.5mm} \approx$ to $C/\hspace{-1.5mm} \approx$ as follows: For all $[a],[b]\in C/\hspace{-1.5mm} \approx$,
$$[a]\circ [b]=\left\{\begin{array}{cc}
[a\circ b]\,\,\,\,\,\,\mbox{ if }\ \rr([a])=\dd([b]),\\
\mbox{undefined}\,\,\,\,\,\,\mbox{otherwise},
\end{array} \right.
 $$
$$\dd([a])=[\dd(a)],\rr([a])=[\rr(a)].$$
Then it is easy to see that ${\mathcal C}/\hspace{-1.5mm} \approx=(C/\hspace{-1.5mm} \approx, \circ, \dd, \rr)$ is a category, and is called {\em the quotient category} of ${\mathcal C}$ with respect to~ $\hspace{-1.5mm} \approx$. In this case, the set of objects of ${\mathcal C}/\hspace{-1.5mm} \approx$ is $P_{C/ \approx}=\{[p]\mid p\in P_{C}\}$. Let $p,q\in P_{C}$. By (V1), it follows that $p=q$ if only if $[p]=[q].$  

A congruence $\approx$ on an ordered category ${\mathcal C}=(C, \circ, \dd, \rr, \leq)$ is called an {\em ordered congruence} if for all $a,b\in C$ and $p\in P_{ C}$, the following condition holds:
\begin{itemize}
\item[(V4)] If $a\approx b$ and $p\leq \dd(a)$, then $_{p}{\downharpoonleft}  a\approx   {_{p}{\downharpoonleft}b}$.
\end{itemize}

\begin{lemma}[\cite{East1}]\label{ting1}Let $\approx$ be an ordered  congruence on an ordered category ${\mathcal C}=(C, \circ, \dd, \rr, \leq)$.
 Define the relation ``$\leq_{\approx}$" on $C/\hspace{-1.5mm} \approx$ as follows:
For all $[a], [b]\in C/\hspace{-1.5mm} \approx$,
\begin{equation}\label{tin2}
[a]\leq_\approx [b]\Longleftrightarrow \mbox{ there exist }x\in [a], y\in [b]\mbox{ such that }x\leq y.
\end{equation}
Let $[a], [b]\in C/\hspace{-1.5mm} \approx$.  Then  $[a]\leq_\approx [b]$ if and only if for all $y\in [b]$, there exists $x\in [a]$ such that $x\leq y$.  Furthermore, ${\mathcal C}/\hspace{-1.5mm} \approx=(C/\hspace{-1.5mm} \approx, \cdot, \dd, \rr, \leq_\approx)$ is an ordered category.
\end{lemma}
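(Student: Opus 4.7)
The plan is to first establish the alternative characterization, since it will be the workhorse for every subsequent verification. The ``only if'' direction is immediate once a single pair of comparable representatives is produced: fix $x_{0}\in[a]$ and $y_{0}\in[b]$ with $x_{0}\leq y_{0}$, and for an arbitrary $y\in[b]$ note that $y\approx y_{0}$ forces $\dd(y)=\dd(y_{0})$ by (V1), so (O1) applied to $x_{0}\leq y_{0}$ yields $\dd(x_{0})\leq\dd(y)$. Setting $x={_{\dd(x_{0})}}{\downharpoonleft}y$ then gives $x\leq y$, while (V4) together with the identity $x_{0}={_{\dd(x_{0})}}{\downharpoonleft}y_{0}$ (Lemma \ref{b}(3)) produces $x\approx x_{0}$, so $x\in[a]$. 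The converse direction is trivial.

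With the characterization in hand, I next check that $\leq_{\approx}$ is a partial order. Reflexivity is immediate. Transitivity follows by two applications of the characterization: given $[a]\leq_{\approx}[b]\leq_{\approx}[c]$ and $z\in[c]$, lift $z$ to some $y\in[b]$ with $y\leq z$, then lift $y$ to $x\in[a]$ with $x\leq y$, and conclude $x\leq z$. For antisymmetry, assume both $[a]\leq_{\approx}[b]$ and $[b]\leq_{\approx}[a]$; starting from any $x_{0}\in[a]$, produce $y_{0}\in[b]$ with $y_{0}\leq x_{0}$ and then $x_{1}\in[a]$ with $x_{1}\leq y_{0}$. Transitivity of $\leq$ gives $x_{1}\leq x_{0}$, and since $x_{1}\approx x_{0}$, (V1) forces $\dd(x_{1})=\dd(x_{0})$, whence Lemma \ref{b}(3) (applied first to $x_{1}\leq x_{0}$ and then to $x_{0}\leq x_{0}$) yields $x_{1}={_{\dd(x_{0})}}{\downharpoonleft}x_{0}=x_{0}$. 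The sandwich $x_{0}=x_{1}\leq y_{0}\leq x_{0}$ then collapses to $x_{0}=y_{0}$, so $[a]=[b]$.

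Finally, I verify the axioms (O1)--(O4) for ${\mathcal C}/{\approx}$. (O1) is immediate from (O1) in ${\mathcal C}$ combined with (V1). For (O2), the equalities $\rr([a])=\dd([c])$ and $\rr([b])=\dd([d])$ involve only projection classes, so the observation that distinct elements of $P_{C}$ lie in distinct $\approx$-classes (a direct consequence of (V1)) already forces $\rr(a)=\dd(c)$ and $\rr(b)=\dd(d)$ in $C$, making $a\circ c$ and $b\circ d$ defined there; the characterization then supplies $a'\in[a]$ with $a'\leq b$ and $c'\in[c]$ with $c'\leq d$, one observes $\rr(a')=\dd(c')$ by (V1), applies (O2) in ${\mathcal C}$ to obtain $a'\circ c'\leq b\circ d$, and concludes $a'\circ c'\approx a\circ c$ via (V3) (applied to $a'\approx a$ with $v=c$) followed by (V2) (applied to $c'\approx c$ with $u=a'$). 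For (O3), the hypothesis $[p]\leq_{\approx}\dd([a])=[\dd(a)]$ again involves only projection classes and so, by the same argument, reduces to $p\leq\dd(a)$ in $C$; I take $[u]=[{_{p}}{\downharpoonleft}a]$ for existence, and for uniqueness, if $[u']\leq_{\approx}[a]$ with $\dd([u'])=[p]$, the characterization produces $x\in[u']$ with $x\leq a$, (V1) forces $\dd(x)=p$, and Lemma \ref{b}(3) then gives $x={_{p}}{\downharpoonleft}a$, so $[u']=[{_{p}}{\downharpoonleft}a]$. Axiom (O4) is entirely dual, invoking Lemma \ref{b}(4) in place of Lemma \ref{b}(3). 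The crux is Paragraph~1: (V4) is the only axiom that transports a comparison in ${\mathcal C}$ from one pair of representatives to another, so the whole argument pivots on that single transfer step, while the remainder is bookkeeping with (V1)--(V3) and the basic identities of Lemma \ref{b}.
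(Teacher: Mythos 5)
The paper states this lemma without proof, importing it verbatim from \cite{East1}, so there is no in-paper argument to compare against; your proof is correct and follows the standard route (first the ``for all $y\in[b]$'' characterization via (V4), (O1) and Lemma \ref{b}(3), then an axiom-by-axiom verification of (O1)--(O4) using (V1)--(V3) and the fact that distinct projections lie in distinct $\approx$-classes). The only slightly glossed step is the claim in (O3) that the hypothesis ``reduces to $p\leq\dd(a)$ by the same argument'': strictly one applies the characterization with $y=\dd(a)$ to obtain $x\in[p]$ with $x\leq\dd(a)$, and then (V1) together with (O1) gives $p=\dd(x)\leq\dd(\dd(a))=\dd(a)$ --- a one-line fix, not a gap.
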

\begin{lemma}[\cite{East1}]\label{ting2}
Let ${\mathcal C}=(C, \circ, \dd, \rr)$ be a category and $\Omega$ be a subset of $C\times C$. Assume that the following condition holds:
\begin{equation}\label{v-con}(u,v)\in \Omega\Longrightarrow \dd(u)=\dd(v) \mbox{ and } \rr(u)=\rr(v).
\end{equation}
 Denote the least congruence of $\mathcal C$ containing $\Omega$  by $\Omega^{\sharp}$. Then for all $a,b\in C$,  $(a,b)\in \Omega^{\sharp}$ if and only if there exists a sequence
$$a=a_{1}\rightarrow \cdots\rightarrow a_{k}=b,$$ where
$$a_{i}=b_{i}\circ u_{i}\circ c_{i},\, a_{i+1}=b_{i}\circ v_{i}\circ c_{i},\, b_{i},c_{i}\in C,\, (u_{i},v_{i})\in \Omega \cup \Omega^{-1},$$$$ \Omega^{-1}=\{(v,u)\mid (u,v)\in \Omega\},\, i=1,2, \ldots, k.$$
\end{lemma}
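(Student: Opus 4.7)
The plan is to prove Lemma \ref{ting2} by defining the relation $\sim$ on $C$ via the chain condition described in the statement, then verifying that $\sim$ is precisely the least congruence containing $\Omega$. Concretely, declare $a\sim b$ if there exists a finite sequence $a=a_1\to a_2\to\cdots\to a_k=b$ with each transition of the form $a_i=b_i\circ u_i\circ c_i$, $a_{i+1}=b_i\circ v_i\circ c_i$ for some $b_i,c_i\in C$ and some $(u_i,v_i)\in\Omega\cup\Omega^{-1}$ for which all indicated compositions are defined. The goal will then be to show $\sim$ is a congruence, that it contains $\Omega$, and that it is contained in every congruence containing $\Omega$.

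First I would check that $\sim$ is an equivalence. Reflexivity follows from the trivial length-one chain. Symmetry uses the fact that $\Omega\cup\Omega^{-1}$ is closed under swapping, so reversing the sequence $a_k\to a_{k-1}\to\cdots\to a_1$ is again a valid chain. Transitivity is concatenation of chains, which works because the last term of one chain coincides with the first term of the next. Next I would verify (V1): each elementary step $b_i\circ u_i\circ c_i\to b_i\circ v_i\circ c_i$ preserves domain and range, since hypothesis (\ref{v-con}) gives $\dd(u_i)=\dd(v_i)$ and $\rr(u_i)=\rr(v_i)$, and then (C1) forces $\dd(b_i\circ u_i\circ c_i)=\dd(b_i)=\dd(b_i\circ v_i\circ c_i)$, similarly for $\rr$; an induction on the length of the chain then yields (V1) for $\sim$. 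For (V2) and (V3), if $u\circ a$ is defined and $a\sim b$, then prepending $u$ to every element in the chain $a_1,\ldots,a_k$ (which makes sense by (V1) applied inductively) produces a chain from $u\circ a$ to $u\circ b$ whose elementary steps are $(u\circ b_i)\circ u_i\circ c_i\to(u\circ b_i)\circ v_i\circ c_i$; dually for (V3).

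Containment $\Omega\subseteq\sim$ follows from the length-two chain $u=\dd(u)\circ u\circ\rr(u)\to\dd(u)\circ v\circ\rr(u)=v$, which is legal because $\dd(v)=\dd(u)$ and $\rr(v)=\rr(u)$ by (\ref{v-con}), and $\dd(u)\circ x\circ\rr(u)=x$ whenever $\dd(x)=\dd(u)$ and $\rr(x)=\rr(u)$ by (C3). For minimality, let $\equiv$ be any congruence on $\mathcal{C}$ with $\Omega\subseteq\equiv$. Then $\Omega^{-1}\subseteq\equiv$ by symmetry of $\equiv$. For each elementary step $a_i=b_i\circ u_i\circ c_i\to a_{i+1}=b_i\circ v_i\circ c_i$, apply (V2) with the pair $(u_i,v_i)\in\equiv$ pre-multiplied by $b_i$ to get $b_i\circ u_i\equiv b_i\circ v_i$, then apply (V3) post-multiplying by $c_i$ to get $a_i\equiv a_{i+1}$. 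Transitivity of $\equiv$ along the chain then gives $a\equiv b$, so $\sim\subseteq\equiv$. Combined with the earlier steps, this identifies $\sim$ with $\Omega^\sharp$.

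The main obstacle is bookkeeping the composability conditions: at each elementary step one must argue that all the compositions $b_i\circ u_i$, $b_i\circ u_i\circ c_i$, and their $v_i$-analogues are simultaneously defined, and that pre- or post-multiplying a whole chain by $u$ or $v$ preserves definedness throughout. These are not deep but require repeated appeals to (C1) together with (\ref{v-con}), and the cleanest way I would handle them is to prove once and for all, as a preliminary observation, that within any chain witnessing $a\sim b$ every intermediate $a_i$ has the same domain and range, so that extending by a fixed $u$ on the left or $v$ on the right is unambiguously legal for the entire chain.
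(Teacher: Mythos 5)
Your proposal is correct: the paper itself gives no proof of Lemma \ref{ting2} (it is quoted from \cite{East1}), and your argument --- defining the chain relation $\sim$, checking it is a congruence containing $\Omega$ via (V1)--(V3) and the identities $u=\dd(u)\circ u\circ\rr(u)$, and then showing $\sim$ sits inside any congruence containing $\Omega$ by applying (V2) and (V3) to each elementary step --- is exactly the standard argument used there. Your closing observation, that (V1) propagated along a chain guarantees all intermediate terms share a common domain and range so that pre- or post-composition is legal throughout, is precisely the bookkeeping point that makes the verification of (V2) and (V3) go through.
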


\begin{lemma}[\cite{East1}]\label{mis}Let ${\mathcal C}=(C, \circ, \dd, \rr, \leq)$ be an ordered category and $\Omega$ be a subset of $C\times C$ satisfying (\ref{v-con}). Assume that the  following condition holds:
\begin{equation}\label{bca}
(a,b)\in \Omega \Longrightarrow  _{p}{\downharpoonleft}a \approx  {_{p}{\downharpoonleft}b} \mbox{ for all } p\in P_{ C} \mbox{ with } p\leq \dd(a).
\end{equation}
Then $\Omega^{\sharp}$ is an ordered congruence on $\mathcal C$.
\end{lemma}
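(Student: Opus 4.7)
The plan is to show that $\Omega^{\sharp}$ satisfies the axiom (V4), since (V1)--(V3) hold automatically by the fact that $\Omega^{\sharp}$ is the least congruence of $\mathcal C$ containing $\Omega$. Concretely, I need to prove that for all $(a,b)\in\Omega^{\sharp}$ and $p\in P_{C}$ with $p\leq \dd(a)$, the pair $({_p}{\downharpoonleft}a, {_p}{\downharpoonleft}b)$ lies in $\Omega^{\sharp}$.

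First, I would invoke Lemma \ref{ting2} to write $(a,b)\in\Omega^{\sharp}$ as a chain $a=a_{1}\to a_{2}\to\cdots\to a_{k}=b$ of elementary $\Omega\cup\Omega^{-1}$-moves. A straightforward induction on $k$ reduces the problem to the single-step case: it is enough to treat $a=b'\circ u\circ c'$ and $a_{2}=b'\circ v\circ c'$ with $(u,v)\in\Omega\cup\Omega^{-1}$, and show $({_p}{\downharpoonleft}a,{_p}{\downharpoonleft}a_{2})\in\Omega^{\sharp}$ for every $p\leq\dd(a)=\dd(b')$.

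For the single-step case, I would apply Lemma \ref{b}(7) twice to both $a$ and $a_{2}$, obtaining the decompositions
\[
{_p}{\downharpoonleft}a={_p}{\downharpoonleft}b'\,\circ\,{_q}{\downharpoonleft}u\,\circ\,{_r}{\downharpoonleft}c',\qquad {_p}{\downharpoonleft}a_{2}={_p}{\downharpoonleft}b'\,\circ\,{_q}{\downharpoonleft}v\,\circ\,{_{r'}}{\downharpoonleft}c',
\]
where $q=\rr({_p}{\downharpoonleft}b')$, $r=\rr({_q}{\downharpoonleft}u)$ and $r'=\rr({_q}{\downharpoonleft}v)$. Note that $q\leq\rr(b')=\dd(u)=\dd(v)$ by Lemma \ref{b}(1) and (\ref{v-con}), so the restrictions ${_q}{\downharpoonleft}u$ and ${_q}{\downharpoonleft}v$ are defined, and the composites above make sense. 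The hypothesis (\ref{bca}) (applied to $(u,v)$ or $(v,u)$ according to which lies in $\Omega$, using the symmetry of $\Omega^{\sharp}$) gives $({_q}{\downharpoonleft}u,{_q}{\downharpoonleft}v)\in\Omega^{\sharp}$.

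Now I would conclude using the fact that $\Omega^{\sharp}$ is already a congruence on $\mathcal C$: axiom (V1) applied to $({_q}{\downharpoonleft}u,{_q}{\downharpoonleft}v)\in\Omega^{\sharp}$ yields $r=r'$, and then (V2) followed by (V3) give $({_p}{\downharpoonleft}a,{_p}{\downharpoonleft}a_{2})\in\Omega^{\sharp}$, completing the single-step case and hence the induction. The main technical point is the correct bookkeeping of projections when unfolding Lemma \ref{b}(7) twice; the key conceptual point is the self-bootstrapping use of (V1) on $\Omega^{\sharp}$ itself to force $r=r'$, which is exactly what lets the decompositions of ${_p}{\downharpoonleft}a$ and ${_p}{\downharpoonleft}a_{2}$ be compared directly by the congruence properties.
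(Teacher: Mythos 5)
The paper states this lemma as a citation from \cite{East1} and supplies no proof of its own, so there is nothing internal to compare against; your argument is correct and is the standard one. The two points that need care are both handled: the double application of Lemma \ref{b}(7) is legitimate because $q=\rr({_p}{\downharpoonleft}b')\leq\rr(b')=\dd(u)=\dd(v)$ by Lemma \ref{b}(1) and (\ref{v-con}), and the bootstrapping use of (V1) on the pair $({_q}{\downharpoonleft}u,{_q}{\downharpoonleft}v)\in\Omega^{\sharp}$ to force $r=r'$ is valid since $\Omega^{\sharp}$ is by construction a congruence and hence already satisfies (V1)--(V3). One small point worth making explicit in the induction: at each elementary step $\dd(a_i)=\dd(b_i)=\dd(a_{i+1})$, so the same $p$ restricts every term of the chain and transitivity of $\Omega^{\sharp}$ finishes the argument.
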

Let $(P, \times, \star)$ be a strong projection algebra and $p,e,f\in P$. Then $(e,p,f)$ is called an {\em admissible triple } of $P$ if
\begin{equation}\label{ccri}
f=e\theta_{p}\theta_{f}=(e\star p)\star f,\,\,\,\,\,e=f\delta_{p}\delta_{e}=e\times (p\times f),
\end{equation}
that is, $f=(e\star p)\star f,\, e=e\times (p\times f).$

\begin{lemma}\label{xxed}Let $(P, \times, \star)$ be a strong projection algebra and $(e,p, f)$ be an  admissible triple of $P$. Then $$(e,e\theta_{p},f)=(e,e\star p, f), (e,f\delta_{p},f)=(e, p\times f, f)\in {\mathscr{P}}(P),$$ where ${\mathscr{P}}(P)$ is the path category of $P$.
\end{lemma}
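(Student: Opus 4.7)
My approach is to verify that each of the two triples lies in $\mathscr{P}(P)$ by checking the $\mathcal{F}_P$-relations between consecutive entries. By Definition \ref{dzxc} and (\ref{s}), a pair $(p,q)$ satisfies $p\,\mathcal{F}_P\,q$ iff $p\times q = p$ and $p\star q = q$. Unpacking (\ref{ccri}) gives the two admissibility equations
\[
f = (e\star p)\star f, \qquad e = e\times(p\times f),
\]
so I need to check four $\mathcal{F}_P$-conditions, i.e.\ eight equations, each either immediate from an axiom or a short deduction from admissibility together with the strong axioms.

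Four of the eight equations are essentially trivial: $e\star(e\star p)=e\star p$ by (R4) and (R1); $(e\star p)\star f=f$ and $e\times(p\times f)=e$ are the admissibility equations themselves; and $(p\times f)\times f=p\times f$ by (L4) and (L1). Two more reduce via Lemma \ref{jiben1}(3) to the identities $e\times p=e$ and $p\star f=f$. The first follows from Lemma \ref{y}(2) applied to admissibility together with $p\times f\leq_P p$ (Lemma \ref{jiben1}(1)). For $p\star f = f$ I would apply (R3) to the admissibility equation $f=(e\star p)\star f$ to obtain $(e\star p)\star(p\star f) = f$; since $(e\star p)\star(p\star f)\leq_P p\star f$ by Lemma \ref{jiben1}(2), this forces $f\leq_P p\star f$, and Lemma \ref{jiben1}(2) combined with (R2) in the form $f\star(p\star f) = p\star f$ then yields $f = p\star f$.

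The remaining two equations are the crux. To establish $(e\star p)\times f = e\star p$, I would apply the strong axiom (SP2) in the form (\ref{wang2}) with $f'=e$, $e'=p$, and $g=(e\star p)\times f$; by Lemma \ref{jiben1}(1) we have $g\leq_P e\star p=f'\star e'$, so (SP2) gives $(e\times g)\star p = g$, i.e.\ $(e\times((e\star p)\times f))\star p = (e\star p)\times f$, and the second part of (P3) collapses the inner product $e\times((e\star p)\times f)$ to $e\times(p\times f)=e$ by admissibility, producing $e\star p = (e\star p)\times f$. Symmetrically, $e\star(p\times f) = p\times f$ follows by applying (SP1) in the form (\ref{wang2}) with $e'=p$, $f'=f$, and $g=e\star(p\times f)$, using the first part of (P3) together with admissibility to simplify $(e\star(p\times f))\star f = (e\star p)\star f = f$ and conclude $p\times f = e\star(p\times f)$. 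The main obstacle is these two ``crossed'' identities, whose proofs require a precise matching of (SP1)/(SP2) with (P3) and the admissibility conditions; once these two key steps are in place, everything else is a routine check.
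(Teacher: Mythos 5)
Your proof is correct and takes essentially the same route as the paper's: the same reduction to the four $\mathcal{F}_P$-conditions, with the two ``crossed'' identities $(e\star p)\times f=e\star p$ and $e\star(p\times f)=p\times f$ obtained from the strong axioms in the form (\ref{wang2}) combined with (P3) and admissibility. The only cosmetic difference is that the paper packages exactly that combination as Lemma \ref{jj} and dispatches the second triple by duality, whereas you inline the computation and write both triples out explicitly.
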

\begin{proof}
By Lemma \ref{jiben1} (1), we have $p\times f\leq_{P} p$. This together with (\ref{ccri}) and (5) and (1) in Lemma \ref{jiben1} gives that $e=e\times(p\times f)\leq_{P} e\times p\leq_{P} e$, and so  $e\times p=e$. By Lemma \ref{jiben1} (3), $e\times e\theta_p=e\times(e\star p)=e\times p=e$. In view of (R4) and (R1), we have $$e\star (e\theta_p)=e\star(e\star p)=(e\star p)\star(e\star p)=e\star p= e\theta_{p}.$$ This implies that $e\, \mathcal{F}_{P}\, e\theta_{p}$. On the other hand, denote $t=e\theta_{p}$. By (\ref{ccri}), (R4) and (R1), $$e\theta_{p}\star f=t\star(t\star f)=(t\star f)\star(t\star f)=t\star f=e\theta_{p}\theta_{f}=f,$$ and Lemma \ref{jj} and (\ref{ccri}) imply that $$e\theta_{p}\times f=f\delta_{e\theta_{p}}=f\delta_{p}\delta_{e}\theta_{p}=e\theta_{p}.$$ This shows that $e\theta_{p}\, \mathcal{F}_{P}\, f$. Thus $(e,e\theta_{p},f)\in {\mathscr{P}}(P).$ Dually, we have $(e,f\delta_{p},f)\in {\mathscr{P}}(P)$.
\end{proof}
\begin{lemma}\label{bcv}Let $(P, \times, \star)$ be a strong projection algebra, $(e,p, f)$ be an  admissible triple of $P$ and $e'\in P$, $e'\leq_{P} e$.  Denote $f{'}=e{'}\theta_{p}\theta_{f}$.
Then $(e{'}, p, f{'})$ is also an  admissible triple of $P$ and
$$_{e{'}}{\downharpoonleft\hspace{-1mm}(e, e\theta_{p}, f)}=(e{'}, e' \theta_p, f{'}), \,\,_{e{'}}{\downharpoonleft\hspace{-1mm}(e, f\delta_p,f)}=(e{'}, f'\delta_p,f{'}).$$
\end{lemma}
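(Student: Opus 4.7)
The plan is to isolate two clean identities, namely
\[
e' \star f = f' \qquad \text{and} \qquad p \times f' = e',
\]
from which every part of the lemma drops out with essentially one line of computation. The starting observation is that the admissibility condition $e = e \times (p\times f)$ says, by Lemma~\ref{jiben1}(1), that $e \leq_P p\times f$; combined with $e' \leq_P e$ this gives $e' \leq_P p\times f$, so that $e' \star (p\times f) = e'$ by Lemma~\ref{jiben1}(2).

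To obtain the first identity I would apply (P3) with the substitution $(e, f, g) \mapsto (e', p, f)$:
\[
(e' \star (p\times f)) \star f \;=\; (e' \star p) \star f.
\]
The left side collapses to $e' \star f$ by the observation above, while the right side is $f'$ by the definition $f' = e'\theta_p\theta_f$; hence $e'\star f = f'$. Feeding this into the strong-projection implication (\ref{wang2}) for $e' \leq_P p\times f$ gives $p \times (e' \star f) = e'$, i.e., $p \times f' = e'$. Admissibility of $(e', p, f')$ is then immediate: the equation $e' = e' \times (p\times f')$ becomes $e' = e' \times e'$, which is (L1); and $f' = e' \theta_p \theta_{f'}$ is a quick application of (R4) and (R1) to $f' = (e'\star p)\star f$, since $(e'\star p) \star ((e'\star p)\star f) = ((e'\star p)\star f)\star((e'\star p)\star f) = f'$.

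For the two path-restriction formulas I would just unfold (\ref{u}). In $\mathfrak{p} = (e, e\theta_p, f)$, the second coordinate is $e' \star (e \star p)$, which by (R4) equals $(e'\star p)\star(e\star p)$; since $e'\star p \leq_P e\star p$ by Lemma~\ref{jiben1}(5), this reduces to $e'\star p = e'\theta_p$ via Lemma~\ref{jiben1}(2), and the third coordinate is then $(e'\star p)\star f = f'$. In $\mathfrak{p} = (e, p\times f, f)$, the second coordinate is $e' \star (p\times f) = e'$, which by the identity $p\times f' = e'$ coincides with $f' \delta_p$, and the third coordinate is $e' \star f = f'$. The only obstacle worth flagging is choosing the right application of (P3): it is exactly the axiom that converts the expression $(e' \star (p\times f))\star f$ into the shape $(e'\star p)\star f$ which defines $f'$, and without this single algebraic move the identification $e'\star f = f'$ is not visible.
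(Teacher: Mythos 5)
Your argument breaks at its very first step, and one of the two ``clean identities'' on which everything rests is in fact false. From $e=e\times(p\times f)$ you conclude, citing Lemma~\ref{jiben1}(1), that $e\leq_{P}p\times f$. But that lemma characterises $e\leq_{P}q$ by the \emph{conjunction} $e=e\times q=q\times e$; the single identity $e=e\times q$ is only one half of the relation $\mathcal{F}_{P}$ (see (\ref{s})) and does not imply comparability. Indeed, if $e=e\times q$ forced $e\leq_{P}q$, then any $e\,\mathcal{F}_{P}\,q$ would give $e=e\star q=q$ by Lemma~\ref{jiben1}(2), so $\mathcal{F}_{P}$ would collapse to equality and there would be no nontrivial $P$-paths at all. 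Hence $e'\leq_{P}p\times f$ is unavailable, and with it both $e'\star(p\times f)=e'$ and (via (\ref{wang2})) $p\times f'=e'$. The latter identity is genuinely false: in the square band $I\times I$ with involution $(i,j)^{\circ}=(j,i)$ (a regular $\circ$-semigroup), the projection algebra satisfies $e\times p=e$ and $e\star p=p$ for all projections, so every triple $(e,p,f)$ is admissible; taking $e'=e$ and $f=p\neq e$ gives $f'=p$ and $p\times f'=p\neq e=e'$.

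This error propagates into your treatment of the second path: the second coordinate of ${_{e'}}{\downharpoonleft}(e,f\delta_{p},f)$ is $e'\theta_{f\delta_{p}}=e'\star(p\times f)$, which in the example above equals $p$, not $e'$; what must be shown is that it equals $f'\delta_{p}$, and the paper does this via Lemma~\ref{jj} ($\theta_{f\delta_{p}}=\theta_{p}\theta_{f}\delta_{p}$), not via any comparison of $e'$ with $p\times f$. Similarly the admissibility equation $e'=f'\delta_{p}\delta_{e'}=e'\times(p\times f')$ does not reduce to (L1); it requires the chain $f'\delta_{p}\delta_{e'}=e'\theta_{f\delta_{p}}\delta_{e'}=e'\times(e'\star f\delta_{p})=e'\times f\delta_{p}=f\delta_{p}\delta_{e}\delta_{e'}=e\delta_{e'}=e'$ using Lemma~\ref{jj}, Lemma~\ref{jiben1}(3), Lemma~\ref{o} and $e'\leq_{P}e$. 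What does survive of your proposal: the (R4)/(R1) computation of $e'\theta_{p}\theta_{f'}=f'$, the handling of the first path via (R4), Lemma~\ref{jiben1}(5) and (2), and the single (P3) move $(e'\star(p\times f))\star f=(e'\star p)\star f$, which correctly identifies the last coordinate of the second path --- but the identity $e'\star f=f'$ you extract from it presupposes $e'\star(p\times f)=e'$ and is therefore unjustified.
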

\begin{proof}Since $(e,p, f)$ is an  admissible triple of $P$, we have $f=e\theta_{p}\theta_{f}$ and $e=f\delta_{p}\delta_{e}$.
Denote $t=e{'}\theta_{p}$. By (R4) and (R1), $$e{'}\theta_{p}\theta_{f{'}}=e{'}\theta_{p}\theta_{e{'}\theta_{p}\theta_{f}}=t\theta_{t\theta_{f}}=t\star(t\star f)=(t\star f)\star(t\star f)=t\star f=e{'}\theta_{p}\theta_{f}=f{'}.$$
On the other hand, using Lemma \ref{jj}, Lemma \ref{jiben1} (3), Lemma \ref{o} , (\ref{ccri}), (\ref{l}) and the fact that $e{'}\leq_{P} e$ in order, we have
$$f{'}\delta_{p}\delta_{e{'}}=e{'}\theta_{p}\theta_{f}\delta_{p}\delta_{e{'}}=e{'}\theta_{f\delta_{p}}\delta_{e{'}}=e'\times (e'\star f\delta_{p})$$$$=e'\times  f\delta_{p}=
f\delta_{p}\delta_{e{'}}
=f\delta_{p}\delta_{e}\delta_{e{'}}=e\delta_{e{'}}=e'\times e=e{'}.$$
Thus $(e{'}, p, f{'})$ is an  admissible triple of $P$. By (\ref{v}),
$$_{e{'}}{\downharpoonleft\hspace{-1mm}(e, e\theta_p, f)}=(e{'},e{'}\theta_{e\theta_{p}},e{'}\theta_{e\theta_{p}}\theta_{f}),\,\, _{e{'}}{\downharpoonleft\hspace{-1mm}(e, f\delta_p, f)}=(e{'},e{'}\theta_{f\delta_{p}},e{'}\theta_{f\delta_{p}}\theta_{f}).$$
According to Lemma \ref{jiben1} (5) and the fact that $e'\leq_{P} e$, we have $e'\star p\leq e\star p$, and so
$$e{'}\theta_{e\theta_{p}}=e{'}\star(e\star p)=(e{'}\star p)\star(e\star p)=e{'}\star p=e{'}\theta_{p}$$ and $e{'}\theta_{e\theta_{p}}\theta_{f}=e{'}\theta_{p}\theta_{f}=f{'}$ by (R4) and Lemma \ref{jiben1} (2).
Moreover, the definition of $f^{'}$, (P3) and Lemma \ref{jj} give
$$e{'}\theta_{f\delta_{p}}\theta_{f}=(e{'}\star(p\times f))\star f=(e{'}\star p)\star f=e{'}\theta_{p}\theta_{f}=f{'}$$  and $e{'}\theta_{f\delta_{p}}=e{'}\theta_{p}\theta_{f}\delta_{p}=f{'}\delta_{p}$. This implies that
$$_{e{'}}{\downharpoonleft\hspace{-1mm}(e, e\theta_p, f)}=(e', e'\theta_p, f'),\,\, _{e{'}}{\downharpoonleft\hspace{-1mm}(e, f\delta_p, f)}=(e', f'\delta_p, f').$$
Thus the result follows.
\end{proof}

Let $(P, \times, \star)$ be a strong projection algebra. Let $\Omega_1(P)=\{((p,p), p)\mid p\in P\}$,
$$\Omega_2(P)=\{((e,e\star p,f), (e,  p\times f, f))\mid (e,p,f)\mbox{ is }\mbox{an admissible triple of } P\}.$$
Write $\Omega(P)=\Omega_1(P)\cup \Omega_2(P)$. Obviously, $\Omega(P)$ satisfies (\ref{v-con}). We use   $\approx_P$ to denote the congruence $\Omega^{\sharp}(P)$ on $\mathscr{P}(P)$ generated by $\Omega(P)$.

\begin{lemma}\label{piabn}Let $(P, \times, \star)$ be a strong projection algebra. Then $\approx_P$ is an ordered congruence on $\mathscr{P}(P)$.
\end{lemma}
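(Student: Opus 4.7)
The plan is to apply Lemma \ref{mis} directly to $\mathcal{C} = \mathscr{P}(P)$, viewed as an ordered category via Theorem \ref{dddnu}, taking $\Omega = \Omega(P) = \Omega_1(P)\cup\Omega_2(P)$. Since $\approx_P$ is \emph{defined} as $\Omega^{\sharp}(P)$, it is automatically a congruence on the category, so the only work is to upgrade it to an \emph{ordered} congruence, which by Lemma \ref{mis} reduces to verifying the two hypotheses (\ref{v-con}) and (\ref{bca}) for the generating set $\Omega(P)$.

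Hypothesis (\ref{v-con}) is immediate from inspection of the two generating families: in each pair $((p,p),p)\in\Omega_1(P)$ both components have domain $p$ and range $p$, and in each pair $((e, e\star p, f), (e, p\times f, f))\in\Omega_2(P)$ both components have domain $e$ and range $f$. So the substantive step is (\ref{bca}), which I would handle case by case on the two generating families.

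For a pair $((p,p), p)\in\Omega_1(P)$ and $q\leq_P p$, I would compute using (\ref{u}) together with $q\star p=q$ (from Lemma \ref{jiben1}(2)) that ${_q}{\downharpoonleft}(p,p) = (q, q\star p) = (q,q)$, while ${_q}{\downharpoonleft}(p) = q$ by the length-one case of (\ref{u}). Thus the pair of left-restrictions is again a generator in $\Omega_1(P)$, so it lies in $\Omega^{\sharp}(P) = \mathord{\approx_P}$. For a pair $((e, e\star p, f), (e, p\times f, f))\in\Omega_2(P)$ and $e'\leq_P e$, this is precisely the hypothesis of Lemma \ref{bcv}: that lemma supplies a new admissible triple $(e', p, f')$ with $f' = e'\theta_p\theta_f$ and gives explicit formulas
\[
 {_{e'}}{\downharpoonleft}(e, e\theta_p, f) = (e', e'\theta_p, f'), \qquad {_{e'}}{\downharpoonleft}(e, f\delta_p, f) = (e', f'\delta_p, f'),
\]
which, after the translations $e\theta_p = e\star p$ and $f\delta_p = p\times f$ coming from (\ref{j}), say exactly that the pair of left-restrictions is once again a generator in $\Omega_2(P)$, hence is in $\mathord{\approx_P}$.

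The whole argument is therefore just a bookkeeping reduction to Lemmas \ref{mis} and \ref{bcv}; there is no real obstacle, and the only point requiring attention is the notational identification of the $\theta$/$\delta$ operators used in the left-restriction formula (\ref{u}) with the $\star$/$\times$ operators used to phrase admissibility and the generators in $\Omega_2(P)$. Once the two cases of (\ref{bca}) are checked as above, Lemma \ref{mis} delivers the conclusion that $\approx_P\, = \Omega^{\sharp}(P)$ is an ordered congruence on $\mathscr{P}(P)$.
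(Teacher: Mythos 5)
Your proposal is correct and follows essentially the same route as the paper: verify (\ref{v-con}) by inspection, verify (\ref{bca}) separately on the two generating families (the $\Omega_1(P)$ case by the direct computation ${_q}{\downharpoonleft}(p,p)=(q,q)$ using Lemma \ref{jiben1}(2), the $\Omega_2(P)$ case by invoking Lemma \ref{bcv}), and then conclude via Lemma \ref{mis}. No gaps.
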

\begin{proof}
Let $(\mathfrak{s}, \mathfrak{t})\in \Omega(P)$. If $$p, r\in P, \mathfrak{s}=(p,p), \mathfrak{t}=p, r\leq_{P} p=\dd(\mathfrak{s})=\dd(\mathfrak{t}),$$ then $r\theta_{p}=r\star p=r$ by Lemma \ref{jiben1} (2), and so $_{r}{\downharpoonleft}\mathfrak{s}=(r,r\theta_{p})=(r,r),\,
_{r}{\downharpoonleft}\mathfrak{t}=r$ by (\ref{v}). This implies that $(_{r}{\downharpoonleft}\mathfrak{s},\,  _{r}{\downharpoonleft}\mathfrak{t})\in \Omega(P)$, and hence
$_{r}{\downharpoonleft}\mathfrak{s} \approx_P{} _{r}{\downharpoonleft}\mathfrak{t}$. Assume that $(e, p, f)$ is an admissible triple of $P$ and $$\mathfrak{s}=(e, e\theta_p, f), \mathfrak{t}=(e, f\delta_p, f),\, e'\in P, e{'}\leq_{P} e=\dd(\mathfrak{s})=\dd(\mathfrak{t}),\, f^{'}=e{'}\theta_{p}\theta_{f}.$$ By Lemma \ref{bcv},  $(e{'}, p, f{'})$ is also an admissible triple of $P$ and
$$_{e{'}}{\downharpoonleft\hspace{-1mm}\mathfrak{s}}=(e{'}, e' \theta_p, f{'}), \,\,_{e{'}}{\downharpoonleft\hspace{-1mm}\mathfrak{t}}=(e{'}, f'\delta_p,f{'}).$$
This implies that $(_{e{'}}{\downharpoonleft\hspace{-1mm}\mathfrak{s}},{}  _{e{'}}{\downharpoonleft\hspace{-1mm}\mathfrak{t}})\in \Omega(P)$, and so $_{e{'}}{\downharpoonleft\hspace{-1mm}\mathfrak{s}}\approx_P{}  _{e{'}}{\downharpoonleft\hspace{-1mm}\mathfrak{t}}$. The above statement shows that $\Omega(P)$ satisfies  (\ref{bca}). Since $\Omega(P)$ satisfies (\ref{v-con}), $\approx_P$ is an ordered congruence on $\mathscr{P}(P)$ by Lemma \ref{mis}.
\end{proof}

Let $(P, \times, \star)$ be a strong projection algebra. Denote  $\mathscr{C}(P)=\mathscr{P}(P)/\hspace{-1.5mm}\approx_P$. By Lemmas \ref{ting1} and \ref{piabn}, $(\mathscr{C}(P), \circ, \dd, \rr, \leq_{\approx_P}, P_{\mathscr{C}(P)}, \times, \star)$ is an ordered category, and is called the {\em chain category} of $(P, \times, \star)$. In fact, $\mathscr{C}(P)$ is a weak projection ordered category. Several key points of this ordered category are listed as follows:
\begin{itemize}
\item The elements in $\mathscr{C}(P)$: For any $\mathfrak{p}=(p_{1},p_{2},\ldots,p_{k})\in \mathscr{P}(P)$, denote the $\approx_P$-class containing $\mathfrak{p}$ by $[\mathfrak{p}]=[p_{1}, p_{2}, \ldots, p_{k}].$ Then $\mathscr{C}(P)=\{[\mathfrak{p}]\mid \mathfrak{p}\in \mathscr{P}(P)\}$.
\item For any $[\mathfrak{p}]=[p_{1}, p_{2}, \ldots, p_{k}]\in \mathscr{C}(P)$,  we have $\dd([\mathfrak{p}])=[\dd(\mathfrak{p})]=[p_{1}]$ and $\rr([\mathfrak{p}])=[\rr(\mathfrak{p})]=[p_k]$, and so  $$P_{\mathscr{C}(P)}=\{[p]\mid p\in P\}.$$
\item For all $p,q\in P$, $[p]=[q]$ if and only if $p=q$.  This implies that $(P_{\mathscr{C}(P)}, \times, \star)$ forms a strong projection algebra, where $[p]\times [q]=[p\times q]$ and  $[p]\star [q]=[p\star q]$ for all $p,q\in P$. So we can identify $[p]$ with $p$ for any $p\in P$, and identity  $(P_{\mathscr{C}(P)}, \times, \star)$ with $(P, \times, \star)$.

\item If $\mathfrak{p}=(p_{1},p_{2},\ldots,p_{k}), \mathfrak{q}=(q_{1},q_{2},\ldots,q_{l})  \in \mathscr{P}(P)$ and $\rr[\mathfrak{p}]=[p_{k}]=[q_{1}]=\dd[\mathfrak{q}]$ (i.e. $p_k=q_1$),  then
$$[\mathfrak{p}]\circ [\mathfrak{q}]=[p\circ q]=[p_{1},\ldots,p_{k-1},p_{k}=q_{1},q_{2},\ldots,q_{l}].$$
\item If $\mathfrak{p}, \mathfrak{q}\in \mathscr{P}(P)$, then
$$[\mathfrak{p}]\leq_{\approx_P} [\mathfrak{q}]\Longleftrightarrow \mbox{there exist }\mathfrak{m}, \mathfrak{n}\in \mathscr{P}(P)\mbox{ such that }
\mathfrak{m}\in[\mathfrak{p}],  \mathfrak{n}\in[\mathfrak{q}]\mbox{ and }\mathfrak{m}\leq_{\mathscr{P}} \mathfrak{n}$$$$\Longleftrightarrow \mbox{ there exists }\mathfrak{m} \in \mathscr{P}(P)\mbox{ such that }\mathfrak{m}\in[\mathfrak{p}]\mbox{ and }\mathfrak{m}\leq_{\mathscr{P}} \mathfrak{q}.$$
\item   For any $[\mathfrak{p}]\in \mathscr{C}(P)$ and $r,s\in P$, if $[r]\leq_{\approx_{P}}\dd([\mathfrak{p}])$, $[s]\leq_{\approx_{P}}\rr([\mathfrak{p}])$, then $$_{[r]}{\downharpoonleft}[\mathfrak{p}]=[_{r}{\downharpoonleft}\mathfrak{p}],\,\, [\mathfrak{p}]{\downharpoonright}_{s}=[\mathfrak{p}{\downharpoonright}_{s}].$$
\item For all $p,q\in P$,
$$p\leq_{\mathscr{P}} q  \Longleftrightarrow p\leq_P q \Longleftrightarrow  [p] \leq_{P_{\mathscr{C}(P)}}[q] \Longleftrightarrow [p]\leq_{\approx_P} [q].$$
\end{itemize}
To prove that $(\mathscr{C}(P), \circ, \dd, \rr, \leq_{\approx_P}, P_{(\mathscr{C}(P)}, \times, \star)$ is a projection ordered category, we need some calculations.
Let $\mathfrak{c}=[p_{1},p_{2},\ldots,p_{k}]\in \mathscr{C}(P)$. ({\em Notice that $q$ denotes $[q]$ and $p_i$ denotes $[p_i]$ in the sequel}).
Then $\dd(\mathfrak{c})=p_{1}$ and $\rr(\mathfrak{c})=p_{k}$.
By (\ref{v}) and Lemma \ref{o},
\begin{equation}\label{daxzo}
\nu_{\mathfrak{c}}: p_{1}{\downarrow}\rightarrow p_{k}{\downarrow},\,\ q\mapsto q\nu_{\mathfrak{c}}=\rr(_{q}{\downharpoonleft}\mathfrak{c})=q\theta_{p_{1}}\theta_{p_{2}}\ldots\theta_{p_{k}},
\end{equation}
$$
\Theta_{\mathfrak{c}}: P_{\mathscr{C}(P)}\rightarrow P_{\mathscr{C}(P)}, \, q\mapsto q\Theta_{\mathfrak{c}}=q\theta_{\dd(\mathfrak{c})}\nu_{\mathfrak{c}}=q\theta_{p_{1}} \theta_{p_{1}}\theta_{p_{2}}\ldots\theta_{p_{k}}=q \theta_{p_{1}}\theta_{p_{2}}\ldots\theta_{p_{k}}.
$$
This implies that
\begin{equation}\label{yida}
\Theta_{\mathfrak{c}}=\theta_{p_{1}}\theta_{p_{2}}\ldots\theta_{p_{k}}.
\end{equation}
Dually,
\begin{equation}\label{edsc}
\mu_{\mathfrak{c}}: p_{k}{\downarrow}\rightarrow p_{1}{\downarrow},\,\, q\mu_{\mathfrak{c}}=\dd( \mathfrak{c}{\downharpoonright}_{q})=q\delta_{p_{k}}\delta_{p_{k-1}}\ldots\delta_{p_{1}},
\end{equation}
\begin{equation}\label{mifg}
\Delta_{\mathfrak{c}}=\delta_{p_{k}}\delta_{p_{k-1}}\ldots\delta_{p_{1}}.
\end{equation}
\begin{lemma}\label{drfg}Let $(P, \times, \star)$ be a strong projection algebra. Then $$(\mathscr{C}(P), \circ, \dd, \rr, \leq_{\approx_P}, P_{\mathscr{C}(P)}, \times, \star)$$ is a projection ordered category.
\end{lemma}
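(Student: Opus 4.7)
The plan is to apply Lemma \ref{ii} and verify condition (G1b); the groundwork has essentially been laid in the bullet points preceding the statement. Specifically, by Lemmas \ref{ting1} and \ref{piabn}, $\mathscr{C}(P)=(\mathscr{P}(P)/\hspace{-1.5mm}\approx_P,\circ,\dd,\rr,\leq_{\approx_P})$ is an ordered category; its object set $P_{\mathscr{C}(P)}=\{[p]\mid p\in P\}$ is identified with $P$ via $[p]\leftrightarrow p$, and under this identification the induced operations $\times,\star$ on $P_{\mathscr{C}(P)}$ agree with those on the strong projection algebra $(P,\times,\star)$. The last bullet preceding the lemma states that the restriction of $\leq_{\approx_P}$ to $P_{\mathscr{C}(P)}$ coincides with $\leq_{P_{\mathscr{C}(P)}}$, so $\mathscr{C}(P)$ is already a weak projection ordered category in the sense of the definition preceding Theorem~\ref{dddnu}.

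To upgrade this to a projection ordered category I would verify (G1b) from Lemma \ref{ii}. Fix $\mathfrak{c}=[p_1,p_2,\ldots,p_k]\in \mathscr{C}(P)$ and $p\in P_{\mathscr{C}(P)}=P$. By the explicit formulas (\ref{yida}) and (\ref{mifg}),
\[
\Theta_{\mathfrak{c}}=\theta_{p_1}\theta_{p_2}\cdots\theta_{p_k},\qquad \Delta_{\mathfrak{c}}=\delta_{p_k}\delta_{p_{k-1}}\cdots\delta_{p_1},
\]
so the two identities required by (G1b) rewrite as
\[
\delta_{p\theta_{p_1}\cdots\theta_{p_k}}=\delta_{p_k}\cdots\delta_{p_1}\delta_p\theta_{p_1}\cdots\theta_{p_k},\qquad
\theta_{p\delta_{p_k}\cdots\delta_{p_1}}=\theta_{p_1}\cdots\theta_{p_k}\theta_p\delta_{p_k}\cdots\delta_{p_1},
\]
which are exactly the two identities furnished by Corollary \ref{diav} applied to the strong projection algebra $(P,\times,\star)$. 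Hence (G1b) holds, and by Lemma \ref{ii} so do (G1a), (G1c) and (G1d).

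No step poses a real obstacle: the hard combinatorial content (the identities of Corollary \ref{diav}, the fact that $\approx_P$ is an ordered congruence, and the identification of projections in $\mathscr{C}(P)$ with $P$) is already in place. The only point warranting a moment's care is that the formulas (\ref{yida}) and (\ref{mifg}) for $\Theta_{\mathfrak{c}}$ and $\Delta_{\mathfrak{c}}$ were extracted from the bullet description of $\mathscr{C}(P)$ via (\ref{daxzo}) and (\ref{edsc}); once these are in hand the verification of (G1b) reduces to a single invocation of Corollary \ref{diav}, completing the proof.
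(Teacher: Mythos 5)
Your proposal is correct and follows essentially the same route as the paper: the paper verifies (G1a) for $q\leq_{\approx_P}\dd(\mathfrak{c})$ via $\delta_{q\nu_{\mathfrak{c}}}=\delta_{q\theta_{p_1}\cdots\theta_{p_k}}=\Delta_{\mathfrak{c}}\delta_q\Theta_{\mathfrak{c}}$, which is the same single invocation of Corollary \ref{diav} that you use to verify the equivalent condition (G1b). The choice of (G1b) over (G1a) is immaterial since Lemma \ref{ii} makes them interchangeable and the underlying identity is identical.
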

\begin{proof}
Let $\mathfrak{c}=[p_{1},p_{2},\ldots,p_{k}]\in \mathscr{C}(P)$ ({\em Notice that $q$ denotes $[q]$ and $p_i$ denotes $[p_i]$ in the sequel}) and  $q\leq_{\approx_P} \dd(\mathfrak{c})=p_{1}$.
By (\ref{daxzo}), Corollary \ref{diav}, (\ref{mifg}) and (\ref{yida}),
$$\delta_{q\nu_{\mathfrak{c}}}=\delta_{q\theta_{p_{1}}\theta_{p_{2}}\ldots\theta_{p_{k}}}=\delta_{p_{k}}\delta_{p_{k-1}}\ldots\delta_{p_{1}}
\delta_{q}\theta_{p_{1}}\theta_{p_{2}}\ldots\theta_{p_{k}}=\Delta_{\mathfrak{c}}\delta_{q}\Theta_{\mathfrak{c}}.$$
Dually, we have $\theta_{q\mu_{\mathfrak{c}}}=\Theta_{\mathfrak{c}}\theta_{q}\Delta_{\mathfrak{c}}$ for all $q\in P$ with $q\leq_{\approx_P}  \rr(\mathfrak{c})=p_{k}$.  Thus (G1a) is true, and so the result follows.
\end{proof}
The following proposition is obvious.

\begin{prop}\label{ahldr}
Let $(P, \times, \star)$ be a strong projection algebra. Then the natural map $$\varepsilon_P=\approx^\natural_P: {\mathscr P}(P_{{\mathscr C}(P)})\rightarrow {\mathscr C}(P),\, ([p_1], [p_2], \ldots, [p_k])\mapsto [p_1, p_2, \ldots, p_k]$$ is an evaluation map on $(\mathscr{C}(P), \circ, \dd, \rr, \leq_{\approx_P}, P_{\mathscr{C}(P)}, \times, \star)$.
\end{prop}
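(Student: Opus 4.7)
The plan is to verify directly the four defining axioms (E1)--(E4) of an evaluation map from Definition \ref{ll}, using the identification of $P_{\mathscr{C}(P)}$ with $P$ established in the bullet points after Lemma \ref{drfg} (in particular, the fact that $[p]=[q]$ iff $p=q$ for $p,q\in P$, so the path category $\mathscr{P}(P_{\mathscr{C}(P)})$ is literally $\mathscr{P}(P)$).

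First I would handle (E1): for any $p\in P$, since $((p,p),p)\in\Omega_1(P)\subseteq\Omega(P)\subseteq\,\approx_P$, we have $[p,p]=[p]$, so $\varepsilon_P(p,p)=[p,p]=[p]=p=\varepsilon_P(p)$. For (E2), a path $\mathfrak{p}=(p_1,\ldots,p_k)\in\mathscr{P}(P_{\mathscr{C}(P)})$ satisfies $\dd(\mathfrak{p})=p_1$ and $\rr(\mathfrak{p})=p_k$, while in the quotient category $\mathscr{C}(P)$ the element $\varepsilon_P(\mathfrak{p})=[p_1,\ldots,p_k]$ has $\dd([\mathfrak{p}])=[\dd(\mathfrak{p})]=[p_1]=p_1$ and $\rr([\mathfrak{p}])=[p_k]=p_k$ by the construction of the quotient and the identification above, so the two match.

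Next I would verify (E3). Given $\mathfrak{p}=(p_1,\ldots,p_k)$ and $\mathfrak{q}=(q_1,\ldots,q_l)$ in $\mathscr{P}(P_{\mathscr{C}(P)})$ with $\rr(\mathfrak{p})=\dd(\mathfrak{q})$, i.e.\ $p_k=q_1$, both $\mathfrak{p}\circ\mathfrak{q}$ and $\varepsilon_P(\mathfrak{p})\circ\varepsilon_P(\mathfrak{q})$ are defined; by the definition of composition in $\mathscr{P}(P)$ and in $\mathscr{C}(P)$ (recorded in the bullet list following Lemma \ref{drfg}), both sides equal $[p_1,\ldots,p_{k-1},p_k=q_1,q_2,\ldots,q_l]$, so $\varepsilon_P(\mathfrak{p}\circ\mathfrak{q})=\varepsilon_P(\mathfrak{p})\circ\varepsilon_P(\mathfrak{q})$.

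Finally, (E4) is essentially immediate from how the order $\leq_{\approx_P}$ was constructed in Lemma \ref{ting1}: if $\mathfrak{p}\leq_{\mathscr{P}}\mathfrak{q}$ in $\mathscr{P}(P)$, then there exists $\mathfrak{m}=\mathfrak{p}\in[\mathfrak{p}]$ and $\mathfrak{n}=\mathfrak{q}\in[\mathfrak{q}]$ with $\mathfrak{m}\leq_{\mathscr{P}}\mathfrak{n}$, hence $\varepsilon_P(\mathfrak{p})=[\mathfrak{p}]\leq_{\approx_P}[\mathfrak{q}]=\varepsilon_P(\mathfrak{q})$. The entire proof is bookkeeping: the real work was done in setting up $\approx_P$ via $\Omega(P)$ and verifying, in Lemma \ref{piabn}, that it is an ordered congruence, so that $\mathscr{C}(P)$ inherits the projection ordered category structure in which these identities hold tautologically. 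There is no genuine obstacle; the only point requiring any care is consistency of the identification $P_{\mathscr{C}(P)}\cong P$, which has already been recorded as a consequence of Lemma \ref{ting2} applied to $\Omega(P)$.
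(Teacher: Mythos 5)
Your proposal is correct and is exactly the routine verification the paper has in mind: the paper offers no proof at all (it declares the proposition "obvious"), and your direct check of (E1)--(E4) -- using $\Omega_1(P)$ for (E1), the quotient-category structure for (E2)--(E3), and the definition of $\leq_{\approx_P}$ from Lemma \ref{ting1} for (E4) -- together with the identification $P_{\mathscr{C}(P)}\cong P$ is precisely how that "obvious" claim is discharged. The only stylistic difference is that you verify (E4) directly rather than going through (E5) and Proposition \ref{fder} as the paper does for $\varepsilon_S$ in Theorem \ref{fii}, which here is the cleaner route since $\leq_{\approx_P}$ is defined by exhibiting comparable representatives.
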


\begin{prop}\label{vnmu}Let $(P, \times, \star)$ be a strong projection algebra. Then $(\mathscr{C}(P), \circ, \dd, \rr, \leq_{\approx_{P}},  P_{\mathscr{C}(P)}, \times, \star, \varepsilon_P)$ is a chain projection ordered category.
\end{prop}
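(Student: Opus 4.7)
The plan is as follows. By Lemma~\ref{drfg} and Proposition~\ref{ahldr}, $(\mathscr{C}(P),\circ,\dd,\rr,\leq_{\approx_{P}},P_{\mathscr{C}(P)},\times,\star)$ is already a projection ordered category and $\varepsilon_P$ is an evaluation map, so the only remaining obligation is axiom (G2). Fix then $\mathfrak{c}=[p_1,p_2,\dots,p_k]\in\mathscr{C}(P)$ and a $\mathfrak{c}$-linked pair $(e,f)$. Using (\ref{yida})--(\ref{mifg}) and unwinding the definitions (\ref{nnn}) and (\ref{mmm}) of $\lambda$ and $\rho$ through the restriction formulas (\ref{v})--(\ref{win}) and the description of $\circ$ and of $\varepsilon_P$ on $\mathscr{C}(P)$, a direct calculation yields
\[
\lambda(e,\mathfrak{c},f)=[e,u_1,u_2,\dots,u_k,f],\qquad \rho(e,\mathfrak{c},f)=[e,s_1,s_2,\dots,s_k,f],
\]
where $u_i:=e\theta_{p_1}\theta_{p_2}\cdots\theta_{p_i}$ for $i=1,\dots,k$ and $s_j:=f\delta_{p_k}\delta_{p_{k-1}}\cdots\delta_{p_j}$ for $j=1,\dots,k$ (with $u_k=f_1$ and $s_1=e_2$). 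Hence (G2) reduces to verifying that these two explicit $P$-paths are $\approx_P$-equivalent.

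To interpolate between them I consider for $j=0,1,\dots,k$ the paths
\[
\pi_j:=(e,u_1,\dots,u_j,s_{j+1},\dots,s_k,f)
\]
(with the convention $s_{k+1}:=f$), so that $\pi_0$ and $\pi_k$ are precisely the representatives of $\rho$ and $\lambda$ above. The driving claim is that each triple $(u_{j-1},p_j,s_{j+1})$ is an admissible triple of $P$ in the sense of~(\ref{ccri}) for every $j=1,\dots,k$; equivalently,
\[
(A_j)\quad u_j\star s_{j+1}=s_{j+1},\qquad (B_j)\quad u_{j-1}\times s_j=u_{j-1}.
\]
Granting this, the only non-evident $\mathcal{F}_P$-link in $\pi_j$, namely $u_j\,\mathcal{F}_P\,s_{j+1}$, is exactly the combination of $(A_j)$ with $(B_{j+1})$ (the remaining consecutive pairs lie inside the $P$-paths ${_{e_1}}{\downharpoonleft}(p_1,\dots,p_k)$ or $(p_1,\dots,p_k){\downharpoonright}_{f_2}$, or come directly from Lemma~\ref{ccc}(2)); meanwhile the generator of $\Omega_2(P)$ attached to $(u_{j-1},p_j,s_{j+1})$ gives $(u_{j-1},u_j,s_{j+1})\approx_P(u_{j-1},s_j,s_{j+1})$, and inserting this between the common left and right segments via the congruence clauses (V2)--(V3) yields $\pi_{j-1}\approx_P\pi_j$. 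Concatenating these equivalences then produces $\rho\approx_P\lambda$.

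The identities $(A_j)$ and $(B_j)$ will be established by parallel inductions. For $(A_j)$ I run an upward induction in $j$: the first half of (P3) yields
\[
(u_j\star s_{j+1})\star s_{j+2}=\bigl(u_j\star(p_{j+1}\times s_{j+2})\bigr)\star s_{j+2}=(u_j\star p_{j+1})\star s_{j+2}=u_{j+1}\star s_{j+2},
\]
so once $u_j\star s_{j+1}=s_{j+1}$ is known, combining with $s_{j+1}\,\mathcal{F}_P\,s_{j+2}$ (from validity of the $s$-path, cf.\ Lemma~\ref{ccts}(2) and Lemma~\ref{ccc}(2)) forces $u_{j+1}\star s_{j+2}=s_{j+2}$; the base $(A_0)$, namely $e\star e_2=e_2$, is part of $e\,\mathcal{F}_P\,e_2$ supplied by Lemma~\ref{ccc}(2). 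Dually, $(B_j)$ is proved by a downward induction in $j$ started from $B_{k+1}:=u_k\times f=f_1\times f=f_1=u_k$ (again Lemma~\ref{ccc}(2)), using the second half of (P3) to derive
\[
u_{j-2}\times(u_{j-1}\times s_j)=u_{j-2}\times(p_{j-1}\times s_j)=u_{j-2}\times s_{j-1},
\]
and then the $\mathcal{F}_P$-relation $u_{j-2}\,\mathcal{F}_P\,u_{j-1}$ to collapse $u_{j-2}\times u_{j-1}=u_{j-2}$.

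The main obstacle I anticipate is the bookkeeping required to synchronise these two parallel inductions: at each step the chosen (P3) application must produce exactly the $\mathcal{F}_P$-relation that is available at that stage, and the endpoint identities forced by the $\mathfrak{c}$-linked hypothesis (through Lemma~\ref{ccc}(2)) must be recognised as the precise bases of both inductions. Once this skeleton is firmly in place the remainder of the argument is a purely mechanical verification.
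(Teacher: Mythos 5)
Your proposal is correct and follows essentially the same route as the paper's proof: both reduce (G2) to showing that the interpolating triples $(u_{j-1},p_j,s_{j+1})$ are admissible and then swap $s_j$ for $u_j$ one position at a time using the generators of $\Omega_2(P)$ together with (V2)--(V3). The only difference is in verifying admissibility: the paper obtains both identities in closed form from Corollary~\ref{diav} combined with the linked-pair identities $f=e\Theta_{[\mathfrak{p}]}\theta_f$ and $e=f\Delta_{[\mathfrak{p}]}\delta_e$, whereas you run two opposite-direction inductions on (P3) anchored at the endpoint relations of Lemma~\ref{ccc}(2) --- a legitimate, slightly more hands-on repackaging of the same computation.
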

\begin{proof}Let $[\mathfrak{p}]\in \mathscr{C}(P)$ and $[e], [f]\in P_{ \mathscr{C}(P)}$, where $\mathfrak{p}=(p_{1},p_{2},\ldots,p_{k})\in \mathscr{P}(P)$ and $e, f\in P$.  Assume that $([e], [f])$ is a $[\mathfrak{p}]$-linked pair. Then
\begin{equation}\label{ddewq}
[f]=[e]\Theta_{[\mathfrak{p}]}\theta_{[f]},\,[e]=[f]\Delta_{[\mathfrak{p}]}\delta_{[e]}.
\end{equation}
By the definition of $\varepsilon_P$ and (\ref{fff})--(\ref{mmm}),
\begin{equation}\label{qwgh}
\lambda([e], [\mathfrak{p}],[f])=[e,e_{1}]~\circ~_{[e_{1}]}{\downharpoonleft[\mathfrak{p}]}~\circ~[f_{1},f],\,\,\,\rho([e], [\mathfrak{p}],[f])=[e,e_{2}]~\circ~ [\mathfrak{p}]\downharpoonright_{[f_{2}]}~\circ~[f_{2},f],
\end{equation}
where $$[e_{1}]=[e]\theta_{[p_{1}]},\,[e_{2}]=[f]\Delta_{[\mathfrak{p}]},\,[f_{1}]=[e]\Theta_{[\mathfrak{p}]},\,[f_{2}]=[f]\delta_{[p_{k}]},\,\,\, e_1, e_2, f_1, f_2\in P.$$
Denote
$$_{[e_{1}]}{\downharpoonleft[\mathfrak{p}]}=[_{e_1}\downharpoonleft\mathfrak{p}]=[u_{1},\ldots u_{k}],\,\,\,\, {[\mathfrak{p}]\downharpoonright}_{[f_{2}]}=[{\mathfrak{p}\downharpoonright}_{f_{2}}]=[v_{1},\ldots v_{k}].$$
By (\ref{u}), (\ref{v}) and the facts  $e_{1}=e\theta_{p_{1}}$ and $f_{2}=f\delta_{p_{k}}$, for all $1\leq i\leq k$,  we have
$$u_{i}=e\theta_{p_{1}}\ldots\theta_{p_{i}},\,v_{i}=f\delta_{p_{k}}\ldots\delta_{p_{i}}.$$
This together with (\ref{qwgh}) gives that
$$\lambda([e], [\mathfrak{p}],[f])=(e,u_{1},\ldots,u_{k},f),\,\,\,\,\rho([e], [\mathfrak{p}],[f])=(e,v_{1},\ldots,v_{k},f).$$
Denote $u_{0}=e$ and $v_{k+1}=f$. We shall show that $(u_{i-1},p_{i},v_{i+1})$ is an admissible triple of $P$ for all $1\leq i\leq k$, that is,
$$v_{i+1}=u_{i-1}\theta_{p_{i}}\theta_{v_{i+1}},\,\,\,\,\,\,u_{i-1}=v_{i+1}\delta_{p_{i}}\delta_{u_{i-1}}.$$
In view of (\ref{yida}), (\ref{mifg}) and (\ref{ddewq}), we have
$$f=e\theta_{p_{1}}\cdots\theta_{p_{k}}\theta_{f},\,e=f\delta_{p_{k}}\cdots\delta_{p_{1}}\delta_{e}.$$
By Lemma \ref{diav},
$$u_{i-1}\theta_{p_{i}}\theta_{v_{i+1}}=e\theta_{p_{1}}\cdots\theta_{p_{i-1}}\theta_{p_{i}}\theta_{f\delta_{p_{k}}\cdots\delta_{p_{i+1}}}
$$$$=e\theta_{p_{1}}\cdots\theta_{p_{i-1}}\theta_{p_{i}}\theta_{p_{i+1}}\cdots\theta_{p_{k}}\theta_{f}\delta_{p_{k}}\cdots\delta_{p_{i+1}}
=f\delta_{p_{k}}\cdots\delta_{p_{i+1}}=v_{i+1}.$$
Dually,
$$v_{i+1}\delta_{p_{i}}\delta_{u_{i-1}}=f\delta_{p_{k}}\ldots\delta_{p_{i+1}}\delta_{p_{i}}\delta_{e\theta_{p_{1}}\cdots\theta_{p_{i-1}}}
$$$$=f\delta_{p_{k}}\cdots\delta_{p_{i+1}}\delta_{p_{i}}\delta_{p_{i-1}}\cdots\delta_{p_{i}}\delta_{e}\theta_{p_{1}}\cdots\theta_{p_{i-1}}
=e\theta_{p_{1}}\cdots\theta_{p_{i-1}}=u_{i-1}.$$
Thus $(u_{i-1},p_{i},v_{i+1})$ is an admissible triple for all $1\leq i\leq k$, and so
$(u_{i-1},u_{i-1}\theta_{p_{i}},\nu_{i+1})=(u_{i-1},u_{i},\nu_{i+1})$ is $\approx^\natural_P$-related to
$(u_{i-1},\nu_{i+1}\delta_{p_{i}},\nu_{i+1})=(u_{i-1},v_{i},\nu_{i+1})$.
Therefore
\begin{equation}\label{ffds}
[u_{i-1},u_{i},\nu_{i+1}]=[u_{i-1},v_{i},\nu_{i+1}] \mbox{ for all }1\leq i\leq k.
\end{equation}
This gives that
$$\rho([e], [\mathfrak{p}],[f])=[e,v_{1},\ldots,v_{k},f]=[u_{0},v_{1},v_{2},v_{3},v_{4},\ldots,v_{k},v_{k+1}]$$$$=
[u_{0},u_{1},v_{2},v_{3},v_{4},\ldots,v_{k},v_{k+1}]=[u_{0},u_{1},u_{2},v_{3},v_{4},\ldots,v_{k},v_{k+1}]$$$$=\cdots
[u_{0},u_{1},u_{2},u_{3},u_{4},\ldots,u_{k},v_{k+1}]=[e,u_{1},\ldots,u_{k},f]=\lambda([e], [\mathfrak{p}],[f])$$
Hence (G2) holds, and the desired result now follows.
\end{proof}

\begin{defn} Let $(P, \times \star)$ be a strong projection algebra. By Proposition \ref{vnmu} and Theorem \ref{xii}, we have the DRC-restriction semigroup $\textbf{S}(\mathscr{C}(P))$ and call it the chain semigroup determined by $(P, \times, \star)$.
\end{defn}
Let $$[\mathfrak{p}]=[p_{1},p_{2}, \ldots, p_{k}], \, [\mathfrak{q}]=[q_{1},q_{2},\ldots,q_{l}]\in \mathscr{C}(P).$$  Then $\rr([\mathfrak{p}])=[p_{k}], \dd([\mathfrak{q}])=[q_{1}]$.
Denote $[p']=[p_k\times q_1], [q']=[p_k\star q_1]$.
By the definition of $\varepsilon_P$, we have
$$[\mathfrak{p}]\bullet [\mathfrak{q}]=[\mathfrak{p}]{\downharpoonright}{_{[p']}} \circ~ [p', q']\circ~{ _{[q']}}{\downharpoonleft}[\mathfrak{q}].$$ By (\ref{v}) and (\ref{win}) we let $[\mathfrak{p}]{\downharpoonright}{_{[p']}}=[p_{1}{'}, \ldots, p_{k}{'}],\,\,\, {_{[q{'}]}}{\downharpoonleft} [\mathfrak{q}]=[q_{1}{'},\ldots, q_{l}{'}],$ where $p{'}=p_{k}{'},  q{'}=q_{1}{'}$ and  $$p_{i}{'}=p{'}\delta_{p_{k}}\cdots\delta_{p_{i}},\, q_{j}{'}=q{'}\theta_{q_{1}}\cdots\theta_{q_{j}}, i=1,2, \ldots, t, j=1,2, \ldots, l.$$
This implies that
$$[\mathfrak{p}]\bullet [\mathfrak{q}]=[p'_{1},\ldots,p'_{k}]\circ [p'_{k},q'_{1}]\circ [q_{1}{'},\ldots,q_{l}{'}]=[p_{1}{'},\ldots,p_{k}{'},q_{1}{'},\ldots,q_{l}{'}],$$
$$[\mathfrak{p}]^+=\dd([\mathfrak{p}])=[p_{1}],\,\, [\mathfrak{p}]^{\ast}=\rr([\mathfrak{p}])=[p_{k}].$$
Moreover, $P(\textbf{S}(\mathscr{C}(P)))=P_{\mathscr{C}(P)}=\{[p]\mid p\in P\}.$

\begin{prop}\label{lope}Let $(C,\circ,\dd,\rr,\leq, P_C, \times,\star,\varepsilon)$ be a chain projection ordered category. Then
$$\overline{\varepsilon}: \mathscr{C}(P_{C})\rightarrow C,\,\,  [\mathfrak{p}]\mapsto \varepsilon (\mathfrak{p})$$
is a chain projection ordered functor form $\mathscr{C}(P_{ C})$ to $\mathcal C$, and so is a (2,1,1)-homomorphism from to ${\bf S}(\mathscr{C}(P_{ C}))$ to ${\bf S}( C)$ such that $[p]\overline{\varepsilon}=p.$
\end{prop}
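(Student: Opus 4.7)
The plan is to verify, in order, that $\overline{\varepsilon}$ is well-defined, that it satisfies axioms (F1)--(F5), and then to invoke Lemma~\ref{wang} for the (2,1,1)-homomorphism conclusion. The main work lies in well-definedness, since everything else is essentially bookkeeping with (E1)--(E5).

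For well-definedness, I need to check that whenever $\mathfrak{p} \approx_{P_C} \mathfrak{q}$ in $\mathscr{P}(P_C)$, we have $\varepsilon(\mathfrak{p}) = \varepsilon(\mathfrak{q})$. Since $\approx_{P_C}$ is generated (as a congruence) by $\Omega(P_C) = \Omega_1(P_C) \cup \Omega_2(P_C)$, and by Lemma~\ref{ting2} any $\approx_{P_C}$-equivalence arises from a finite chain of local replacements of the form $b_i\circ u_i\circ c_i \leftrightarrow b_i\circ v_i\circ c_i$ with $(u_i,v_i)\in \Omega(P_C)\cup\Omega^{-1}(P_C)$, it suffices by (E3) to verify that $\varepsilon$ agrees on the generating pairs. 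For $\Omega_1(P_C)$: $\varepsilon(p,p) = p = \varepsilon(p)$ directly from (E1). For $\Omega_2(P_C)$: given an admissible triple $(e,p,f)$, I apply (G2) with $b = p \in P_C$. Since $\dd(p)=\rr(p)=p$ and $\Theta_p = \theta_p$, $\Delta_p = \delta_p$ by (\ref{ccwio}), the $p$-linked pair condition $f = e\Theta_p\theta_f$, $e = f\Delta_p\delta_e$ is exactly the admissibility of $(e,p,f)$. Computing (\ref{fff}) in this case gives $e_1 = f_1 = e\theta_p$ and $e_2 = f_2 = f\delta_p$, both lying below $p = \dd(p) = \rr(p)$, so by Lemma~\ref{b}(11) ${_{e_1}}{\downharpoonleft}p = e_1$ and ${_{e_2}}{\downharpoonleft}p = e_2$. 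Then by (C3) and (E3) the definitions (\ref{nnn}) and (\ref{mmm}) collapse to
\[
\lambda(e,p,f) = \varepsilon(e,e\theta_p,f),\qquad \rho(e,p,f) = \varepsilon(e,f\delta_p,f),
\]
and (G2) yields $\varepsilon(e,e\star p,f) = \varepsilon(e,p\times f,f)$, which is precisely the equality corresponding to $\Omega_2(P_C)$. The main obstacle is recognizing that the admissibility condition is exactly the $p$-linked pair condition when the middle element is a projection; once seen, (G2) finishes it.

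For the functor axioms, identify $P_{\mathscr{C}(P_C)} = P_C$ via $[p]\leftrightarrow p$ as noted after Lemma~\ref{drfg}. Then (F1) is immediate from (E2): $\dd([\mathfrak{p}]\overline{\varepsilon}) = \dd(\varepsilon(\mathfrak{p})) = \dd(\mathfrak{p}) = (\dd([\mathfrak{p}]))\overline{\varepsilon}$, and dually for $\rr$. Axiom (F2) follows directly from (E3) since composition in $\mathscr{C}(P_C)$ is defined by concatenation of representatives. For (F3), given $[\mathfrak{p}] \leq_{\approx_{P_C}} [\mathfrak{q}]$ there is $\mathfrak{m}\in[\mathfrak{p}]$ with $\mathfrak{m}\leq_{\mathscr{P}}\mathfrak{q}$; by well-definedness $\varepsilon(\mathfrak{p})=\varepsilon(\mathfrak{m})$, and by (E4) $\varepsilon(\mathfrak{m})\leq\varepsilon(\mathfrak{q})$. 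Axiom (F4) reduces, under the identification, to $\varepsilon(e\times f) = e\times f = \varepsilon(e)\times\varepsilon(f)$, immediate from (E1). Finally (F5) is a direct computation: the evaluation map $\varepsilon_{P_C}$ of Proposition~\ref{ahldr} sends $([p_1],\ldots,[p_k])$ to $[p_1,\ldots,p_k]$, whose image under $\overline{\varepsilon}$ is $\varepsilon(p_1,\ldots,p_k)$, matching $\varepsilon(p_1\overline{\varepsilon},\ldots,p_k\overline{\varepsilon})$.

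Having shown $\overline{\varepsilon}$ is a chain projection ordered functor, Lemma~\ref{wang} (the functor $\mathbf{S}$ part of the isomorphism theorem) guarantees that $\mathbf{S}\overline{\varepsilon}$ is a $(2,1,1)$-homomorphism $\mathbf{S}(\mathscr{C}(P_C)) \to \mathbf{S}(C)$ with the same underlying map. The equality $[p]\overline{\varepsilon} = \varepsilon(p) = p$ for $p\in P_C$ is immediate from (E1), completing the proof.
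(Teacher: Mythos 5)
Your proposal is correct and follows essentially the same route as the paper's proof: reduce well-definedness via Lemma~\ref{ting2} to the generating pairs, observe that an admissible triple $(e,p,f)$ is exactly a $p$-linked pair because $\Theta_p=\theta_p$ and $\Delta_p=\delta_p$, collapse $\lambda(e,p,f)$ and $\rho(e,p,f)$ to $\varepsilon(e,e\theta_p,f)$ and $\varepsilon(e,f\delta_p,f)$ using Lemma~\ref{b}(11) and (C3), and then check (F1)--(F5) routinely before invoking Lemma~\ref{wang}. No gaps.
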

\begin{proof}
We first show that $\overline{\varepsilon}$ is well-defined. Let $\mathfrak{p},\mathfrak{q}\in \mathscr{P}(P_{C})$ and $\mathfrak{p}=\mathfrak{b}\circ \mathfrak{s} \circ \mathfrak{c},\mathfrak{q}=\mathfrak{b}\circ \mathfrak{t} \circ \mathfrak{c}$, where $\mathfrak{b},\mathfrak{c}\in \mathscr{P}(P_{ C}),(\mathfrak{s},\mathfrak{t})\in \Omega(P_{ C})\cup  \Omega^{-1}(P_{ C})$. If $\mathfrak{s}=(p,p),\mathfrak{t}=p$ or $\mathfrak{s}=p,\mathfrak{t}=(p,p)$, where $p\in P_{ C} $, then by (E1) we have $\varepsilon(p,p)=p=\varepsilon(p)$, and so $\varepsilon(\mathfrak{s})=\varepsilon(\mathfrak{t})$. Assume that $(e,p,f)$ is an admissible triple in $P_{ C}$. Then
\begin{equation}\label{ccsgh}
f=e\theta_{p}\theta_{f},e=f\delta_{p}\delta_{e}
\end{equation}
Let $\mathfrak{s}=(e,e\theta_{p},f),\mathfrak{t}=(e,f\delta_{p},f)$ or $\mathfrak{s}=(e,f\delta_{p},f),\mathfrak{t}=(e,e\theta_{p},f)$. We assume the former case holds without loss of generality. By (\ref{pp}),
\begin{equation}\label{xbuy}
\varepsilon(\mathfrak{s})=\varepsilon(e,e\theta_{p})\circ \varepsilon(e\theta_{p},f),\varepsilon(\mathfrak{t})=\varepsilon(e,f\delta_{p})\circ \varepsilon(f\delta_{p},f).
\end{equation}
By using the fact that $p\in P_{ C}$ and (\ref{ccwio}), we have $\Theta_{p}=\theta_{p},\Delta_{p}=\delta_{p},$ and hence $f=e\Theta_{p}\theta_{f}$ and $e=f\Delta_{p}\delta_{e}$ by (\ref{ccsgh}). This implies that $(e,f)$ is (in the sense of Definition \ref{lihfg}) $p$-linked pair. By (G2), (\ref{nnn}), (\ref{mmm}), (\ref{fff}) and the fact that $\dd(p)=p$, we have $$\varepsilon(e,e_{1})\circ{_{e_{1}}}{\downharpoonleft}p \circ \varepsilon(f_{1},f)=\varepsilon(e,e_{2})\circ{_{e_{2}}}{\downharpoonleft}p \circ \varepsilon(f_{2},f),$$
where $$e_{1}=e\theta_{\dd(p)}=e\theta_{p},e_{2}=f\Delta_{p}=f\delta_{p},f_{1}=e\Theta_{p}=e\theta_{p},f_{2}=f\delta_{\rr(p)}=
f\delta_{p}.$$ This gives that
\begin{equation}\label{erfui}
\varepsilon(e,e\theta_{p})\circ{_{e\theta_{p}}}{\downharpoonleft}p \circ \varepsilon(e\theta_{p},f)=\varepsilon(e,f\delta_{p})\circ{_{f\delta_{p}}}{\downharpoonleft}p \circ \varepsilon(f\delta_{p},f).
\end{equation}
Using the fact that $e\theta_{p}\leq p,f\delta_{p}\leq p$ and Lemma \ref{b} (11), we get ${_{e\theta_{p}}}{\downharpoonleft}p=e\theta_{p}$ and ${_{f\delta_{p}}}{\downharpoonleft}p=f\delta_{p}.$ Observe that $\rr(\varepsilon(e,e\theta_{p}))=e\theta_{p}$ and $\rr(\varepsilon(e,f\delta_{p}))=f\delta_{p}$ by Lemma \ref{oo}. This together with  (C3) provides
\begin{equation}\label{fhkwe}
\varepsilon(e,e\theta_{p})\circ{_{e\theta_{p}}}{\downharpoonleft}p=\varepsilon(e,e\theta_{p}),\,\,
\varepsilon(e,f\delta_{p})\circ{_{f\delta_{p}}}{\downharpoonleft}p=\varepsilon(e,f\delta_{p}).
\end{equation}
By (\ref{xbuy}), (\ref{erfui}) and (\ref{fhkwe}), we have $\varepsilon(\mathfrak{s})=\varepsilon(\mathfrak{t}).$ The above discussion shows that  $\varepsilon(\mathfrak{s})=\varepsilon(\mathfrak{t})$ in either case. Thus by (E3), $$\varepsilon(\mathfrak{p})=\varepsilon(\mathfrak{b}\circ \mathfrak{s} \circ \mathfrak{c})=\varepsilon(\mathfrak{b})\circ \varepsilon(\mathfrak{s}) \circ \varepsilon(\mathfrak{c})=\varepsilon(\mathfrak{b})\circ \varepsilon(\mathfrak{t}) \circ \varepsilon(\mathfrak{c})=\varepsilon(\mathfrak{b}\circ \mathfrak{t} \circ \mathfrak{c})=\varepsilon(\mathfrak{q}).$$
Let $\mathfrak{p},\mathfrak{q}\in \mathscr{P}(P_{ C})$ and $[\mathfrak{p}]=[\mathfrak{q}]$. Then $\mathfrak{p}\approx_{P_{ C}}\mathfrak{q}.$ By Lemma \ref{ting2} and the above statements, we have $\varepsilon(\mathfrak{p})=\varepsilon(\mathfrak{q})$. This shows that $\overline{\varepsilon}$ is well-defined.

In the sequel, we show that $\overline{\varepsilon}$ is a chain projection ordered functor. Let $$[\mathfrak{p}],[\mathfrak{q}]\in \mathscr{C}(P_{ C}),\, e,f\in P_{C},\,\mathfrak{p}=(p_{1},p_{2},\ldots,p_{k}),\mathfrak{q}=(q_{1},q_{2},\ldots,q_{k})\in \mathscr{P}(P_{ C}).$$

(1) Firstly, $$\overline{\varepsilon}(\dd([\mathfrak{p}]))=\overline{\varepsilon}([\dd(\mathfrak{p})])=\overline{\varepsilon}([p_{1}]
)=\varepsilon(p_{1})=p_{1}=\dd(\mathfrak{p})=\dd(\varepsilon(\mathfrak{p}))=\dd(\overline{\varepsilon}([\mathfrak{p}])).$$
Dually, $\overline{\varepsilon}(\rr([\mathfrak{p}]))=p_{k}=\rr(\overline{\varepsilon}([\mathfrak{p}]))$.

(2) If $\rr([\mathfrak{p}])=\dd([\mathfrak{q}])$, then $p_{k}=q_{1}$ and $$\overline{\varepsilon}([\mathfrak{p}]\circ[\mathfrak{q}])=\overline{\varepsilon}([\mathfrak{p}\circ \mathfrak{q}])=\varepsilon(\mathfrak{p} \circ \mathfrak{q})=\varepsilon(\mathfrak{p})\circ \varepsilon(\mathfrak{q})=\overline{\varepsilon}([\mathfrak{p} ])\circ \overline{\varepsilon}([\mathfrak{q} ]).$$

(3) Let $[\mathfrak{p}]\leq_{\approx_{P}} [\mathfrak{q}]$. Then there exist $\mathfrak{m},\mathfrak{n}\in \mathscr{P}(P_{ C})$ such that $[\mathfrak{p}]=[\mathfrak{m}],[\mathfrak{q}]=[\mathfrak{n}]$ and $\mathfrak{m}\leq_{\mathscr{P}} \mathfrak{n}$ in $\mathscr{P}(P_{\mathcal C})$. Since $\varepsilon$ is order-preserving, we have $$\overline{\varepsilon}([\mathfrak{p} ])=\overline{\varepsilon}([\mathfrak{m} ])=\varepsilon(\mathfrak{m})\leq \varepsilon(\mathfrak{n})=\overline{\varepsilon}([\mathfrak{n} ])=\overline{\varepsilon}([\mathfrak{q} ]).$$

(4) Let $[p],[q]\in P_{\mathscr{C}(P_{ C})}$, where $p,q\in P_{C}.$ Then $$\overline{\varepsilon}([p]\times [q])=\overline{\varepsilon}([p\times q])=\varepsilon(p\times q)=\varepsilon(p)\times \varepsilon(q)=\overline{\varepsilon}([p])\times \overline{\varepsilon}([q]).$$
    Dually, $\overline{\varepsilon}([p]\star [q])=\overline{\varepsilon}([p])\star \overline{\varepsilon}([q]).$

(5) Let $([p_{1}],[p_{2}],\ldots,[p_{k}])\in \mathscr{P}(P_{\mathscr{C}(P_{ C})})$. Then $$(\varepsilon_{P_{ C}}(([p_{1}],[p_{2}],\ldots,[p_{k}])))\overline{\varepsilon}=([p_{1},p_{2},\ldots,p_{k}])\overline{\varepsilon}$$$$=\varepsilon((p_{1},p_{2},\ldots,p_{k}))
=\varepsilon(\varepsilon(p_{1}),\varepsilon(p_{2}),\ldots,\varepsilon(p_{k}))=\varepsilon(([p_{1}]\overline{\varepsilon},[p_{2}]\overline{\varepsilon},\ldots,[p_{k}]\overline{\varepsilon})).$$
By Lemma \ref{wang}, $\overline{\varepsilon}$ is a (2,1,1)-homomorphism from ${\bf S}(\mathscr{C}(P))$ to ${\bf S}( C)$. Moreover, since $\varepsilon$ is an evaluation map, it follows that $([p])\overline{\varepsilon}=\varepsilon(p)=p$ for all $p\in P_{C}$ by the definition of $\overline{\varepsilon}$.
\end{proof}

\begin{theorem}\label{llpg} If $(P,\times,\star)$ be a strong projection algebra, then ${\bf S}(\mathscr{C}(P))$ is a projection-generated DRC-restriction semigroup with projection algebra $P({\bf S}(\mathscr{C}(P)))=\{[p]\mid p\in P\}$.   Conversely, if $(S,\cdot,^+,^\ast)$ is a projection-generated DRC-restriction semigroup whose projection algebra is $(P,\times,\star)$, then there exists a surjective (2,1,1)-homomorphism $\phi$ from  ${\bf S}(\mathscr{C}(P))$ onto $S$ such that $[p]\phi=p$ for all $p\in P$.
\end{theorem}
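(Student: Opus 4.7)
The plan is to assemble the forward direction directly from the chain of constructions already established and to derive the converse as an application of the universal property hinted at in Proposition~\ref{lope}.

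For the forward direction, Proposition~\ref{vnmu} provides that $(\mathscr{C}(P),\circ,\dd,\rr,\leq_{\approx_P},P_{\mathscr{C}(P)},\times,\star,\varepsilon_P)$ is a chain projection ordered category, so Theorem~\ref{xii} immediately yields that $\mathbf{S}(\mathscr{C}(P))$ is a DRC-restriction semigroup. The description of the projections is read off from (\ref{yd}), which gives $P(\mathbf{S}(\mathscr{C}(P)))=P_{\mathscr{C}(P)}=\{[p]\mid p\in P\}$ together with the identification $[p]\times[q]=[p\times q]$ and $[p]\star[q]=[p\star q]$ already noted after Lemma~\ref{piabn}. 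To deduce projection-generation I will invoke Proposition~\ref{fgpu}: it reduces the task to checking that the evaluation map $\varepsilon_P\colon \mathscr{P}(P_{\mathscr{C}(P)})\to \mathscr{C}(P)$ is surjective. But by the very definition of $\mathscr{C}(P)=\mathscr{P}(P)/{\approx_P}$, every element is of the form $[p_1,p_2,\ldots,p_k]$, and $\varepsilon_P([p_1],[p_2],\ldots,[p_k])=[p_1,p_2,\ldots,p_k]$; surjectivity is immediate.

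For the converse, start from a projection-generated DRC-restriction semigroup $(S,\cdot,{}^+,{}^\ast)$ with projection algebra $(P,\times,\star)$. By Theorem~\ref{fii}, $\mathbf{C}(S)=(S,\circ_S,\dd_S,\rr_S,\leq_S,P_S,\times_S,\star_S,\varepsilon_S)$ is a chain projection ordered category and $P_S=P$, so Proposition~\ref{lope} applies and yields a chain projection ordered functor
\[
\overline{\varepsilon_S}\colon \mathscr{C}(P)\to \mathbf{C}(S),\qquad [\mathfrak{p}]\mapsto \varepsilon_S(\mathfrak{p}),
\]
with $[p]\overline{\varepsilon_S}=p$ for every $p\in P$. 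Applying the functor $\mathbf{S}$ and using Lemma~\ref{wang}, I obtain a $(2,1,1)$-homomorphism $\phi=\mathbf{S}\overline{\varepsilon_S}\colon \mathbf{S}(\mathscr{C}(P))\to \mathbf{S}(\mathbf{C}(S))$, which by Lemma~\ref{zhang3} is a $(2,1,1)$-homomorphism $\phi\colon \mathbf{S}(\mathscr{C}(P))\to S$ satisfying $[p]\phi=p$ for all $p\in P$.

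It remains to verify surjectivity of $\phi$. Since $S$ is projection-generated, $\mathbf{E}(S)=S$, and Proposition~\ref{fgpu} applied to the chain projection ordered category $\mathbf{C}(S)$ tells us that every $x\in S$ equals $\varepsilon_S(\mathfrak{p})$ for some $\mathfrak{p}\in\mathscr{P}(P)$; thus $x=[\mathfrak{p}]\phi\in \mathrm{im}\,\phi$.

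The main obstacle I anticipate is not in any single step but in bookkeeping: one must make sure that the identifications $P_{{\bf C}(S)}=P$ and $P_{\mathscr{C}(P)}=\{[p]\mid p\in P\}$ are applied consistently so that the hypothesis of Proposition~\ref{lope} is fully satisfied and so that, after applying $\mathbf{S}$, the resulting map lands on $S$ itself rather than on a superficially different isomorphic copy; Lemma~\ref{zhang3} is precisely the identification needed to clear this up.
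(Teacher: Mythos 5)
Your proposal is correct and follows essentially the same route as the paper: the forward direction combines Proposition~\ref{vnmu}, Theorem~\ref{xii} and Proposition~\ref{fgpu} with the evident surjectivity of $\varepsilon_P$, and the converse applies Proposition~\ref{lope} together with the identification $\mathbf{S}(\mathbf{C}(S))=S$ and deduces surjectivity of $\overline{\varepsilon_S}$ from that of $\varepsilon_S$ via Proposition~\ref{fgpu}. The only cosmetic difference is that you cite Lemmas~\ref{wang} and~\ref{zhang3} explicitly where the paper routes the same facts through Theorem~\ref{hgdc}.
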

\begin{proof}
Since $\varepsilon_{P}$ (see Proposition \ref{ahldr}) is an evaluation map on $\mathscr{C}(P)$ and is also surjective, by  Proposition \ref{vnmu}, Theorem \ref{xii} and  Proposition \ref{fgpu},  ${\bf S}(\mathscr{C}(P))$ is a projection-generated DRC-restriction semigroup  with projection algebra $P({\bf S}(\mathscr{C}(P)))=P_{ \mathscr{C}(P)}=\{[p]\mid p\in P\}$.

Conversely, by Proposition \ref{lope} and Theorem \ref{hgdc}, $\overline{\varepsilon_{S}}$ is a (2,1,1)-homomorphism from ${\bf S}(\mathscr{C}(P_{{\bf C}(S)})={\bf S}(\mathscr{C}(P))$ to ${\bf S}({\bf C}(S))=S$ satisfying $[p]\overline{\varepsilon_{S}}=p$ for all $p\in P$. Since $S$ is projection-generated, it follows that $\varepsilon_{S}$ is surjective by Proposition \ref{fgpu}, and hence $\overline{\varepsilon_{S}}$ is also surjective.
\end{proof}

\begin{theorem}\label{cc}
Let $\psi$ be a homomorphism from the strong projection algebra  $(P,\times,\star)$ to the strong projection algebra $(P',\times,\star)$. Then
$$\Phi:\mathscr{C}(P)\rightarrow \mathscr{C}(P^{'}),\,\,\,[p_{1},p_{2},\ldots,p_{k}]\rightarrow [p_{1}\psi,p_{2}\psi,\ldots,p_{k}\psi]$$ is a chain projection ordered functor, and so is a (2,1,1)-homomorphism from ${\rm \bf{S}}(\mathscr{C}(P))$ to ${\rm \bf{S}}(\mathscr{C}(P'))$.
\end{theorem}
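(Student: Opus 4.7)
The strategy is to first produce a functor $\psi_{\ast}\colon \mathscr{P}(P)\to\mathscr{P}(P')$ on path categories, then show it descends to the quotients $\mathscr{C}(P)\to\mathscr{C}(P')$, and finally verify the five axioms (F1)--(F5). I would begin by defining
$$
\psi_{\ast}\colon \mathscr{P}(P)\to\mathscr{P}(P'),\quad (p_{1},p_{2},\ldots,p_{k})\mapsto (p_{1}\psi,p_{2}\psi,\ldots,p_{k}\psi).
$$
This is well defined on $P$-paths because Lemma \ref{pin}(2) gives $p_{i}\,\mathcal{F}_{P}\,p_{i+1}\Longrightarrow p_{i}\psi\,\mathcal{F}_{P'}\,p_{i+1}\psi$. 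Clearly $\psi_{\ast}$ preserves $\dd$, $\rr$, and the concatenation $\circ$.

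The next step, which I regard as the main content of the proof, is to show that $\psi_{\ast}$ carries the generating set $\Omega(P)$ into $\Omega(P')$, so that the composition $\mathscr{P}(P)\xrightarrow{\psi_{\ast}}\mathscr{P}(P')\twoheadrightarrow\mathscr{C}(P')$ factors through $\approx_{P}$ and produces the asserted map $\Phi$. Pairs in $\Omega_{1}(P)$ clearly map into $\Omega_{1}(P')$. For $\Omega_{2}(P)$, let $(e,p,f)$ be an admissible triple, so $f=(e\star p)\star f$ and $e=e\times(p\times f)$. Applying $\psi$ and using that $\psi$ preserves $\star$ and $\times$, we obtain $f\psi=(e\psi\star p\psi)\star f\psi$ and $e\psi=e\psi\times(p\psi\times f\psi)$, so $(e\psi,p\psi,f\psi)$ is an admissible triple in $P'$; and then
$$
\psi_{\ast}(e,e\star p,f)=(e\psi,e\psi\star p\psi,f\psi),\quad \psi_{\ast}(e,p\times f,f)=(e\psi,p\psi\times f\psi,f\psi),
$$
a pair in $\Omega_{2}(P')$. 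By Lemma \ref{ting2}, $\psi_{\ast}(\approx_{P})\subseteq\,\approx_{P'}$, and hence $\Phi\colon [\mathfrak{p}]\mapsto[\psi_{\ast}(\mathfrak{p})]$ is a well defined functor on the quotients.

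It remains to verify (F1)--(F5). Axioms (F1) and (F2) are immediate from the corresponding properties of $\psi_{\ast}$ and the recipe $\dd[\mathfrak{p}]=[\dd(\mathfrak{p})]$, $\rr[\mathfrak{p}]=[\rr(\mathfrak{p})]$ in the chain category. For (F4) we use that $P_{\mathscr{C}(P)}$ is identified with $P$ via $p\leftrightarrow[p]$, so $(e\times f)\Phi=[e\psi\times f\psi]=[e\psi]\times[f\psi]=(e\Phi)\times(f\Phi)$, and analogously for $\star$. For (F5), given $([p_{1}],\ldots,[p_{k}])\in\mathscr{P}(P_{\mathscr{C}(P)})$,
$$
(\varepsilon_{P}([p_{1}],\ldots,[p_{k}]))\Phi=[p_{1},\ldots,p_{k}]\Phi=[p_{1}\psi,\ldots,p_{k}\psi]=\varepsilon_{P'}([p_{1}\psi],\ldots,[p_{k}\psi]),
$$
which is exactly the required identity. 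Axiom (F3) (order preservation) is the last point to check: if $[\mathfrak{p}]\leq_{\approx_{P}}[\mathfrak{q}]$ then there is $r\leq_{P}\dd(\mathfrak{q})$ with $[\mathfrak{p}]=[{_{r}\downharpoonleft}\mathfrak{q}]$. Writing $\mathfrak{q}=(q_{1},\ldots,q_{l})$ and using formula \eqref{v}, one has ${_{r}\downharpoonleft}\mathfrak{q}=(r,r\theta_{q_{2}},\ldots,r\theta_{q_{2}}\cdots\theta_{q_{l}})$; applying $\psi_{\ast}$ term by term and using that $\psi$ intertwines $\theta$ (because it preserves $\star$) yields exactly ${_{r\psi}\downharpoonleft}\psi_{\ast}(\mathfrak{q})$, while Lemma \ref{pin}(1) gives $r\psi\leq_{P'}\dd(\psi_{\ast}(\mathfrak{q}))$, so $\Phi[\mathfrak{p}]\leq_{\approx_{P'}}\Phi[\mathfrak{q}]$. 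This completes the verification that $\Phi$ is a chain projection ordered functor, and the final assertion that $\Phi$ induces a $(2,1,1)$-homomorphism ${\bf S}(\mathscr{C}(P))\to{\bf S}(\mathscr{C}(P'))$ is then Lemma \ref{wang} (or Lemma \ref{zhang4} composed with Theorem \ref{hgdc}).
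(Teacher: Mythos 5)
Your proposal is correct and follows essentially the same route as the paper's own proof: define the induced map $\xi=\psi_{\ast}$ on path categories, check that it sends $\Omega(P)$ into $\Omega(P')$ (via preservation of admissible triples) so that Lemma \ref{ting2} yields well-definedness on the quotient, verify (F1)--(F5) with the same computations (including the $\theta$-intertwining argument for (F3)), and invoke Lemma \ref{wang} for the final assertion. The only cosmetic difference is that in (F3) you restrict along $\mathfrak{q}$ itself rather than along a representative $\mathfrak{n}\in[\mathfrak{q}]$, which is justified by the second clause of Lemma \ref{ting1}.
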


\begin{proof}
Let $$\xi:\mathscr{P}(P)\rightarrow \mathscr{P}(P'),\,\,\,(p_{1},p_{2},\ldots,p_{k})\mapsto (p_{1}\psi,p_{2}\psi,\ldots,p_{k}\psi).$$ Then $\xi$ is well-defined by
Lemma \ref{pin}. It is easy to prove that
\begin{equation}\label{xfgji}
(\dd(\mathfrak{p}))\xi=\dd(\mathfrak{p}\xi),(\rr(\mathfrak{p}))\xi=\rr(\mathfrak{p}\xi),(\mathfrak{p}\circ \mathfrak{q})\xi=(\mathfrak{p}\xi)\circ(\mathfrak{q}\xi)
\end{equation}
for all $\mathfrak{p},\mathfrak{q}\in \mathscr{P}(P)$. Let $(\mathfrak{s}, \mathfrak{t})\in \Omega(P)\cup  \Omega^{-1}(P)$. If $\mathfrak{s}=(p,p), \mathfrak{t}=p, p\in P,$ then $\mathfrak{s}\xi=(p\psi,p\psi)$ and $\mathfrak{t}\xi=p\psi$, and so $(\mathfrak{s}\xi,\mathfrak{t}\xi),(\mathfrak{t}\xi,\mathfrak{s}\xi)\in \Omega(P')\cup  \Omega^{-1}(P')$. Now let $(e,p,f)$ be an admissible triple of $P$ and $\mathfrak{s}=(e,e\star p,f),\mathfrak{t}=(e,p\times f,f)$. Then $(e\star p)\star f=f$ and $e=e\times(p\times f)$. Since $\psi$ is a homomorphism, it follows that $(e\psi,p\psi,f\psi)$ is an admissible triple of $P'$ and $$\mathfrak{s}\xi=(e\psi,(e\psi)\star (p\psi),f\psi),\,\,\,\mathfrak{t}\xi=(e\psi,(p\psi)\times (e\psi),f\psi).$$ This implies that $(\mathfrak{s}\xi,\mathfrak{t}\xi),(\mathfrak{t}\xi,\mathfrak{s}\xi)\in \Omega(P')\cup  \Omega^{-1}(P')$.

Let $\mathfrak{p},\mathfrak{q}\in \mathscr{P}(P)$ and $\mathfrak{p}=\mathfrak{b}\circ \mathfrak{s} \circ \mathfrak{c},\mathfrak{q}=\mathfrak{b}\circ \mathfrak{t} \circ \mathfrak{c}$, where $\mathfrak{b},\mathfrak{c}\in \mathscr{P}(P),(\mathfrak{s},\mathfrak{t})\in \Omega(P)\cup  \Omega^{-1}(P)$. Then by the above discussions, $(\mathfrak{s}\xi,\mathfrak{t}\xi)\in \Omega(P')\cup  \Omega^{-1}(P')$. By (\ref{xfgji}), $$\mathfrak{p}\xi=(\mathfrak{b}\circ \mathfrak{s} \circ \mathfrak{c})\xi=(\mathfrak{b}\xi)\circ (\mathfrak{s}\xi) \circ (\mathfrak{c}\xi),\mathfrak{q}\xi=(\mathfrak{b}\circ \mathfrak{t} \circ \mathfrak{c})\xi=(\mathfrak{b}\xi)\circ (\mathfrak{t}\xi) \circ (\mathfrak{c}\xi).$$ This implies that $\mathfrak{p}\xi\approx_{P'}\mathfrak{q}\xi$, i.e. $[\mathfrak{p}\xi]=[\mathfrak{q}\xi].$

Let $\mathfrak{p},\mathfrak{q}\in \mathscr{P}(P)$ and $[\mathfrak{p}]=[\mathfrak{q}]$. Then $\mathfrak{p}\approx_{P} \mathfrak{q}$. By Lemma \ref{ting2} and the statements in previous paragraph, we have $[\mathfrak{p}]\Phi=[\mathfrak{p}\xi]=[\mathfrak{q}\xi]=[\mathfrak{q}]\Phi$. Thus $\Phi$ is well-defined.

Let $\mathfrak{p}, \mathfrak{q}\in \mathscr{P}(P)$ and $\mathfrak{p}=(p_{1},p_{2},\ldots,p_{k}),\mathfrak{q}=(q_{1},q_{2},\ldots,q_{l})$. Then $$(\dd([\mathfrak{p}]))\Phi=[p_{1}]\Phi=[p_{1}\psi]=\dd([\mathfrak{p}]\Phi).$$ Dually, $(\rr([\mathfrak{p}]))\Phi=\rr([\mathfrak{p}]\Phi)$. If $\rr([\mathfrak{p}])=\dd([\mathfrak{p}])$, i.e. $p_{k}=q_{1}$, by (\ref{xfgji}) we have $$([\mathfrak{p}]\circ[\mathfrak{q}])\Phi=[\mathfrak{p} \circ \mathfrak{q}]\Phi=[(\mathfrak{p} \circ \mathfrak{q})\xi]=[(\mathfrak{p}\xi)\circ(\mathfrak{q}\xi)]=[\mathfrak{p}\xi] \circ [q\xi]=([\mathfrak{p}]\Phi)\circ([\mathfrak{q}]\Phi).$$
This shows that (F1) and (F2) hold.

Assume that $[\mathfrak{p}]\leq_{\approx_P} [\mathfrak{q}]$. Then there exist $\mathfrak{m}\in [\mathfrak{p}]$ and $\mathfrak{n}\in [\mathfrak{q}]$ such that $\mathfrak{m}\leq_{\mathscr{P}} \mathfrak{n}$, and so there exists $r\in P$ such that $r\leq_{P} \dd(\mathfrak{n})$ and $\mathfrak{m}={_r}{\downharpoonleft}\mathfrak{n}$. Denote $\mathfrak{m}=(m_{1},m_{2},\ldots,m_{u})$ and $\mathfrak{n}=(n_{1},n_{2},\ldots,n_{v})$. Then by (\ref{v}), $$\mathfrak{m}=(r,r\theta_{n_{2}},\ldots,r\theta_{n_{2}}\ldots\theta_{n_{v}}),\,\, \mathfrak{n}\xi=(n_{1}\psi,n_{2}\psi,\ldots,n_{v}\psi).$$
Since $\psi$ is a homomorphism between projection algebras, we have $r\psi\leq_{P'}(\dd(\mathfrak{n}))\psi=n_{1}\psi=\dd(\mathfrak{n}\xi)$ and $$\mathfrak{m}\xi=(r\psi,(r\theta_{n_{2}})\psi,\ldots,(r\theta_{n_{2}}\theta_{n_{3}}\ldots\theta_{n_{v}})\psi) =(r\psi,r\psi\theta_{n_{2}\psi},\ldots,r\psi\theta_{n_{2}\psi}\ldots\theta_{n_{v}\psi})
$$$$={_{r\psi}}{\downharpoonleft}(n_{1}\psi,n_{1}\psi,\ldots,n_{v}\psi) ={_{r\psi}}{\downharpoonleft}\mathfrak{n}\xi.$$
This shows that $\mathfrak{m}\xi\leq_{\mathscr{P}} \mathfrak{n}\xi,$  and so $[\mathfrak{m}]\Phi=[\mathfrak{m}\xi]\leq_{\approx_{P'}} [\mathfrak{n}\xi]=[\mathfrak{n}]\Phi$. As $\mathfrak{m}\in [\mathfrak{p}]$ and $\mathfrak{n}\in [\mathfrak{q}]$,  we have $[\mathfrak{m}]=[\mathfrak{p}]$ and $[\mathfrak{n}]=[\mathfrak{q}]$. Hence $[\mathfrak{p}]\Phi=[\mathfrak{m}]\Phi \leq_{\approx_{P'}}[\mathfrak{n}]\Phi=[\mathfrak{q}]\Phi.$ This shows that $\Phi$ satisfies (F3).

Take $[p],[q]\in P_{\mathscr{C}(P)},p,q\in P$ arbitrarily.  Then $$([p]\times[q])\Phi=([p\times q])\Phi=[(p\times q)\psi]=[(p\psi)\times (q\psi) ]=[p\psi]\times[q\psi].$$ Dually, $([p]\star[q])\Phi=[p\psi]\star[q\psi].$ This gives (F4).
Let $([p_{1}],[p_{2}],\ldots,[p_{k}])\in \mathscr{P}(P_{\mathscr{C}(P)})$. Then $$(\varepsilon_{P}([p_{1}],[p_{2}],\ldots,[p_{k}]))\Phi=[p_{1},p_{2},\ldots,p_{k}]\Phi
 =[p_{1}\psi,p_{2}\psi,\ldots,p_{k}\psi]
$$$$=\varepsilon_{P'}(([p_{1}\psi],[p_{2}\psi],\ldots,[p_{k}\psi]))
 =\varepsilon_{P'}([p_{1}]\Phi,[p_{2}]\Phi,\ldots,[p_{k}]\Phi).$$
Thus (F5) holds. This implies that $\Phi$ is a chain projection ordered functor. By Lemma \ref{wang}, $\Phi$ is a (2,1,1)-homomorphism from $\textbf{S}(\mathscr{C}(P))$ to $\textbf{S}(\mathscr{C}(P'))$.
\end{proof}

In the remainder of this section, we consider the presentations of chain semigroups. We first recall the notion of presentations of semigroups.  Let $X$ be a nonempty set and $X^{+}$ be the free semigroup on $X$. Let $R\subseteq X^{+}\times X^{+}$ and $R^{\sharp}$ be the congruence on $X^{+}$  generated by $R$. Let $S$ be a semigroup.  If $S\cong X^{+}/R^{\sharp}$, then  we say that $S$ has presentation $\langle X: R \rangle$.  In this case, we shall identify $S$ with $X^{+}/R^{\sharp}$, and call the elements in $X$ and $R$ the {\em generators and generations} of $S$, respectively. Obviously, $S$ has presentation $\langle X:R \rangle$ if and only if there exists an surjective semigroup homomorphism $\pi: X^{+}\rightarrow S$  such that the kernel of $\pi$  is exactly $R^{\sharp}$.

Let $(P,\times, \star)$ be a strong projection algebra. The following theorem gives a presentation of the chain semigroup $\textbf{S}(\mathscr{C}(P))$.
\begin{theorem}\label{scab}
Let $(P,\times, \star)$ be a strong projection algebra. Then the chain semigroup $\textbf{S}(\mathscr{C}(P))$ has a presentation $\textbf{S}(\mathscr{C}(P))\cong \langle X_{P}:R_{P}\rangle$, where $X_{P}=\{x_{p}\mid p\in P\}$ is an alphabet with a correspondence to $P$ and  $R_{P}$ is the set of the following relations: For all $p,q\in P$,
\begin{itemize}
\item[\rm(R1)]$x_{p}^{2}=x_{p}.$
\item[\rm(R2)]$x_{p\times q}x_{q}=x_{p}x_{q}.$
\item[\rm(R3)]$x_{p\times q}x_{p\ast q}=x_{p}x_{q}.$
\item[\rm(R4)]$x_{p}x_{p\ast q}=x_{p}x_{q}.$
\end{itemize}
\end{theorem}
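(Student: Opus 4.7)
My plan is to exhibit $\pi: X_P^+ \to \textbf{S}(\mathscr{C}(P))$, $x_p \mapsto [p]$, as a surjective semigroup homomorphism and to prove $\ker \pi = R_P^\sharp$. Surjectivity is immediate from Theorem \ref{llpg}, which says $\textbf{S}(\mathscr{C}(P))$ is projection-generated with $P(\textbf{S}(\mathscr{C}(P))) = \{[p] \mid p \in P\}$. The content lies in the two inclusions $R_P^\sharp \subseteq \ker \pi$ and $\ker \pi \subseteq R_P^\sharp$.

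The inclusion $R_P^\sharp \subseteq \ker \pi$ amounts to checking each of (R1)--(R4) in $\textbf{S}(\mathscr{C}(P))$. By Lemmas \ref{edd}(3) and \ref{p}, the product of two projections is $[p] \bullet [q] = [p \times q,\, p \star q]$. Then (R1) reduces to $p \times p = p = p \star p$ together with the relation $(p,p) \sim_{\approx_P} p$ coming from $\Omega_1(P)$; (R2) follows from $(p \times q) \times q = p \times q$ (axioms (L4) and (L1)) and $(p \times q) \star q = p \star q$ (Lemma \ref{jiben1}(3)); (R3) is Lemma \ref{jiben1}(4); and (R4) uses $p \times (p \star q) = p \times q$ (Lemma \ref{jiben1}(3)) together with $p \star (p \star q) = p \star q$ (axiom (R4) in the definition of right projection algebra followed by (R1)).

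For the reverse inclusion I will build an inverse homomorphism $\eta: \textbf{S}(\mathscr{C}(P)) \to X_P^+ / R_P^\sharp$ via $\eta([p_1, p_2, \ldots, p_k]) = [x_{p_1} x_{p_2} \cdots x_{p_k}]_{R_P^\sharp}$. Well-definedness reduces, via Lemma \ref{ting2}, to checking it on the generators of $\approx_P$: the $\Omega_1(P)$ case is exactly (R1), while for $((e, e\star p, f),(e, p\times f, f)) \in \Omega_2(P)$ both sides $R_P^\sharp$-collapse to $x_e x_p x_f$ by (R4) on the left pair and (R2) on the right pair (admissibility of $(e,p,f)$ is not even required for this step). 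The key point is the homomorphism property: for paths $\mathfrak{p} = (p_1, \ldots, p_k)$ and $\mathfrak{q} = (q_1, \ldots, q_l)$, the description preceding Proposition \ref{lope} gives $[\mathfrak{p}] \bullet [\mathfrak{q}] = [p_1', \ldots, p_k', q_1', \ldots, q_l']$ with $p' = p_k \times q_1$, $q' = p_k \star q_1$, $p_k' = p'$, $q_1' = q'$, $p_i' = p_i \times p_{i+1}'$ for $i < k$ and $q_{j+1}' = q_j' \star q_{j+1}$ for $j \geq 1$. Applying (R3) at the join to replace $x_{p_k} x_{q_1}$ by $x_{p'} x_{q'}$, iterated (R2) from the right on the $p$-block (each step $x_{p_i} x_{p_{i+1}'} \equiv x_{p_i \times p_{i+1}'} x_{p_{i+1}'} = x_{p_i'} x_{p_{i+1}'}$), and iterated (R4) from the left on the $q$-block (each step $x_{q_j'} x_{q_{j+1}} \equiv x_{q_j'} x_{q_j' \star q_{j+1}} = x_{q_j'} x_{q_{j+1}'}$) yields $\eta([\mathfrak{p}]) \cdot \eta([\mathfrak{q}]) = \eta([\mathfrak{p}] \bullet [\mathfrak{q}])$.

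Once $\eta$ is in hand, it is mutually inverse to the induced map $\bar\pi: X_P^+/R_P^\sharp \to \textbf{S}(\mathscr{C}(P))$: the composite $\bar\pi \circ \eta$ sends $[p_1, \ldots, p_k]$ to $[p_1] \bullet \cdots \bullet [p_k]$, which equals $[p_1, \ldots, p_k]$ by an easy induction using Lemma \ref{edd}(1) together with Lemma \ref{vvvv}(1), exploiting the $\mathcal{F}_P$-adjacency of consecutive terms; and $\eta \circ \bar\pi = \mathrm{id}$ follows tautologically from the homomorphism property. The main obstacle I anticipate is the homomorphism calculation for $\eta$: the abstract product $\bullet$ on paths is defined through restriction maps and $\varepsilon_P$, so one must patiently rewrite this in terms of $\delta$- and $\theta$-operators and then translate each $\delta$-step to (R2) and each $\theta$-step to (R4), with (R3) gluing the two blocks at the join.
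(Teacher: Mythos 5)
Your proposal is correct, and it reorganizes the hard inclusion $\ker\pi\subseteq R_P^\sharp$ in a way that differs from the paper. The surjectivity of $\pi$, the verification of (R1)--(R4) in $\textbf{S}(\mathscr{C}(P))$, and the well-definedness of your $\eta$ on $\approx_P$-classes (checking $w_{\mathfrak{s}}\sim w_{\mathfrak{t}}$ on the generators $\Omega_1(P)$, $\Omega_2(P)$ and propagating via Lemma \ref{ting2}) all coincide with the paper's argument, the last item being exactly Lemma \ref{slrgf}. Where you diverge is in how injectivity of the induced map is obtained: the paper first proves a normal-form result (Lemma \ref{abgq}, resting on the technical Lemma \ref{dhji}) showing every word of $X_P^+$ is $\sim$-equivalent to a path word $w_{\mathfrak{p}}$, and then concludes from $[\mathfrak{p}]=[\mathfrak{q}]$ via Lemma \ref{slrgf}; you instead prove directly that $\eta$ is multiplicative, by rewriting $w_{\mathfrak{p}}w_{\mathfrak{q}}$ into $w_{[\mathfrak{p}]\bullet[\mathfrak{q}]}$ with one application of (R3) at the join, iterated (R2) leftward through the $p$-block and iterated (R4) rightward through the $q$-block, and then get $\eta\circ\bar\pi=\mathrm{id}$ for free. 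Your rewriting steps check out against the explicit formulas $p_i'=p_i\times p_{i+1}'$ and $q_{j+1}'=q_j'\star q_{j+1}$ for the restrictions preceding Proposition \ref{lope}. The two routes encode the same computation: applying your multiplicativity inductively to $[p_1]\bullet\cdots\bullet[p_k]$ recovers the statement of Lemma \ref{abgq} with an explicit target path, while the paper's normal-form lemma alone does not immediately give multiplicativity. What your version buys is a cleaner logical skeleton (a two-sided inverse homomorphism, making injectivity and surjectivity simultaneous) and the elimination of Lemma \ref{dhji}; what it costs is that the single rewriting computation for $\eta(\mathfrak{p}\bullet\mathfrak{q})$ is somewhat more delicate than either of the paper's two inductions taken separately, so it should be written out carefully, including the (R1)-mediated identification $w_{\mathfrak{b}\circ\mathfrak{s}\circ\mathfrak{c}}\sim w_{\mathfrak{b}}w_{\mathfrak{s}}w_{\mathfrak{c}}$ needed in the well-definedness step.
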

To prove Theorem \ref{scab}, we need a series of lemmas. For convenience, we denote  $\sim=R^{\sharp}_{P}$, the congruence of $X^{+}_{P}$ generated by $R_{P}$.

\begin{lemma}\label{dhji}
Let $(P,\times,\star)$ be a strong projection algebra and $k$ be a positive integer. Let $$p_{1},p_{2}, \ldots, p_{k}, q\in P,\,\, p_{1}\mathcal{F}_{P} p_{2}\mathcal{F}_{P} \ldots \mathcal{F}_{P} p_{k},\,\, q\leq_P p_{k}.$$ Then there exist $q_{1},q_{2},\ldots,q_{k-1}\in P$ such that $$q_{1}\,\mathcal{F}_{P} \,q_{2}\,\mathcal{F}_{P} \ldots \mathcal{F}_{P}\, q_{k-1}\,\mathcal{F}_{P}\, q,\,\,\, x_{p_{1}}x_{p_{2}}\ldots x_{p_{k-1}}x_{q}\sim x_{q_{1}}x_{q_{2}}\ldots x_{q_{k-1}}x_{q}.$$
\end{lemma}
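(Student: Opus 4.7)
The plan is to proceed by induction on $k$, using the right-restriction formula from (\ref{win}) to identify the correct candidates for $q_1,\ldots,q_{k-1}$ and then applying (R2) one letter at a time to move from the $p_i$-string to the $q_i$-string. Specifically, I set $q_k := q$ and define $q_i := p_i\times q_{i+1}$ for $i=k-1,k-2,\ldots,1$, which is precisely the sequence produced by $\mathfrak{p}\!\downharpoonright_q$ for $\mathfrak{p}=(p_1,\ldots,p_k)$. By Lemma~\ref{ccts}(2), this sequence is automatically a $P$-path, so the required chain $q_1\mathcal{F}_P q_2\mathcal{F}_P\cdots\mathcal{F}_P q_{k-1}\mathcal{F}_P q$ comes for free; no further verification of the $\mathcal{F}_P$-relations is needed.

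For the base case $k=1$ the statement reduces to $x_q\sim x_q$. For the inductive step, assuming the lemma for length $k-1$, I would first apply (R2) with parameters $(p_{k-1},q)$ to rewrite
\[
x_{p_1}x_{p_2}\cdots x_{p_{k-2}}x_{p_{k-1}}x_q
\;\sim\;
x_{p_1}x_{p_2}\cdots x_{p_{k-2}}x_{p_{k-1}\times q}x_q
\;=\;x_{p_1}\cdots x_{p_{k-2}}x_{q_{k-1}}x_q.
\]
Since $q_{k-1}=p_{k-1}\times q\leq_P p_{k-1}$ by Lemma~\ref{jiben1}(1), and since $p_1\mathcal{F}_P\cdots\mathcal{F}_P p_{k-1}$ has length $k-1$, the inductive hypothesis applies to the prefix with $q_{k-1}$ playing the role of $q$: it produces $q_1,\ldots,q_{k-2}$ with $q_1\mathcal{F}_P\cdots\mathcal{F}_P q_{k-2}\mathcal{F}_P q_{k-1}$ and $x_{p_1}\cdots x_{p_{k-2}}x_{q_{k-1}}\sim x_{q_1}\cdots x_{q_{k-2}}x_{q_{k-1}}$. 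Appending $x_q$ and concatenating with the previous $\sim$-step yields the desired equivalence.

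The only mild obstacle is the compatibility check that the recursively generated last term $q_{k-1}=p_{k-1}\times q$ really satisfies $q_{k-1}\mathcal{F}_P q$, needed so that the inductive chain glues correctly. As noted above, this is delivered directly by Lemma~\ref{ccts}(2) since $(q_1,\ldots,q_{k-1},q)=\mathfrak{p}\!\downharpoonright_q\in\mathscr{P}(P)$; should one prefer a direct verification, $q_{k-1}\times q=q_{k-1}$ follows from (L4) and (L1), while $q_{k-1}\star q=p_{k-1}\star q=q$ follows from (P3) applied to $(p_{k-1},p_k,q)$ together with Lemma~\ref{jiben1}(2) and the hypothesis $q\leq_P p_k$. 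Thus the induction closes and (R2) alone, iterated from right to left, suffices to establish the equivalence claimed in the lemma.
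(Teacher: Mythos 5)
Your proof is correct, and its inductive skeleton is the same as the paper's: define $q_{k-1}=p_{k-1}\times q$, use (R2) to replace $x_{p_{k-1}}x_q$ by $x_{q_{k-1}}x_q$, and apply the inductive hypothesis to the prefix $(p_1,\ldots,p_{k-1})$ with $q_{k-1}$ in the role of $q$. The one genuine difference is how the $\mathcal{F}_P$-chain is obtained. The paper verifies the new link $q_{k-1}\,\mathcal{F}_P\,q$ by hand inside the induction, computing $q_{k-1}\times q=q_{k-1}$ from (L4) and (L1) and $q_{k-1}\star q=q$ from (\ref{wang2}), (R4) and (R1); this occupies roughly half of its proof. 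You instead observe that the recursively defined sequence $(q_1,\ldots,q_{k-1},q)$ is exactly $\mathfrak{p}\downharpoonright_q$ for $\mathfrak{p}=(p_1,\ldots,p_k)$, so Lemma~\ref{ccts}(2) already certifies it as a $P$-path and the whole chain comes for free; this is a legitimate and cleaner reuse of earlier work (with the small caveat that one should then fix these specific $q_i$ as the witnesses in the induction, rather than the anonymous ones produced by a bare existential inductive hypothesis). Your fallback direct verification also checks out: $(p_{k-1}\times q)\star q=p_{k-1}\star q$ is Lemma~\ref{jiben1}(3), and $p_{k-1}\star q=q$ follows from (P3) applied to $(p_{k-1},p_k,q)$ once one rewrites $q=p_k\times q$ and uses $p_{k-1}\star q\leq_P q$ and $p_k\star q=q$ from Lemma~\ref{jiben1}(2) — a slight variant of the paper's computation, but equally valid.
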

\begin{proof}If $k=1$, then $x_{q}\sim x_{q}$ obviously. Let $k\geq 2$. Denote $q_{k-1}=p_{k-1}\times q$. Then $q_{k-1}\leq_P p_{k-1}$ by Lemma \ref{jiben1} (1) .
By mathematical induction, there exist $q_{1},q_{2},\ldots,q_{k-2}\in P$ such that
\begin{equation}\label{xlwfa}
q_{1}\,\mathcal{F}_{P}\, q_{2}\,\mathcal{F}_{P}\, \ldots \mathcal{F}_{P}\, q_{k-2}\,\mathcal{F}_{P}\, q_{k-1},\,\,\,\, x_{p_{1}}x_{p_{2}}\ldots x_{p_{k-2}}x_{q_{k-1}}\sim x_{q_{1}}x_{q_{2}}\ldots x_{q_{k-2}}x_{q_{k-1}}.
\end{equation}
By (R2),
\begin{equation}\label{wiop}
x_{p_{k-1}}x_{q} {~\sim~} x_{p_{k-1}\times q}x_{q}=x_{q_{k-1}}x_{q}.
\end{equation}
By (\ref{xlwfa}) and (\ref{wiop}),  we have
$$x_{p_{1}}x_{p_{2}}\ldots x_{p_{k-2}}x_{p_{k-1}}x_{q}\sim x_{p_{1}}x_{p_{2}}\ldots x_{p_{k-2}}x_{q_{k-1}}x_{q} \sim x_{q_{1}}x_{q_{2}}\ldots x_{q_{k-2}}x_{q_{k-1}}x_{q}.$$
We shall prove $q_{k-1}\,\mathcal{F}_{P}\, q$ in the sequel. On one hand, by the fact that $q_{k-1}=p_{k-1}\times q$, (L4) and (L1), we obtain
$$q_{k-1}\times q=(p_{k-1}\times q)\times q=(p_{k-1}\times q)\times(p_{k-1}\times q)=p_{k-1}\times q=q_{k-1}.$$
On the other hand, the fact $p_{k-1}\mathcal{F}_{P} p_{k}$ gives that $p_{k}=p_{k-1}\star p_{k}$, and the fact $q\leq p_{k}=p_{k-1}\star p_{k}$ and (\ref{wang2}) imply $q=(p_{k-1}\times q)\star p_{k}$. By (R4), (R1) and the above statements, $$q_{k-1}\star q=(p_{k-1}\times q)\star ((p_{k-1}\times q)\star p_{k})= $$$$((p_{k-1}\times q)\star p_{k})\star  ((p_{k-1}\times q)\star p_{k})= ((p_{k-1}\times q)\star p_{k})=q.$$
By (\ref{s}), $q_{k-1}\,\mathcal{F}_{P}\, q.$
\end{proof}

\begin{lemma}\label{abgq}
Let $(P,\times,\star)$ be a strong projection algebra and $w=x_{q_{1}}x_{q_{2}}\cdots x_{q_{k}}\in X_{P}^{+}$. Then there exist $p_{1},p_{2},\ldots,p_{k}\in P$ such that $p_{1}\mathcal{F}_{P} p_{2}\mathcal{F}_{P} \cdots \mathcal{F}_{P} p_{k}$ and $w\sim x_{p_{1}}x_{p_{2}}\cdots x_{p_{k}}.$
\end{lemma}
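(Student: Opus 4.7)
The plan is to proceed by induction on the length $k$ of the word $w$. The base case $k=1$ is immediate: set $p_{1}=q_{1}$ and note that a one-term condition is vacuous. The idea for the inductive step is to ``fix up'' the word from right to left, using relation (R3) at each stage to install an $\mathcal{F}_{P}$-related pair at the tail, and then to invoke Lemma \ref{dhji} to propagate this relation back through the earlier letters.

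More concretely, assume the statement for words of length $k-1$ and consider $w=x_{q_{1}}x_{q_{2}}\cdots x_{q_{k}}$. By the inductive hypothesis applied to $x_{q_{1}}\cdots x_{q_{k-1}}$, there exist $r_{1},\dots,r_{k-1}\in P$ with $r_{1}\mathcal{F}_{P}r_{2}\mathcal{F}_{P}\cdots\mathcal{F}_{P}r_{k-1}$ and
\[
x_{q_{1}}\cdots x_{q_{k-1}}\sim x_{r_{1}}\cdots x_{r_{k-1}}.
\]
Set $a=r_{k-1}\times q_{k}$ and $b=r_{k-1}\star q_{k}$. By relation (R3) applied to the last two letters,
\[
x_{r_{1}}\cdots x_{r_{k-2}}x_{r_{k-1}}x_{q_{k}}\sim x_{r_{1}}\cdots x_{r_{k-2}}x_{a}x_{b},
\]
and by Lemma \ref{p}, $a\,\mathcal{F}_{P}\,b$.

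Now the issue is that the chain $r_{1}\mathcal{F}_{P}\cdots\mathcal{F}_{P}r_{k-1}$ need not connect $r_{k-2}$ to $a$. However, $a=r_{k-1}\times q_{k}\leq_{P}r_{k-1}$ by Lemma \ref{jiben1} (1), so Lemma \ref{dhji} (applied to the prefix $r_{1},\dots,r_{k-1}$ with $q$ replaced by $a$) produces $s_{1},\dots,s_{k-2}\in P$ with $s_{1}\mathcal{F}_{P}\cdots\mathcal{F}_{P}s_{k-2}\mathcal{F}_{P}a$ and
\[
x_{r_{1}}\cdots x_{r_{k-2}}x_{a}\sim x_{s_{1}}\cdots x_{s_{k-2}}x_{a}.
\]
Concatenating with $x_{b}$ and combining all three equivalences gives $w\sim x_{s_{1}}\cdots x_{s_{k-2}}x_{a}x_{b}$, and this final word has $s_{1}\mathcal{F}_{P}\cdots\mathcal{F}_{P}s_{k-2}\mathcal{F}_{P}a\mathcal{F}_{P}b$; setting $p_{i}=s_{i}$ for $i\leq k-2$, $p_{k-1}=a$, $p_{k}=b$ finishes the induction.

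The only mildly delicate point is the interaction between the two rewriting moves: (R3) at the tail can destroy the $\mathcal{F}_{P}$-compatibility of the newly-created letter with its predecessor, which is exactly the obstacle Lemma \ref{dhji} is designed to handle (it rewrites a prefix so that it terminates at any prescribed element below the current last projection). So the main conceptual step is recognizing that Lemma \ref{dhji} is the right tool to repair the chain each time, and the rest is bookkeeping.
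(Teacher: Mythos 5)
Your proposal is correct and follows essentially the same route as the paper's proof: induction on $k$, applying (R3) together with Lemma \ref{p} to the last two letters, and then invoking Lemma \ref{dhji} (via $r_{k-1}\times q_k \leq_P r_{k-1}$ from Lemma \ref{jiben1}(1)) to repair the prefix chain. The only differences are notational.
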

\begin{proof}
When $k=1$, it is obvious that $x_{q_{1}}\sim x_{q_{1}}$. Let $k\geq 2$ and $w=x_{q_{1}}x_{q_{2}}\cdots x_{q_{k}}$. By mathematical induction, there exist $r_{1},r_{2},\ldots,r_{k-1}\in P$ such that $$r_{1}\mathcal{F}_{P} r_{2}\mathcal{F}_{P} \cdots \mathcal{F}_{P} r_{k-1},\,\,\, x_{q_{1}}x_{q_{2}}\cdots x_{q_{k-1}}\sim  x_{r_{1}}x_{r_{2}}\cdots x_{r_{k-1}}.$$
Denote $p_{k-1}=r_{k-1}\times q_{k}\,\, p_{k}=r_{k-1}\star {q_{k}}.$ By Lemma \ref{p},  we have $p_{k-1}\,\mathcal{F}_{P}\, p_{k}$, and so
$$x_{r_{k-1}}x_{q_{k}}\overset{\rm(R3)}{\sim}x_{r_{k-1}\times q_{k}}x_{r_{k-1}\star {q_{k}}}=x_{p_{k-1}}x_{p_{k}}.$$
Moreover, the fact $p_{k-1}=r_{k-1}\times q_{k}$ gives that $p_{k-1}\leq_P r_{k-1}$ by Lemma \ref{jiben1} (1). By Lemma \ref{dhji}, there exist $p_{1},p_{2},,\ldots,p_{k-2}\in P$
such that $$p_{1}\mathcal{F}_{P} p_{2}\mathcal{F}_{P} \cdots \mathcal{F}_{P} p_{k-2}\mathcal{F}_{P} p_{k-1},\,\, x_{r_{1}}x_{r_{2}}\cdots x_{r_{k-2}}x_{p_{k-1}}\sim  x_{p_{1}}x_{p_{2}}\cdots x_{p_{k-2}}x_{p_{k-1}}.$$
By the above statements, $$p_{1}\mathcal{F}_{P} p_{2}\mathcal{F}_{P} \cdots \mathcal{F}_{P} p_{k-2}\mathcal{F}_{P} p_{k-1}\mathcal{F}_{P} p_{k},$$
$$w=x_{q_{1}}x_{q_{2}}\cdots x_{q_{k-1}} x_{q_{k}}\sim x_{r_{1}}x_{r_{2}}\cdots x_{r_{k-2}}x_{r_{k-1}}x_{q_{k}}$$$$\sim x_{r_{1}}x_{r_{2}}\cdots x_{r_{k-2}}x_{p_{k-1}}x_{p_{k}}\sim x_{p_{1}}x_{p_{2}}\cdots x_{p_{k-1}} x_{p_{k}}.$$
By mathematical induction, the result follows.
\end{proof}

Let $(P,\times,\star)$ be a strong projection algebra and $\mathfrak{p}=(p_{1},p_{2},\ldots,p_{k})\in \mathscr{P}(P)$. Define
$$w_{\mathfrak{p}}= x_{p_{1}}x_{p_{2}}\cdots x_{p_{k}}\in X_{P}^{+}.$$
Let $w\in X_{P}^{+}.$ By Lemma \ref{abgq}, there exists $\mathfrak{p}\in \mathscr{P}(P)$ such that $w\sim w_{\mathfrak{p}}$. Let $\mathfrak{p},\mathfrak{q}\in \mathscr{P}(P)$ and  $\rr(\mathfrak{p})=\dd(\mathfrak{q})$. By (R1),
\begin{equation}\label{qkla}
w_{\mathfrak{p}}w_{\mathfrak{q}}\sim w_{\mathfrak{p}\circ \mathfrak{q}}.
\end{equation}
\begin{lemma}\label{slrgf}
Let $(P,\times,\star)$ be a strong projection algebra, $\mathfrak{p}, \mathfrak{q}\in \mathscr{P}(P)$ and $\mathfrak{p}\approx_{P}\mathfrak{q}$. Then $w_{\mathfrak{p}}\sim w_{\mathfrak{q}}$.
\end{lemma}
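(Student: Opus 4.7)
The plan is to exploit the explicit description of $\approx_{P}=\Omega(P)^{\sharp}$ provided by Lemma \ref{ting2}: if $\mathfrak{p}\approx_{P}\mathfrak{q}$, then there is a finite chain
\[
\mathfrak{p}=\mathfrak{p}_{1}\to \mathfrak{p}_{2}\to\cdots\to \mathfrak{p}_{n}=\mathfrak{q}
\]
of $\mathscr{P}(P)$-paths in which each step has the form $\mathfrak{p}_{i}=\mathfrak{b}_{i}\circ \mathfrak{s}_{i}\circ \mathfrak{c}_{i}$ and $\mathfrak{p}_{i+1}=\mathfrak{b}_{i}\circ \mathfrak{t}_{i}\circ \mathfrak{c}_{i}$ with $(\mathfrak{s}_{i},\mathfrak{t}_{i})\in \Omega(P)\cup \Omega^{-1}(P)$. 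By transitivity of $\sim$, it therefore suffices to treat a single elementary move, i.e.\ to prove that $w_{\mathfrak{b}\circ \mathfrak{s}\circ \mathfrak{c}}\sim w_{\mathfrak{b}\circ \mathfrak{t}\circ \mathfrak{c}}$ for every $(\mathfrak{s},\mathfrak{t})\in \Omega(P)$ and every compatible $\mathfrak{b},\mathfrak{c}\in \mathscr{P}(P)$.

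By iterated use of (\ref{qkla}) we have $w_{\mathfrak{b}\circ \mathfrak{s}\circ \mathfrak{c}}\sim w_{\mathfrak{b}}\,w_{\mathfrak{s}}\,w_{\mathfrak{c}}$ and $w_{\mathfrak{b}\circ \mathfrak{t}\circ \mathfrak{c}}\sim w_{\mathfrak{b}}\,w_{\mathfrak{t}}\,w_{\mathfrak{c}}$, so (since $\sim$ is a congruence on $X_{P}^{+}$) the whole problem collapses to showing $w_{\mathfrak{s}}\sim w_{\mathfrak{t}}$ for each generating pair. There are only two cases. If $(\mathfrak{s},\mathfrak{t})=((p,p),p)\in \Omega_{1}(P)$, then $w_{\mathfrak{s}}=x_{p}x_{p}$ and $w_{\mathfrak{t}}=x_{p}$, and the relation (R1) gives $w_{\mathfrak{s}}\sim w_{\mathfrak{t}}$ directly. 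If $(\mathfrak{s},\mathfrak{t})=((e,e\star p,f),(e,p\times f,f))\in \Omega_{2}(P)$, with $(e,p,f)$ an admissible triple, then
\[
w_{\mathfrak{s}}=x_{e}\,x_{e\star p}\,x_{f},\qquad w_{\mathfrak{t}}=x_{e}\,x_{p\times f}\,x_{f}.
\]
Applying (R4) to the prefix $x_{e}x_{e\star p}$ yields $w_{\mathfrak{s}}\sim x_{e}\,x_{p}\,x_{f}$, while applying (R2) to the suffix $x_{p\times f}x_{f}$ yields $w_{\mathfrak{t}}\sim x_{e}\,x_{p}\,x_{f}$; hence $w_{\mathfrak{s}}\sim w_{\mathfrak{t}}$.

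Combining the two displays with the congruence property of $\sim$ gives $w_{\mathfrak{b}}\,w_{\mathfrak{s}}\,w_{\mathfrak{c}}\sim w_{\mathfrak{b}}\,w_{\mathfrak{t}}\,w_{\mathfrak{c}}$, which together with (\ref{qkla}) yields $w_{\mathfrak{p}_{i}}\sim w_{\mathfrak{p}_{i+1}}$ for every $i$; iterating along the chain furnished by Lemma \ref{ting2} and invoking transitivity delivers $w_{\mathfrak{p}}\sim w_{\mathfrak{q}}$. The reverse pairs in $\Omega^{-1}(P)$ are handled symmetrically since $\sim$ is itself symmetric. No real obstacle is anticipated: the only point requiring mild care is the possibility that $\mathfrak{b}_{i}$ or $\mathfrak{c}_{i}$ is a single projection (an identity morphism in $\mathscr{P}(P)$), in which case (\ref{qkla}) is applied trivially by absorbing the duplicated projection via (R1), exactly as in the justification of (\ref{qkla}) itself.
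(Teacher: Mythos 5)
Your proof is correct and takes essentially the same approach as the paper's: reduce via Lemma \ref{ting2} and (\ref{qkla}) to a single elementary move, then verify $w_{\mathfrak{s}}\sim w_{\mathfrak{t}}$ using (R1) for pairs in $\Omega_{1}(P)$ and (R4) followed by (R2) for pairs in $\Omega_{2}(P)$. Your extra remarks on transitivity along the chain and on degenerate $\mathfrak{b}_{i}$, $\mathfrak{c}_{i}$ only make explicit what the paper leaves implicit.
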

\begin{proof}
By Lemma \ref{ting2}, we assume that $\mathfrak{p},\mathfrak{q}\in \mathscr{P}(P)$ and $\mathfrak{p}=\mathfrak{b}\circ \mathfrak{s} \circ \mathfrak{c},\mathfrak{q}=\mathfrak{b}\circ \mathfrak{t} \circ \mathfrak{c}$, where $$\mathfrak{b},\mathfrak{c}\in \mathscr{P}(P),(\mathfrak{s},\mathfrak{t})\in \Omega(P)\cup  \Omega^{-1}(P).$$ By (\ref{qkla}), we obtain that $w_{\mathfrak{p}}\sim w_{\mathfrak{b}}w_{\mathfrak{s}}w_{\mathfrak{c}}$ and $ w_{\mathfrak{p}}\sim w_{\mathfrak{b}} w_{\mathfrak{t}}w_{\mathfrak{c}}.$  We shall prove $w_{\mathfrak{s}}\sim w_{\mathfrak{t}}$ in the sequel.
Let $\mathfrak{s}=(p,p), \mathfrak{t}=p, p\in P.$ Then $w_{\mathfrak{s}}=x_{p}x_{p}\overset{\rm(R1)}{\sim}x_{p}=w_{\mathfrak{t}}.$
Let $(e,p,f)$ be an admissible triple in $P$ and  $\mathfrak{s}=(e,e\star p,f),\mathfrak{t}=(e,p\times f,f)$. Then $$w_{\mathfrak{s}}=x_{e}x_{e\star {p}}x_{f}\overset{\rm(R4)}{\sim}x_{e}x_{p}x_{f}\overset{\rm(R2)}{\sim}x_{e}x_{p\times f}x_{f}=w_{\mathfrak{t}}.$$
This shows that $w_{\mathfrak{s}}\sim w_{\mathfrak{t}}$, and so $w_{\mathfrak{p}}\sim w_{\mathfrak{q}}.$
\end{proof}

Now we can give a proof of Theorem \ref{scab}.

{\bf A proof of Theorem \ref{scab}} By the freeness of $X_{P}^{+}$ we can assume that the unique semigroup homomorphism determined by the map
$X_P\rightarrow  \textbf{S}(\mathscr{C}(P)),\,\, x_p\mapsto [p]$ is
$$\Psi:X_{P}^{+}\rightarrow \textbf{S}(\mathscr{C}(P)).$$
We first show that $\Psi$ is surjective. Let $\mathfrak{p}=(p_{1},p_{2},\ldots,p_{k})\in \mathscr{P}(P),\mathfrak{c}=[\mathfrak{p}]\in \textbf{S}(\mathscr{C}(P)).$ By the proof of Proposition \ref{fgpu}, we have
\begin{equation}\label{vbkdg}
\begin{array}{cc}
\mathfrak{c}=[p_{1},p_{2},\ldots,p_{k}]=\varepsilon_{P}([p_{1}],[p_{2}],\ldots,[p_{k}])\\[3mm]
=[p_{1}]\bullet [p_{2}]\bullet\ldots\bullet [p_{k}]=(x_{p_{1}}\Psi)\bullet (x_{p_{2}}\Psi)\bullet\ldots \bullet (x_{p_{k}}\Psi)=(x_{p_{1}}x_{p_{2}}\ldots x_{p_{k}})\Psi=w_{\mathfrak{p}}\Psi.
\end{array}
\end{equation}
We next prove that $R_{P}^{\sharp}=\ker\Psi$. Let $(u,v)\in R_{P}$.
\begin{itemize}
\item[(1)]In the case that $u=x_{p}^{2},v=x_{p}$, we have $$u\Psi=(x_{p}x_{p})\Psi=(x_{p}\Psi)\bullet (x_{p}\Psi)=[p]\bullet [p]=[p]=x_{p}\Psi=v\Psi.$$
\item[(2)]Let $u=x_{p\times q}x_{q}$ and $v=x_{p}x_{q}$ for some $p,q \in P$. By (L4) and (L1), $$(p\times q)\times q=(p\times q)\times(p\times q)=p\times q,$$ which together with
Lemma \ref{jiben1} (3) gives that $(p\times q)\star q=p\star q$. By Lemma \ref{edd} (3),
    $$u\Psi=(x_{p\times q}x_{q})\Psi=(x_{p\times q}\Psi)\bullet(x_{q}\Psi)=[p\times q]\bullet[q]=\varepsilon_{P}([p\times q]\times[q],[p\times q]\star[q])$$$$=\varepsilon_{P}([(p\times q)\times q],[(p\times q)\star q])=\varepsilon_{P}([p\times q],[p\star q])=\varepsilon_{P}([p]\times[q],[p]\star[q])$$$$=[p]\bullet [q]=x_{p}\Psi\bullet x_{q}\Psi=(x_{p}x_{q})\Psi=v\Psi.$$
\item[(3)]Let $u=x_{p\times q}x_{p\star q}$ and $v=x_{p}x_{q}$ for some $p,q \in P$. By Lemma \ref{jiben1} (4), $$(p\times q)\times(p\star q)=p\times q,(p\times q)\star(p\star q)=p\star q.$$ Using similar calculations as in (2), we have
$$u\Psi=[p\times q]\bullet[p\star q]=\varepsilon_{P}([p\times q]\times[p\star q],[p\times q]\star[p\star q])=\varepsilon_{P}([p\times q],[p\star q])=v\Psi.$$
\item[(4)] This is the dual of (2).
\end{itemize}
From items (1)--(4) above, we can obtain $R_{P}^{\sharp}\subseteq \ker\Psi.$ On the other hand, let $(u,v)\in \ker\Psi$. Then $u,v\in X_{P}^{+}$, $u\Psi=v\Psi$. By Lemma \ref{abgq} we have $u\sim w_{\mathfrak{p}}$ and $v\sim w_{\mathfrak{q}}$ for some $\mathfrak{p},\mathfrak{q}\in \mathscr{P}(P)$. Moreover,   (\ref{vbkdg}) and the fact that $R_{P}^{\sharp}\subseteq \ker\Psi$ give that
$$[\mathfrak{p}]=w_{\mathfrak{p}}\Psi=u\Psi=v\Psi=w_{\mathfrak{q}}\Psi=[\mathfrak{q}].$$
Thus $\mathfrak{p}\approx_{P}\mathfrak{q}$. By Lemma \ref{slrgf}, $w_{\mathfrak{p}}\sim w_{\mathfrak{q}}$. So $u\sim w_{\mathfrak{p}}\sim w_{\mathfrak{q}}\sim v$. This implies that  $\ker(\Psi)\subseteq R_{P}^{\sharp}.$  Therefore,  $R_{P}^{\sharp}=\ker\Psi$.

Denote the $R_{P}^{\sharp}$-class containing $w\in  X_{P}^{+}$ by $\overline{w}$. Then
\begin{equation}\label{qyh}
\overline{\Psi}:  X_{P}^{+}/R_{P}^{\sharp} =\{\overline{w}\mid  w\in X_{P}^{+}\}=\{\overline{w_{\mathfrak{p}}}\mid  \mathfrak{p}\in \mathscr{P}(P)\}\rightarrow \textbf{S}(\mathscr{C}(P)), \,\, \overline{w}\mapsto w\Psi
\end{equation}
is a semigroup isomorphism. We observe that the  multiplication on
$X_{P}^{+}/R_{P}^{\sharp}$ is as follows: For all $w_{\mathfrak{p}}, w_{\mathfrak{q}}\in  X_{P}^{+}/R_{P}^{\sharp}$, $\overline{w_{\mathfrak{p}}}\,\,\overline{w_{\mathfrak{q}}}=\overline{w_{\mathfrak{p}} w_{\mathfrak{q}}}$. Moreover,
let $\mathfrak{p}=(p_1,p_2,\ldots, p_k)\in \mathscr{P}(P)$. Define
\begin{equation}\label{wangyin1}
\overline{w_{\mathfrak{p}}}^+=(\overline{w_{\mathfrak{p}}}\overline{\Psi})^+\overline{\Psi}^{-1}
=(w_{\mathfrak{p}}\Psi)^+\overline{\Psi}^{-1}=[p_1]\overline{\Psi}^{-1}=\overline{w_{{p_1}}}=\overline{x_{p_1}},\,\,\,\, \overline{w_{\mathfrak{p}}}^\ast=\overline{x_{p_k}}.
\end{equation}
Then $(X_{P}^{+}/R_{P}^{\sharp}, \cdot, ^+, ^\ast)$ forms a DRC-restriction semigroup with respect to the above operations with the set of projections $\{\overline{x_{p}}\mid p\in P\}$, and $\overline{\Psi}$ is a (2,1,1)-isomorphism from $X_{P}^{+}/R_{P}^{\sharp}$ onto $\textbf{S}(\mathscr{C}(P))$. Thus the strong projection algebra of $\langle X_{P}: R_{P}\rangle$ is isomorphic to that of  $\textbf{S}(\mathscr{C}(P))$, and so is isomorphic to  $(P,\times,\star).$

\begin{theorem}\label{ahyj}
Let $(S,\cdot,^+,^\ast)$ be a DRC-restriction semigroup and $(P(S),\times_{S},\star_{S})$ be the strong projection algebra of $S$. Assume that $(P,\times,\star)$ is a strong projection algebra and $\phi: P\rightarrow P(S)$  is a homomorphism  between strong projection algebras. Then there exists a unique (2,1,1)-homomorphism
$$\Phi:{\bf S}(\mathscr{C}(P))\rightarrow S,\,\,\,[p_1,p_2,\ldots, p_k]\mapsto (p_1\phi)(p_2\phi)\ldots (p_k\phi).$$
such that the following diagram
\begin{equation}\label{wangyin3}
\CD
  P@>\phi>> P(S)\\
  @V\iota_{1} VV @V\iota_{2} VV \\
  {\bf S}(\mathscr{C}(P))@>\Phi>> S
\endCD
\end{equation}
commutes, where $$\iota_{1}:P\rightarrow {\bf S}(\mathscr{C}(P)),\,\,p\mapsto [p],\,\,\iota_{2}:P(S)\rightarrow S,\,\,p\mapsto p.$$
\end{theorem}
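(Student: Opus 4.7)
The plan is to realize $\Phi$ as the composition of two (2,1,1)-homomorphisms already furnished by the machinery of the paper. First, apply Theorem \ref{cc} to the strong projection algebra homomorphism $\phi:P\to P(S)$ to obtain a chain projection ordered functor $\Phi_1:\mathscr{C}(P)\to\mathscr{C}(P(S))$ with $[p_1,\ldots,p_k]\mapsto[p_1\phi,\ldots,p_k\phi]$; note this is well-defined as a chain because $\phi$ preserves the relation $\mathcal{F}$ by Lemma \ref{pin}. By Lemma \ref{wang}, $\Phi_1$ induces a (2,1,1)-homomorphism between the associated DRC-restriction semigroups. Second, apply Proposition \ref{lope} to the chain projection ordered category $\mathbf{C}(S)=(S,\circ_S,\dd_S,\rr_S,\leq_S,P(S),\times_S,\star_S,\varepsilon_S)$ from Theorem \ref{fii}, whose set of objects is $P(S)$. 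This yields a chain projection ordered functor $\overline{\varepsilon_S}:\mathscr{C}(P(S))\to\mathbf{C}(S)$ with $[q_1,\ldots,q_k]\mapsto\varepsilon_S(q_1,\ldots,q_k)=q_1q_2\cdots q_k$, and by Lemmas \ref{wang} and \ref{zhang3}, $\overline{\varepsilon_S}$ induces a (2,1,1)-homomorphism $\Phi_2:\mathbf{S}(\mathscr{C}(P(S)))\to\mathbf{S}(\mathbf{C}(S))=S$.

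Define $\Phi$ to be the composition $\Phi_1\Phi_2$. Unwinding,
\begin{equation*}
[p_1,\ldots,p_k]\Phi=[p_1\phi,\ldots,p_k\phi]\Phi_2=(p_1\phi)(p_2\phi)\cdots(p_k\phi),
\end{equation*}
which is exactly the prescribed formula. For the commuting square, a one-element chain gives $[p]\iota_1\Phi=[p]\Phi=p\phi=(p\phi)\iota_2$ for all $p\in P$, so $\iota_1\Phi=\phi\iota_2$.

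For uniqueness, by Theorem \ref{llpg} the semigroup $\mathbf{S}(\mathscr{C}(P))$ is projection-generated with set of projections $\{[p]\mid p\in P\}$; more precisely, each $[p_1,\ldots,p_k]$ equals $[p_1]\bullet[p_2]\bullet\cdots\bullet[p_k]$ in $\mathbf{S}(\mathscr{C}(P))$ (as in the identity (\ref{vbkdg})). Any (2,1,1)-homomorphism $\Phi'$ making the diagram commute must send $[p]$ to $p\phi$, and hence by the semigroup homomorphism property must agree with $\Phi$ on every such product. Since these products exhaust $\mathbf{S}(\mathscr{C}(P))$, we conclude $\Phi'=\Phi$. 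The only mild point to check is that the composition $\Phi_1\Phi_2$ does carry the chain-wise formula expected, but this is immediate from the explicit descriptions of $\Phi_1$ and $\overline{\varepsilon_S}$ together with the definition of $\varepsilon_S$ in (\ref{hui}); no new computation is required beyond the results cited.
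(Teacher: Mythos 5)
Your proposal is correct, and it takes a genuinely different route from the paper. The paper proves this theorem through the presentation $\langle X_P : R_P\rangle$ of Theorem \ref{scab}: it defines the free-semigroup homomorphism $X_P^+\to S$, $x_p\mapsto p\phi$, checks relation by relation (using Lemma \ref{kangwei}) that $R_P$ lies in its kernel, and then must separately verify, via Lemma \ref{zjiy} and the DRC conditions, that the induced map $\Gamma$ preserves $^+$ and $^\ast$ --- a step that is unavoidable there because the presentation is a priori only a semigroup presentation. You instead factor $\Phi$ as $\mathbf{S}(\Phi_1)$ followed by $\overline{\varepsilon_S}$, where $\Phi_1$ comes from Theorem \ref{cc} applied to $\phi$ and $\overline{\varepsilon_S}$ comes from Proposition \ref{lope} applied to $\mathbf{C}(S)$; since both are already certified (2,1,1)-homomorphisms and $\mathbf{S}(\mathbf{C}(S))=S$ by Lemma \ref{zhang3}, no new computation is needed, and the formula $[p_1,\ldots,p_k]\mapsto (p_1\phi)\cdots(p_k\phi)$ and the commuting square follow by inspection. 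Your uniqueness argument via projection-generation (Theorem \ref{llpg} and the identity $[p_1,\ldots,p_k]=[p_1]\bullet\cdots\bullet[p_k]$ from the proof of Proposition \ref{fgpu}) is the same as the paper's in substance. What the paper's route buys is an explicit tie to the presentation, which it wants anyway for Theorem \ref{scab}; what yours buys is economy --- it exhibits the universal property as a formal consequence of the functorial machinery already in place, which is arguably the more conceptual proof. This is in fact the same composition trick the paper itself uses later in the proof of Theorem \ref{yin12}, so it is fully consistent with the paper's toolkit.
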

\begin{proof} Bt Theorem \ref{scab},
it suffices to show that there exists a unique  (2,1,1)-homomorphism $\Gamma: X_{P}^{+}/R_{P}^{\sharp}\rightarrow S$ such that the diagram
$$\CD
  P@>\phi>> P(S)\\
  @V\tau_{1} VV @V\tau_{2} VV \\
X_{P}^{+}/R_{P}^{\sharp} @>\Gamma>> S
\endCD$$
commutes, where $$\tau_{1}:P\rightarrow X_{P}^{+}/R_{P}^{\sharp} ,\,\,p\mapsto \overline{x_{p}},\,\,\tau_{2}:P(S)\rightarrow S,\,\,x\mapsto x.$$
If this is the case, it is easy to prove that
\begin{equation}\label{wangyin2}
\Phi=\overline{\Psi}^{-1}\Gamma:{\bf S}(\mathscr{C}(P))\rightarrow S
\end{equation}
is the unique (2,1,1)-homomorphism satisfying  $\iota_{1}\Phi=\phi \iota_{2}$, where $\overline{\Psi}$ is define in (\ref{qyh}).

By the freeness of $X^+_{P}$, we can assume that the unique semigroup homomorphism determined by the map $X_P\rightarrow S, x_p\mapsto p\phi$ is as follows:
$$\varphi:X_{P}^{+}\rightarrow S,\,\,\,x_{p}\rightarrow p\phi.$$
We shall show that $R_{P}\subseteq \ker{\varphi}$ in the sequel. Let $p,q\in P$ and $(u,v)\in R_{P}$. If  $u=x_{p}^{2}$ and $v=x_{p}$, then $u\varphi=x_{p}^{2}\varphi=(p\phi)(p\phi)=p\phi=v\varphi.$
If $u=x_{p\times q}x_{q}$ and $v=x_{p}x_{q}$, then by using the fact that $\phi$ is homomorphism, Lemmas \ref{touyingdaishu} and \ref{kangwei} (3), we have
$$u\varphi=(x_{p\times q}x_{q})\varphi=((p\times q)\phi) (q\phi)=(p\phi\times_S q\phi)(q\phi)$$$$=((p\phi)(q\phi))^{+}(q\phi)=(p\phi)(q\phi)=(x_{p}\varphi)(x_{q}\varphi)=(x_{p}x_{q})\varphi=v\varphi.$$
Dually, we also have $u\varphi=v\varphi$ if $u=x_{p}x_{p\star q}$ and $v=x_{p}x_{q}$.
If $u=x_{p\times q}x_{p\star{q}}, v=x_{p}x_{q}$, then by using the fact that $\phi$ is homomorphism, Lemmas \ref{touyingdaishu} and \ref{kangwei} (4), we obtain
$$u\varphi=(x_{p\times q}x_{p\star{q}})\varphi=(p\times q)\phi(p\star q)\phi
=(p\phi \times_S q\phi) (p\phi\star_S q\phi)\phi$$$$=((p\phi)(q\phi))^{+}((p\phi)(q\phi))^{\star}=(p\phi)(q\phi)=(x_{p}x_{q})\varphi=v\varphi.$$
This implies that $R_{P}\subseteq \ker\varphi$, and so $R_{P}^{\sharp}\subseteq \ker\varphi$. Thus we can define the semigroup homomorphism
$$\Gamma: X_{P}^{+}/R_{P}^{\sharp}=\langle X_{P}:R_{P}\rangle\rightarrow S,\,\,\, \overline{w}\rightarrow w\varphi.$$
By the statements after Lemma \ref{abgq}, we have $X_{P}^{+}/R_{P}^{\sharp}=\{\overline{w_{\mathfrak{p}}}\mid  \mathfrak{p}\in \mathscr{P}(P)\}$. Let $$\overline{w_{\mathfrak{p}}} \in X_{P}^{+}/R_{P}^{\sharp},\,\, \mathfrak{p}=(p_{1},p_{2},\ldots,p_{k})\in \mathscr{P}(P).$$ By (\ref{wangyin1}), $\overline{w_{\mathfrak{p}}}^{+}=\overline{x_{p_1}}$ and $ \overline{w_{\mathfrak{p}}}^\ast=\overline{x_{p_k}}$.  This shows that $\overline{w_{\mathfrak{p}}}^{+}\Gamma=\overline{x_{p_1}}\Phi=x_{p_1}\varphi=p_1\phi$.
Observe $p_{1}\mathcal{F}_{P} p_{2}\mathcal{F}_{P} \ldots \mathcal{F}_{P} p_{k}$  as $\mathfrak{p}=(p_{1},p_{2},\ldots,p_{k})\in \mathscr{P}(P)$. Hence $$p_{1}\phi\mathcal{F}_{P(S)} p_{2}\phi\mathcal{F}_{P(S)} \ldots \mathcal{F}_{P(S)} p_{k}\phi.$$
In view of Lemma \ref{zjiy}, in $(P(S),\times_{S},\star_{S})$ we have
\begin{equation*}
\begin{aligned}
p_{1}\phi&=p_{k}\phi\delta_{p_{k-1}\phi}\delta_{p_{k-2}\phi}\ldots \delta_{{p_{1}}\phi}\\
&=(p_{1}\phi\times_{S}(\ldots(p_{k-2}\phi\times_{S}(p_{k-1}\phi\times_{S}p_{k}\phi))))\\
&=(p_{1}\phi(\ldots(p_{k-2}\phi(p_{k-1}\phi p_{k}\phi)^{+})^{+})^{+})^{+}\\
&=((p_{1}\phi)(p_{2}\phi)\ldots (p_{k}\phi))^{+}\,\,\,\,\,\,\,\,(\mbox{by (ii) in DRC Conditions})\\
&=((x_{p_{1}}x_{p_{2}}\cdots x_{p_{k}})\varphi)^{+}
=(w_{\mathfrak{p}}\varphi)^{+}=(\overline{w_{\mathfrak{p}}}\Gamma)^{+}.
\end{aligned}
\end{equation*}
Thus $(\overline{w_{\mathfrak{p}}}\Gamma)^{+}=\overline{w_{\mathfrak{p}}}^{+}\Gamma.$ Dually, we have $(\overline{w_{\mathfrak{p}}}\Gamma)^{\ast}=\overline{w_{\mathfrak{p}}}^{\ast}\Gamma.$
Let $p\in P$. Then $$p(\tau_{1}\Gamma)=\overline{x_{p}}\Gamma=x_{p}\varphi=p\phi=(p\phi)\tau_{2}=p(\phi\tau_{2}).$$
Finally, since $$\overline{w_{\mathfrak{p}}}=\overline{x_{p_{1}}x_{p_{2}}\ldots x_{p_{k}}}=\overline{x_{p_{1}}}\,\,\overline{x_{p_{2}}}\ldots\overline{x_{p_{k}}},$$
it follows that $X_{P}^{+}/R_{P}^{\sharp}$ is generated by $\{\overline{x_{p}}\mid  p\in P\}$. So $\Gamma$ is necessarily unique. Moreover, combining  (\ref{vbkdg}), (\ref{qyh})  and (\ref{wangyin2}), we have $$\Phi:{\bf S}(\mathscr{C}(P))\rightarrow S,\,\,\,[p_1,p_2,\ldots, p_k]\mapsto (p_1\phi)(p_2\phi)\ldots (p_k\phi),$$ which  is the unique (2,1,1)-homomorphism such that the diagram (\ref{wangyin3}) commutes.
\end{proof}

Denote the category of DRC-restriction semigroups together with (2,1,1)-homomorphisms by $\mathbb{DRS}$ and the category of strong projection algebras together with projection algebra homomorphisms by $\mathbb{SPA}$, respectively.
Define a functor ${\mathcal U}: \mathbb{DRS}\rightarrow \mathbb{SPA}$ as follows: For any two DRC-semigroups $(S, \cdot, ^+, ^\ast)$, $(T, \cdot, ^+, ^\ast)$
and a (2,1,1)-homomorphism $\phi: S\rightarrow T$,
\begin{itemize}
\item[(1)]${\mathcal U}(S)=(P(S), \times_S, \star_S)$.
\item[(2)]${\mathcal U}(\phi): {\mathcal U}(S)=P(S)\rightarrow {\mathcal U}(T)=P(T), x\mapsto x\phi$.
\end{itemize}
Moreover, define a functor ${\mathcal V}: \mathbb{SPA} \rightarrow \mathbb{DRS}$ as follows: For any two strong  projection algebras $(P, \times, \star)$ and $(Q, \times, \star)$ and a projection algebra homomorphism $\psi: P\rightarrow Q$,
\begin{itemize}
\item[(1)]${\mathcal V}(P)= \textbf{S}(\mathscr{C}(P)).$
\item[(2)]${\mathcal V}(\psi): {\mathcal V}(P)=\textbf{S}(\mathscr{C}(P))\rightarrow {\mathcal V}(Q)=\textbf{S}(\mathscr{C}(Q))$ is the  (2,1,1) homomorphism induced in Theorem \ref{cc}.
\end{itemize}
To give further properties of the functors ${\mathcal U}$ and ${\mathcal V}$ above, we need some notions.
\begin{defn}\label{yqzyq}
Let ${\rm \bf C}$ and $ {\rm \bf D}$ be two (large) category and $F$ and $G$ be two functors from ${\rm \bf C}$ to ${\rm \bf D}$. Assume that for each object $C$ in ${\rm \bf C}$, $\eta_{C}$ is a morphism from $F(C)$ to $G(C)$ in ${\rm \bf D}$. The family of morphisms $\eta=\{\eta_{C}\mid {C\mbox{ is }\mbox{ an object in }{\rm \bf C}}\}$ is called a {\em natural transformation } from $F$ to $G$ if for any two objects $C,C'$ in ${\rm \bf C}$ and
any morphism $\phi: C\rightarrow C'$ in ${\rm \bf C}$, we have $\eta_CG(\phi)=F(\phi)\eta_{C'}$, that is, the following diagram
$$\CD
  F(C)@>\eta_{C}>> G(C)\\
  @VF(\phi) VV @VG(\phi) VV  \\
 F(C')@>\eta_{ C'}>>G(C')
\endCD$$
commutes.
\end{defn}
\begin{defn}\label{axdr}
Let ${\rm \bf C}$ and $ {\rm \bf D}$ be two (large) category, ${\mathcal V}: {\rm \bf C} \rightarrow {\rm \bf D}$ and ${\mathcal U}: {\rm \bf D} \rightarrow {\rm \bf C}$ be two functors, and $\eta: {\rm id}_{{\rm \bf C}}\rightarrow {\mathcal UV}$ be a natural transformation. Then $({\mathcal V}, {\mathcal U},\eta)$ is called an {\em adjunction} if for any object $C$ in ${\rm \bf C}$ , any object $D$ in ${\rm \bf D}$ and any morphism $\phi: C\rightarrow  {\mathcal U}(D)$, there exists a unique morphism $\overline{\phi}: {\mathcal V}(C)\rightarrow D$ such that $\eta_C{\mathcal U}(\overline{\phi})=\phi$, that is, the following diagram
$$
\xymatrix{
  C \ar[r]^{\phi} \ar[d]_{\eta_C} & {\mathcal U}(D)\\
  {\mathcal U}({\mathcal V}(C)) \ar[ur]_{{\mathcal U}(\overline{\phi})}
}
$$
commutes. In this case, $\mathcal V$ and $\mathcal U$ are called {\em left and right adjoints}, respectively, and $\eta$ is called the {\em unit} of the adjunction $({\mathcal V}, {\mathcal U},\eta)$. Moreover, elements in $\{{\mathcal V}(C)\mid C\mbox{ is an object in {\rm \bf C}}\}$ are called the {\em {\bf C}-free objects in {\rm \bf D}}.
\end{defn}

\begin{theorem}\label{dlsrb}
Define $$\eta=\{\eta_{P}\mid P \mbox{ is a strong projection algebra}\}: {\rm id}_{\mathbb {SPA}}\rightarrow {\mathcal U}{\mathcal V}$$ as follows:
For any strong projection algebra $P$,
$$\eta_{P}:P={\rm id}_{\mathbb {SPA}}(P)\rightarrow {\mathcal U}{\mathcal V}(P)=\{[p]\mid p\in P\},\,p\mapsto [p].$$
Then $({\mathcal V}, {\mathcal U},\eta)$ is an adjunction, and so the $\mathbb{SPA}$-objects in $\mathbb{DRS}$ are exactly chain semigroups.

\end{theorem}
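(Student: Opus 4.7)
The plan is to verify the two pieces of data defining an adjunction: that $\eta$ is indeed a natural transformation, and that the pair $(\mathcal V, \mathcal U)$ enjoys the required universal property. The heavy lifting will be delegated to Theorem~\ref{ahyj}, so the bulk of the work is bookkeeping rather than new construction.

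First I would check that each component $\eta_P: P \to \mathcal{U}\mathcal{V}(P) = P(\mathbf{S}(\mathscr{C}(P)))$, $p\mapsto [p]$, is a morphism in $\mathbb{SPA}$, i.e.\ a strong projection algebra homomorphism. This is essentially immediate from the discussion preceding Proposition~\ref{vnmu}, where we identified $[p]\times[q]=[p\times q]$ and $[p]\star[q]=[p\star q]$ in $P_{\mathscr{C}(P)}$ (and we also noted $[p]=[q]$ iff $p=q$, so $\eta_P$ is injective). Next I would verify the naturality square: for any homomorphism $\psi:P\to Q$ between strong projection algebras and any $p\in P$, one has ${\mathcal U}{\mathcal V}(\psi)(\eta_P(p))={\mathcal U}{\mathcal V}(\psi)([p])=[p\psi]=\eta_Q(\psi(p))$, where the middle equality is the definition of $\mathcal V(\psi)$ given in Theorem~\ref{cc}. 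This yields the commutativity of the naturality square and establishes that $\eta$ is a natural transformation.

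The core step is the universal property. Given a strong projection algebra $P$, a DRC-restriction semigroup $S$, and a morphism $\phi: P\to \mathcal{U}(S)=P(S)$ in $\mathbb{SPA}$, I would apply Theorem~\ref{ahyj} directly: it produces a unique $(2,1,1)$-homomorphism $\overline{\phi}:\mathbf{S}(\mathscr{C}(P))\to S$ with $[p_1,\ldots,p_k]\overline{\phi}=(p_1\phi)\cdots (p_k\phi)$ making the diagram~\eqref{wangyin3} commute. Commutativity of~\eqref{wangyin3} reads $[p]\overline{\phi}=p\phi$ for every $p\in P$, which, writing out the definition of $\mathcal U$ on morphisms, is precisely the identity $\eta_P\circ \mathcal{U}(\overline{\phi})=\phi$ required by Definition~\ref{axdr}. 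Uniqueness of $\overline{\phi}$ also transfers from Theorem~\ref{ahyj}, since any $(2,1,1)$-homomorphism ${\mathbf S}(\mathscr{C}(P))\to S$ satisfying $\eta_P\circ {\mathcal U}(\overline{\phi})=\phi$ automatically fulfils the hypotheses of that theorem on the generating projections $\{[p]\mid p\in P\}$, which generate $\mathbf{S}(\mathscr{C}(P))$ as a DRC-restriction semigroup by Proposition~\ref{fgpu}.

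Combining these verifications shows $(\mathcal V,\mathcal U,\eta)$ is an adjunction. The final clause is then purely definitional: Definition~\ref{axdr} declares the $\mathbb{SPA}$-free objects in $\mathbb{DRS}$ to be the images of $\mathcal V$, and by construction these are exactly the chain semigroups $\mathbf{S}(\mathscr{C}(P))$ arising from strong projection algebras $P$. I do not anticipate a substantive obstacle; the only potential pitfall is a notational one, namely ensuring that one consistently distinguishes $p\in P$ from $\eta_P(p)=[p]\in P(\mathbf{S}(\mathscr{C}(P)))$ and checks that $\mathcal U(\overline{\phi})$ is the restriction of $\overline{\phi}$ to projections, so that the adjunction identity truly coincides with the commutativity of~\eqref{wangyin3}.
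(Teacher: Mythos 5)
Your proposal is correct and follows essentially the same route as the paper: verify naturality of $\eta$ componentwise, then obtain both existence and uniqueness of the mediating $(2,1,1)$-homomorphism by translating the adjunction triangle into the commutative square of Theorem~\ref{ahyj}. The only cosmetic difference is that you make explicit the check that $\eta_P$ is a projection algebra homomorphism and cite Proposition~\ref{fgpu} for projection-generation (the paper routes uniqueness directly through the uniqueness clause of Theorem~\ref{ahyj}), neither of which changes the substance.
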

\begin{proof}
We first show that $\eta$ is a natural transformation. Let $P$ and $Q$ be two strong projection algebras and $\phi:P\rightarrow Q$ be a homomorphism. Let $p\in P$. Then $$p(\eta_{P}({\mathcal U}{\mathcal V}(\phi)))=[p]({\mathcal U}{\mathcal V}(\phi))=[p\phi]=(p\phi)\eta_{Q}=p(\phi\eta_{Q})=p({\rm id}_{\mathbb {SPA}}(\phi)\eta_{Q}).$$  This shows that the following diagram
$$\CD
   P={\rm id}_{\mathbb{SPA}}(P)@>\eta_{P}>> {\mathcal U}{\mathcal V}(P)=P(\textbf{S}(\mathscr{C}(P)))\\
  @V{\rm id}_{\mathbb{SPA}}(\phi) VV @V {\mathcal U}{\mathcal V}(\phi)VV  \\
 Q={\rm id}_{\mathbb{SPA}}(Q)@>\eta_{Q}>> {\mathcal U}{\mathcal V}(Q)=P(\textbf{S}(\mathscr{C}(Q)))
\endCD$$
commutes. Thus $\eta$ is anatural transformation.
Let $(P,\times,\star)$ be a strong projection algebra, $(S,\cdot, ^+, ^\ast)$ be a DRC-restriction semigroup whose strong projection algebra is $(P(S),\times_{S},\star_{S})$  and $\phi:P\rightarrow {\mathcal U}(S)=P(S)$ be a projection algebra homomorphism. By Theorem \ref{ahyj}, there exists a unique  (2,1,1)-homomorphism
$$\Phi:{\mathcal V}(P)=\textbf{S}(\mathscr{C}(P))\rightarrow S,\,\,[p_{1},p_{2},\ldots,p_{k}]\mapsto (p_{1}\phi)(p_{2}\phi)\ldots(p_{k}\phi).$$
such that $[p]\Phi=p\phi$. Thus
$${\mathcal U}(\Phi):P(\textbf{S}(\mathscr{C}(P)))={\mathcal U}{\mathcal V}(P)\rightarrow {\mathcal U}(S)=P(S),\,\,[p]\mapsto p\phi.$$ Let $p\in P$. Then $p(\eta_{P}{\mathcal U}(\Phi))=(p\eta_{P}){\mathcal U}(\Phi)=[p]{\mathcal U}(\Phi)=p\phi,$ that is, the following diagram
$$
\xymatrix{
  P \ar[r]^{\phi} \ar[d]_{\eta_P} & {\mathcal U}(S)=P(S)\\
 P(\textbf{S}(\mathscr{C}(P))) ={\mathcal U}{\mathcal V}(P) \ar[ur]_{{\mathcal U}(\Phi)}
}
$$
commutes.
Assume that $\sigma:{\mathcal V}(P)=\textbf{S}(\mathscr{C}(P))\rightarrow S$ is a (2,1,1)-homomorphism such that the following diagram
$$
\xymatrix{
  P \ar[r]^{\phi} \ar[d]_{\eta_P} & {\mathcal U}(S)=P(S)\\
 P(\textbf{S}(\mathscr{C}(P))) ={\mathcal U}{\mathcal V}(P) \ar[ur]_{{\mathcal U}(\sigma)}
}
$$
commutes. Then $[p]\sigma=[p]{\mathcal U}(\sigma)=p(\eta_{P}{\mathcal U}(\sigma))=p\phi$ for any $p\in P$, and so the following diagram
$$\CD
  P@>\phi>> P(S)\\
  @V\iota_1 VV @V\iota_2 VV  \\
 \textbf{S}(\mathscr{C}(P))@>\sigma >>S
\endCD$$
commutes.
By Theorem \ref{ahyj}, we have $\sigma=\Phi$. This gives the uniqueness of $\Phi$. Thus $({\mathcal V}, {\mathcal U},\eta)$ is an adjunction.
\end{proof}

\section{Projection-fundamental DRC-restriction semigroups}
In this section, by using Theorem \ref{hgdc} we reobtain the constructions of projection-fundamental DRC-restriction semigroups which originally given by Wang \cite{Wang6}.  We first give some preliminary notions and results.

Let $(S, \cdot, ^+, ^\ast)$ be a DRC-restriction semigroup and $\sigma$ be an equivalence on $S$. Then $\sigma$ is called {\em a (2,1,1)-congruence} on $S$ if for all $a,b,c\in S$,
$$a~\sigma~b\Longrightarrow ac~\sigma~bc,\,c a~\sigma~c b,\,a^{+}\sigma b^{+},\, a^{\ast}\sigma b^{\ast}.$$
A (2,1,1)-congruence $\sigma$ on $S$ is called {\em projection-separating} if for all $p,q\in P(S),$  the fact that $p~\sigma~q$ implies that $p=q$.

\begin{lemma}[\cite{Jones1}]\label{dvhi}
Let $(S, \cdot, ^+, ^\ast)$ be a DRC-restriction semigroup. Then
\begin{equation}\label{skfvm}
\mu_{S}=\{(a,b)\in S\times S\mid a^+=b^+, a^\ast=b^\ast\mbox{ and }  $$$$  (ap)^{+}=(bp)^{+},(pa)^{\ast}=(pb)^{\ast} \mbox{ for all } p\in P(S)\}
\end{equation}
is the maximum projection-separating (2,1,1)-congruence on $S$.
\end{lemma}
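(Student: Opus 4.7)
The plan is to verify four things in order: $\mu_S$ is an equivalence, $\mu_S$ is a $(2,1,1)$-congruence, $\mu_S$ is projection-separating, and every projection-separating $(2,1,1)$-congruence sits inside $\mu_S$. The first claim is immediate from the symmetric definition, and the third is essentially a tautology: if $e,f\in P(S)$ and $e\,\mu_S\,f$ then the very first defining clause gives $e=e^+=f^+=f$. So the real content lies in the second and fourth assertions.

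For the congruence property, the plan is to lean on the identities (ii) $(xy)^+=(xy^+)^+$ and (ii)$'$ $(xy)^*=(x^*y)^*$ to reduce everything to applications of the definition of $\mu_S$ with an appropriately chosen projection. Assuming $a\,\mu_S\,b$ and $c\in S$, I would compute
\[
(ac)^+=(ac^+)^+=(bc^+)^+=(bc)^+,\qquad (ca)^*=(c^*a)^*=(c^*b)^*=(cb)^*,
\]
where the middle equalities use the defining conditions of $\mu_S$ with $p=c^+$ and $p=c^*$ respectively. For the other two equalities needed, $(ac)^*=(bc)^*$ follows directly from $a^*=b^*$ and (ii)$'$, and $(ca)^+=(cb)^+$ from $a^+=b^+$ and (ii). To check that the defining conditions involving an arbitrary projection $q$ are preserved, I would then expand, for example, $(acq)^+=(a(cq)^+)^+=(b(cq)^+)^+=(bcq)^+$ by applying the hypothesis with the single projection $(cq)^+\in P(S)$, and do the dual calculation for $(qca)^*=(qcb)^*$ via $(qc)^*$. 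Preservation under $^+$ and $^*$ is automatic because $a^+=b^+$ and $a^*=b^*$ appear in the definition.

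For maximality, let $\sigma$ be any projection-separating $(2,1,1)$-congruence and suppose $a\,\sigma\,b$. Applying $^+$ and $^*$ and using that $\sigma$ separates projections yields $a^+=b^+$ and $a^*=b^*$. For any $p\in P(S)$, the congruence property gives $ap\,\sigma\,bp$, hence $(ap)^+\,\sigma\,(bp)^+$; both sides being projections forces $(ap)^+=(bp)^+$, and dually $(pa)^*=(pb)^*$. Thus $(a,b)\in\mu_S$, so $\sigma\subseteq\mu_S$.

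There is no serious obstacle in this proof; the only mild point to keep in mind is that the definition of $\mu_S$ builds in the extra conditions $(ap)^+=(bp)^+$ and $(pa)^*=(pb)^*$ for every projection $p$ precisely so that congruence with respect to arbitrary multiplication can be bootstrapped out of congruence with respect to multiplication by projections via (ii) and (ii)$'$; without these extra clauses the relation would be an equivalence that separates projections but would fail to be a congruence on the full semigroup.
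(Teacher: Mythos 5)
Your proof is correct. Note that the paper itself offers no proof of this lemma --- it is quoted from Jones \cite{Jones1} --- so there is no in-paper argument to compare against; your argument is the standard one. All the essential points are in place: the equivalence and projection-separating properties are immediate; the congruence property reduces, via the DRC axioms (ii) and (ii)$'$, to instances of the defining clauses of $\mu_S$ evaluated at the projections $c^+$, $c^\ast$, $(cq)^+$, $(qc)^\ast$, etc.; and maximality follows because any projection-separating $(2,1,1)$-congruence $\sigma$ forces $a^+=b^+$, $a^\ast=b^\ast$, $(ap)^+=(bp)^+$ and $(pa)^\ast=(pb)^\ast$ whenever $a\,\sigma\,b$. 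The only cases you leave implicit are the remaining two projection clauses for each of $ac\,\mu_S\,bc$ and $ca\,\mu_S\,cb$ (e.g.\ $(p(ac))^\ast=((pa)^\ast c)^\ast=((pb)^\ast c)^\ast=(p(bc))^\ast$ and $((ca)p)^+=(c(ap)^+)^+=(c(bp)^+)^+=((cb)p)^+$), but these follow by exactly the pattern you describe, so this is a matter of completeness of exposition rather than a gap. Your closing observation is also apt: the argument uses only the DRC axioms, not the ample conditions, so the lemma in fact holds for arbitrary DRC semigroups, which is the generality in which Jones states it.
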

\begin{defn}\label{shcyk}
A  DRC-restriction semigroup $(S, \cdot, ^+, ^\ast)$ is called {\em projection-fundamental} if the unique (2,1,1)-projection-separating congruence on $S$ is the identity relation, that is,  $\mu_S$ is the identity relation on $S$.
\end{defn}

\begin{prop}\label{fscpl}
Let $(S, \cdot, ^+, ^\ast)$ be a DRC-restriction semigroup and $a,b\in S$. Assume that $\dd_{S}$,\,$\rr_{S}$, $\nu_{a},\,\mu_{a},\nu_{b},\,\mu_{b}$ and $\Theta_{a}, \Delta_{a}, \Theta_{b}, \Delta_{b}$ are given in (\ref{qqyyaa}) ,(\ref{qqyyee}) ,(\ref{qqyyvv}), (\ref{yyqqaa}) and (\ref{yyqqee}), respectively. Then the following statements are equivalent:
\begin{itemize}
\item[(1)]$(a,b)\in \mu_{S}.$

\item[(2)]$\Theta_{a}=\Theta_{b}$ and $\Delta_{a}=\Delta_{b}.$

\item[(3)]$\dd_{S}(a)=\dd_{S}(b)$ and $\Theta_{a}=\Theta_{b}.$

\item[(4)]$\rr_{S}(a)=\rr_{S}(b)$ and $\Delta_{a}=\Delta_{b}.$

\item[(5)]$\dd_{S}(a)=\dd_{S}(b)$ and $\Theta_{a}{\mid}{_{\dd_{S}(a)^{\downarrow}}}=\Theta_{b}{\mid}{_{\dd_{S}(b)^{\downarrow}}}.$

\item[(6)]$\rr_{S}(a)=\rr_{S}(b)$ and $\Delta_{a}{\mid}{_{\rr_{S}(a)^{\downarrow}}}=\Delta_{b}{\mid}{_{\rr_{S}(b)^{\downarrow}}}.$

\item[(7)]$\nu_{a}=\nu_{b}.$

\item[(8)]$\mu_{a}=\mu_{b}.$
\end{itemize}
\end{prop}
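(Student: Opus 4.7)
The plan is to reduce everything to the pointwise formulas
$p\Theta_{a}=(pa)^{\ast}$ and $p\Delta_{a}=(ap)^{+}$ (see (\ref{yyqqaa}) and (\ref{yyqqee})), together with the fact that
$\nu_{a}=\Theta_{a}\mid_{\dd_{S}(a)^{\downarrow}}$ and
$\mu_{a}=\Delta_{a}\mid_{\rr_{S}(a)^{\downarrow}}$ are mutually inverse bijections (from (\ref{xpha}) and Lemma \ref{f}), and the identities
$\Theta_{a}=\theta_{\dd_{S}(a)}\nu_{a}$ and $\Delta_{a}=\delta_{\rr_{S}(a)}\mu_{a}$. A useful preliminary observation is that $a^{+}\Theta_{a}=a^{\ast}$ and $a^{\ast}\Delta_{a}=a^{+}$, while $(pa)^{\ast}\leq_{P(S)}a^{\ast}$ and $(ap)^{+}\leq_{P(S)}a^{+}$ for every $p\in P(S)$; thus $a^{\ast}$ is the largest element of $\im\Theta_{a}$ and $a^{+}$ is the largest element of $\im\Delta_{a}$. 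Consequently $\Theta_{a}$ determines $a^{\ast}$ and $\Delta_{a}$ determines $a^{+}$.

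With these gadgets in hand, I would first establish (1)$\Leftrightarrow$(2). By (\ref{skfvm}), $(a,b)\in\mu_{S}$ unfolds exactly as the four equalities $a^{+}=b^{+}$, $a^{\ast}=b^{\ast}$, $\Theta_{a}=\Theta_{b}$, $\Delta_{a}=\Delta_{b}$; the forward direction is then immediate, while the converse uses the max-of-image remark above. Next I would run the cycle (2)$\Rightarrow$(3)$\Rightarrow$(5)$\Rightarrow$(7)$\Rightarrow$(2). The first arrow is clear since (2) already contains $\Theta_{a}=\Theta_{b}$, and $\dd_{S}(a)=\dd_{S}(b)$ comes from $\Delta_{a}=\Delta_{b}$ via the max-of-image trick. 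The step (3)$\Rightarrow$(5) is a trivial restriction. For (5)$\Rightarrow$(7), having $\dd_{S}(a)=\dd_{S}(b)$ equates the domains, and the restricted equality is exactly $\nu_{a}=\nu_{b}$ by (\ref{xpha}). Finally for (7)$\Rightarrow$(2): from $\nu_{a}=\nu_{b}$ the common domain yields $\dd_{S}(a)=\dd_{S}(b)$, then $\Theta_{a}=\theta_{\dd_{S}(a)}\nu_{a}=\theta_{\dd_{S}(b)}\nu_{b}=\Theta_{b}$; applying Lemma \ref{f} gives $\mu_{a}=\nu_{a}^{-1}=\nu_{b}^{-1}=\mu_{b}$, and the common codomain of $\nu_{a},\nu_{b}$ yields $\rr_{S}(a)=\rr_{S}(b)$, hence $\Delta_{a}=\delta_{\rr_{S}(a)}\mu_{a}=\delta_{\rr_{S}(b)}\mu_{b}=\Delta_{b}$.

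By a completely dual argument (swapping the roles of $+$/$\ast$, $\Theta$/$\Delta$, $\theta$/$\delta$, $\nu$/$\mu$, and using Lemma \ref{f} in the opposite direction), I would run the second cycle (2)$\Rightarrow$(4)$\Rightarrow$(6)$\Rightarrow$(8)$\Rightarrow$(2). Combined with the first cycle and (1)$\Leftrightarrow$(2), this closes the chain of equivalences.

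The main obstacle is really only bookkeeping: one must be careful that equalities such as $\Theta_{a}=\Theta_{b}$ are read as equalities of functions on $P(S)$ (so that common domains/codomains follow automatically), and that equalities such as $\nu_{a}=\nu_{b}$ are genuine equalities of bijections between specific down-sets, from which the identifications of $\dd_{S}(a),\rr_{S}(a)$ may legitimately be extracted. Once these identifications are secured, every implication is a one-line manipulation using $\Theta_{a}=\theta_{\dd_{S}(a)}\nu_{a}$, $\Delta_{a}=\delta_{\rr_{S}(a)}\mu_{a}$, and the inversion $\mu_{a}=\nu_{a}^{-1}$.
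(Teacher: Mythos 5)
Your proof is correct and follows essentially the same route as the paper: reduce $\mu_S$ to the equalities $\Theta_a=\Theta_b$, $\Delta_a=\Delta_b$, chain (2)$\Rightarrow$(3)$\Rightarrow$(5)$\Rightarrow$(7)$\Rightarrow$(2) using $\Theta_a=\theta_{\dd_S(a)}\nu_a$, (\ref{xpha}) and Lemma \ref{f}, and dualize. The only cosmetic difference is that you recover $\dd_S(a)=\dd_S(b)$ from $\Delta_a=\Delta_b$ by noting $a^+$ is the maximum of $\im\Delta_a=(a^+)^{\downarrow}$, whereas the paper computes $a^+=a^\ast\Delta_a=a^\ast\Delta_b=(ba^\ast)^+\leq_S b^+$ and dually; both arguments are valid.
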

\begin{proof}We first observe that $\dd_{S}(a)=a^{+},\,\rr_{S}(a)=a^{\ast}$, and
\begin{center}$p\Theta_{a}=p\theta_{\dd_{S}(a)}\nu_{a}=(pa)^{\ast}$ and $p\Delta_{a}=p\delta_{\rr_{S}(a)}\mu_{a}=(ap)^{+}$ for all $p\in P(S)$.
\end{center}
By symmetry, we only need to show that (1), (2), (3), (5) and (7) are mutually equivalent.   Lemma \ref{dvhi} gives that (1) implies that (2).  That (3) implies (5) is trivial. The fact (\ref{xpha}) provides that (5) implies (7). In the followings, we prove that (2) implies (3) and (7) implies (1).  Assume that (2) holds. Then  $\Theta_{a}=\Theta_{b}$ and $\Delta_{a}=\Delta_{b}.$  This implies that $$\dd_{S}(a)=a^{+}=(aa^{\ast})^{+}=a^{\ast}\Delta_{a}=a^{\ast}\Delta_{b}=(ba^{\ast})^{+}\leq_{S}b^{+}=\dd_{S}(b)$$ by (i)$'$, (\ref{xiaoyu}) and Lemma \ref{xiaoyuslw}.
Dually, we have  $\dd_{S}(b)\leq_{S} \dd_{S}(a)$. This implies that  $\dd_{S}(b)=\dd_{S}(a)$. This together with the known fact that $\Theta_{a}=\Theta_{b}$ gives (3).

Now assume that (7) is true. Then $\nu_{a}=\nu_{b}.$  This yields that $\dd_{S}(a)^{\downarrow}=\dm\nu_{a}=\dm\nu_{b}=\dd_{S}(b)^{\downarrow}$, and so  $\dd_{S}(a)=\dd_{S}(b)$. Moreover,  $\theta_{\dd_{S}(a)}=\theta_{\dd_{S}(b)}$. Thus $\Theta_{a}=\theta_{\dd_{S}(a)}\nu_{a}=\theta_{\dd_{S}(b)}\nu_{b}=\Theta_{b}.$ This gives  (3). On the other hand, by Lemma \ref{f}, we have $\mu_{a}=\nu_{a}^{-1}=\nu_{b}^{-1}=\mu_{b}$. By the dual statements, we can obtain (4). Now Lemma \ref{dvhi} and (3), (4) together imply (1).
\end{proof}

Let $(P,\times,\star)$ be a strong projection algebra and $p\in P$. Then $$p^{\downarrow}=\{x\in P\mid x\leq_P p\}$$ is a subalgebra of $P$. Let $p,q\in P$. Denote
$$M(p,q)=\{\alpha\mid  \alpha\mbox{ is } \mbox{ a projection algebra isomorphism from } p^{\downarrow}\mbox{ to }q^{\downarrow}\},$$
$$M(P)=\underset{{p,q\in P}}{\bigcup}M(p,q).$$
For all $\alpha,\beta \in M(P)$, define
$$\alpha\circ \beta=\left\{\begin{array}{cc}
\alpha\beta\,\,\,\,\,\,\mbox{if }\ra \alpha=\dm\beta,\\
\mbox{undefined }\,\,\,\,\,\,\mbox{otherwise},
\end{array} \right.
$$
and $\dd(\alpha)={\rm id}_{\dm\alpha},\,\,\rr(\alpha)={\rm id}_{\ra\alpha},$
where $\alpha\beta$ denotes the composition of $\alpha$ and $\beta$. Denote the inverse map of $\alpha$ by $\alpha^{-1}$ for all $\alpha \in M(P)$. Then it is easy to prove that  $(M(P),\circ,\dd,\rr,^{-1})$ is a groupoid with the set of objects $P_{M(P)}=\{{\rm id}_{p^{\downarrow}}\mid p\in P\}.$ We also observe the following fact:
\begin{equation}\label{baoxu}
p\alpha =q \mbox{ and } s\alpha\leq_{P}t\alpha \mbox{ for all }  \alpha\in M(p,q)  \mbox{ and } s,t\in p^{\downarrow} \mbox{ with } s\leq_{P}t,
\end{equation}
Let $\alpha\in M(s,t)$ and $X$ be a subalgebra of $\dm\alpha=s^\downarrow$. Then $\alpha{\mid}_{X}:X\rightarrow X\alpha,\,\,\,x\mapsto x\alpha$ is a projection algebra isomorphism from $X$ to $X\alpha$, and is called {\em the restriction of $\alpha$ on $X$}.
\begin{lemma}\label{yinwang}
Let $\alpha\in M(s,t)$ and $p\in \dm\alpha$. Then $\alpha{\mid}_{p^{\downarrow}}\in M(p,p\alpha).$
\end{lemma}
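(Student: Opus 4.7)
The plan is to verify the three items built into membership of $M(p, p\alpha)$: that $\alpha|_{p^\downarrow}$ is well-defined with domain $p^\downarrow$, that its image is exactly $(p\alpha)^\downarrow$, and that it is a projection algebra isomorphism onto this image. The first and third parts will be essentially immediate once the second is established, so the substantive step is pinning down the image.

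First I would observe that since $p \in \dm\alpha = s^\downarrow$ we have $p \leq_P s$, and by transitivity of $\leq_P$ together with the fact, noted just before Lemma \ref{ii}, that every down-set $r^\downarrow$ is a subalgebra of $(P,\times,\star)$, the set $p^\downarrow$ is a subalgebra of $s^\downarrow = \dm\alpha$. Hence the restriction $\alpha|_{p^\downarrow}$ makes sense as a map from the subalgebra $p^\downarrow$ into $t^\downarrow$, and since $\alpha$ is a projection algebra homomorphism so is its restriction.

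The main (and only non-immediate) step is to show that the image of $\alpha|_{p^\downarrow}$ is exactly $(p\alpha)^\downarrow$. For the forward containment, let $x \in p^\downarrow$, so $x \leq_P p$; applying the monotonicity property (\ref{baoxu}) to $\alpha$ gives $x\alpha \leq_P p\alpha$, i.e.\ $x\alpha \in (p\alpha)^\downarrow$. For the reverse containment, let $y \in (p\alpha)^\downarrow$, so $y \leq_P p\alpha$. Since $p\alpha \in t^\downarrow = \ra\alpha$, certainly $y \in t^\downarrow$, so $y = x\alpha$ for a unique $x \in s^\downarrow$. To see $x \leq_P p$, apply (\ref{baoxu}) to the inverse isomorphism $\alpha^{-1} \in M(t,s)$ (which lies in $M(P)$ because $(M(P),\circ,\dd,\rr,^{-1})$ is a groupoid): from $y \leq_P p\alpha$ we get $y\alpha^{-1} \leq_P (p\alpha)\alpha^{-1} = p$, i.e.\ $x \leq_P p$, so $x \in p^\downarrow$ and $y = x\alpha \in \alpha(p^\downarrow)$.

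Finally, $\alpha|_{p^\downarrow} : p^\downarrow \to (p\alpha)^\downarrow$ is injective as a restriction of the injection $\alpha$, and surjective by the image computation above; combined with the homomorphism property it is therefore a projection algebra isomorphism, giving $\alpha|_{p^\downarrow} \in M(p, p\alpha)$. No step presents a real obstacle; the only thing to watch is to use $\alpha^{-1}$ (rather than trying a direct argument inside $p^\downarrow$) to get the reverse containment, since monotonicity alone does not automatically give the surjectivity of a restricted isomorphism onto the expected down-set.
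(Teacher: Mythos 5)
Your proof is correct and follows essentially the same route as the paper's: both establish $p^{\downarrow}\alpha=(p\alpha)^{\downarrow}$ by using the monotonicity property (\ref{baoxu}) for the forward containment and the order-preservation of $\alpha^{-1}$ for the reverse containment, and then conclude the restriction is an isomorphism. The extra remarks you add about $p^{\downarrow}$ being a subalgebra and the restriction inheriting the homomorphism property are fine but not needed beyond what the paper records.
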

\begin{proof}
If $p\in \dm\alpha$, then $p^{\downarrow}\subseteq \dm\alpha=s^{\downarrow}$, and so
$\alpha{\mid}_{p^{\downarrow}}:p^{\downarrow}\rightarrow p^{\downarrow}\alpha,\,\,\,x\mapsto x\alpha.$
Let $x\alpha \in p^{\downarrow}\alpha$, where $x\leq_{P}p$. By (\ref{baoxu}), we have $x\alpha\leq_{P}p\alpha$ and so $x\alpha \in (p\alpha)^{\downarrow}$. This shows that $ p^{\downarrow}\alpha \subseteq (p\alpha)^{\downarrow}.$ Now let $y\in (p\alpha)^{\downarrow}$. Then $y\leq_{P} p\alpha\leq_{P} t$, and so $y\in t^{\downarrow}$. Denote  $y=x\alpha$. Then $y=x\alpha\leq_{P}p\alpha.$ Thus $x\leq_{P}p$ as $\alpha^{-1}$ is  also order-preserving. This implies that $(p\alpha)^{\downarrow}\subseteq  p^{\downarrow}\alpha$. Therefore $(p\alpha)^{\downarrow}= p^{\downarrow}\alpha.$ We have shown that $\alpha{\mid}_{p^{\downarrow}}\in M(p,p\alpha).$
\end{proof}
Define a relation $\leq_{M(P)}$ on $M(P)$ as follows: For all $\alpha,\beta \in M(P)$,
$$\alpha \leq_{M(P)}\beta \Longleftrightarrow \alpha=\beta{\mid}_{\dm\alpha}.$$
That is, $\alpha \leq_{M(P)}\beta$ if and only if $\dm\alpha\subseteq \dm\beta$, and $x\alpha=x\beta$ for all $x\in \dm\alpha$. It is easy to see that $``\leq_{M(P)}"$ is a partial order on  $M(P)$.

\begin{lemma}\label{yind2}
Let $(P,\times,\star)$ be a strong projection algebra. Then $(M(P),\circ,\dd,\rr, \leq_{M(P)}, P_{M(P)}, \times, \star)$ is a  projection ordered category and
$${_{{\rm id}_{p^{\downarrow}}}}{\downharpoonleft}\alpha=\alpha{\mid}_{p^{\downarrow}},\,\,\,\,\alpha{\downharpoonright}{_{{\rm id}_{q^{\downarrow}}}}=\alpha{\mid}{_{(q\alpha^{-1})^{\downarrow}}} $$ for all $ p\in P $ with  ${\rm id}_{p^{\downarrow}}\leq_{M(P)} \dd(\alpha)$ and $q\in P$ with  ${\rm id}_{q^{\downarrow}}\leq_{M(P)} \rr(\alpha)$, respectively.
\end{lemma}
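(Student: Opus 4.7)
The plan is to verify the axioms in order: first that $(M(P),\circ,\dd,\rr)$ is a category, then that $\leq_{M(P)}$ gives it the structure of an ordered category (establishing the restriction formulas en route), then that it is a weak projection ordered category, and finally that condition (G1d) holds so that it is in fact a projection ordered category.

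First I would record that $(M(P),\circ,\dd,\rr)$ is already a (groupoid) category whose set of objects is $P_{M(P)}=\{\mathrm{id}_{p^{\downarrow}}\mid p\in P\}$, with $\dd(\mathrm{id}_{p^{\downarrow}})=\rr(\mathrm{id}_{p^{\downarrow}})=\mathrm{id}_{p^{\downarrow}}$. The map $p\mapsto \mathrm{id}_{p^{\downarrow}}$ is a bijection $P\to P_{M(P)}$ (since distinct $p,q\in P$ yield distinct subalgebras $p^{\downarrow}\neq q^{\downarrow}$), and using this bijection I would transport the operations $\times,\star$ from $P$ to $P_{M(P)}$ by setting $\mathrm{id}_{p^{\downarrow}}\times\mathrm{id}_{q^{\downarrow}}=\mathrm{id}_{(p\times q)^{\downarrow}}$ and $\mathrm{id}_{p^{\downarrow}}\star\mathrm{id}_{q^{\downarrow}}=\mathrm{id}_{(p\star q)^{\downarrow}}$. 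Thus $(P_{M(P)},\times,\star)$ is a strong projection algebra, canonically identified with $(P,\times,\star)$.

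Next I would verify the order axioms. For (O1), if $\alpha\leq_{M(P)}\beta$ then $\dm\alpha\subseteq\dm\beta$ and the two maps agree on $\dm\alpha$, so $\dd(\alpha)=\mathrm{id}_{\dm\alpha}\leq_{M(P)}\mathrm{id}_{\dm\beta}=\dd(\beta)$ and the analogous statement for $\rr$ follows from (\ref{baoxu}). (O2) is a direct computation using that a restriction of a composite is a composite of restrictions. For (O3), given $\alpha\in M(s,t)$ and $\mathrm{id}_{p^{\downarrow}}\leq_{M(P)}\mathrm{id}_{s^{\downarrow}}$ (equivalently $p\leq_P s$), Lemma~\ref{yinwang} gives $\alpha|_{p^{\downarrow}}\in M(p,p\alpha)$, and by definition $\alpha|_{p^{\downarrow}}\leq_{M(P)}\alpha$ with $\dd(\alpha|_{p^{\downarrow}})=\mathrm{id}_{p^{\downarrow}}$. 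For uniqueness, any $\gamma\leq_{M(P)}\alpha$ with $\dd(\gamma)=\mathrm{id}_{p^{\downarrow}}$ must have $\dm\gamma=p^{\downarrow}$ and must agree with $\alpha$ on $p^{\downarrow}$, so $\gamma=\alpha|_{p^{\downarrow}}$. This simultaneously establishes the formula ${_{\mathrm{id}_{p^{\downarrow}}}}{\downharpoonleft}\alpha=\alpha|_{p^{\downarrow}}$. The argument for (O4) and the right-restriction formula is dual: given $\mathrm{id}_{q^{\downarrow}}\leq_{M(P)}\mathrm{id}_{t^{\downarrow}}$, set $r=q\alpha^{-1}\in s^{\downarrow}$ (using that $\alpha^{-1}$ is order-preserving) and verify $\alpha|_{r^{\downarrow}}$ is the unique element below $\alpha$ with range $q^{\downarrow}$.

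It remains to check the weak-projection compatibility and (G1d). For the former, $\mathrm{id}_{p^{\downarrow}}\leq_{M(P)}\mathrm{id}_{q^{\downarrow}}$ iff $p^{\downarrow}\subseteq q^{\downarrow}$ iff $p\leq_P q$ iff $\mathrm{id}_{p^{\downarrow}}\leq_{P_{M(P)}}\mathrm{id}_{q^{\downarrow}}$, so $(M(P),\circ,\dd,\rr,\leq_{M(P)},P_{M(P)},\times,\star)$ is a weak projection ordered category. For (G1d), I would compute, for $\alpha\in M(s,t)$ and $\mathrm{id}_{p^{\downarrow}}\in\dd(\alpha)^{\downarrow}$,
\[
\nu_{\alpha}(\mathrm{id}_{p^{\downarrow}})=\rr({_{\mathrm{id}_{p^{\downarrow}}}}{\downharpoonleft}\alpha)=\rr(\alpha|_{p^{\downarrow}})=\mathrm{id}_{(p\alpha)^{\downarrow}},
\]
so that under the identification $p\leftrightarrow\mathrm{id}_{p^{\downarrow}}$, the map $\nu_{\alpha}$ is precisely $\alpha|_{s^{\downarrow}}=\alpha$ itself. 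Since $\alpha\in M(s,t)$ is by hypothesis a projection algebra isomorphism, $\nu_{\alpha}$ is a homomorphism, which is exactly (G1d). The main obstacle in writing this up cleanly is the bookkeeping between $P$ and $P_{M(P)}$ via the bijection $p\leftrightarrow\mathrm{id}_{p^{\downarrow}}$, together with the verification of uniqueness in (O3)/(O4); once those are settled, the projection-algebra content of (G1d) is immediate from the definition of $M(P)$.
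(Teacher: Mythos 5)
Your proposal is correct and follows essentially the same route as the paper: transport the projection-algebra structure to $P_{M(P)}$ via $p\mapsto{\rm id}_{p^{\downarrow}}$, verify (O1)--(O4) directly (with Lemma~\ref{yinwang} supplying the restrictions and the uniqueness arguments yielding the two displayed formulas), and then observe that under the identification $p\leftrightarrow{\rm id}_{p^{\downarrow}}$ the map $\nu_{\alpha}$ is $\alpha$ itself, so (G1d) is immediate from the definition of $M(P)$. No substantive differences from the paper's proof.
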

\begin{proof}Define $\times$ and $\star$ on $P_{M(P)}$ as follows:
For all $p,q\in P$,
$${\rm id}_{p^{\downarrow}}\times{\rm id}_{q^{\downarrow}}={\rm id}_{(p\times q)^{\downarrow}},\,\,{\rm id}_{p^{\downarrow}}\star {\rm id}_{q^{\downarrow}}={\rm id}_{(p\star q)^{\downarrow}}.$$
Then $(P_{M(P)},\times,\star)$ forms a strong projection algebra isomorphic to $(P,\times,\star)$ via the map ${\rm id}_{p^{\downarrow}}\mapsto p$. Thus for all $p,q\in P$, $p\leq_{P}q$ if and only if ${\rm id}_{p^{\downarrow}}\leq_{P_{M(P)}}{\rm id}_{q^{\downarrow}}.$

Now let $p,q\in P$.  Then $${\rm id}_{p^{\downarrow}}\leq_{M(P)} {\rm id}_{q^{\downarrow}}\Longleftrightarrow p^{\downarrow}\subseteq q^{\downarrow}\Longleftrightarrow p\leq_{P}q \Longleftrightarrow {\rm id}_{p^{\downarrow}}\leq_{P_{M(P)}} {\rm id}_{q^{\downarrow}}.$$  This implies that $\leq_{M(P)}$ and $\leq_{P_{M(P)}}$ is the same on $P_{M(P)}$.

(O1) Let $\alpha \leq_{M(P)}\beta$. Then $\alpha=\beta{\mid}_{\dm\alpha}$, which implies that  $\dm\alpha\subseteq \dm\beta$ and $x\alpha=x\beta$ for all $x\in \dm\alpha$. Hence $\dd(\alpha)={\rm id}_{\dm\alpha}\leq_{M(P)} {\rm id}_{\dm\beta}=\dd(\beta)$. Dually, we have $\rr(\alpha )\leq_{M(P)}\rr(\beta)$.

(O2) Let $\alpha \leq_{M(P)}\beta$,$\gamma \leq_{M(P)}\delta$ and $\alpha\circ \gamma$ and $\beta\circ \delta$ be defined. Then $$\alpha=\beta{\mid}_{\dm\alpha},\, \gamma=\delta{\mid}_{\dm\gamma}, \,\dm\alpha\subseteq \dm\beta, \, \dm\gamma\subseteq \dm\delta,$$$$\ra\alpha=\dm\gamma,\, \ra\beta=\dm\delta,\,\, \alpha\circ \gamma=\alpha\gamma,\,\beta\circ \delta=\beta\delta.$$ Thus $$\dm(\alpha \circ\gamma)=\dm(\alpha\gamma)=\dm(\alpha)\subseteq \dm(\beta)=\dm(\beta\circ\delta).$$
Let $x\in \dm(\alpha\circ \gamma)=\dm\alpha$. Then $$x(\alpha \circ \gamma)=x(\alpha\gamma)=(x\alpha)\gamma=(x\beta)\gamma=(x\beta)\delta=x\beta\delta=x(\beta\circ \delta).$$ This implies  that $\alpha\circ \gamma\leq_{M(P)}\beta \circ \delta.$

(O3) Let ${\rm id}_{p^{\downarrow}}\in P_{M(P)}, p\in P $ and ${\rm id}_{p^{\downarrow}}\leq_{M(P)} \dd(\alpha)={\rm id}_{\dm\alpha}$.  Then $p^{\downarrow}\subseteq \dm\alpha.$ By Lemma \ref{yinwang}, we have $\alpha{\mid}_{p^{\downarrow}}\in M(p,p\alpha)\subseteq M(P)$. Obviously, $\alpha{\mid}_{p^{\downarrow}}\leq_{M(P)}\alpha$.  Moreover, $\dd(\alpha{\mid}_{p^{\downarrow}})={\rm id}_{\dm(\alpha{\mid}_{p^{\downarrow}})}={\rm id}_{p^{\downarrow}}$.
Assume that $\beta\in M(P),\beta\leq_{M(P)} \alpha$ and $\dd(\beta)={\rm id}_{p^{\downarrow}}$. Then
${\rm id}_{\dm\beta}=\dd(\beta)={\rm id}_{p^{\downarrow}}$ and $\beta=\alpha{\mid}_{\dm\beta}$, and so $\dm\beta=p^{\downarrow}$ and $\beta=\alpha{\mid}_{\dm\beta}=\alpha{\mid}_{p^{\downarrow}}.$ Thus ${_{{\rm id}_{p^{\downarrow}}}}{\downharpoonleft}\alpha=\alpha{\mid}_{p^{\downarrow}}$.

(O4) Let ${\rm id}_{q^{\downarrow}}\in P_{M(P)}, q\in P$ and ${\rm id}_{q^{\downarrow}}\leq_{M(P)} \rr(\alpha)={\rm id}_{\ra\alpha}$. Then $$q^{\downarrow}\subseteq \ra\alpha=\dm\alpha^{-1},q\alpha^{-1}\in \ra\alpha^{-1}=\dm\alpha, (q\alpha^{-1})^{\downarrow}\subseteq \dm\alpha.$$
By Lemma \ref{yinwang}, we have $\alpha{\mid}{_{(q\alpha^{-1})^{\downarrow}}}\in M(q\alpha^{-1},q)$. Obviously, $\alpha{\mid}{_{(q\alpha^{-1})^{\downarrow}}} \leq_{M(P)} \alpha$. Moreover, $\rr(\alpha{\mid}{_{(q\alpha^{-1})^{\downarrow}}} )={\rm id}_{\ra(\alpha{\mid}{_{(q\alpha^{-1})^{\downarrow}}} )}={\rm id}_{q^{\downarrow}}$.
Assume that $\beta\in M(P), \beta\leq_{M(P)} \alpha$ and $\rr(\beta)={\rm id}_{q^{\downarrow}}$. Then  ${\rm id}_{\ra\beta}=\rr(\beta)={\rm id}_{q^{\downarrow}}$
and $\beta=\alpha{\mid}_{\dm\beta}$, and so $\ra\beta=q^{\downarrow}$ and $$\beta=\alpha{\mid}_{\dm\beta} =\alpha{\mid}{_{(q^{\downarrow})\beta^{-1}}}=\alpha{\mid}{_{(q\beta^{-1})^{\downarrow}}}
=\alpha{\mid}{_{(q\alpha^{-1})^{\downarrow}}}.$$ Thus $\alpha{\downharpoonright}{_{{\rm id}_{q^{\downarrow}}}}=\alpha{\mid}{_{(q\alpha^{-1})^{\downarrow}}}$.
We have shown that $(M(P),\circ,\dd,\rr, \leq_{M(P)}, P_{M(P)}, \times,\star)$ is a weak projection ordered category.

Now, let $p,q\in P$ and $\alpha\in M(p,q)$.  Then
$$\dd(\alpha)^{\downarrow}=({\rm id}_{\dm\alpha})^{\downarrow}=({\rm id}_{p^{\downarrow}})^{\downarrow}=\{{\rm id}_{x^{\downarrow}}\mid x\in P,  x\leq_{P}p\}=\{{\rm id}_{x^{\downarrow}}\mid x\in P, x\in p^{\downarrow}\},$$$$\,\,\rr(\alpha)^{\downarrow}=({\rm id}_{\ra\alpha})^{\downarrow}=({\rm id}_{q^{\downarrow}})^{\downarrow} =\{{\rm id}_{y^{\downarrow}}\mid y\in P, y\leq_{P}q\}$$$$=\{{\rm id}_{y^{\downarrow}}\mid y\in P, y\in q^{\downarrow}\}=\{{\rm id}_{(x\alpha)^{\downarrow}}\mid x\in P, x\in p^{\downarrow}\}.$$
Let ${\rm id}_{x^{\downarrow}}\leq_{M(P)} \dd(\alpha)$. By (\ref{c}), $${\rm id}_{x^{\downarrow}}\nu_{\alpha}=\rr({_{{\rm id}_{x^{\downarrow}}}}{\downharpoonleft}\alpha)=\rr(\alpha{\mid}{_{x^{\downarrow}}})
={\rm id}_{\ra(\alpha{\mid}{_{x^{\downarrow}}})}={\rm id}_{(x\alpha)^{\downarrow}}.$$
Thus we have
\begin{equation}\label{frig}
\nu_{\alpha}:\dd(\alpha)^{\downarrow}\rightarrow \rr(\alpha)^{\downarrow},\,\,{\rm id}_{x^{\downarrow}}\mapsto {\rm id}_{(x\alpha)^{\downarrow}}.
\end{equation}
Lemma \ref{f} gives that $\mu_{\alpha}=\nu_{\alpha^{-1}}$. Hence
\begin{equation}\label{yind4}
\mu_{\alpha}:\rr(\alpha)^{\downarrow}\rightarrow \dd(\alpha)^{\downarrow},\,\,{\rm id}_{y^{\downarrow}}\mapsto {\rm id}_{(y\alpha^{-1})^{\downarrow}}.
\end{equation}
Since $\alpha$ is isomorphism,  $\nu_{\alpha}$ is an isomorphism. Thus $(M(P),\circ,\dd,\rr,
\leq_{M(P)}, P_{M(P)}, \times, \star)$ is a  projection ordered category. 
\end{proof}

Let $(P,\times,\star)$ be a strong projection algebra, $p,q\in P$ and $\alpha\in M(p,q)$. Then $\dd(\alpha)={\rm id}_{p^{\downarrow}}$ and $\rr(\alpha)={\rm id}_{q^{\downarrow}}$. Let ${\rm id}_{x^{\downarrow}}\in P_{M(P)}$. By (\ref{aa}) and (\ref{frig}), $$({\rm id}_{x^{\downarrow}})\Theta_{\alpha}=({\rm id}_{x^{\downarrow}})\theta_{\dd(\alpha)}\nu_{\alpha}=({\rm id}_{x^{\downarrow}}\star {\rm id}_{p^{\downarrow}})\nu_{\alpha}=({\rm id}_{(x\star p)^{\downarrow}})\nu_{\alpha}={\rm id}_{((x\star p)\alpha)^{\downarrow}}.$$
Thus we have
\begin{equation}\label{ahdy}
\Theta_{\alpha}:P_{M(P)}\rightarrow\rr(\alpha)^{\downarrow},\,\,\,{\rm id}_{x^{\downarrow}}\mapsto {\rm id}_{((x\star p)\alpha)^{\downarrow}}.
\end{equation}
Dually, we obtain
\begin{equation}\label{yind5}
\Delta_{\alpha}:P_{M(P)}\rightarrow\dd(\alpha)^{\downarrow},\,\,\,{\rm id}_{x^{\downarrow}}\mapsto {\rm id}_{((q\times x)\alpha^{-1})^{\downarrow}}.
\end{equation}
In particular, for all ${\rm id}_{x^{\downarrow}}\in P_{M(P)}$,
\begin{equation}\label{yind6}
\Theta_{{\rm id}_{x^{\downarrow}}}=\theta_{{\rm id}_{x^{\downarrow}}},\,\,\Delta_{{\rm id}_{x^{\downarrow}}}=\delta_{{\rm id}_{x^{\downarrow}}}
\end{equation}

Let $(P,\times,\star)$ be a strong projection algebra, and $p,q \in P$ with $p \mathcal{F}_{P} \,q$. Then
$$\theta_{q}:P\rightarrow P,\,\,x\mapsto x\star q,\,\,\delta_{p}:P\rightarrow P,\,\,y\mapsto p\times y.$$
By (\ref{m}), we have $\ra\theta_{q}=q^{\downarrow}$ and $\ra\delta_{p}=p^{\downarrow}$. Denote
\begin{equation}\label{avtki}
\gamma_{p,q}=\theta_{q}{\mid}{_{p^{\downarrow}}},\,\,\,\beta_{q,p}=\delta_{p}{\mid}{_{q^{\downarrow}}}
\end{equation}
Then $\gamma_{p,q}$  and $\beta_{q,p}$ are maps from $p^{\downarrow}$ to $q^{\downarrow}$ and $q^{\downarrow}$ to $p^{\downarrow}$, respectively.
According to \cite[(4.8)]{Wang6}, \cite[Lemma 4.5]{Wang6} and the fact  that $p \mathcal{F}_{P} \,q$, we have the following lemma.

\begin{lemma}\label{qodvk}
Let $(P,\times,\star)$ be a strong projection algebra and $p,q \in P$ with $p \mathcal{F}_{P}\, q$. Then $\gamma_{p, q}\in M(p,q)$ and $\gamma_{p,q}^{-1}=\beta_{q,p}.$
\end{lemma}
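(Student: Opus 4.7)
The plan is to verify the two claims separately: that $\beta_{q,p}$ is the two-sided inverse of $\gamma_{p,q}$ as a map, and that $\gamma_{p,q}$ preserves both $\times$ and $\star$.

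First I would confirm that the two maps are well defined with the stated codomains: for $x\in p^\downarrow$, Lemma \ref{jiben1}(2) gives $x\star q\leq_P q$, and dually $p\times y\leq_P p$ for $y\in q^\downarrow$. Next, the bijectivity will come directly from the strong-algebra condition (\ref{wang2}) together with $p \mathcal{F}_P q$. Using (\ref{s}), $p \mathcal{F}_P q$ means $p\times q=p$ and $p\star q=q$. So for $x\leq_P p=p\times q$, (\ref{wang2}) yields $p\times(x\star q)=x$, i.e.\ $\beta_{q,p}\circ\gamma_{p,q}=\mathrm{id}_{p^\downarrow}$; dually, for $y\leq_P q=p\star q$ we get $(p\times y)\star q=y$, giving $\gamma_{p,q}\circ\beta_{q,p}=\mathrm{id}_{q^\downarrow}$.

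For the homomorphism properties the easy half is $\star$-preservation: for $x,y\leq_P p$, apply (R3) to get $(x\star y)\star q=(x\star y)\star(y\star q)$, then (R4) to collapse both this and $(x\star q)\star(y\star q)$ to $x\star(y\star q)$, so that $\gamma_{p,q}(x\star y)=\gamma_{p,q}(x)\star\gamma_{p,q}(y)$. The $\times$-preservation is the main obstacle. I would leverage the bijection just established: since $\beta_{q,p}$ is injective, it suffices to verify that $(x\times y)\star q$ and $(x\star q)\times(y\star q)$ have a common image under $\beta_{q,p}=p\times(-)$. The left side sends to $x\times y$ because $x\times y\leq_P p=p\times q$ lets us invoke (\ref{wang2}) again. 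For the right side, writing $a=x\star q$ and $b=y\star q$ with $p\times a=x$ and $p\times b=y$, I need the identity
\[
p\times(a\times b)=(p\times a)\times(p\times b).
\]
I would derive this by chaining (L3), which gives $p\times(a\times b)=(p\times a)\times(a\times b)$, with (L4) and the absorption $(p\times a)\times a=p\times a$ obtained from (L1)–(L2), and finally an application of Lemma \ref{y} exploiting the inequalities $p\times a\leq_P p$ and $a\times b\leq_P b\leq_P q$ to substitute $q$ out of the expression using $p\times q=p$.

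The technical heart of the argument is this last $\times$-preservation identity; the rest is routine once the axiom (\ref{wang2}) is in hand. Exactly the required calculation is recorded in \cite[(4.8), Lemma 4.5]{Wang6}, so I would cite those identities rather than redoing the full manipulation, and the lemma then follows.
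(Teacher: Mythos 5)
Your well-definedness and bijectivity arguments are correct and complete: rewriting $p\,\mathcal{F}_P\,q$ as $p\times q=p$ and $p\star q=q$ via (\ref{s}), the two implications of (\ref{wang2}) give exactly $p\times(x\star q)=x$ for $x\leq_P p$ and $(p\times y)\star q=y$ for $y\leq_P q$, so $\gamma_{p,q}$ and $\beta_{q,p}$ are mutually inverse. (For what it is worth, the paper offers no proof at all beyond the citation to \cite[(4.8), Lemma 4.5]{Wang6} in the sentence preceding the lemma, so your closing fallback to that citation matches the paper.)

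The genuine gap is in the step you label ``the easy half''. You need $(x\star y)\star q=(x\star q)\star(y\star q)$; (R3) turns the left side into $(x\star y)\star(y\star q)$ and (R4) turns the right side into $x\star(y\star q)$, but the remaining equality $(x\star y)\star(y\star q)=x\star(y\star q)$ is \emph{not} an instance of (R4). Matching $(x\star y)\star(y\star q)$ against the right side $(g\star e)\star(f\star e)$ of (R4) forces $e=q$ and $x\star y=g\star q$ for some $g$, i.e.\ $x\star y\leq_P q$, which fails in general; applying (R4) with $g=x\star y$ instead only returns the circular identity $(x\star y)\star(y\star q)=((x\star y)\star q)\star(y\star q)$. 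The missing step really does need the two-sided and strong axioms: setting $z=y\star q$, a second application of (\ref{wang2}) (using $y\leq_P p=p\times q$) gives $y=p\times z$, and then (P3) yields $(x\star y)\star z=(x\star(p\times z))\star z=(x\star p)\star z=x\star z$, which closes the argument. The same criticism applies to your $\times$-preservation sketch: the chain (L3), (L4), (L1)--(L2), Lemma \ref{y} only brings you to comparing $(p\times a)\times(a\times b)$ with $(p\times a)\times b$, and the exchange of $a$ for $q$ in the middle factor again requires writing $a=(p\times a)\star q$ via (\ref{wang2}) and then invoking the second identity of (P3). So the result is true and your overall strategy (invertibility from (\ref{wang2}), then preservation of the two operations) is sound, but both preservation computations as written omit the essential uses of (P3) and of a second instance of the strong axiom, which is precisely the content of the cited \cite[Lemma 4.5]{Wang6}.
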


Let $f: A\rightarrow B$ and $g: B\rightarrow C$ be two bijections and $X\subseteq A$. Then
\begin{equation}\label{tiocn}
(fg){\mid}_{X}=(f{\mid}_{X})( g{\mid}{_{Xf}}).
\end{equation}

\begin{lemma}\label{aufrp}
Let $(P,\times,\star)$ be a strong projection algebra and $(M(P),\circ,\dd,\rr, \leq_{M(P)}, P_{M(P)},\\ \times,\star)$ be the projection ordered category determined by $P$ and ${\mathscr P}(P_{M(P)})$ be the path category of $P_{M(P)}$. Define  $$\varepsilon: {\mathscr P}(P_{M(P)})\rightarrow M(P),\,\,\, {\rm id}_{p_1^{\downarrow}}\mapsto {\rm id}_{p_1^{\downarrow}},\,\,\,
\,\, ({\rm id}_{p_{1}^{\downarrow}},{\rm id}_{p_{2}^{\downarrow}},\ldots,{\rm id}_{p_{k}^{\downarrow}})\rightarrow \gamma_{p_{1},p_{2}}\gamma_{p_{2},p_{3}}\ldots\gamma_{p_{k-1},p_{k}}.$$
Then $\varepsilon$ is an evaluation map.
\end{lemma}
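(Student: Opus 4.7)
The plan is to verify each of the evaluation map axioms (E1)--(E4) from Definition \ref{ll}, using Proposition \ref{fder} to replace (E4) by the more tractable (E5). The key technical input is Lemma \ref{qodvk}, which ensures $\gamma_{p,q} \in M(p,q)$ whenever $p\,\mathcal{F}_P\,q$, together with the identity (\ref{tiocn}) describing how composition of bijections interacts with restriction.

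Axioms (E1)--(E3) will be almost immediate. For (E1), the single-entry image $\varepsilon({\rm id}_{p^\downarrow}) = {\rm id}_{p^\downarrow}$ is built in, while $\varepsilon({\rm id}_{p^\downarrow}, {\rm id}_{p^\downarrow}) = \gamma_{p,p} = \theta_p|_{p^\downarrow}$ is the identity on $p^\downarrow$ because $x \star p = x$ for all $x \leq_P p$ by Lemma \ref{jiben1}(2). For (E2), since each factor $\gamma_{p_i,p_{i+1}}$ lies in $M(p_i, p_{i+1})$, the composition belongs to $M(p_1, p_k)$, so $\dd(\varepsilon(\mathfrak{p})) = {\rm id}_{p_1^\downarrow} = \dd(\mathfrak{p})$ and dually for $\rr$. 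For (E3), concatenating $\mathfrak{p}$ and $\mathfrak{q}$ at a shared projection just juxtaposes the two $\gamma$-strings, which is precisely the functional composition defining $\varepsilon(\mathfrak{p}) \circ \varepsilon(\mathfrak{q})$ in $M(P)$.

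The main work is (E5). Given $t \leq_P p_1$ and $\mathfrak{p} = ({\rm id}_{p_1^\downarrow}, \ldots, {\rm id}_{p_k^\downarrow})$, I must show $\varepsilon({_{{\rm id}_{t^\downarrow}}}{\downharpoonleft}\mathfrak{p}) = {_{{\rm id}_{t^\downarrow}}}{\downharpoonleft}\varepsilon(\mathfrak{p}) = \varepsilon(\mathfrak{p})|_{t^\downarrow}$, the last equality coming from Lemma \ref{yind2}. I will proceed by induction on $k$. The crux is the $k = 2$ case, which reduces to $\theta_{t\star p_2}|_{t^\downarrow} = \theta_{p_2}|_{t^\downarrow}$: for $x \leq_P t$, axiom (R4) gives $x\star(t\star p_2) = (x\star p_2)\star(t\star p_2)$, and because $x \leq_P t$ forces $x\star p_2 \leq_P t\star p_2$ by Lemma \ref{jiben1}(5), a further application of Lemma \ref{jiben1}(2) yields $(x\star p_2)\star(t\star p_2) = x\star p_2$, as required. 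For $k > 2$, write $\mathfrak{p} = \mathfrak{p}' \circ \mathfrak{r}$ with $\mathfrak{r} = ({\rm id}_{p_{k-1}^\downarrow}, {\rm id}_{p_k^\downarrow})$, split the left restriction via Lemma \ref{kin}(1), feed each piece to the inductive hypothesis (with $s = \rr({_{{\rm id}_{t^\downarrow}}}{\downharpoonleft}\mathfrak{p}')$ serving as the lower endpoint on the second factor), and recombine using (E3) and (\ref{tiocn}); the latter identity is exactly what makes restriction compatible with composition. The only nontrivial algebraic content is the $k = 2$ computation above, which I anticipate as the main (though modest) obstacle.
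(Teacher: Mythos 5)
Your proposal is correct and follows essentially the same route as the paper: (E1)--(E3) are checked directly, and (E5) is proved by induction on the path length, peeling off the last edge and using (\ref{tiocn}) to restrict the composite, with the whole argument hinging on the identity $\theta_{q_{k-1}\star p_k}\mid_{q_{k-1}^{\downarrow}}=\theta_{p_k}\mid_{q_{k-1}^{\downarrow}}$. The only (cosmetic) difference is that the paper derives this identity in one step from (R3), whereas you obtain it from (R4) together with Lemma \ref{jiben1}(5) and (2); both derivations are valid.
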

\begin{proof}
Let $$\mathfrak{p}=({\rm id}_{p_{1}^{\downarrow}},{\rm id}_{p_{2}^{\downarrow}},\ldots,{\rm id}_{p_{k}^{\downarrow}}),\mathfrak{s}=({\rm id}_{s_{1}^{\downarrow}},{\rm id}_{s_{2}^{\downarrow}},\ldots,{\rm id}_{s_{l}^{\downarrow}})\in \mathscr{P}(P_{M(P)})$$ and $p\in P$. Then $\varepsilon({\rm id}_{p^{\downarrow}})={\rm id}_{p^{\downarrow}}$. By (\ref{avtki}), we have $\varepsilon(({\rm id}_{p^{\downarrow}},{\rm id}_{p^{\downarrow}}))=\gamma_{p, p}=\theta_{p}{\mid}{_{p^{\downarrow}}}={\rm id}_{p^{\downarrow}}.$ This gives (E1). By (\ref{avtki}) and (E1), $$\dd(\varepsilon(\mathfrak{p}))=\dd(\gamma_{p_{1},p_{2}}\gamma_{p_{2},p_{3}}\ldots\gamma_{p_{k-1},p_{k}})
$$$$=\dd(\theta_{p_{2}}{\mid}{_{p_{1}^{\downarrow}}}\theta_{p_{3}}{\mid}{_{p_{2}^{\downarrow}}}\ldots
\theta_{p_{k}}{\mid}{_{p_{k-1}^{\downarrow}}})={\rm id}_{p_{1}^{\downarrow}}=\varepsilon({\rm id}_{p_{1}^{\downarrow}})=\varepsilon(\dd(\mathfrak{p})).$$
Dually, we have $\rr(\varepsilon(\mathfrak{p}))=\varepsilon(\rr(\mathfrak{p}))$. This proves (E2). If $\rr(\mathfrak{p})=\dd(\mathfrak{s})$, then $$\varepsilon(\mathfrak{p}\circ \mathfrak{s})=\varepsilon({\rm id}_{p_{1}^{\downarrow}},{\rm id}_{p_{2}^{\downarrow}},\ldots,{\rm id}_{p_{k}^{\downarrow}}={\rm id}_{s_{1}^{\downarrow}},{\rm id}_{s_{2}^{\downarrow}},\ldots,{\rm id}_{s_{l}^{\downarrow}})
$$$$=(\gamma_{p_{1},p_{2}}\gamma_{p_{2},p_{3}}\ldots\gamma_{p_{k-1},p_{k}})(\gamma_{s_{1},s_{2}}\gamma_{s_{2},s_{3}}\ldots\gamma_{s_{l-1},s_{l}})
=\varepsilon(\mathfrak{p})\circ \varepsilon(\mathfrak{s}).$$ Thus (E3) is true.

Let ${\rm id}_{q^{\downarrow}}\in P_{M(P)}$ and ${\rm id}_{q^{\downarrow}}\leq_{M(P)} \dd(\mathfrak{p})$. In the case that $k=1$, by the fact that ${\rm id}_{q^{\downarrow}}\leq_{M(P)} \dd(\mathfrak{p})={\rm id}_{p_{1}^{\downarrow}}$, Lemma \ref{b} (11)  and (E1), $$\varepsilon({_{{\rm id}_{q^{\downarrow}}}}{\downharpoonleft}\mathfrak{p})=\varepsilon({_{{\rm id}_{q^{\downarrow}}}}{\downharpoonleft}{\rm id}_{p_{1}^{\downarrow}})=\varepsilon({\rm id}_{q^{\downarrow}})
={\rm id}_{q^{\downarrow}}
={_{{\rm id}_{q^{\downarrow}}}}{\downharpoonleft}{\rm id}_{p_{1}^{\downarrow}}={_{{\rm id}_{q^{\downarrow}}}}{\downharpoonleft}\varepsilon({\rm id}_{p_{1}^{\downarrow}})={_{{\rm id}_{q^{\downarrow}}}}{\downharpoonleft}\varepsilon(\mathfrak{p}).$$
Let ${_{{\rm id}_{q^{\downarrow}}}}{\downharpoonleft}\mathfrak{p}=({\rm id}_{q_{1}^{\downarrow}},{\rm id}_{q_{2}^{\downarrow}},\ldots,{\rm id}_{q_{k}^{\downarrow}})$ and $$\mathfrak{p}'=({\rm id}_{p_{1}^{\downarrow}},{\rm id}_{p_{2}^{\downarrow}},\ldots,{\rm id}_{p_{k-1}^{\downarrow}})
,\,\,\,\,{_{{\rm id}_{q^{\downarrow}}}}{\downharpoonleft}\mathfrak{p}'=({\rm id}_{q_{1}^{\downarrow}},{\rm id}_{q_{2}^{\downarrow}},\ldots,{\rm id}_{q_{k-1}^{\downarrow}}).$$ Then by induction hypothesis,
$$
\varepsilon({_{{\rm id}_{q^{\downarrow}}}}{\downharpoonleft}\mathfrak{p})=\varepsilon({\rm id}_{q_{1}^{\downarrow}},{\rm id}_{q_{2}^{\downarrow}},\ldots,{\rm id}_{q_{k}^{\downarrow}})
=\gamma_{q_{1},q_{2}}\gamma_{q_{2},q_{3}}\ldots\gamma_{q_{k-1},q_{k}}$$$$
=\varepsilon({_{{\rm id}_{q^{\downarrow}}}}{\downharpoonleft}\mathfrak{p}')\circ \gamma_{q_{k-1},q_{k}}
={_{{\rm id}_{q^{\downarrow}}}}{\downharpoonleft}\varepsilon(\mathfrak{p}')\circ \gamma_{q_{k-1},q_{k}}.
$$
Denote $Y=q^{\downarrow}(\gamma_{p_{1},p_{2}}\gamma_{p_{2},p_{3}}\ldots\gamma_{p_{k-2},p_{k-1}})$.  By (\ref{tiocn}),  $${_{{\rm id}_{q^{\downarrow}}}}{\downharpoonleft}\varepsilon(\mathfrak{p})=(\gamma_{p_{1},p_{2}}\gamma_{p_{2},p_{3}}\ldots\gamma_{p_{k-1},p_{k}}){\mid}{_{q^{\downarrow}}}
$$$$=(\gamma_{p_{1},p_{2}}\gamma_{p_{2},p_{3}}\ldots\gamma_{p_{k-2},p_{k-1}}){\mid}{_{q^{\downarrow}}}\circ\gamma_{p_{k-1},p_{k}}{\mid}_{Y}
=\varepsilon(\mathfrak{p}'){\mid}{_{q^{\downarrow}}}\circ \gamma_{p_{k-1},p_{k}}{\mid}_{Y}
={_{{\rm id}_{q^{\downarrow}}}}{\downharpoonleft}\varepsilon(\mathfrak{p}')\circ \gamma_{p_{k-1},p_{k}}{\mid}_{Y}.$$
We shall prove that $\gamma_{p_{k-1},p_{k}}{\mid}_{Y}=\gamma_{q_{k-1},q_{k}}$. By (\ref{v}),
$$Y=q^{\downarrow}(\gamma_{p_{1},p_{2}}\gamma_{p_{2},p_{3}}\ldots\gamma_{p_{k-2},p_{k-1}})
$$$$=(q\gamma_{p_{1},p_{2}}\gamma_{p_{2},p_{3}}\ldots\gamma_{p_{k-2},p_{k-1}})^{\downarrow}
=(q\theta_{p_{2}}\ldots\theta_{p_{k-1}})^{\downarrow}=q_{k-1}^{\downarrow}.$$ This together with the fact that $q_{k-1}\leq_P p_{k-1}$ implies that $$\gamma_{p_{k-1},p_{k}}{\mid}{_{Y}}=(\theta_{p_{k}}{\mid}{_{{p_{k-1}^{\downarrow}})}}){\mid}{_{q_{k-1}^{\downarrow}}}
=\theta_{p_{k}}{\mid}{_{q_{k-1}^{\downarrow}}}.$$ Let $t\in q_{k-1}^{\downarrow}$. By (\ref{l}) and (\ref{v}), we have $t=t\theta_{q_{k-1}}=t\star q_{k-1}$ and $q_{k}=q_{k-1}\theta_{p_{k}}=q_{k-1}\star p_{k}$. This together  with (R3) implies that
$$t\theta_{q_{k}}=t\star q_{k}=(t\star q_{k-1})\star(q_{k-1}\star p_{k})=(t\star q_{k-1})\star p_{k}=t\theta_{q_{k-1}}\theta_{p_{k}}=t\theta_{p_{k}}.$$  Thus $\gamma_{p_{k-1},p_{k}}{\mid}_{Y}=\theta_{p_{k}}{\mid}{_{q_{k-1}^{\downarrow}}}=\theta_{q_{k}}{\mid}{_{q_{k-1}^{\downarrow}}}=\gamma_{q_{k-1},q_{k}},$ and so  $\varepsilon({_{{\rm id}_{q^{\downarrow}}}}{\downharpoonleft}\mathfrak{p})={_{{\rm id}_{q^{\downarrow}}}}{\downharpoonleft}\varepsilon(\mathfrak{p}).$  By mathematical induction, for all ${\rm id}_{q^{\downarrow}}\in P_{M(P)}$ and $\mathfrak{p}\in \mathscr{P}(P_{M(P)})$, the fact that  ${\rm id}_{q^{\downarrow}}\leq_{M(P)} \dd(\mathfrak{p})$ implies that $\varepsilon({_{{\rm id}_{q^{\downarrow}}}}{\downharpoonleft}\mathfrak{p})={_{{\rm id}_{q^{\downarrow}}}}{\downharpoonleft}\varepsilon(\mathfrak{p}).$  This gives (E5). By Proposition
\ref{fder}, $\varepsilon$ is an evaluation map.
\end{proof}

\begin{prop}\label{sozr}
Let $(P,\times,\star)$ be a strong projection algebra. Then $$(M(P),\circ,\dd,\rr, \leq_{M(P)},  P_{M(P)}, \times, \star,\varepsilon)$$ is a chain projection ordered category.
\end{prop}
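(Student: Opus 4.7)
By Lemma~\ref{yind2}, $(M(P),\circ,\dd,\rr,\leq_{M(P)},P_{M(P)},\times,\star)$ is already a projection ordered category, and by Lemma~\ref{aufrp}, the map $\varepsilon$ is an evaluation map. So the only remaining task is to verify axiom (G2): for any $\alpha\in M(P)$ and any $\alpha$-linked pair $(e,f)\in P_{M(P)}\times P_{M(P)}$, one has $\lambda(e,\alpha,f)=\rho(e,\alpha,f)$. I would proceed by direct computation in $M(P)$, using the fact that every projection in $P_{M(P)}$ has the form ${\rm id}_{x^{\downarrow}}$ for a unique $x\in P$ (so we may identify $P$ with $P_{M(P)}$).

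First I would fix $\alpha\in M(p,q)$ and an $\alpha$-linked pair $({\rm id}_{e^{\downarrow}},{\rm id}_{f^{\downarrow}})$. Unpacking Definition~\ref{lihfg} using the explicit formulas (\ref{ahdy}) and (\ref{yind5}) for $\Theta_{\alpha}$ and $\Delta_{\alpha}$ translates the $\alpha$-linked condition into the two identities
\[
f=((e\star p)\alpha)\star f,\qquad e=((q\times f)\alpha^{-1})\times e,
\]
which we will need below. Then, from (\ref{fff}), the four auxiliary projections in $P_{M(P)}$ are
\[
e_{1}={\rm id}_{(e\star p)^{\downarrow}},\ \ f_{1}={\rm id}_{((e\star p)\alpha)^{\downarrow}},\ \ e_{2}={\rm id}_{((q\times f)\alpha^{-1})^{\downarrow}},\ \ f_{2}={\rm id}_{(q\times f)^{\downarrow}}.
\]

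Next I would compute $\lambda(e,\alpha,f)$ and $\rho(e,\alpha,f)$ using (\ref{nnn}) and (\ref{mmm}) and the definitions of $\varepsilon$ and of left/right restriction in $M(P)$ supplied by Lemma~\ref{yind2}. Writing $e'=e\star p$ and $e''=(q\times f)\alpha^{-1}$, this gives
\[
\lambda(e,\alpha,f)=\gamma_{e,e'}\circ \alpha|_{{e'}^{\downarrow}}\circ\gamma_{e'\alpha,f},\qquad
\rho(e,\alpha,f)=\gamma_{e,e''}\circ \alpha|_{{e''}^{\downarrow}}\circ\gamma_{q\times f,f},
\]
both of which are maps $e^{\downarrow}\to f^{\downarrow}$, i.e.\ elements of $M(e,f)$. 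For $x\in e^{\downarrow}$, unwinding the composites using (\ref{avtki}) yields
\[
x\lambda(e,\alpha,f)=((x\star e')\alpha)\star f,\qquad x\rho(e,\alpha,f)=((x\star e'')\alpha)\star f.
\]
The crucial simplification is that $e\,\mathcal{F}_{P}\,e'$ (by Lemma~\ref{ccc}(2)) forces $e\leq_{P}e'$ via Lemma~\ref{jiben1}(1), and similarly $e\leq_{P}e''$ from the $\alpha$-linked relation $e=e''\times e$. Hence for every $x\leq_{P}e$ one has $x\leq_{P}e'$ and $x\leq_{P}e''$, so Lemma~\ref{jiben1}(2) gives $x\star e'=x=x\star e''$. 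Substituting, both sides collapse to $x\alpha\star f$, and therefore $\lambda(e,\alpha,f)=\rho(e,\alpha,f)$ as maps in $M(e,f)$.

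The only real obstacle is bookkeeping: one must carefully translate every object ${\rm id}_{x^{\downarrow}}\in P_{M(P)}$ back to $x\in P$, and track how the operations $\theta$, $\delta$, $\Theta_{\alpha}$, $\Delta_{\alpha}$, restriction, and composition interact. Once the two compositions are unwound into concrete elements of $P$, the comparison reduces, as above, to the elementary observation that an $\alpha$-linked pair forces $e$ below both $e'$ and $e''$ in $(P,\leq_{P})$, which is where the strong projection algebra axioms (encoded in Lemma~\ref{jiben1}) deliver the equality.
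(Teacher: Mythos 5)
Your reduction of the problem to (G2), your identification of $e_{1},e_{2},f_{1},f_{2}$, and your pointwise formulas $x\lambda(e,\alpha,f)=((x\star e_{1})\alpha)\star f$ and $x\rho(e,\alpha,f)=((x\star e_{2})\alpha)\star f$ all agree with the paper's argument. The proof breaks down at the final ``collapse''. You claim that $e\,\mathcal{F}_{P}\,e_{1}$ forces $e\leq_{P}e_{1}$ via Lemma \ref{jiben1}(1); it does not. By (\ref{s}), $e\,\mathcal{F}_{P}\,e_{1}$ gives $e\times e_{1}=e$ and $e\star e_{1}=e_{1}$, whereas $e\leq_{P}e_{1}$ requires \emph{both} $e\times e_{1}=e$ and $e_{1}\times e=e$. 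In general $\mathcal{F}_{P}\not\subseteq\,\leq_{P}$: in the projection algebra of the $2\times2$ rectangular band viewed as a regular $\circ$-semigroup, the two projections are $\mathcal{F}_{P}$-related but incomparable. Likewise the second linked identity is $e=f\Delta_{\alpha}\delta_{e}=e\times e_{2}$ (not $e_{2}\times e$), which again does not yield $e\leq_{P}e_{2}$. Consequently $x\star e_{1}\neq x$ in general, and your claimed common value $x\alpha\star f$ is not even well defined: $x\in e^{\downarrow}$ need not lie in $\dm\alpha=p^{\downarrow}$, since $e\leq_{P}p$ is not part of the linked-pair hypothesis (Lemma \ref{ccc}(2) only gives $e=e\times p$).

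The correct common value is $((x\star p)\alpha)\star f$, i.e.\ $x\theta_{p}\alpha\theta_{f}$, which is well defined because $x\star p\leq_{P}p$. For the $\lambda$ side this follows quickly from $x=x\star e$ and (R3): $x\star(e\star p)=(x\star e)\star(e\star p)=(x\star e)\star p=x\star p$. The $\rho$ side is the substantive half and cannot be dispatched by an order argument: one needs the identity $\theta_{f}\delta_{q}\theta_{f}=\theta_{f}$ (which follows from $f=q\star f$, supplied by Lemma \ref{ccc}(2), together with the strongness condition (\ref{wang2})), combined with (G1b) to compute $x\theta_{e_{2}}=(q\times((x\star p)\alpha\star f))\alpha^{-1}$ and then cancel $\alpha^{-1}\alpha$. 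This is exactly where the paper's proof does its real work, and it is the step your argument is missing.
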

\begin{proof}
Let $\beta\in M(q,r), {\rm id}_{e^{\downarrow}},{\rm id}_{f^{\downarrow}}\in P_{M(P)}, e,f\in P$  and $( {\rm id}_{e^{\downarrow}},{\rm id}_{f^{\downarrow}})$ be a $\beta$-linked pair.  Then
\begin{equation}\label{ddew}
{\rm id}_{f^{\downarrow}}={\rm id}_{e^{\downarrow}}\Theta_{\beta}\theta_{{\rm id}_{f^{\downarrow}}},\,{\rm id}_{e^{\downarrow}}={\rm id}_{f^{\downarrow}}\Delta_{\beta}\delta_{{\rm id}_{e^{\downarrow}}}.
\end{equation}
By the definition of $\varepsilon$ (see Lemma \ref{aufrp}), (\ref{fff})--(\ref{mmm}), Lemma \ref{yind2}, (\ref{ahdy}) and (\ref{yind5}),
$$\lambda({\rm id}_{e^{\downarrow}},\beta,{\rm id}_{f^{\downarrow}})=\varepsilon({\rm id}_{e^{\downarrow}},{\rm id}_{e_{1}^{\downarrow}})\circ {_{{\rm id}_{e_{1}^{\downarrow}}}}{\downharpoonleft}\beta\circ \varepsilon({\rm id}_{f_{1}^{\downarrow}},{\rm id}_{f^{\downarrow}})=\gamma_{ee_{1}}\beta{\mid}{_{e_{1}^{\downarrow}}} \gamma_{f_{1}f}=\theta_{e_{1}}{\mid}{_{e^{\downarrow}}}\beta{\mid}{_{e_{1}^{\downarrow}}} \theta_{f}{\mid}{_{f_{1}^{\downarrow}}},$$
$$\rho({\rm id}_{e^{\downarrow}},\beta,{\rm id}_{f^{\downarrow}})=\varepsilon({\rm id}_{e^{\downarrow}},{\rm id}_{e_{2}^{\downarrow}})\circ \beta{\downharpoonright}{_{{\rm id}_{f_{2}^{\downarrow}}}}\circ \varepsilon({\rm id}_{f_{2}^{\downarrow}},{\rm id}_{f^{\downarrow}})=\gamma_{ee_{2}}\beta{\mid}{_{e_{2}^{\downarrow}}} \gamma_{f_{2}f}=\theta_{e_{2}}{\mid}{_{e^{\downarrow}}}\beta{\mid}{_{e_{2}^{\downarrow}}} \theta_{f}{\mid}{_{f_{2}^{\downarrow}}},$$
where $${\rm id}_{e_{1}^{\downarrow}}=({\rm id}_{e^{\downarrow}})\theta_{{\rm id}_{q^{\downarrow}}}={\rm id}_{e^{\downarrow}}\star {\rm id}_{q^{\downarrow}}={\rm id}_{(e\star q)^{\downarrow}},\,\,{\rm id}_{e_{2}^{\downarrow}}={\rm id}_{f^{\downarrow}}\Delta_{\beta}={\rm id}_{((r\times f)\beta^{-1})^{\downarrow}},$$$$ {\rm id}_{f_{1}^{\downarrow}}={\rm id}_{e^{\downarrow}}\Theta_{\beta}={\rm id}_{((e\star q)\beta)^{\downarrow}},\,\,{\rm id}_{f_{2}^{\downarrow}}={\rm id}_{f_{1}^{\downarrow}}\delta_{{\rm id}_{r^{\downarrow}}}={\rm id}_{r^{\downarrow}}\times {\rm id}_{f^{\downarrow}}={\rm id}_{(r\times f)^{\downarrow}}.$$
So $$e_{1}=e\star q,\,\,e_{2}=(r\times f)\beta^{-1},\,\,f_{1}=(e\star q)\beta,\,\,f_{2}=r\times f.$$
We shall prove that $\lambda({\rm id}_{e^{\downarrow}},\beta,{\rm id}_{f^{\downarrow}})=\rho({\rm id}_{e^{\downarrow}},\beta,{\rm id}_{f^{\downarrow}})$. This is equivalent to prove that   $t\theta_{e_{1}}\beta\theta_{f}=t\theta_{e_{2}}\beta\theta_{f}$ for $t\in P$ with $t\in e^{\downarrow}$.
Since $t\leq_{P} e$,  by  (\ref{l}) we have $t=t\theta_{e}=t\star e$. This together with the fact $e_{1}=e\theta_{q}$ and (R3) implies that
$$t\theta_{e_{1}}\beta\theta_{f}=t\theta_{e}\theta_{e\theta_{q}}\beta\theta_{f}=((t\star e)\star(e\star q))\beta\theta_{f}=((t\star e)\star q)\beta\theta_{f}=t\theta_{e}\theta_{q}\beta\theta_{f}=t\theta_{q}\beta\theta_{f}.$$
On the other hand, let $p\in P$.  By Lemma \ref{ccc} (2), $${\rm id}_{f^{\downarrow}}=\rr(\beta)\star {\rm id}_{f^{\downarrow}}={\rm id}_{r^{\downarrow}}\star {\rm id}_{f^{\downarrow}}={\rm id}_{(r\star f)^{\downarrow}},$$  and so  $f=r\star f.$  Since $p\star f\leq_{P} f=r\star f$ by Lemma \ref{jiben1} (2), we obtain  $(r\times(p\star f))\star f=p\star f$ by (\ref{wang2}). This implies that $$p\theta_{f}\delta_{r}\theta_{f}=(r\times(p\star f))\star f=p\star f=p\theta_{f}.$$ Thus $\theta_{f}\delta_{r}\theta_{f}=\theta_{f}.$
In view of (G1b), (\ref{ahdy}) and (\ref{yind5}), we have
$${\rm id}_{(t\star e_{2})^{\downarrow}}={\rm id}_{t^{\downarrow}}\star {\rm id}_{e_{2}^{\downarrow}}={\rm id}_{t^{\downarrow}}\theta_{{\rm id}_{e_{2}^{\downarrow}}}={\rm id}_{t^{\downarrow}}\theta_{{\rm id}_{f^{\downarrow}}\Delta_{\beta}}={\rm id}_{t^{\downarrow}}
\Theta_{\beta}\theta_{{\rm id}_{f^{\downarrow}}}\Delta_{\beta}={\rm id}_{((r\times((t\star q)\beta \star f))\beta^{-1})^{\downarrow}},$$ and so $t\theta_{e_{2}}=t\star e_{2}=(r\times((t\star q)\beta \star f))\beta^{-1}.$ By the above statements,
$$t\theta_{e_{2}}\beta\theta_{f}=(r\times((t\star q)\beta \star f))\beta^{-1}\beta\theta_{f}=(r\times((t\star q)\beta \star f))\theta_{f}=t\theta_{q}\beta\theta_{f}\delta_{r}\theta_{f}=t\theta_{q}\beta\theta_{f}.$$
This gives that $t\theta_{e_{1}}\beta\theta_{f}=t\theta_{e_{2}}\beta\theta_{f}.$
Therefore $\lambda({\rm id}_{e^{\downarrow}},\beta,{\rm id}_{f^{\downarrow}})=\rho({\rm id}_{e^{\downarrow}},\beta,{\rm id}_{f^{\downarrow}}).$
\end{proof}

Let $(P,\times,\star)$ be a strong projection algebra. Define a binary operation $``\bullet"$ and two unary operations $``^{+}"$ and $``^{\ast}"$ on $M(P)$ as follows: For all $\alpha\in M(p,q),\beta\in M(s,t)$,
$$\alpha\bullet \beta=\alpha{\downharpoonright}{_{\rr(\alpha)\times \dd(\beta)}} \circ\, \varepsilon(\rr(\alpha)\times \dd(\beta),\rr(\alpha)\star \dd(\beta))\,\circ{ _{\rr(\alpha)\star \dd(\beta)}}{\downharpoonleft} \beta,\,\,\alpha^{+}=\dd(\alpha),\,\,\alpha^{\ast}=\rr(\alpha).$$
By (\ref{kkk}), Theorem \ref{xii} and Pproposition \ref{sozr}, $\textbf{S}(M(P))=(M(P),\bullet,^{+},^{\ast})$ forms a DRC-restriction semigroup ([cf. Theorem 4.7 in \cite{Wang6}]). Since $\dd(\alpha)={\rm id}_{p^{\downarrow}}$ and $\rr(\alpha)={\rm id}_{q^{\downarrow}}$, we have $\alpha^{+}={\rm id}_{p^{\downarrow}}$ and  $\alpha^{\ast}={\rm id}_{q^{\downarrow}}$. Moreover, $$\alpha{\downharpoonright}{_{\rr(\alpha)\times \dd(\beta)}}=\alpha{\downharpoonright}{_{{\rm id}_{q^{\downarrow}}\times {\rm id}_{s^{\downarrow}}}}=\alpha{\downharpoonright}{_{{\rm id}_{(q\times s)^{\downarrow}}}}=\alpha{\mid}{_{((q\times s)\alpha^{-1})^{\downarrow}}}.$$ Similarly, we have $${ _{\rr(\alpha)\star \dd(\beta)}}{\downharpoonleft} \beta=\beta{\mid}{_{(q\star s)^{\downarrow}}},\varepsilon(\rr(\alpha)\times \dd(\beta),\rr(\alpha)\star \dd(\beta))=\varepsilon({\rm id}_{(q\times s)^{\downarrow}},{\rm id}_{(q\star s)^{\downarrow}})=\gamma_{q\times s,q\star s}.$$ Thus
\begin{equation}\label{gpyin6}
\alpha \bullet \beta=\alpha{\mid}{_{((q\times s)\alpha^{-1})^{\downarrow}}}\circ\gamma_{q\times s,q\star s}\circ
\beta{\mid}{_{(q\star s)^{\downarrow}}}=\alpha\theta_{q\star s}\beta{\mid}{_{((q\times s)\alpha^{-1})^{\downarrow}}}.
\end{equation}
If $\rr(\alpha)=\dd(\beta)$, i.e. $q=s$, we have $q\times s=q\star s=q=s$,  and so $(q\times s)\alpha^{-1}=q\alpha^{-1}=p$. This yields that $\alpha \bullet \beta=\alpha\theta_{q}\beta{\mid}{_{p^{\downarrow}}}=\alpha\beta.$
In particular, for any $\alpha\in M(p,q)$, we have $\alpha^{-1}\in M(q,p)$, and so $$\alpha \bullet \alpha^{-1}=\alpha\alpha^{-1}={\rm id}_{p^{\downarrow}}=\alpha^{+}=(\alpha^{-1})^{\ast},\,\,\alpha^{-1} \bullet \alpha=\alpha^{-1}\alpha={\rm id}_{q^{\downarrow}}=\alpha^{\ast}=(\alpha^{-1})^{+}.$$
By Lemma \ref{aisdm}, $\textbf{S}(M(P))$ is a generalized regular ${\circ}$-semigroup with set of projections $P(\textbf{S}(M(P)))=P_{M(P)}=\{{\rm id}_{p^{\downarrow}}\mid p\in P\}.$  Let $p,q\in P$. By Lemma \ref{edd},  $${\rm id}_{p^{\downarrow}}\bullet {\rm id}_{q^{\downarrow}}=\varepsilon({\rm id}_{p^{\downarrow}}\times {\rm id}_{q^{\downarrow}},{\rm id}_{p^{\downarrow}}\star {\rm id}_{q^{\downarrow}})=\varepsilon({\rm id}_{(p\times q)^{\downarrow}},{\rm id}_{(p\star q)^{\downarrow}})=\gamma_{p\times q,p\star q}.$$  In particular, if $p\,\mathcal{F}_{P}\,q,$  then ${\rm id}_{p^{\downarrow}}\bullet {\rm id}_{q^{\downarrow}}=\gamma_{p,q}.$

\begin{theorem}[cf. Theorem 5.3 in \cite{Wang6}]\label{dpwgl0}
Let $(S,\cdot,^{+},^{\ast})$ be a DRC-restriction semigroup and $(P(S),\times_{S},\star_{S})$ be the strong projection algebra of $S$. For all $a\in S$, denote $$\nu_{a}:(a^{+})^{\downarrow}\rightarrow (a^{\ast})^{\downarrow},\,\,x\mapsto (xa)^{\ast},$$ where $$(a^{+})^{\downarrow}=\{x\in P(S)\mid x\leq_{S}a^{+}\},\,\,(a^{\ast})^{\downarrow}=\{x\in P(S)\mid x\leq_{S}a^{\ast}\}.$$ Then $\phi_{S}:S\rightarrow {\rm \bf{S}}(M(P(S))),\,\,\,a\mapsto \nu_{a}$ is a (2,1,1)-homomorphism with $\ker\phi_{S}=\mu_{S}$ and $p\phi_{S}={\rm id}_{p^{\downarrow}}$ for all $p\in P(S)$, and so $S$ is fundamental if and only if $\phi$ is injective. Moreover, ${\rm im}(\phi_{S})\cong S/\mu_{S}$ is fundamental.
\end{theorem}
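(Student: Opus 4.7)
The plan is to factor $\phi_S$ through the categorical correspondence of Theorem \ref{hgdc}. Since $S$ is DRC-restriction, Lemma \ref{huni} gives that $\nu_a=\rho_a$ is a projection algebra isomorphism from $(a^+)^\downarrow$ to $(a^*)^\downarrow$, so $\nu_a\in M(a^+,a^*)\subseteq M(P(S))$ and $\phi_S$ is at least well-defined as a set map. The cleanest route is then to show that the rule $F:\textbf{\rm\bf C}(S)\to M(P(S))$, $a\mapsto \nu_a$, is a chain projection ordered functor from the category $\textbf{\rm\bf C}(S)$ of Theorem \ref{fii} to the category $M(P(S))$ of Proposition \ref{sozr}. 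Once this is done, Lemma \ref{wang} immediately upgrades $F$ to a (2,1,1)-homomorphism $\textbf{\rm\bf S}(\textbf{\rm\bf C}(S))\to \textbf{\rm\bf S}(M(P(S)))$, and since $\textbf{\rm\bf S}(\textbf{\rm\bf C}(S))=S$ by Lemma \ref{zhang3}, this map is exactly $\phi_S$.

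To check the functor axioms (F1)--(F5): (F1) is immediate from $\dd(\nu_a)={\rm id}_{(a^+)^\downarrow}$ together with the identity $\nu_{a^+}={\rm id}_{(a^+)^\downarrow}$ provided by (\ref{hengdeng}). (F2) is exactly Lemma \ref{ee}, which says $\nu_{a\circ b}=\nu_a\nu_b$ whenever $a^*=b^+$, and in $M(P(S))$ such a composition is ordinary map composition. For (F3), if $a\leq_S b$ then $a^+\leq_S b^+$ by Corollary \ref{zhu2}(1), so $(a^+)^\downarrow\subseteq (b^+)^\downarrow$; moreover for any $x\in P(S)$ with $x\leq_S a^+$ one has $xa^+=x$, and combined with $a=a^+b$ from Corollary \ref{zhu2}(4) this yields $xa=xa^+b=xb$, hence $\nu_a(x)=(xa)^*=(xb)^*=\nu_b(x)$, so $\nu_a=\nu_b|_{(a^+)^\downarrow}$, which is $\nu_a\leq_{M(P(S))}\nu_b$. (F4) again reduces to (\ref{hengdeng}): $\nu_{e\times_S f}={\rm id}_{(e\times_S f)^\downarrow}={\rm id}_{e^\downarrow}\times{\rm id}_{f^\downarrow}=\nu_e\times\nu_f$ in $P_{M(P(S))}$, and similarly for $\star$. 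For (F5), given a path $(p_1,\ldots,p_k)$ one computes, for $x\leq p_1$ in $P(S)$, the successive applications of the generating isomorphisms: $x\gamma_{p_1,p_2}\gamma_{p_2,p_3}\cdots\gamma_{p_{k-1},p_k}=(((x\star p_2)\star p_3)\cdots\star p_k)$, and an easy induction using $x=xp_1$ and $(y\star p)(q)=(yq)^*$ identifies this with $(xp_1p_2\cdots p_k)^*=\nu_{\varepsilon_S(p_1,\ldots,p_k)}(x)$.

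With $\phi_S$ a (2,1,1)-homomorphism in hand, its kernel is $\{(a,b)\mid \nu_a=\nu_b\}$, which equals $\mu_S$ by the equivalence of (1) and (7) in Proposition \ref{fscpl}. The description $p\phi_S={\rm id}_{p^\downarrow}$ for $p\in P(S)$ is (\ref{hengdeng}). The statement that $S$ is fundamental iff $\phi_S$ is injective is then Definition \ref{shcyk} restated; finally ${\rm im}(\phi_S)\cong S/\ker\phi_S=S/\mu_S$ by the homomorphism theorem for (2,1,1)-algebras, and the maximality of $\mu_S$ (Lemma \ref{dvhi}) forces $\mu_{S/\mu_S}$ to be trivial so that $S/\mu_S$ is fundamental.

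The main obstacle is verifying (F5), or equivalently identifying the chain $\gamma_{p_1,p_2}\cdots\gamma_{p_{k-1},p_k}$ with $\nu_{p_1p_2\cdots p_k}$; the remaining axioms are either definitional or direct consequences of results already established. After that, everything else is routine bookkeeping via Lemma \ref{wang}, Proposition \ref{fscpl}, and the maximality clause of Lemma \ref{dvhi}.
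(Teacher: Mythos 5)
Your proposal is correct and follows essentially the same route as the paper: realize $\phi_S$ as a chain projection ordered functor $\textbf{\rm\bf C}(S)\to M(P(S))$, verify (F1)--(F5), and then invoke Lemma \ref{wang} together with $\textbf{\rm\bf S}(\textbf{\rm\bf C}(S))=S$ and Proposition \ref{fscpl} for the kernel. The only (harmless) divergences are that you check (F3) by the direct computation $xa=xa^+b=xb$ where the paper cites Lemma \ref{apdxn}, and you verify (F5) by a pointwise induction on $(xp_1\cdots p_k)^\ast$ where the paper composes the two-step identities $\nu_{\varepsilon_S(p,q)}=\theta_q|_{p^\downarrow}=\gamma_{p,q}$ from Lemma \ref{oo}(2).
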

\begin{proof}
By Theorem \ref{fii}, we can consider the chain projection ordered category $$\textbf{\rm \bf C}(S)=(S, \circ_S, \dd_S, \rr_S, \leq_S, P_{S},\times_S, \star_S, \varepsilon_S).$$ Let $a\in S$. By (G1d) and (\ref{bope}),
$$\nu_{a}:(a^{+})^{\downarrow}=\dd_{S}(a)^{\downarrow}\rightarrow \rr_{S}(a)^{\downarrow}=(a^{\ast})^{\downarrow},\,\,\,x\mapsto \rr_{S}({_x}{\downharpoonleft} a)=(xa)^{\ast}$$ is an isomorphism, and so  $\nu_{a}\in M(a^{+},a^{\ast})$. So we can define
$$\phi_{S}:\textbf{\rm \bf C}(S)\rightarrow M(P(S)) ,\,\,a\rightarrow \nu_{a}.$$
We shall prove that $\phi_{S}$ is a chain projection ordered functor. Let $a,b\in S$.

(F1) Firstly,  we have $\dd(a\phi_{S})=\dd(\nu_{a})={\rm id}_{(a^{+})^{\downarrow}}.$ Since $a^{+}\in P(S)=P_{S}$, it follows that $\nu_{a^{+}}={\rm id}_{(a^{+})^{\downarrow}}$ by Lemma \ref{f}.  This implies that $\dd(a\phi_{S})=\nu_{a^{+}}=a^{+}\phi_{S}=(\dd_{S}(a))\phi_{S}.$ Dually, $\rr(a\phi_{S})=(\rr_{S}(a))\phi_{S}.$

(F2) Let  $\rr_{S}(a)=\dd_{S}(b)$. Then $a^{\ast}=b^{+}$ and $\rr(\nu_{a})={\rm id}_{(a^{\ast})^{\downarrow}}={\rm id}_{(b^{+})^{\downarrow}}=\dd(\nu_{b})$.
By Lemma \ref{ee}, $$(a\circ_{S} b)\phi_{S}=\nu_{a\circ_{S} b}=\nu_{a}\nu_{b}=\nu_{a}\circ \nu_{b}=(a\phi_{S})\circ(b\phi_{S}).$$

(F3) Let $a\leq_{S}b$. By Lemma \ref{b} (3), we obtain  $a={_{\dd_{S}(a)}}{\downharpoonleft}b={_{a^{+}}}{\downharpoonleft}b$, this together with Lemma \ref{apdxn} gives that $\nu_{a}=\nu_{{_{a^{+}}}{\downharpoonleft}b}=\nu_{b}{\mid}{_{(a^{+})^{\downarrow}}}
=\nu_{b}{\mid}{_{\dm \nu_{a}}}.$ This shows that  $a\phi_{S}=\nu_{a}\leq_{M(P)} \nu_{b}=b\phi_{S}$ in $(M(P),\circ,\dd,\rr,\leq)$.

(F4) Let $e,f\in P_{S}=P(S)$. By Lemma \ref{f}, $$(e\times_{S} f)\phi_{S}=\nu_{e\times_{S} f}={\rm id}_{(e\times_{S} f)^{\downarrow}}={\rm id}_{e^{\downarrow}}\times {\rm id}_{f^{\downarrow}}=\nu_{e}\times \nu_{f}=(e\phi_{S})\times (f\phi_{S}).$$

(F5) Let $\mathfrak{c}=(p_{1},p_{2},\ldots,p_{k})\in \mathscr{P}(P(S)).$ Using (\ref{pp}), (F2), Lemma \ref{oo} (2), (\ref{avtki}) and Lemma \ref{aufrp},  we have
\begin{equation*}
\begin{aligned}
(\varepsilon_{S}(\mathfrak{c}))\phi_{S}&=(\varepsilon_{S}(p_{1},p_{2})\circ \ldots \circ \varepsilon_{S}(p_{k-1},p_{k}))\phi_{S} =(\varepsilon_{S}(p_{1},p_{2}))\phi_{S}\circ \ldots \circ (\varepsilon_{S}(p_{k-1},p_{k}))\phi_{S}\\
&=\nu_{\varepsilon_{S}(p_{1},p_{2})}\circ \ldots \circ \nu_{\varepsilon_{S}(p_{k-1},p_{k})}=\theta_{p_{2}}{\mid}{_{p_{1}^{\downarrow}}}\circ \ldots \circ \theta_{p_{k}}{\mid}{_{p_{k-1}^{\downarrow}}}\\
&=\gamma_{p_{1},p_{2}}\circ \ldots \circ \gamma_{p_{k-1},p_{k}}=\varepsilon({\rm id}_{p_{1}^{\downarrow}},{\rm id}_{p_{2}^{\downarrow}})\circ \ldots \circ \varepsilon({\rm id}_{p_{k-1}^{\downarrow}},{\rm id}_{p_{k}^{\downarrow}})=\varepsilon({\rm id}_{p_{1}^{\downarrow}},{\rm id}_{p_{2}^{\downarrow}},\ldots,{\rm id}_{p_{k}^{\downarrow}}).
\end{aligned}
\end{equation*}
Since  $p_{i}\in P(S)$ for all $i$, it follows that $p_{i}\phi_{S}=\nu_{p_{i}}={\rm id}_{p_{i}^{\downarrow}}$ by Lemma \ref{f}. Thus $(\varepsilon_{S}(\mathfrak{c}))\phi_{S}=\varepsilon((p_{1}\phi_{S},p_{2}\phi_{S},\ldots,p_{k}\phi_{S})).$
By items (F1)--(F5), $\phi_{S}$ is a chain projection ordered functor. By Lemma \ref{wang} and Theorem \ref{hgdc}, $\phi_{S}$ is a (2,1,1)-homomorphism from
$S=\textbf{S}(\textbf{\rm \bf C}(S))$ to $\textbf{S}(M(P(S)))$,  and $p\phi_{S}=\nu_{p}={\rm id}_{p^{\downarrow}}$ for all $p\in P(S)$. Finally, $$\ker\phi_{S}=\{(a,b)\in S\times S\mid a\phi_{S}=b\phi_{S}\}=\{(a,b)\in S\times S\mid \nu_{a}=\nu_{b}\}=\mu_{S}$$ by Proposition \ref{fscpl}. The remaining part  of the theorem  is obvious.
\end{proof}
\begin{theorem}[cf. Theorem 5.4 in \cite{Wang6}]\label{dpwgl}Let $(P,\times,\star)$ be a strong projection algebra and $(S,\cdot,^{+},^{\ast})$ be a DRC-restriction semigroup whose strong projection algebra is isomorphic to $(P,\times,\star)$. Then $S$ is fundamental if and only if $S$ is isomorphic to some full (2,1,1)-subalgebra of ${\rm \bf{S}}(M(P))$. In particular, ${\rm \bf{S}}(M(P))$ is projection-fundamental.
\end{theorem}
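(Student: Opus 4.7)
The plan is to leverage Theorem \ref{dpwgl0} together with a direct verification that $\textbf{S}(M(P))$ itself is projection-fundamental. Throughout, I will use the isomorphism $P \cong P(S)$ to identify the two strong projection algebras, so that (by the functoriality established in Theorem \ref{cc} applied to $M(-)$, or alternatively by transporting via conjugation $\alpha \mapsto \psi^{-1}\alpha\psi$) we may freely identify $\textbf{S}(M(P))$ with $\textbf{S}(M(P(S)))$.

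First I would establish the ``in particular'' statement: $\textbf{S}(M(P))$ is projection-fundamental. Fix $\alpha \in M(p,q)$ inside $\textbf{S}(M(P))$; then $\alpha^{+} = {\rm id}_{p^{\downarrow}}$ and $\alpha^{\ast} = {\rm id}_{q^{\downarrow}}$. For ${\rm id}_{s^{\downarrow}} \in (\alpha^{+})^{\downarrow}$, which means $s \leq_{P} p$, a short computation using (\ref{gpyin6}) together with $s\times p = s\star p = s$ gives ${\rm id}_{s^{\downarrow}} \bullet \alpha = \alpha{\mid}_{s^{\downarrow}}$, so that $\nu_{\alpha}({\rm id}_{s^{\downarrow}}) = {\rm id}_{(s\alpha)^{\downarrow}}$. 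Now suppose $(\alpha,\beta) \in \mu_{\textbf{S}(M(P))}$ with $\beta \in M(p',q')$. By Proposition \ref{fscpl} we have $\nu_{\alpha} = \nu_{\beta}$; equality of domains forces $p = p'$, equality of codomains $q = q'$, and pointwise equality ${\rm id}_{(s\alpha)^{\downarrow}} = {\rm id}_{(s\beta)^{\downarrow}}$ for every $s \leq_{P} p$ forces $s\alpha = s\beta$ on all of $p^{\downarrow} = \dm\alpha = \dm\beta$. Hence $\alpha = \beta$, so $\mu_{\textbf{S}(M(P))}$ is the identity relation, as required.

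Next I would handle the two directions of the biconditional. For the forward direction, assume $S$ is projection-fundamental, so $\mu_{S}$ is the identity. Theorem \ref{dpwgl0} furnishes a (2,1,1)-homomorphism $\phi_{S} : S \to \textbf{S}(M(P(S)))$ with $\ker\phi_{S} = \mu_{S}$ and $p\phi_{S} = {\rm id}_{p^{\downarrow}}$ for every $p \in P(S)$; thus $\phi_{S}$ is injective, and its image is a (2,1,1)-subalgebra of $\textbf{S}(M(P(S)))$ containing every projection ${\rm id}_{p^{\downarrow}}$, hence full. Conjugating through the identification $P \cong P(S)$, this exhibits $S$ as a full (2,1,1)-subalgebra of $\textbf{S}(M(P))$. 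Conversely, suppose $S$ embeds as a full (2,1,1)-subalgebra of $\textbf{S}(M(P))$. Then $P(S) = P(\textbf{S}(M(P)))$ inside $\textbf{S}(M(P))$, so for any $(a,b) \in \mu_{S}$ the defining conditions in (\ref{skfvm}) together with $P(S) = P(\textbf{S}(M(P)))$ give $(a,b) \in \mu_{\textbf{S}(M(P))}$; by the previous paragraph this forces $a = b$. Thus $\mu_{S}$ is trivial, i.e.\ $S$ is fundamental.

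The main potential obstacle is the bookkeeping around the isomorphism $P \cong P(S)$ and the induced (2,1,1)-isomorphism between $\textbf{S}(M(P))$ and $\textbf{S}(M(P(S)))$, which must be shown to preserve the property of being a full subalgebra; this is routine but requires care since ``full'' is defined via the projection set. Otherwise, the proof is essentially a synthesis of Theorem \ref{dpwgl0}, the formula (\ref{gpyin6}) for the product in $\textbf{S}(M(P))$, and the explicit description (\ref{skfvm}) of $\mu_{S}$ via Proposition \ref{fscpl}.
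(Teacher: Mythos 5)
Your proposal is correct and follows essentially the same route as the paper: the forward direction is Theorem \ref{dpwgl0} verbatim, and your verification that $\mu_{\textbf{S}(M(P))}$ is trivial rests on the same computation (left-multiplying by ${\rm id}_{x^{\downarrow}}$ for $x\leq_P p$ and comparing ranges via (\ref{gpyin6})) that the paper performs. The only difference is organizational — the paper shows directly that every full (2,1,1)-subalgebra $T$ has trivial $\mu_T$, whereas you first kill $\mu_{\textbf{S}(M(P))}$ and then use fullness to get $\mu_T\subseteq\mu_{\textbf{S}(M(P))}$; both uses of fullness are the same in substance.
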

\begin{proof}
Assume that $S$ is projection-fundamental. By Theorem \ref{dpwgl0}, $$\phi_{S}:S\rightarrow \textbf{S}(M(P(S)))(\cong\textbf{S}(M(P)))$$ is an injective (2,1,1)-homomorphism and  $p\phi_{S}={\rm id}_{p^{\downarrow}}$ for all $p\in P(S)$. Thus
$$\im\phi_{S}\supseteq P(S)\phi_{S}=\{p\phi_{S}\mid p\in P(S)\}=\{{\rm id}_{p^{\downarrow}}\mid p\in P(S)\}=P(\textbf{S}(M(P(S)))).$$
This  implies that  $S\cong \im\phi_{S} $ and  $\im\phi_{S}$  is a full (2,1,1)-subalgebra of $\textbf{S}(M(P))$.

Conversely, assume that $T$  is a full (2,1,1)-subalgebra of $\textbf{S}(M(P))$.  Let $\alpha,\beta\in T$ and $(\alpha,\beta)\in \mu_{T}$. By Lemma \ref{dvhi}, we have ${\rm id}_{\dm\alpha}=\alpha^{+}=\beta^{+}={\rm id}_{\dm\beta}$, and so we can denote $e^{\downarrow}=\dm\alpha=\dm\beta$ for some $e\in P$.
Let $x\in e^{\downarrow}$. Then $x\in P$ and $x\leq_{P}e$, and so ${\rm id}_{ x^{\downarrow}}\in P(\textbf{S}(M(P)))\subseteq T.$ This gives that
$({\rm id}_{ x^{\downarrow}}\bullet \alpha,{\rm id}_{ x^{\downarrow}}\bullet \beta)\in \mu_{T}$. Similar discussion gives that $\ra({\rm id}_{ x^{\downarrow}}\bullet \alpha)=\ra({\rm id}_{ x^{\downarrow}}\bullet \beta)$. By $(\ref{gpyin6})$, we have  $x^{\downarrow}\alpha=x^{\downarrow}\beta$, i.e $(x\alpha)^{\downarrow}=(x\beta)^{\downarrow}$. Thus $x\alpha=x\beta$. we have shown that $\alpha=\beta$. Therefore $\mu_{T}$ is the identity relation and so $T$ is projection-fundamental.
\end{proof}

\begin{theorem}\label{yin12}
Let $(P,\times,\star)$ be a strong projection algebra, $\mu=\mu_{\mbox{{\bf S}}(\mathscr{C}(P))}$ and  $(S, \cdot, ^+, ^\ast)$ be a projection- fundamental DRC-restriction semigroup whose strong projection algebra is isomorphic to $(P,\times,\star)$. Then $\mbox{{\bf E}}(S)\cong \mbox{{\bf S}}(\mathscr{C}(P))/\mu.$ In particular if $(S, \cdot, ^+, ^\ast)$ is projection-generated, then $S\cong\mbox{{\bf S}}(\mathscr{C}(P))/\mu.$
\end{theorem}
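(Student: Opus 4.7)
My plan is to pass from $S$ to its subsemigroup $\mathbf{E}(S)$, invoke Theorem \ref{llpg}, and identify the kernel of the resulting surjection with $\mu=\mu_{\mathbf{S}(\mathscr{C}(P))}$. First, $\mathbf{E}(S)$ is closed under ${}^+$ and ${}^\ast$: for a product $p_1p_2\cdots p_k$ of projections, DRC Condition (ii) applied repeatedly gives $(p_1\cdots p_k)^+\in P(S)$, and dually for ${}^\ast$. Thus $\mathbf{E}(S)$ is a projection-generated DRC-restriction (2,1,1)-subalgebra of $S$ whose strong projection algebra equals $(P(S),\times_S,\star_S)\cong(P,\times,\star)$. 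Since $P(\mathbf{E}(S))=P(S)$, the defining conditions for $\mu$ in Lemma \ref{dvhi} force $\mu_{\mathbf{E}(S)}=\mu_S\cap(\mathbf{E}(S)\times\mathbf{E}(S))$, so $\mathbf{E}(S)$ inherits projection-fundamentality from $S$. It therefore suffices to prove $\mathbf{E}(S)\cong\mathbf{S}(\mathscr{C}(P))/\mu$; the ``in particular'' statement follows by taking $\mathbf{E}(S)=S$.

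Identifying $(P,\times,\star)$ with $(P(S),\times_S,\star_S)$ via the given isomorphism, Theorem \ref{llpg} produces a surjective (2,1,1)-homomorphism $\phi:\mathbf{S}(\mathscr{C}(P))\to\mathbf{E}(S)$ with $[p]\phi=p$ for every $p\in P$. By the first isomorphism theorem for (2,1,1)-algebras, the desired isomorphism reduces to proving $\ker\phi=\mu$. Since $P(\mathbf{S}(\mathscr{C}(P)))=\{[p]\mid p\in P\}$ and $[p]\phi=p$, the map $\phi$ is injective on projections, so $\ker\phi$ is a projection-separating (2,1,1)-congruence; maximality of $\mu$ (Lemma \ref{dvhi}) yields $\ker\phi\subseteq\mu$.

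For the reverse inclusion $\mu\subseteq\ker\phi$, given $(u,v)\in\mu$, I would show $(u\phi,v\phi)\in\mu_{\mathbf{E}(S)}$; projection-fundamentality of $\mathbf{E}(S)$ then forces $u\phi=v\phi$. Indeed, $\phi$ preserves ${}^+$ and ${}^\ast$, so $(u\phi)^+=(v\phi)^+$ and $(u\phi)^\ast=(v\phi)^\ast$; and for any $q\in P(\mathbf{E}(S))$, writing $q=[p]\phi$ with $p\in P$, we have
\[
((u\phi)\cdot q)^+=((u\bullet[p])\phi)^+=(u\bullet[p])^+\phi=(v\bullet[p])^+\phi=((v\phi)\cdot q)^+,
\]
where the middle equality uses $(u\bullet[p])^+=(v\bullet[p])^+$, which follows directly from $(u,v)\in\mu$ and the defining condition of $\mu$. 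The dual computation handles the ${}^\ast$-condition. The principal obstacle is keeping the various identifications straight---particularly the verification that $\mathbf{E}(S)$ truly inherits projection-fundamentality from $S$, which rests on the observation $P(\mathbf{E}(S))=P(S)$ so that the quantifiers appearing in the definition of $\mu$ automatically coincide; beyond that, the argument is a clean universal-property chase identifying the kernel of the canonical surjection with the maximum projection-separating (2,1,1)-congruence.
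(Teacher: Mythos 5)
Your proof is correct, and it identifies the kernel of the canonical surjection by a route genuinely different from the paper's. Both arguments hinge on the same surjective $(2,1,1)$-homomorphism ${\bf S}(\mathscr{C}(P))\to{\bf E}(S)$ sending $[p]\mapsto p$ (the paper obtains it as $\overline{\varepsilon_S}$ via Proposition \ref{lope}; you invoke Theorem \ref{llpg}, which packages the same map), but they diverge on why its kernel equals $\mu$. The paper composes with the fundamental representation $\phi_S\colon S\to{\bf S}(M(P))$ of Theorem \ref{dpwgl0}, uses projection-generation of ${\bf S}(\mathscr{C}(P))$ to identify the composite with $\phi_{{\bf S}(\mathscr{C}(P))}$, and then reads off $\ker(\overline{\varepsilon_S})=\ker(\phi_{{\bf S}(\mathscr{C}(P))})=\mu$ from injectivity of $\phi_S$. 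You instead argue directly from Lemma \ref{dvhi}: $\ker\phi$ is a projection-separating $(2,1,1)$-congruence, hence contained in the maximum one, $\mu$; conversely, the explicit description (\ref{skfvm}) of $\mu$ is transported along $\phi$ (using that every projection of ${\bf E}(S)$ has the form $[p]\phi$) to show that $\mu$-related elements have $\mu_{{\bf E}(S)}$-related images, and the inherited projection-fundamentality of ${\bf E}(S)$ collapses those images. That inheritance rests precisely on your observation $P({\bf E}(S))=P(S)$, which makes the quantifiers in (\ref{skfvm}) coincide and gives $\mu_{{\bf E}(S)}=\mu_S\cap({\bf E}(S)\times{\bf E}(S))$. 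Your route avoids the Munn-type semigroup ${\bf S}(M(P))$ and Theorem \ref{dpwgl0} altogether, at the modest cost of verifying that projection-fundamentality passes to the full subalgebra ${\bf E}(S)$; the paper's route avoids that verification but leans on the whole fundamental-representation machinery. All the individual steps you use (closure of ${\bf E}(S)$ under the unary operations, the identification of its projection algebra, and the two inclusions giving $\ker\phi=\mu$) check out.
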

\begin{proof}
Assume that $(S, \cdot, ^+, ^\ast)$ is a projection-fundamental DRC-restriction semigroup whose strong projection algebra is  $(P,\times,\star)$. By Theorem \ref{fii}, $\textbf{\rm \bf C}(S)$ is a chain projection ordered category.  By Proposition \ref{lope}, $\overline{\varepsilon}: \mathscr{C}(P_{S})\rightarrow \textbf{\rm \bf C}(S)$ is a chain projection ordered functor from
$\mathscr{C}(P_{S})$ to $\textbf{\rm \bf C}(S)$,  and so is a (2,1,1)-homomorphism from ${\bf S}(\mathscr{C}(P_{S}))$ to ${\bf S}(\textbf{\rm \bf C}(S))=S$ (by Theorem \ref{hgdc}) such that $[p]\overline{\varepsilon}=p$ for all $p\in P$. According to Theorem \ref{dpwgl0}, there exists a (2,1,1)-homomorphism $$\phi_{S}:S\rightarrow \textbf{S}(M(P)),\,\,\,a\rightarrow \nu_{a}$$ such that $p\mapsto {\rm id}_{p^\downarrow}$ for all $p\in P$.
This implies that
$$\overline{\varepsilon}\phi_{S}: {\bf S}(\mathscr{C}(P_{S}))\rightarrow \textbf{S}(M(P))$$ is also a (2,1,1)-homomorphism such that $[p]\overline{\varepsilon}\phi_{S}={\rm id}_{p^\downarrow}$ for all $p\in P$.
In view of Theorem \ref{dpwgl0}, we have a (2,1,1)-homomorphism $$\phi_{{\bf S}(\mathscr{C}(P_{S}))}: {\bf S}(\mathscr{C}(P_{S}))\longrightarrow \textbf{S}(M(P))$$ such that $[p]={\rm id}_{p^\downarrow}$ for all $p\in P$.
By Theorem \ref{llpg}, ${\bf S}(\mathscr{C}(P_{S}))$ is projection-generated, and so  $\overline{\varepsilon}\phi_{S}=\phi_{{\bf S}(\mathscr{C}(P_{S}))}$.
Since $S$ is projection-fundamental, it follows that $\phi_{S}$ is injective by Theorem \ref{dpwgl0}. This together with Theorem \ref{dpwgl0} gives that
$$\ker(\overline{\varepsilon})=\ker(\overline{\varepsilon}\phi_{S})=\ker(\phi_{{\bf S}(\mathscr{C}(P_{S}))})=\mu_{{\bf S}(\mathscr{C}(P_{S}))}.$$
By Proposition \ref{fgpu}, $$\textbf{E}(S)=im(\overline{\varepsilon})\cong {\bf S}(\mathscr{C}(P_{\textbf{\rm \bf C}(S)}))/\mu_{{\bf S}(\mathscr{C}(P_{\textbf{\rm \bf C}(S)}))}\cong {\bf S}(\mathscr{C}(P))/\mu.$$
In particular,  if $(S, \cdot, ^+, ^\ast)$ is projection-generated, then $S=\textbf{E}(S)\cong {\bf S}(\mathscr{C}(P))/\mu$ by Proposition \ref{fgpu} again.
\end{proof}
Let $(P,\times,\star)$ be a strong projection algebra. To end this section, we shall give some remarks on the semigroup $\mbox{{\bf S}}(\mathscr{C}(P))/\mu_{{\bf S}(\mathscr{C}(P))}$.
By Theorem \ref{yin12}, up to isomorphism, there exists a unique projection-generated projection-fundamental  DRC-restriction semigroup whose strong projection algebra is isomorphic to   $(P,\times,\star)$.   This semigroup is $\mbox{{\bf S}}(\mathscr{C}(P))/\mu_{\mbox{{\bf S}}(\mathscr{C}(P))}$. Moreover, if $(S, \cdot, ^+, ^\ast)$ is a  projection-fundamental DRC-restriction semigroup with $P(S)\cong P$, then $\mbox{{\bf E}}(S)\cong \mbox{{\bf S}}(\mathscr{C}(P))/\mu_{{\bf S}(\mathscr{C}(P))}.$
To obtain $\mbox{{\bf S}}(\mathscr{C}(P))/\mu_{{\bf S}(\mathscr{C}(P))}$, we only need to find a  projection-fundamental DRC-restriction semigroup $(S,\cdot,^{+},^{\ast})$ with $P(S)\cong P$ and compute $\mbox{{\bf E}}(S)$. So we have the following three approaches to describe $\mbox{{\bf S}}(\mathscr{C}(P))/\mu_{{\bf S}(\mathscr{C}(P))}$.

{\bf Approach 1}: The elements in ${\bf S}(\mathscr{C}(P))/\mu_{{\bf S}(\mathscr{C}(P))}$ are all $\mu_{{\bf S}(\mathscr{C}(P))}$-classes in ${\bf S}(\mathscr{C}(P))$. Let $$\mathfrak{p}=[p_{1},p_{2},\ldots,p_{k}],\mathfrak{q}=[q_{1},q_{2},\ldots,q_{l}]\in {\bf S}(\mathscr{C}(P)).$$ By Proposition \ref{fscpl} and (\ref{daxzo}), (\ref{yida}), (\ref{edsc}), (\ref{mifg}),
$$(\mathfrak{p},\mathfrak{q})\in \mu_{{\bf S}(\mathscr{C}(P))}\Longleftrightarrow \theta_{p_{1}}\theta_{p_{2}}\ldots\theta_{p_{k}}=\theta_{q_{1}}\theta_{q_{2}}\ldots\theta_{q_{l}},\,\,\delta_{p_{k}}\delta_{p_{k-1}}\ldots\delta_{p_{1}}=\delta_{q_{l}}\delta_{q_{l-1}}\ldots\theta_{q_{1}}
$$$$\Longleftrightarrow \theta_{p_{1}}\theta_{p_{2}}\ldots\theta_{p_{k}}=\theta_{q_{1}}\theta_{q_{2}}\ldots\theta_{q_{l}},\,\,p_{1}=q_{1}$$$$\Longleftrightarrow p_{1}=q_{1},\,\, \theta_{p_{2}}\theta_{p_{3}}\ldots\theta_{p_{k}}|_{p_1^{\downarrow}}=\theta_{q_{2}}\theta_{q_{3}}\ldots\theta_{q_{l}}|_{q_1^{\downarrow}}.$$

{\bf Approach 2}: Since $\textbf{S}(M(P))$ is projection-fundamental and its strong projection algebra is isomorphic to $(P, \times, \star)$. By Theorem \ref{yin12},
${\bf S}(\mathscr{C}(P))/\mu_{{\bf S}(\mathscr{C}(P))}$ is isomorphic to $\textbf{E}(\textbf{S}(M(P))$. The elements in $\textbf{E}(\textbf{S}(M(P))$ are:
$${\rm id}_{p_{1}^{\downarrow}} \bullet {\rm id}_{p_{2}^{\downarrow}}\bullet \ldots \bullet {\rm id}_{p_{k}^{\downarrow}}, \,\,\, p_{1},p_{2},\ldots,p_{k}\in P,\,\, k \mbox{ is an integer}. $$

{\bf Approach 3}: Let $(P,\times,\star)$ be a strong projection algebra. Then we have the generalized regular ${\circ}$-semigroup $\mbox{{\bf S}}(M(P))=(M(P),\bullet,^{+},^{\ast},^{-1})$. Denote $$\overline{\textbf{S}(M(P))}=\{(\theta_{p}\alpha,\delta_{q}\alpha^{-1})\mid\alpha\in M(p,q),p,q\in P\}$$  and define $$\varphi:\mbox{{\bf S}}(M(P))\rightarrow \overline{\textbf{S}(M(P)},\,\,\alpha\in M(p,q)\mapsto (\theta_{p}\alpha,\delta_{q}\alpha^{-1}).$$ We assert that $\varphi$ is a bijection. Obviously, $\varphi$ is surjective. Let $\alpha\in M(p,q),\beta\in M(s,t)$ and $(\theta_{p}\alpha,\delta_{q}\alpha^{-1})=(\theta_{s}\beta,\delta_{t}\beta^{-1})$.
Then $\theta_{p}\alpha=\theta_{s}\beta,\,\,\delta_{q}\alpha^{-1}=\delta_{t}\beta^{-1}.$
Let $x \in P$. Then  $x(\theta_{p}\alpha)=x(\theta_{s}\beta)$, i.e. $(x\star p)\alpha=(x\star s)\beta$. Take $x=p$. Then $q=p\alpha=(p\star p)\alpha=(p\star s)\beta\leq_{P} s\beta=t.$ Similarly, take $x=s$. Then we have $t\leq_{P} q$. Thus $q=t$. Dually, we can obtain $p=s$ form the fact that $\delta_{q}\alpha^{-1}=\delta_{t}\beta^{-1}$. Thus $\alpha,\beta\in M(p,q)$. Take $x\in \dm\alpha=p^{\downarrow}$ arbitrarily. Then $x\leq_{P} p$,  and so $x\star p=x$ by Lemma \ref{jiben1} (1). Thus $$x\alpha=(x\star  p)\alpha=x\theta_{p}\alpha=x\theta_{p}\beta=(x\star p)\beta=x\beta.$$  This gives $\alpha=\beta$. Therefore $\varphi$ is injective.

Since $\varphi$ is bijective, we can transfer the operations on $\mbox{{\bf S}}(M(P))$ to $\overline{\textbf{S}(M(P)}$ as follows. Let $\alpha \in M(p,q),\beta\in M(s,t)$. By (\ref{gpyin6}),  $\alpha\bullet \beta \in M(((q\times s)\alpha^{-1},(q\star s)\beta))$. So we define
\begin{equation*}
\begin{aligned}
(\theta_{p}\alpha,\delta_{q}\alpha^{-1})(\theta_{s}\beta,\delta_{t}\beta^{-1})&=(\alpha\bullet \beta )\varphi=(\alpha\gamma_{q\times s,q\star s}\beta)\varphi\\
&=(\theta_{(q\times s)\alpha^{-1}}\alpha\gamma_{q\times s,q\star s}\beta,\delta_{(q\star s)\beta}\beta^{-1}\gamma_{q\times s,q\star s}^{-1}\alpha^{-1}),
\end{aligned}
\end{equation*}
 $$(\theta_{p}\alpha,\delta_{q}\alpha^{-1})^{+}=\alpha^{+}\varphi={\rm id}_{p^{\downarrow}}\varphi=(\theta_{p}{\rm id}_{p^{\downarrow}},\delta_{p}{\rm id}_{p^{\downarrow}}^{-1})=(\theta_{p},\delta_{p}).$$
$$(\theta_{p}\alpha,\delta_{q}\alpha^{-1})^{\ast}=\alpha^{\ast}\varphi={\rm id}_{q^{\downarrow}}\varphi=(\theta_{q}{\rm id}_{q^{\downarrow}},\delta_{q}{\rm id}_{q^{\downarrow}}^{-1})=(\theta_{q},\delta_{q}).$$
Let $\alpha \in M(p,q)$ and $\beta\in M(s,t)$. Then $$\alpha\bullet \beta=\alpha\gamma_{q\times s,q\star s}\beta \in M((q\times s)\alpha^{-1},(q\star s)\beta).$$ Assume that $x\in P$. Then we have
\begin{equation*}
\begin{aligned}
x\theta_{(q\times s)\alpha^{-1}}&=x\theta_{p}\theta_{(q\times s)\alpha^{-1}}\,\,\,\,\,\,\,\,(\mbox{by }(q\times s)\alpha^{-1}\leq_{P} p \mbox{ and Lemma \ref{o}})\\
&=(x\theta_{p})\alpha\alpha^{-1}\theta_{(q\times s)\alpha^{-1}}\\
&=(x\theta_{p}\alpha\theta_{q\times s})\alpha^{-1}\,\,\,\,\,\,\,\,(\mbox{since }\alpha^{-1}\mbox{ is an isomorphism})\\
&=x\theta_{p}\alpha\theta_{q\times s}\delta_{q}\alpha^{-1}\,\,\,\,\,\,\,\,(\mbox{by} q\times s\leq_{P} q,\mbox{and Lemma \ref{o}})\\
&=x\theta_{p}\alpha\theta_{(q\star s)\delta_{q}}\delta_{q}\alpha^{-1}\,\,\,\,\,\,\,\,(\mbox{by } (q\star s)\delta_{q}=q\times(q\star s)=q\times s \,\,\mbox{and Lemma \ref{jiben1} (3)} )\\
&=x\theta_{p}\alpha\theta_{q}\theta_{q\star s}\delta_{q}\delta_{q}\alpha^{-1}\,\,\,\,\,\,\,\,(\mbox{by Lemma \ref{jj}})\\
&=x\theta_{p}\alpha\theta_{q\star s}\delta_{q}\alpha^{-1}\,\,\,\,\,\,\,\,(\mbox{by }(x\theta_{p})\alpha\in q^{\downarrow}\,\mbox{and Lemma \ref{o}})
\end{aligned}
\end{equation*}
(R4) gives that $q\star(q\star s)=(q\star s)\star(q\star s)=q\star s$ and Lemma \ref{jiben1} (2) gives that $$(x\theta_{p}\alpha)\star(q\star s)\leq_{P} q\star s=q\star(q\star s).$$ This together with (\ref{wang2}) implies that $$(q\times((x\theta_{p}\alpha)\star(q\star s)))\star(q\star s)=(x\theta_{p}\alpha)\star(q\star s).$$ Thus
\begin{equation*}
\begin{aligned}
x\theta_{(q\times s)\alpha^{-1}}\alpha\gamma_{q\times s,q\star s}\beta&=x\theta_{(q\times s)\alpha^{-1}}\alpha\theta_{q\star s}\beta
=x\theta_{p}\alpha\theta_{q\star s}\delta_{q}\alpha^{-1}\alpha\theta_{q\star s}\beta
=x\theta_{p}\alpha\theta_{q\star s}\delta_{q}\theta_{q\star s}\beta\\
&=((q\times((x\theta_{p}\alpha)\star(q\star s)))\star(q\star s))\beta
=((x\theta_{p}\alpha)\star(q\star s))\beta\,\,\,\,\,\,\,\,(\mbox{(\ref{wang2})})\\
&=((x\theta_{p}\alpha\star q)\star(q\star s))\beta\,\,\,\,\,(x\theta_{p}\alpha\leq_{P} q,\,\,\mbox{Lemma \ref{jiben1} (2)})\\
&=((x\theta_{p}\alpha\star q)\star s)\beta\,\,\,\,\,\,\,\,(\mbox{(R3)})\\
&=(x\theta_{p}\alpha\star s)\beta
=x\theta_{p}\alpha\theta_{s}\beta.
\end{aligned}
\end{equation*}
Dually,
$\delta_{(q\star s)\beta}\beta^{-1}\gamma_{q\times s,q\star s}^{-1}\alpha^{-1}=\delta_{t}\beta^{-1}\delta_{q}\alpha^{-1}.$ Thus $$(\theta_{p}\alpha,\delta_{q}\alpha^{-1})(\theta_{s}\beta,\delta_{t}\beta^{-1})=(\theta_{p}\alpha\theta_{s}\beta,\delta_{t}\beta^{-1}\delta_{q}\alpha^{-1}).$$
So  $\varphi$ is a (2,1,1)-isomorphism from $\mbox{{\bf S}}(M(P))$ onto $\overline{\textbf{S}(M(P)}$. Since ${\bf S}(\mathscr{C}(P))/\mu_{{\bf S}(\mathscr{C}(P))}$ is isomorphic to $\textbf{E}(\textbf{S}(M(P))$, ${\bf S}(\mathscr{C}(P))/\mu_{{\bf S}(\mathscr{C}(P))}$ is isomorphic to $\textbf{E}(\overline{\textbf{S}(M(P)})$. Observe that the projections in $\overline{\textbf{S}(M(P)}$ are:
$(\theta_{p},\delta_{p}), p\in P,$  it follows that the elements in
$\textbf{E}(\overline{\textbf{S}(M(P)})$ are: $$(\theta_{p_{1}}\theta_{p_{2}}\ldots\theta_{p_{k}},\delta_{p_{k}}\delta_{p_{k-1}}\ldots\delta_{p_{1}}),\,\, p_{1},p_{2},\ldots,p_{k}\in P,\,\, k \mbox{ is an integer}.$$

\vspace{3mm}
\noindent {\bf Acknowledgements:} The paper is supported jointly  by the Nature Science Foundations of  China (12271442, 11661082).

\end{document}